\documentclass[11pt,reqno]{amsart}
\usepackage{amsaddr}
\usepackage{hyperref}
\usepackage{graphicx}
\usepackage{amssymb}
\usepackage{amsfonts}
\usepackage[]{amsmath}
\usepackage[]{epsfig}
\usepackage[]{pstricks}
\newpsobject{malla}{psgrid}{subgriddiv=1,griddots=10,gridlabels=6pt}
\usepackage[]{float}
\usepackage{setspace}
\newpsobject{malla}{psgrid}{subgriddiv=1,griddots=10,gridlabels=6pt}
\newtheorem{theorem}{Theorem}[section]
\newtheorem{lemma}{Lemma}[section]

\newtheorem{remark}{Remark}[section]
\newtheorem{proposition}{Proposition}[section]

\numberwithin{equation}{section}
\newcommand{\R}{\mathbb R}


\newcommand{\T}{{\mathbb T}}

\newcommand{\Z}{{\mathbb Z}}
\newcommand{\ft}{{\mathcal{F}}}

\newcommand{\Sch}{{\mathcal{S}}}
\newcommand{\supp}{{\mbox{supp}}}

\newcommand{\px}{\partial_x}
\newcommand{\pt}{\partial_t}



\def\norm#1{\|#1\|}
\def\bra#1{\langle#1\rangle}
\def\wt#1{\widetilde{#1}}
\def\wh#1{\widehat{#1}}
\def\set#1{\{#1\}}

\setlength{\textwidth}{15.5cm}
\addtolength{\oddsidemargin}{-1.7cm}
\addtolength{\evensidemargin}{-1.7cm}
\setlength{\textheight}{20cm}

\begin{document}

	\pagenumbering{arabic}	
\title[IBVP for the fifth-order KdV-type equations]{Local well-posedness of the fifth-order KdV-type equations on the half-line}

\author[M. Cavalcante]{M\'arcio Cavalcante}

\address{\emph{Instituto de Matem\'{a}tica, Universidade Federal de Alagoas\\ Macei\'o AL-Brazil}}
\email{marcio.melo@im.ufal.br}

\author[C. Kwak]{Chulkwang Kwak}

\address{\emph{Facultad de Matem\'{a}ticas, Pontificia Universidade Catolica de Chile\\ Santiago-Chile}}
\email{chkwak@mat.uc.cl}
\thanks{C. Kwak is supported by FONDECYT de Postdoctorado 2017 Proyecto No. 3170067.}

\subjclass[2010]{35Q53, 35G31} \keywords{Fifth-order KdV-type equations, initial-boundary value problem, local well-posedness}
\begin{abstract}
This paper is a continuation of authors' previous work \cite{CK2018-1}. We extend the argument \cite{CK2018-1} to fifth-order KdV-type equations with different nonlinearities, in specific, where the scaling argument does not hold. We establish the $X^{s,b}$ nonlinear estimates for $b < \frac12$, which is almost optimal compared to the standard $X^{s,b}$ nonlinear estimates for $b > \frac12$ \cite{CGL2010, JH2009}. As an immediate conclusion, we prove the local well-posedness of the initial-boundary value problem (IBVP) for fifth-order KdV-type equations on the right half-line and the left half-line. 

\end{abstract}
\maketitle

\tableofcontents

\section{Introduction}\label{sec:intro}
This paper is a continuation of authors' previous work \cite{CK2018-1}. In \cite{CK2018-1}, the authors studied the \emph{Duhamel boundary forcing} operator associated to the fifth-order linear operator, and established the local well-posedness of Kawahara equation posed on the right/left half-line. In this paper, we extend the previous study to the fifth-order KdV-type equations whose nonlinearities are different, in particular, do not satisfy the scaling symmetry. The lack of the scaling invariance cause an additional analysis on time trace estimates with a cutoff function supported on $|t| \le T$, and to provide such an analysis is one of aims of this work. Consider the following fifth-order KdV-type equation:
\begin{equation}\label{eq:5kdv}
\pt u - \px^5 u + F(u)=0,
\end{equation}
where $u(t,x)$ is real-valued function and $F(u)$ is a nonlinearity. We, here, take $F(u) = (1-\px^2)^{\frac12}\px (u^2)$ or $F(u) = \px(u^3)$. 

When $F(u) = (1-\px^2)^{\frac12}\px (u^2)$, the equation \eqref{eq:5kdv} was introduced by Tina, Gui and Liu \cite{TGL2008} to understand the role of dispersive and nonlinear convection effects in the fifth-order $K(m,n,p)$ equations of the form
\[\pt u + \beta_1\px(u^m) + \beta_2\px^3(u^n) + \beta_3\px^5(u^p) = 0\]
when $(m,n,p) = (2,2,1)$, in particular, $\beta_1=1$ and $\beta_2= \beta_3 = -1$.

When $F(u) = \px(u^3)$, the equation \eqref{eq:5kdv} is well-known as the modified Kawahara equation, which was proposed first by Kawahara \cite{Kawahara1972}. The modified Kawahara equation arises in the theory of shallow water waves, the theory of magneto-acoustic waves in plasmas and propagation of nonlinear water-waves in the long-wavelength region as in the case of KdV equations. This equation is also regarded as a singular perturbation of KdV equation. We refer to \cite{AS1985, HS1988, GJ1995} and references therein for more background informations.

\subsection{Main analysis} 
The principal contribution in the paper is to establish the nonlinear estimates for both nonlinearities $F(u) = (1-\px^2)^{\frac12}\px (u^2)$ and $F(u) = \px (u^3)$, in particular, 
\begin{equation}\label{Main_Non}
\norm{F(u)}_{X^{s,-b} \cap Y^{s,-b}} \lesssim \norm{u}_{X^{s,b} \cap D^{\alpha}}^k, \qquad k=2,3,
\end{equation}
for a certain regularity $s \in \R$, $0 < b < \frac12 < \alpha < 1-b$. The functions spaces used in \eqref{Main_Non} are the standard $X^{s,b}$ space\footnote{It is called Bourgain's space or dispersive Sobolev space.} equipped with the norm
\begin{equation}\label{Xsb}
\norm{f}_{X^{s,b}} = \norm{\bra{\xi}^s\bra{\tau-\xi^5}^b \wt{f}}_{L_{\tau,\xi}^2},
\end{equation}
where $\wt{f}$ is the space time Fourier coefficient (also denoted by $\ft(f)$) and $\bra{\cdot} = (1+|\cdot|^2)^{\frac12}$, initially introduced in its current form by Bourgain \cite{Bourgain1993}, and its various modifications, see below for a short explanation of spaces and Section \ref{sec:sol space} for precise definitions of spaces.

The $X^{s,b}$ space is known to be an appropriate device to detect dispersive phenomena in the Fourier analysis. In other words, solutions to \eqref{eq:5kdv} have the \emph{dispersive smoothing effect} which means that the (space-time) Fourier coefficients decay far away from the characteristic curve $\tau = \xi^5$, where $\xi$ and $\tau$ are the Fourier variables corresponding to $x$ and $t$, respectively. This property is naturally reflected in \eqref{Xsb} as a weight $\bra{\tau-\xi^5}^b$. The choice of the exponent $b < \frac12$ in \eqref{Xsb} is imposed in the study on IBVP due to the presence of the Duhamel boundary forcing operator, which reflects what role boundary conditions play in the solutions (see Section \ref{sec:Duhamel boundary forcing operator} for more details), while the standard $X^{s,b}$ space with $b > \frac12$ works well in the study on the initial value problem (IVP). 

The trade-off of choosing $b < \frac12$ causes the lack of $\tau$-integrability in \eqref{Main_Non}, when all functions are localized in the frequency support $|\xi| \le 1$. In order to resolve this problem, an additional \emph{low frequency} localized space $D^{\alpha}$, $\alpha > \frac12$, is needed, i.e., \eqref{Xsb} under the restriction $|\xi| \le 1$, precisely,
\[\norm{f}_{D^{\alpha}} = \norm{\bra{\tau}^b \chi_{\set{|\xi| \le 1}}(\xi)\wt{f}}_{L_{\tau,\xi}^2}.\]
On the other hand, time trace estimates of the Duhamel parts (Lemma \ref{duhamel}) in $X^{s,b}$-type spaces hold true for only positive regularities (see Remark \ref{rem:Ysb}), thus an additional introduction of the (time-adapted) Bourgain space $Y^{s,b}$ as an intermediate norm in the iteration process, which is defined similarly as the standard $X^{s,b}$ space but with a weight in terms of $\tau$ instead of $\xi$ in the sense of $\pt \sim \px^5$ in \eqref{eq:5kdv} (replacing $\bra{\xi}^s$ by $\bra{\tau}^{\frac{s}{5}}$ in \eqref{Xsb}), precisely
\[\norm{f}_{Y^{s,b}} = \norm{\bra{\tau}^{\frac{s}{5}}\bra{\tau-\xi^5}^b \wt{f}}_{L_{\tau,\xi}^2},\]
is necessary to cover the negative regularities. 

The followings are the main results established in the paper.
\begin{theorem}\label{thm:nonlinear1}   $\;$

(a) For $-5/4 < s $, there exists $b = b(s) < 1/2$ such that for all $\alpha > 1/2$, we have
\begin{equation}\label{eq:nonlinear1}
\norm{(1-\px^2)^{\frac12}\px (uv)}_{X^{s,-b}} \lesssim \norm{u}_{X^{s,b} \cap D^{\alpha}}\norm{v}_{X^{s,b} \cap D^{\alpha}}.
\end{equation}

(b) For $-5/4 < s \le 0$, there exists $b = b(s) < 1/2$ such that for all $\alpha > 1/2$, we have
	\begin{equation}\label{eq:nonlinear1.1}
	\norm{(1-\px^2)^{\frac12}\px (uv)}_{Y^{s,-b}} \lesssim \norm{u}_{X^{s,b} \cap D^{\alpha}}\norm{v}_{X^{s,b} \cap D^{\alpha}}.
	\end{equation}

The implicit constants in \eqref{eq:nonlinear1} and \eqref{eq:nonlinear1.1} depend only on $s$, $b$, ans $\alpha$.	
\end{theorem}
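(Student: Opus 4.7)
The plan is to prove both \eqref{eq:nonlinear1} and \eqref{eq:nonlinear1.1} by the standard $X^{s,b}$ trilinear analysis. Dualizing the output and passing to Fourier variables, \eqref{eq:nonlinear1} reduces to
\[
\int_* \frac{|\xi|\bra{\xi}\,\bra{\xi}^s \,\wt{f_1}(\tau_1,\xi_1)\,\wt{f_2}(\tau_2,\xi_2)\,\wt{h}(\tau,\xi)}{\bra{\tau-\xi^5}^b \bra{\xi_1}^s\bra{\xi_2}^s \bra{\tau_1-\xi_1^5}^b\bra{\tau_2-\xi_2^5}^b} \lesssim \norm{f_1}_{L^2}\norm{f_2}_{L^2}\norm{h}_{L^2},
\]
where $\int_*$ denotes integration over the convolution hyperplane $\xi_1+\xi_2=\xi$, $\tau_1+\tau_2=\tau$. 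I then decompose each function into pieces supported in $|\xi_j|\sim N_j$ with modulations $|\tau_j-\xi_j^5|\sim L_j$ and exploit the resonance identity
\[
(\tau-\xi^5) - (\tau_1-\xi_1^5) - (\tau_2-\xi_2^5) = -5\xi_1\xi_2(\xi_1+\xi_2)(\xi_1^2+\xi_1\xi_2+\xi_2^2),
\]
which yields the uniform lower bound $L_{\max}\gtrsim N_{\min}N_{\max}^4$ on the largest of $L,L_1,L_2$ outside the purely small-frequency region.

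I split into the standard regimes: \emph{high-high to low} ($N_1\sim N_2\gg N$), \emph{high-low to high} ($N\sim N_{\max}\gg N_{\min}$), \emph{comparable high} ($N_1\sim N_2\sim N\gg 1$), and \emph{all small} ($N_1,N_2,N\lesssim 1$). In each non-small regime, Cauchy--Schwarz in $(\tau,\xi)$ combined with bilinear $L^2_{tx}$ orthogonality bounds for two free solutions of the linear fifth-order group produces a gain of roughly $L_{\max}^{1/2-b}$; combined with $L_{\max}\gtrsim N_{\min}N_{\max}^4$ this absorbs the derivative loss $|\xi|\bra{\xi}$ and closes the dyadic sum for $s>-5/4$, provided $b<1/2$ is chosen close enough to $1/2$ depending on $s$.

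In the small-frequency regime $N_1,N_2,N\lesssim 1$ no resonance gain is available and, more importantly, the $\tau$-integral of $\bra{\tau-\xi^5}^{-2b}$ diverges since $b<1/2$. This is precisely where the auxiliary space $D^\alpha$ with $\alpha>1/2$ enters: the weight $\bra{\tau}^\alpha$ on the inputs provides the $\tau$-summability needed to close this regime by a direct Cauchy--Schwarz argument. For the $Y^{s,-b}$ estimate \eqref{eq:nonlinear1.1} with $s\le 0$, the output weight $\bra{\xi}^s$ is replaced by $\bra{\tau}^{s/5}$; using $\bra{\tau}\lesssim \bra{\tau-\xi^5}+\bra{\xi}^5$, the region $\bra{\tau}\ge \bra{\xi}^5$ yields $\bra{\tau}^{s/5}\le \bra{\xi}^s$ and reduces to \eqref{eq:nonlinear1}, while in the region $\bra{\tau}<\bra{\xi}^5$ one has $\bra{\tau-\xi^5}\gtrsim \bra{\xi}^5$, so the modulation weight absorbs $\bra{\xi}^{|s|}$ at the cost of replacing $b$ by $b-|s|/5$, which still lies in $(0,1/2)$ under $s>-5/4$ and $b$ close to $1/2$.

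The main obstacle is the high-high to low regime as $s$ approaches the critical threshold $-5/4$, where the derivative loss $N_{\max}^2$ together with the high-frequency Sobolev mismatch $N_{\max}^{-s}$ demands essentially the full power of the resonance gain with no slack. This forces a sharp choice $b=b(s)<1/2$ interpolating between the modulation gain $L_{\max}^{1-2b}\gtrsim (N_{\min} N_{\max}^4)^{1-2b}$ and the need to sum over the small output frequency. A secondary subtlety is that the resonance factor $(\xi_1+\xi_2)=\xi$ vanishes exactly in the regime where it is most needed, so within high-high to low I expect a further dichotomy on $|\xi|$, handled if necessary by a bilinear refinement of the Strichartz estimate for the fifth-order group to recover the missing $N_{\min}^{1/2}$ in the worst sub-regime.
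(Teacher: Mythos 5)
Your high-level framework---dualization, joint Littlewood--Paley decomposition in frequency and modulation, the resonance lower bound $L_{\max}\gtrsim N_{\min}N_{\max}^4$, bilinear $L^2$-block smoothing, and using the low-frequency weight $D^{\alpha}$ to supply $\tau$-integrability where $b<\frac12$ fails---is precisely what the paper does in Propositions~\ref{prop:bi1} and~\ref{prop:bi2}. Two substantive points need correction, though.

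You single out \emph{high $\times$ high $\Rightarrow$ low} as the critical case as $s\to -5/4$, but for this nonlinearity the threshold is actually set by \emph{high $\times$ high $\Rightarrow$ high}. Because $(1-\px^2)^{\frac12}\px$ attaches an extra derivative to the \emph{output} frequency, in the balanced regime one pays $|\xi|\bra{\xi}^{s+1}\bra{\xi_1}^{-s}\bra{\xi_2}^{-s}\sim N_{\max}^{2-s}$, and after applying the $L^2$-block estimate and summing over dyadic shells this forces $s>-5/4$ whenever $b<\frac12$. In high $\times$ high $\Rightarrow$ low, by contrast, $(1-\px^2)^{\frac12}\sim 1$ on the output and the component closes for $s>-7/4$ (the paper records this observation in the remark immediately following the proof of Proposition~\ref{prop:bi1}). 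Your instinct is correct for $\px(u^2)$ (the Kawahara nonlinearity), but here misidentifying the binding interaction would give the wrong constraint on $b=b(s)$.

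For the $Y^{s,-b}$ estimate your reduction has a genuine gap. On $\{\bra{\tau}\lesssim\bra{\xi}^5\}$ the inequality $\bra{\tau}^{s/5}\bra{\tau-\xi^5}^{-b}\lesssim \bra{\xi}^{s}\bra{\tau-\xi^5}^{-(b+s/5)}$ would send you to the $X^{s,-b'}$ estimate with $b'=b+s/5$, but for $s$ near $-5/4$ this is as small as $b-\frac14<\frac38$, strictly below the modulation exponent that the high $\times$ high $\Rightarrow$ high block analysis in part (a) requires (the paper takes $b>\max(3/8,\cdot)$ there). Proposition~\ref{prop:bi2} does \emph{not} proceed by such an exponent shift; after reducing to $|\tau|\lesssim|\xi|^5$ (hence $|\tau-\xi^5|\sim|\xi|^5$), it runs a separate Cauchy--Schwarz argument against pointwise supremum bounds in $(\xi,\tau)$ or $(\xi_2,\tau_2)$, using the convolution lemmas \eqref{eq:integral1}--\eqref{eq:integral3} and changes of variable $\mu=\tau-\xi^5+H$ that exploit $|\partial_{\xi_1}H|$, together with a further case split on whether $|H|$ dominates the maximal modulation. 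An argument of that type, not a reuse of part (a) at a shifted exponent, is needed to cover the full range $-5/4<s\le 0$.
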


\begin{theorem}\label{thm:nonlinear2}  $\;$

(a) For $-1/4 \le s$, there exists $b = b(s) < 1/2$ such that for all $\alpha > 1/2$, we have
\begin{equation}\label{eq:nonlinear2}
\norm{\px(uvw)}_{X^{s,-b} } \lesssim \norm{u}_{X^{s,b} \cap D^{\alpha}}\norm{v}_{X^{s,b} \cap D^{\alpha}}\norm{w}_{X^{s,b} \cap D^{\alpha}}.\end{equation}
	
(b) For $-1/4 \le s \le 0$, there exists $b = b(s) < 1/2$ such that for all $\alpha > 1/2$, we have
	\begin{equation}\label{eq:nonlinear2.1}
	\norm{\px(uvw)}_{Y^{s,-b} } \lesssim \norm{u}_{X^{s,b} \cap D^{\alpha}}\norm{v}_{X^{s,b} \cap D^{\alpha}}\norm{w}_{X^{s,b} \cap D^{\alpha}}.
	\end{equation}
	The implicit constants in \eqref{eq:nonlinear2} and \eqref{eq:nonlinear2.1} depend only on $s$, $b$, ans $\alpha$.
\end{theorem}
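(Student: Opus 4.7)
The plan is to reduce \eqref{eq:nonlinear2} to a weighted convolution estimate on the resonance manifold and close it by a standard Tao-type $[k;Z]$ dyadic analysis. Plancherel and duality convert \eqref{eq:nonlinear2} into the quadrilinear form
\[
\left|\iint_{\Gamma_4}\frac{|\xi_0|\,\langle\xi_0\rangle^{s}}{\langle\tau_0-\xi_0^5\rangle^{b}}\,\widetilde{u}\,\widetilde{v}\,\widetilde{w}\,\overline{\widetilde{h}}\right|\lesssim \|h\|_{L^2}\prod_{j=1}^{3} \|u_j\|_{X^{s,b}\cap D^{\alpha}},
\]
where $\Gamma_4=\{\xi_0=\xi_1+\xi_2+\xi_3,\ \tau_0=\tau_1+\tau_2+\tau_3\}$ and $(u_1,u_2,u_3)=(u,v,w)$. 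Letting $f_j=\langle\xi_j\rangle^{s}\langle\tau_j-\xi_j^5\rangle^{b}\widetilde{u_j}$ and localizing to dyadic shells $|\xi_j|\sim N_j$ and modulations $\langle\tau_j-\xi_j^5\rangle\sim L_j$, the estimate reduces to summable block-wise bounds.

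The algebraic engine is the quintic resonance identity
\[
H := \xi_0^5-\xi_1^5-\xi_2^5-\xi_3^5 = \tfrac{5}{2}(\xi_0-\xi_1)(\xi_0-\xi_2)(\xi_0-\xi_3)(\xi_0^2+\xi_1^2+\xi_2^2+\xi_3^2),
\]
obtained from Newton's identities applied to the zero-sum $\xi_1+\xi_2+\xi_3-\xi_0=0$, which forces $L_{\max}\gtrsim |H|$ on $\Gamma_4$. With the frequencies ordered as $N_{(1)}\le N_{(2)}\le N_{(3)}\le N_{(4)}$, one has $|H|\gtrsim N_{(4)}^{4}N_{(1)}$ in the generic non-coherent regime. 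This modulation gain is the principal substitute for the classical embedding $X^{s,1/2+}\hookrightarrow L^{\infty}_{t}L^{2}_{x}$, which is unavailable because $b<1/2$.

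I would then split the analysis according to the frequency geometry: (i) four comparable high frequencies, (ii) two high frequencies of comparable size and two much smaller (either high-high-to-low or high-low-high-high), and (iii) all four $N_j\lesssim 1$. In cases (i)--(ii), two Cauchy--Schwarz steps after resolving the modulation constraint, combined with the bilinear $L^{2}$-convolution lemma adapted to the quintic symbol, produce a per-block bound essentially of the form $|H|^{-(1/2-b)}$ times explicit polynomial factors in the $N_j$; thanks to the factor $\xi_0^2+\xi_1^2+\xi_2^2+\xi_3^2$ inside $H$, the counting closes precisely at $s\ge -1/4$ after one chooses $b=1/2-\varepsilon(s)$ with $\varepsilon(s)>0$ small. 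The sharpness of the threshold $-1/4$ comes from the critical high-high-to-low sub-case, where the gain from $H$ exactly balances the loss coming from the prefactor $|\xi_0|\langle\xi_0\rangle^{s}\prod\langle\xi_j\rangle^{-s}$.

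The principal obstacle is case (iii), and more generally any configuration with $|\xi|\lesssim 1$ on either side: the resonance provides no polynomial gain and the prefactor $\langle\tau_0-\xi_0^5\rangle^{-b}$ is not $\tau_0$-integrable for $b<1/2$. This is exactly the role of the norm $D^{\alpha}$ with $\alpha>1/2$: I would pull out the low-frequency indicator and recover the missing $\tau$-integrability from $\|\langle\tau\rangle^{-\alpha}\|_{L^{2}_\tau}<\infty$, absorbing the cost into the $D^{\alpha}$ norm of the corresponding low-frequency input. For part (b), the restriction $s\le 0$ allows me to split the output region by whether $\langle\tau_0\rangle\gtrsim \langle\xi_0\rangle^{5}$. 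On the first piece $\langle\tau_0\rangle^{s/5}\lesssim \langle\xi_0\rangle^{s}$, reducing \eqref{eq:nonlinear2.1} to \eqref{eq:nonlinear2}. On the complement $\langle\tau_0-\xi_0^5\rangle\gtrsim \langle\xi_0\rangle^{5}$, which supplies an additional decay factor $\langle\xi_0\rangle^{-5b}$ that more than compensates for the missing $\langle\xi_0\rangle^{s}$ weight in the admissible range $-1/4\le s\le 0$ with $b$ close to $1/2$; the dyadic counting from part (a) then closes \eqref{eq:nonlinear2.1}.
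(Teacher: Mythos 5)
Your overall framework---duality reduction to a quadrilinear convolution estimate, Littlewood--Paley decomposition in both frequency and modulation, using the cubic resonance $G(\xi_1,\xi_2,\xi_3)=\frac{5}{2}(\xi_1+\xi_2)(\xi_2+\xi_3)(\xi_3+\xi_1)(\xi_1^2+\xi_2^2+\xi_3^2+\xi_0^2)$ to generate modulation gains, and invoking the $D^\alpha$ norm to recover $\tau$-integrability at low frequency---matches the paper. But there is a genuine gap in your treatment of case (i) (four comparable high frequencies, i.e.\ \emph{high $\times$ high $\times$ high $\Rightarrow$ high}). You assert a modulation gain $|H|\gtrsim N_{(4)}^4 N_{(1)}$ in the ``generic non-coherent regime,'' but when all four $|\xi_j|\sim N$ the factor $(\xi_0-\xi_1)(\xi_0-\xi_2)(\xi_0-\xi_3)=(\xi_2+\xi_3)(\xi_1+\xi_3)(\xi_1+\xi_2)$ can be $O(1)$ or vanish (e.g.\ $\xi_1=-\xi_2$), so $|H|$ provides no gain at all. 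The paper's trilinear $L^2$-block estimates that carry a large resonance factor (Lemma~\ref{lem:tri-L2}(b)) explicitly require $k_{thd}\le k_{max}-10$ and hence do not apply in this regime; the universal block bound (Lemma~\ref{lem:tri-L2}(a)) is far too lossy. The paper bypasses this by abandoning the modulation-gain strategy entirely in Case~I and instead proving the trilinear estimate \emph{directly} via the $L^6$ Strichartz estimate for $e^{t\partial_x^5}$ (Lemma~\ref{eq:strichartz}, $\|e^{t\partial_x^5}P_k\varphi\|_{L^6_{t,x}}\lesssim 2^{-k/2}\|P_k\varphi\|_{L^2}$), applied to each of the three convolution factors after dyadic modulation localization, which yields the needed $2^{-3k_4/2}$ derivative gain independently of $H$. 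Without Strichartz (or some comparable substitute), your proposal does not close in this case.

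A secondary issue: for part (b), your claim that the factor $\langle\xi_0\rangle^{-5b}$ obtained when $\langle\tau_0-\xi_0^5\rangle\gtrsim\langle\xi_0\rangle^5$ ``more than compensates for the missing $\langle\xi_0\rangle^s$ weight'' and that ``the dyadic counting from part (a) then closes'' is backwards: in that regime, part~(a)'s $X^{s,-b}$ weight would be $\langle\xi_0\rangle^{s-5b}\le\langle\xi_0\rangle^{-5b}$ for $s\le 0$, so the $Y^{s,-b}$ weight actually gives \emph{less} decay than the $X^{s,-b}$ weight, not more. Part (b) is not a corollary of part (a); the paper runs a separate and substantially different case analysis (Cases I--VI in the proof of Proposition~\ref{prop:tri2}) exploiting the identity $|\tau-\xi^5|\sim|\xi|^5$ together with the integral estimates \eqref{eq:integral1}--\eqref{eq:integral3}, and your sketch does not supply a replacement for that work.
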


\begin{remark}
Theorems \ref{thm:nonlinear1} and \ref{thm:nonlinear2} are almost sharp compared with \cite{CGL2010} and \cite{JH2009}, respectively, in the low regularity sense (in particular, negative regularity). We also refer to \cite{YL2012} for the weak ill-posedness result for the modified Kawahara equation in $H^s(\R)$, $s < -\frac14$.
\end{remark}

\begin{remark}
Both \eqref{eq:nonlinear1.1} and \eqref{eq:nonlinear2.1} can be obtain in some positive regularity regime similarly as Proposition 5.2 in \cite{CK2018-1}, but we, here, explore the nonlinear estimates for both $F(u) = (1-\partial_x^2)^{\frac12}\partial_x(u^2)$ and $F(u) = \partial_x(u^3)$ in $Y^{s,b}$ only in the negative regularity regime, since the intermediate norm $Y^{s,b}$ occurs in the Picard iteration mechanism only in the negative regularity regime (see the proof of Lemma \ref{duhamel} (b)).
\end{remark}

The proof of Theorems \ref{thm:nonlinear1} (a) and \ref{thm:nonlinear2} (a) are based on the harmonic analysis technique, in particular the Tao’s $[k; Z]$-multiplier norm method \cite{Tao2001}, which has become now standard to prove the multilinear estimates. Precisely, let $P_k$ be the Littlewood-Paley projection operator in terms of the frequency on a support $|\xi| \sim 2^k$, $k \in \Z$ (will be precisely defined in Section \ref{sec:pre}) and $f_k = P_k f$. Let further decompose $f_k$ into $\wt{f}_{k,j} = \eta_j(\tau - \xi^5)\wt{f}_k$ pieces, where $\eta_j(\zeta)$ is a smooth bump function supported in $|\zeta| \sim 2^j$. Then, the Littlewood-Paley decomposition (not only in terms of the frequency $|\xi|$, but also with respect to the modulation $|\tau-\xi^5|$) allows us to separate the left-hand side of \eqref{eq:nonlinear1}--\eqref{eq:nonlinear2.1} into each frequency part, for instance in \eqref{eq:nonlinear1}
\[\norm{(1-\px^2)^{\frac12}\px (uv)}_{X^{s,-b}} \lesssim \sum_{k,k_1,k_2 \in \Z}\max(1,2^{(1+s)k})2^k \norm{P_k(u_{k_1}v_{k_2})}_{X^{0,-b}}\]
and
\[\norm{P_k(u_{k_1}v_{k_2})}_{X^{0,-b}} \sim \sum_{j,j_1,j_2 \ge 0} 2^{-bj}\norm{\eta_j(\tau-\xi^5)\ft(P_k(u_{k_1,j_1}v_{k_2,j_2}))}_{L_{\tau,\xi}^2}.\]
The quadratic nonlinearity consists of \emph{high $\times$ low $\Rightarrow$ high}, \emph{high $\times$ high $\Rightarrow$ high} and \emph{high $\times$ high $\Rightarrow$ low} cases\footnote{The \emph{low $\times$ low $\Rightarrow$ low} interaction case can be dealt with by the trivial bound, hence we do not comment it on here.} (in terms of the relation among frequencies), and the cubic nonlinearity consists of more cases with various frequency relations, see the proof of Proposition \ref{prop:tri1}. Thus, the task is reduced to prove multilinear estimates of each piece in $L^2$. Such $L^2$-block estimates have already been provided by Chen, Li, Miao and Wu \cite{CLMW2009} for the bilinear case (Chen and Guo \cite{CG2011} corrected the \emph{high $\times$ high $\Rightarrow$ high} case), and by the second author \cite{Kwak2015} for the trilinear case (in \cite{Kwak2015}, the $L^2$-block estimates for periodic functions in the spatial variable are given, but the proof for non-periodic functions is analogous). Performing $L^2$ estimates and gathering all pieces, one reaches the right-hand side of \eqref{eq:nonlinear1}--\eqref{eq:nonlinear2.1}. 

A direct proof of trilinear estimates \eqref{eq:nonlinear2} is given, while bilinear estimates and $TT^{\ast}$ argument are used to prove the trilinear estimates in \cite{CGL2010}. Moreover, the Strichartz estimate (with derivative gains) for the linear operator group $\{e^{t\px^5}\}$ \cite{CT2005} is needed to deal with \emph{high $\times$ high $\times$ high $\Rightarrow$ high} interaction component, since the trilinear $L^2$-block estimates (Lemma \ref{lem:tri-L2}) are given for only \emph{high $\times$ low $\times$ low $\Rightarrow$ high} case. 

The proof of Theorems \ref{thm:nonlinear1} (b) and \ref{thm:nonlinear2} (b) are based on the proof of Lemma 5.10 (b) in \cite{Holmerkdv}, but more careful examination of frequency relations is needed (also for the proof of Theorems \ref{thm:nonlinear1} (a) and \ref{thm:nonlinear2} (a)).

\subsection{IBVP problems setting} The IBVP of \eqref{eq:5kdv}, here, is studied as an application of Theorems \ref{thm:nonlinear1} and \ref{thm:nonlinear2}. The precise IBVP of \eqref{eq:5kdv} on the right/left half-lines is set as follows:
\begin{equation}\label{kawahararight}
\begin{cases}
\pt u - \px^5 u + F(u)=0, & (t,x)\in (0,T)  \times(0,\infty),\\
u(0,x)=u_0(x),                                   & x\in(0,\infty),\\
u(t,0)=f(t),\ u_x(t,0)=g(t)& t\in(0,T)
\end{cases}
\end{equation}
and
\begin{equation}\label{kawaharaleft}
\begin{cases}
	\pt u - \px^5 u + F(u)=0, & (t,x)\in  (0,T) \times (-\infty,0),\\
	u(0,x)=u_0(x),                                   & x\in(-\infty,0),\\
	u(t,0)=f(t),\ 	u_x(t,0)=g(t),\ u_{xx}(t,0)=h(t)& t\in(0,T).
\end{cases}
\end{equation}
The number of boundary conditions in \eqref{kawahararight} and \eqref{kawaharaleft} is inspired from the uniqueness issue arising in a direct calculation of $L^2$ integral identities for linear equations:
\begin{equation}\label{unico1}
\begin{split}
	\int_0^{\infty}u^2(T,x)dx=&\int_0^{\infty}u^2(0,x)dx -  \int_0^T(\partial_x^2u)^2(t,0) dt + 2\int_0^T\partial_x^3u(t,0)\partial_xu(t,0)dt\\
&	-2\int_0^T\partial_x^4u(t,0)u(t,0)dt
	\end{split}
\end{equation}
and
\begin{equation}\label{unico2}
\begin{split}
	\int_{-\infty}^{0}u^2(T,x)dx=&\int_{-\infty}^{0}u^2(0,x)dx + \int_0^T(\partial_x^2u)^2(t,0) dt - 2\int_0^T\partial_x^3u(t,0)\partial_xu(t,0)dt\\
&	+2\int_0^T\partial_x^4u(t,0)u(t,0)dt.
	\end{split}
\end{equation}
We refer to \cite{Holmerkdv, CK2018-1} for more expositions.

The \emph{local smoothing effect} \cite{KPV1991} 
\[\|\partial_x^je^{t\partial_x^5}\phi\|_{L_x^{\infty}\dot{H}^{\frac{s+2-j}{5}}(\mathbb{R}_t)}\leq c\|\phi\|_{\dot{H}^{s}(\mathbb{R})},\ \text{for}\ j=0,1,2,\]
stipulates the appropriate spaces for the initial and boundary data, thus the initial and boundary data for \eqref{kawahararight} and \eqref{kawaharaleft} satisfy
\begin{equation}\label{regularidade}
u_0\in H^s(\mathbb{R}^+),\; f(t)\in H^{\frac{s+2}{5}}(\mathbb{R}^+)\; \text{and}\;  g(t)\in H^{\frac{s+1}{5}}(\mathbb{R}^+)
\end{equation}
and
\begin{equation}\label{regularidadeleft}
u_0\in H^s(\mathbb{R}^-), \; f(t)\in H^{\frac{s+2}{5}}(\mathbb{R}^+),\; g(t)\in H^{\frac{s+1}{5}}(\mathbb{R}^+)\; \text{and}\;  h(t)\in H^{\frac{s}{5}}(\mathbb{R}^+),
\end{equation}
respectively. On the other hand, the compatibility conditions in high regularities, for instance, $\frac12 < s < \frac32$ or $\frac32 < s < \frac52$..., are required to be considered as follows, for instance,
\[u_0(0) = f(0), \quad \mbox{if } \; \frac12 < s < \frac32, \quad u_0(0) = f(0), \partial_x u_0(0) = g(0) \quad \mbox{if } \; \frac32 < s\]
for \eqref{kawahararight}. However, our local well-posedness results for both \eqref{kawahararight} and \eqref{kawaharaleft} are valid only in the regularity region $s<\frac12$ (see theorems \ref{theorem1}--\ref{theorem22} below), and hence the compatibility conditions for high regularities are negligible. See \cite{CK2018-1} for the comparison.

Theorems \ref{thm:nonlinear1} and \ref{thm:nonlinear2} in addition to the standard argument used in \cite{CK2018-1} immediately imply the local well-posedness of the IBVP for \eqref{eq:5kdv} on the right half-line. We state theorems separately for the sake of reader's convenience.
\begin{theorem}\label{theorem1}
Let $s \in (-\frac54,\frac12)$. For given initial-boundary data $(u_0,f,g)$ satisfying \eqref{regularidade}, there exist a positive time $T>0$ depending on $\|u_0\|_{H^s(\mathbb{R}^+)}$, $\|f\|_{H^{\frac{s+2}{5}}(\mathbb{R}^+)}$ and $\|g\|_{H^{\frac{s+1}{5}}(\mathbb{R}^+)}$, and a solution $u(t,x) \in C((0 , T);H^s(\R^+))$ to \eqref{kawahararight}-\eqref{regularidade} with the nonlinearity $F(u) = (1-\px^2)^{\frac12}\px (u^2)$ satisfying
\[u \in C\bigl(\mathbb{R}^+;\; H^{\frac{s+2}{5}}(0,T)\bigr) \cap X^{s,b}((0,T) \times \R^+) \cap D^{\alpha}((0,T) \times \R^+) \; \mbox{ and } \; \partial_xu\in C\bigl(\R^+;\; H^{\frac{s+1}{5}}(0,T)\bigr)\]
for some $b(s) < \frac12$ and $\alpha(s) > \frac12$. Moreover, the map $(u_0,f,g)\longmapsto u$ is analytic from $H^s(\mathbb{R}^+)\times H^{\frac{s+2}{5}}(\mathbb{R}^+)\times H^{\frac{s+1}{5}}(\mathbb{R}^+)$ to  $C\big((0,T);\,H^s(\mathbb{R}^+)\big)$.
\end{theorem}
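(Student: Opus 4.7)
The plan is to follow the Picard iteration scheme used for the Kawahara equation in \cite{CK2018-1}, replacing the nonlinear estimate there by Theorem~\ref{thm:nonlinear1} and arranging the resolution so that the low-frequency norm $D^\alpha$ and the time-trace norm $Y^{s,-b}$ appear in the correct places. First I would extend $u_0$ to $\widetilde u_0 \in H^s(\R)$ with comparable norm, and pose on the whole real line the ansatz
\[u(t,x) = \eta(t) e^{t\px^5}\widetilde u_0(x) - \eta_T(t)\int_0^t e^{(t-t')\px^5}F(u)(t',x)\,dt' + \eta(t)\Li_1\lambda_1(t,x) + \eta(t)\Li_2\lambda_2(t,x),\]
where $\eta$ is a smooth time cutoff and $\eta_T(t)=\eta(t/T)$, $\Li_1,\Li_2$ are the two Duhamel boundary forcing operators constructed in \cite{CK2018-1} (two forcings matching the two boundary conditions indicated by the $L^2$ identity~\eqref{unico1}), and $\lambda_1,\lambda_2$ are auxiliary boundary densities to be determined. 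The candidate solution to \eqref{kawahararight} will be the restriction of this $u$ to $(0,T)\times\R^+$.

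Second, imposing $u(\cdot,0)=f$ and $\px u(\cdot,0)=g$ on $[0,T]$ reduces, as in \cite{CK2018-1}, to a $2\times 2$ linear system of Riemann--Liouville--type convolutions for $(\lambda_1,\lambda_2)$ whose coefficient matrix is invertible on $H^{(s+2)/5}(\R^+)\times H^{(s+1)/5}(\R^+)$, yielding
\[\norm{\lambda_1}_{H^{(s+2)/5}(\R^+)}+\norm{\lambda_2}_{H^{(s+1)/5}(\R^+)}\lesssim \norm{f}_{H^{(s+2)/5}}+\norm{g}_{H^{(s+1)/5}}+\norm{\widetilde u_0}_{H^s(\R)}+\norm{F(u)}_{X^{s,-b}\cap Y^{s,-b}}.\]
Third, I would close a contraction in $Z:=X^{s,b}(\R^2)\cap D^\alpha(\R^2)$ intersected with the relevant time-trace spaces for $u|_{x=0}$ and $\px u|_{x=0}$. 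The group and low-frequency estimates control $\eta(t) e^{t\px^5}\widetilde u_0$ in $Z$ by $\norm{\widetilde u_0}_{H^s}$; the boundary forcing estimates of \cite{CK2018-1} control $\eta(t)\Li_j\lambda_j$ in $Z$ by the quantities on the right-hand side above; and Lemma~\ref{duhamel}, combined with Theorem~\ref{thm:nonlinear1}(a)--(b) (the $Y^{s,-b}$ estimate being required precisely in the range $s\le 0$ where the time trace of the Duhamel integral cannot be read off directly from $X^{s,-b}$), bounds the nonlinear Duhamel term in $Z$ by $\norm{u}_Z^2$ with a smallness factor $T^\theta$ for some $\theta=\theta(s,b)>0$.

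The main obstacle will be extracting that time gain $T^\theta$. Since the boundary forcing operators force the choice $b<\frac12$ (see the discussion after \eqref{Xsb}), the familiar cutoff inequality $\norm{\eta_T f}_{X^{s,b'}}\lesssim T^{\epsilon}\norm{f}_{X^{s,b}}$ is available only for $b'<b$, so one must apply Theorem~\ref{thm:nonlinear1} with its left-hand exponent replaced by a slightly larger value; this is legitimate because Theorem~\ref{thm:nonlinear1} holds on an open interval of $b$'s below $\frac12$. The absence of scaling symmetry, emphasized in the introduction, precludes converting to a small-data rescaling, so the contraction is genuinely a small-$T$ argument and $T$ must absorb the implicit constants coming from the data norms. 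Once $T$ is chosen small enough, Banach's fixed point theorem produces a unique $u\in Z$ on $(0,T)\times\R^+$; the stated continuity of $u$ in $H^s(\R^+)$, of $u|_{x=0}$ in $H^{(s+2)/5}$ and of $\px u|_{x=0}$ in $H^{(s+1)/5}$, as well as the analyticity of the data-to-solution map, then follow from the standard convergence of the Picard iterates as a power series in $(u_0,f,g)$.
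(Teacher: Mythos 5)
Your proposal follows essentially the same route as the paper: construct the solution map from the Duhamel boundary forcing operators $\mathcal{L}_+^{\lambda_1},\mathcal{L}_+^{\lambda_2}$, solve a $2\times 2$ system for the forcing densities $\gamma_1,\gamma_2$, and close a Picard iteration in $Z_1^{s,b,\alpha}$ using the energy estimates of Section~\ref{sec:energy} together with Theorem~\ref{thm:nonlinear1}. The only inaccuracy is in your "main obstacle" paragraph: you do not need to re-apply Theorem~\ref{thm:nonlinear1} at a shifted modulation exponent, because the small factor $T^{\theta}$ (with $\theta=1-\alpha-b>0$, see Remark~\ref{rem:choice of theta}) is already built into Lemma~\ref{duhamel} -- the Duhamel estimates there carry $\psi_T$ inside and extract $T^\theta$ directly from the comparison of scales $T^{-1}$, $|\tau-\xi^5|$, $|\tau'-\xi^5|$ -- so the nonlinear estimate is used with a single exponent $b$ on both sides and the gain in $T$ comes entirely from the linear Duhamel step.
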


\begin{theorem}\label{theorem2}
Let $s \in [-\frac14,\frac12)$. For given initial-boundary data $(u_0,f,g)$ satisfying \eqref{regularidade}, there exist a positive time $T>0$ depending on $\|u_0\|_{H^s(\mathbb{R}^+)}$, $\|f\|_{H^{\frac{s+2}{5}}(\mathbb{R}^+)}$ and $\|g\|_{H^{\frac{s+1}{5}}(\mathbb{R}^+)}$, and a solution $u(t,x) \in C((0 , T);H^s(\R^+))$ to \eqref{kawahararight}-\eqref{regularidade} with the nonlinearity $F(u) = \px (u^3)$ satisfying
\[u \in C\bigl(\mathbb{R}^+;\; H^{\frac{s+2}{5}}(0,T)\bigr) \cap X^{s,b}((0,T) \times \R^+) \cap D^{\alpha}((0,T) \times \R^+) \; \mbox{ and } \; \partial_xu\in C\bigl(\R^+;\; H^{\frac{s+1}{5}}(0,T)\bigr)\]
for some $b(s) < \frac12$ and $\alpha(s) > \frac12$. Moreover, the map $(u_0,f,g)\longmapsto u$ is analytic from $H^s(\mathbb{R}^+)\times H^{\frac{s+2}{5}}(\mathbb{R}^+)\times H^{\frac{s+1}{5}}(\mathbb{R}^+)$ to  $C\big((0,T);\,H^s(\mathbb{R}^+)\big)$.
\end{theorem}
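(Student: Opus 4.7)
The plan is to follow the Holmer--Bona--Sun--Zhang framework for half-line problems that the authors deployed for the Kawahara equation in \cite{CK2018-1}, replacing the quadratic nonlinear estimate there by the trilinear estimate \eqref{eq:nonlinear2}--\eqref{eq:nonlinear2.1} just established. First I would extend the initial datum $u_0\in H^s(\R^+)$ to some $\tilde u_0\in H^s(\R)$ with $\|\tilde u_0\|_{H^s(\R)}\lesssim\|u_0\|_{H^s(\R^+)}$ (possible because $s<1/2$, so no compatibility arises), and extend the boundary data $f,g$ from $H^{(s+2)/5}(\R^+)$ and $H^{(s+1)/5}(\R^+)$ respectively to functions on $\R$ supported in $t\ge 0$ (or in a slightly larger interval), again with control of norms. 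A cutoff $\psi(t)$ localizing to $|t|\le 1$ and its rescaled version $\psi_T(t)=\psi(t/T)$ will be inserted everywhere in the iteration map to gain a small power of $T$, since the scaling symmetry is unavailable here.

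Next I would write the candidate solution in the integral form
\[
u(t)=\psi(t)e^{t\partial_x^5}\tilde u_0+\psi(t)\int_0^t e^{(t-t')\partial_x^5}\psi_T(t')\partial_x(u^3)(t')\,dt'+\sum_{j=1}^2 \Li^{-2+j}\lambda_j(t),
\]
where $\Li^{-2+j}$ are the Duhamel boundary forcing operators introduced in \cite{CK2018-1} associated with the fifth-order linear half-line problem, and $\lambda_1,\lambda_2$ are unknown forcing functions selected so that the two Dirichlet/Neumann traces of $u$ at $x=0$ coincide with $f,g$. Solving this trace matching system reduces, exactly as in \cite{CK2018-1}, to a bounded linear system on the appropriate $H^{(s+2)/5}(\R^+)\times H^{(s+1)/5}(\R^+)$ space, expressing $\lambda_1,\lambda_2$ continuously in terms of $f,g$ and of the two traces of the linear + Duhamel parts (which are estimated by the linear and nonlinear bounds). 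The iteration thus takes place in the Banach space
\[
Z^{s}_T=\bigl\{u:\|u\|_{X^{s,b}}+\|u\|_{D^\alpha}+\sup_{x}\|u(\cdot,x)\|_{H^{(s+2)/5}_t}+\sup_{x}\|\partial_x u(\cdot,x)\|_{H^{(s+1)/5}_t}<\infty\bigr\}
\]
for a suitable pair $b(s)<1/2<\alpha(s)<1-b(s)$ furnished by Theorem \ref{thm:nonlinear2}.

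The contraction estimate then decomposes into three ingredients, each already on record: (i) linear group bounds and local smoothing trace estimates for $e^{t\partial_x^5}\tilde u_0$, (ii) Duhamel and boundary forcing bounds for $\Li^{-2+j}\lambda_j$, both imported verbatim from \cite{CK2018-1}, and (iii) the nonlinear trilinear bound
\[
\|\psi_T\,\partial_x(u^3)\|_{X^{s,-b}\cap Y^{s,-b}}\lesssim T^{\theta}\|u\|_{Z^s_T}^3,\qquad \theta>0,
\]
obtained by combining Theorem \ref{thm:nonlinear2} with the standard cutoff gain $\|\psi_T F\|_{X^{s,-b}}\lesssim T^{\theta}\|F\|_{X^{s,-b'}}$ for $b<b'<1/2$. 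For $s\in[-1/4,0]$ the $Y^{s,-b}$ part of \eqref{eq:nonlinear2.1} is needed, because the Duhamel-group trace estimate (Lemma \ref{duhamel}(b)) for negative $s$ requires a $Y^{s,-b}$ source, while for $s\in[0,1/2)$ only $X^{s,-b}$ enters. Choosing $T$ small, one closes a contraction on a ball in $Z^s_T$; standard arguments yield existence, uniqueness on the restricted class, continuous (in fact analytic, by the power-series construction) dependence on $(u_0,f,g)$, and the stated continuity in time.

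The main obstacle, and the reason the new estimates are essential, is the negative-regularity regime $-1/4\le s<0$: without the scaling invariance enjoyed by the Kawahara equation in \cite{CK2018-1}, the customary $T^{\theta}$ gain through rescaling is unavailable, and one must instead squeeze the $T^{\theta}$ out of the cutoff $\psi_T$ while remaining in $X^{s,b}\cap Y^{s,-b}$ with $b<1/2$; this is precisely what forces the strict inequalities $b<1/2<\alpha<1-b$ and the simultaneous use of $D^\alpha$ and $Y^{s,-b}$. Once the exponents $b(s),\alpha(s)$ are tuned so that all three ingredients (i)--(iii) admit a common positive power of $T$, the contraction closes for $T=T(\|u_0\|,\|f\|,\|g\|)>0$, proving the theorem.
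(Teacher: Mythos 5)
Your proposal follows essentially the same Colliander--Kenig--Holmer framework that the paper uses in Section~\ref{sec:thm proof}: extend the data, set up the fixed-point map $\Lambda u = \psi_T \mathcal{L}_+^{\lambda_1}\gamma_1 + \psi_T\mathcal{L}_+^{\lambda_2}\gamma_2 + \psi_T\bigl(e^{t\px^5}\underline u_0 - \mathcal{D}(\px(u^3))\bigr)$, match traces via the invertible matrix $A(\lambda_1,\lambda_2)$, iterate in $Z_1^{s,b,\alpha}$, and invoke Theorem~\ref{thm:nonlinear2} (with the $Y^{s,-b}$ component for $s\le 0$) together with Lemmas~\ref{grupo}, \ref{duhamel}, \ref{edbf}. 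One correction: the cutoff gain you quote, $\norm{\psi_T F}_{X^{s,-b}}\lesssim T^{\theta}\norm{F}_{X^{s,-b'}}$ for $b<b'<\tfrac12$, has the roles of $b,b'$ reversed (cutoff cannot gain modulation regularity); the paper does not truncate the nonlinearity at all but instead obtains $T^{\theta}$ from the time-localized Duhamel and group estimates in Lemma~\ref{duhamel} (and $T^{\frac12-\alpha}$ from Lemma~\ref{grupo}(c)), which is the cleaner route and avoids re-proving the trilinear bound at two different $b$-exponents.
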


Moreover, we have the local well-posedness of the IBVP for \eqref{eq:5kdv} on the left half-line.
\begin{theorem}\label{theorem12}
Let $s \in (-\frac54,\frac12)$. For given initial-boundary data $(u_0,f,g,h)$ satisfying \eqref{regularidadeleft}, there exist a positive time $T$ depending on $\|u_0\|_{H^s(\mathbb{R}^-)}$, $\|f\|_{H^{\frac{s+2}{5}}(\mathbb{R}^+)}$, $\|g\|_{H^{\frac{s+1}{5}}(\mathbb{R}^+)}$ and $\|h\|_{H^{\frac{s}{5}}(\mathbb{R}^+)}$, and a solution $u(t,x) \in C((0 , T);H^s(\R^-))$ to \eqref{kawaharaleft}-\eqref{regularidadeleft} with the nonlinearity $F(u) = (1-\px^2)^{\frac12}\px (u^2)$ satisfying
\[\begin{aligned}
&u \in C\bigl(\mathbb{R}^-;\; H^{\frac{s+2}{5}}(0,T)\bigr) \cap X^{s,b}((0,T) \times \R^-) \cap D^{\alpha}((0,T) \times \R^-), \\
&\partial_xu\in C\bigl(\R^-;\; H^{\frac{s+1}{5}}(0,T)\bigr) \; \mbox{ and } \; \partial_x^2u\in C\bigl(\R^-;\; H^{\frac{s}{5}}(0,T)\bigr)
\end{aligned}\]
for some $b(s) < \frac12$ and $\alpha(s) > \frac12$. Moreover, the map $(u_0,f,g,h)\longmapsto u$ is analytic from $H^s(\mathbb{R}^-)\times H^{\frac{s+2}{5}}(\mathbb{R}^+)\times H^{\frac{s+1}{5}}(\mathbb{R}^+) \times H^{\frac{s}{5}}(\mathbb{R}^+)$ to  $C\big((0,T);\,H^s(\mathbb{R}^-)\big)$.
\end{theorem}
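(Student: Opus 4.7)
The plan is to follow the Duhamel boundary forcing framework developed in \cite{CK2018-1} and reduce \eqref{kawaharaleft} to a fixed-point problem on the whole line whose solution, restricted to $\R^-$, solves the IBVP; the nonlinear estimates \eqref{eq:nonlinear1}--\eqref{eq:nonlinear1.1} from Theorem \ref{thm:nonlinear1} supply the multilinear bound needed for the contraction step. The choice of parameters will be $b=b(s)<\tfrac12$ and $\alpha=\alpha(s)>\tfrac12$ given by Theorem \ref{thm:nonlinear1}, together with a time cut-off $\psi(t)$ supported in $|t|\le 1$ and $\psi_T(t)=\psi(t/T)$.

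First, I extend the initial datum $u_0\in H^s(\R^-)$ to $\widetilde u_0\in H^s(\R)$ with $\|\widetilde u_0\|_{H^s(\R)}\lesssim \|u_0\|_{H^s(\R^-)}$, and write a candidate solution in the form
\[
u(t,x)=\psi(t)\,e^{t\px^5}\widetilde u_0(x)
+\psi(t)\int_0^t e^{(t-t')\px^5}F(u)(t',x)\,dt'
+\sum_{j=1}^{3}\psi(t)\,\Li^{-}_{j}\lambda_j(t,x),
\]
where $\Li^{-}_{1},\Li^{-}_{2},\Li^{-}_{3}$ are the left half-line Duhamel boundary forcing operators of \cite{CK2018-1} associated to the three boundary conditions at $x=0$, and $\lambda_1,\lambda_2,\lambda_3$ are unknown auxiliary forcing functions to be determined so that the three traces at $x=0$ match $f,g,h$ (after subtracting the known contributions from the free evolution and the nonlinear Duhamel term). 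By the jump/continuity properties of $\Li^{-}_{j}$ and their spatial derivatives at $x=0$ proved in \cite{CK2018-1}, the system
\[
\mathcal{A}\begin{pmatrix}\lambda_1\\ \lambda_2\\ \lambda_3\end{pmatrix}
=
\begin{pmatrix} f-[e^{t\px^5}\widetilde u_0+\text{Duhamel}](t,0)\\
g-\px[\cdots](t,0)\\
h-\px^2[\cdots](t,0)\end{pmatrix}
\]
is a lower-triangular (or otherwise explicitly invertible) $3\times3$ operator equation in the trace spaces, which I will invert exactly as in the Kawahara case of \cite{CK2018-1}.

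Second, with the $\lambda_j$'s expressed in terms of $u_0,f,g,h$ and $F(u)$, the problem becomes a single fixed-point equation $u=\Phi(u)$ in the ball
\[
B_R=\Bigl\{u\in X^{s,b}(\R^2)\cap D^{\alpha}(\R^2):\;\|u\|_{X^{s,b}\cap D^{\alpha}}\le R\Bigr\},
\]
supplemented by the trace norms needed to close the boundary estimates. The linear map part is controlled by: (i) the group and Duhamel estimates in $X^{s,b}\cap D^{\alpha}$ and in the time trace spaces $C(\R^\pm;H^{(s+2-j)/5}(0,T))$ ($j=0,1,2$); (ii) the boundedness of $\Li^{-}_{j}$ between $H^{(s+2-j)/5}(\R^+)$ and $X^{s,b}\cap D^\alpha$ together with the trace spaces, proved in \cite{CK2018-1}; and (iii) the gain of a small positive power of $T$ coming from the cut-off $\psi_T$ in the Duhamel term, which is where the choice $b<\tfrac12$ is crucial (otherwise no $T^\theta$ gain is available). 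For the nonlinear part, $F(u)=(1-\px^2)^{\frac12}\px(u^2)$ is controlled by applying Theorem \ref{thm:nonlinear1}(a) in the $X^{s,-b}$ component and Theorem \ref{thm:nonlinear1}(b) in the $Y^{s,-b}$ component (the latter used, as indicated below Lemma \ref{duhamel}, to close the Duhamel time-trace estimates in the negative regularity range $-\tfrac54<s\le 0$; for $0<s<\tfrac12$ only Theorem \ref{thm:nonlinear1}(a) is needed).

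Combining these ingredients, one obtains a bound of the schematic form
\[
\|\Phi(u)\|_{X^{s,b}\cap D^{\alpha}}\lesssim C_0+T^{\theta}\|u\|_{X^{s,b}\cap D^{\alpha}}^{2},
\qquad
\|\Phi(u)-\Phi(v)\|\lesssim T^{\theta}\bigl(\|u\|+\|v\|\bigr)\|u-v\|,
\]
where $C_0$ depends on the data norms in \eqref{regularidadeleft} and $\theta=\theta(s,b)>0$. Choosing $R\sim C_0$ and $T$ small yields a contraction on $B_R$, hence a unique fixed point. Continuity in time of $u$ and of the traces $u, \px u, \px^2 u$ at $x=0$ follows from the standard density/approximation argument, and analyticity of the data-to-solution map is automatic from the Picard iteration. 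The main obstacle is the careful inversion of the $3\times3$ boundary trace system and the verification that each $\Li^{-}_{j}$ meets the (anisotropic) $X^{s,b}\cap D^{\alpha}$ plus trace-space bounds with a $T^{\theta}$ gain uniformly down to $s>-\tfrac54$; once this is matched with the nonlinear estimate of Theorem \ref{thm:nonlinear1}, which is sharp up to $s>-\tfrac54$, the theorem follows.
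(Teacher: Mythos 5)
Your proposal follows the same route as the paper: extend the data, write the candidate solution as free evolution plus Duhamel plus a combination of left-half-line boundary forcing operators $\mathcal L_-^{\lambda_j}$, invert the boundary trace system to determine the auxiliary forcings, and run a contraction in a time-localized solution space using Theorem~\ref{thm:nonlinear1} together with the energy estimates of Section~\ref{sec:energy}. This is exactly the paper's scheme (the paper only writes Theorem~\ref{theorem1} in detail and indicates the $3\times 3$ analogue for Theorem~\ref{theorem12}).

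Two small corrections. First, the matrix $\mathcal A$ in the boundary trace system is \emph{not} lower-triangular: after normalizing the unknowns by suitable Riemann--Liouville fractional integrals, it is a constant $3\times 3$ matrix whose entries come from Lemma~\ref{trace1}, and invertibility is ensured by choosing $\lambda_1,\lambda_2,\lambda_3$ to avoid an explicit discrete set (the analogue of the condition $\lambda_1-\lambda_2\neq 5n$ used in Section~\ref{sec:thm proof} for the $2\times 2$ case), not by triangularity. Second, the contraction must be performed in the full solution space $Z_2^{s,b,\alpha}$ of \eqref{eq:solution space}, which also carries the sup-in-$t$ $H^s$ norm and the three time-trace norms, rather than in $X^{s,b}\cap D^\alpha$ alone; the time cut-off should be $\psi_T$ in all terms of the ansatz (not $\psi$), because the decisive factor that closes the iteration is the combined power $T^{3/2-2\alpha-b}>0$ appearing as in \eqref{eq:T}, not a uniform $T^\theta$ gain from each individual estimate (the group estimates actually contribute a negative power $T^{1/2-\alpha}$). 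With these adjustments the argument is the paper's.
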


\begin{theorem}\label{theorem22}
Let $s \in [-\frac14,\frac12)$. For given initial-boundary data $(u_0,f,g,h)$ satisfying \eqref{regularidadeleft}, there exist a positive time $T$ depending on $\|u_0\|_{H^s(\mathbb{R}^-)}$, $\|f\|_{H^{\frac{s+2}{5}}(\mathbb{R}^+)}$, $\|g\|_{H^{\frac{s+1}{5}}(\mathbb{R}^+)}$ and $\|h\|_{H^{\frac{s}{5}}(\mathbb{R}^+)}$, and a solution $u(t,x) \in C((0 , T);H^s(\R^-))$ to \eqref{kawaharaleft}-\eqref{regularidadeleft} with the nonlinearity $F(u) = \px (u^3)$ satisfying
\[\begin{aligned}
&u \in C\bigl(\mathbb{R}^-;\; H^{\frac{s+2}{5}}(0,T)\bigr) \cap X^{s,b}((0,T) \times \R^-) \cap D^{\alpha}((0,T) \times \R^-), \\
&\partial_xu\in C\bigl(\R^-;\; H^{\frac{s+1}{5}}(0,T)\bigr) \; \mbox{ and } \; \partial_x^2u\in C\bigl(\R^-;\; H^{\frac{s}{5}}(0,T)\bigr)
\end{aligned}\]
for some $b(s) < \frac12$ and $\alpha(s) > \frac12$. Moreover, the map $(u_0,f,g,h)\longmapsto u$ is analytic from $H^s(\mathbb{R}^-)\times H^{\frac{s+2}{5}}(\mathbb{R}^+)\times H^{\frac{s+1}{5}}(\mathbb{R}^+) \times H^{\frac{s}{5}}(\mathbb{R}^+)$ to  $C\big((0,T);\,H^s(\mathbb{R}^-)\big)$.
\end{theorem}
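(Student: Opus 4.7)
The plan is to mirror the scheme from \cite{CK2018-1}, replacing the bilinear Kawahara nonlinear estimates by the trilinear estimates of Theorem \ref{thm:nonlinear2}, and to handle the three boundary conditions on the left half-line via the three Duhamel boundary forcing operators (one for each of $f, g, h$) associated with the fifth-order linear operator. First I would extend the initial datum $u_0 \in H^s(\mathbb{R}^-)$ to a function $\tilde u_0 \in H^s(\mathbb{R})$ with comparable norm, and then seek $u$ on $\mathbb{R} \times (0,T)$ in the form
\[
u(t,x) = \psi(t)\Bigl[e^{t\partial_x^5}\tilde u_0 - \int_0^t e^{(t-t')\partial_x^5}\partial_x(u^3)(t')\,dt' + \sum_{j=1}^{3} \Li_j \lambda_j(t)\Bigr],
\]
where $\psi$ is the standard smooth time cutoff, and $\Li_1,\Li_2,\Li_3$ are the Duhamel boundary forcing operators constructed in \cite{CK2018-1} that solve the forced linear problem with boundary traces at $x=0$. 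The coefficients $\lambda_1,\lambda_2,\lambda_3$ are to be determined so that the full trace triple $(u, \partial_x u, \partial_x^2 u)|_{x=0^-}$ matches $(f,g,h)$ after subtracting the contributions of $e^{t\partial_x^5}\tilde u_0$ and the Duhamel term of the nonlinearity; this produces a $3\times 3$ linear system on the boundary, which is invertible by the same symbol computation as in \cite{CK2018-1}, yielding $\lambda_j$ in the trace Sobolev spaces $H^{(s+2-j+1)/5}(\mathbb{R}^+)$ for $j=1,2,3$ (i.e.\ matching the regularities in \eqref{regularidadeleft}).

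Next I would set up the Banach fixed point argument in the solution space
\[
Z_T^{s,b,\alpha} = \bigl\{ u : \norm{u}_{X^{s,b}\cap D^\alpha} + \norm{u}_{C(\mathbb{R}^-;H^{(s+2)/5}(0,T))} + \norm{\partial_x u}_{C(\mathbb{R}^-;H^{(s+1)/5}(0,T))} + \norm{\partial_x^2 u}_{C(\mathbb{R}^-;H^{s/5}(0,T))} < \infty \bigr\}
\]
for appropriate $b(s) < 1/2 < \alpha(s) < 1-b(s)$. The linear estimates needed (group, Duhamel, time-trace and spatial-trace bounds for $e^{t\partial_x^5}$ and for each $\Li_j$) are precisely those already established in \cite{CK2018-1} and recorded in the forthcoming Lemma \ref{duhamel}; in particular the low-frequency auxiliary space $D^\alpha$ absorbs the loss coming from $b<1/2$. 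The nonlinear contribution $\psi(t)\int_0^t e^{(t-t')\partial_x^5}\partial_x(u^3)(t')\,dt'$ is controlled in $X^{s,b}\cap D^\alpha$ by Theorem \ref{thm:nonlinear2}(a), while its three spatial traces at $x=0$ are controlled, via the time-trace lemma for the Duhamel term (which for $s<0$ requires passing through the intermediate space $Y^{s,-b}$), by Theorem \ref{thm:nonlinear2}(b); the constraint $s\in [-1/4, 1/2)$ is exactly the range where both parts of Theorem \ref{thm:nonlinear2} are available, and the restriction $s<1/2$ ensures that no compatibility conditions are needed, as explained after \eqref{regularidadeleft}.

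With these ingredients the contraction is standard: on a ball of radius $R$ in $Z_T^{s,b,\alpha}$, the cubic nonlinear estimate gives a factor $T^{\theta}R^3$ for some $\theta = \theta(s,b) > 0$ coming from the time localization (a $\psi_T$-gain of $T^{\frac12 - b - \vep}$ in $X^{s,b}$ and a corresponding gain in the trace norms), so choosing $R$ proportional to the data norm and $T$ small yields a unique fixed point $u$, which by construction solves \eqref{kawaharaleft} on $(0,T)\times \mathbb{R}^-$ and satisfies the boundary conditions \eqref{regularidadeleft}; the analytic dependence on $(u_0,f,g,h)$ is automatic from the fixed-point formulation. The main obstacle I expect is not the contraction itself but the careful bookkeeping of the left half-line Duhamel boundary operators $\Li_1,\Li_2,\Li_3$ and the invertibility of the associated $3\times 3$ boundary symbol matrix: one must verify that the same construction used on the right half-line in \cite{CK2018-1}, adapted to the left half-line with three boundary conditions motivated by the trace identity \eqref{unico2}, still yields a non-degenerate system, and that each $\Li_j$ maps the corresponding boundary Sobolev class continuously into $X^{s,b}\cap D^\alpha$ and into the three trace spaces $H^{(s+2)/5}$, $H^{(s+1)/5}$, $H^{s/5}$ of $(u,\partial_x u, \partial_x^2 u)|_{x=0}$; once this linear package is in place, Theorem \ref{thm:nonlinear2} closes the argument.
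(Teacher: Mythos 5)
Your proposal matches the paper's approach (Section \ref{sec:thm proof}): extend $u_0$, add a linear combination $\psi_T(t)\sum_{j}\mathcal{L}_-^{\lambda_j}\gamma_j$ of left-half-line boundary forcing operators solving a $3\times 3$ system, contract in $Z_2^{s,b,\alpha}$ using the trilinear estimates of Theorem \ref{thm:nonlinear2} together with the $\psi_T$-localized energy and time-trace estimates of Section \ref{sec:energy}, which is exactly what the authors do (they write the details only for Theorem \ref{theorem1} and state the others follow analogously). One caveat worth correcting: after applying the Riemann-Liouville lifts $\mathcal I_{1/5}g$, $\mathcal I_{2/5}h$ to homogenize the boundary data, all three forcing coefficients $\gamma_j$ live in the single space $H_0^{(s+2)/5}(\mathbb R^+)$ (as required by Lemma \ref{edbf}), not in three different spaces $H^{(s+3-j)/5}$ as you wrote; also the display should carry $\psi_T$ rather than a fixed $\psi$, since without scaling invariance the small factor $T^{\theta}$ from the $\psi_T$-localized estimates is the only mechanism to close the contraction.
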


\begin{remark}
The proof of Theorems \ref{theorem1}--\ref{theorem22} in Section \ref{sec:thm proof} claims that extension of solutions are unique under an auxiliary condition: the solution $u(t,x)$ as in \eqref{eq:solution map} is unique in $Z_{\ell}^{s,b,\alpha}$, or it just guarantees the existence of a weak solution (a formulation of the integral equation). However, an analogous argument in \cite[Section 4]{BSZ2006}\footnote{The argument introduced in \cite{BSZ2006} works well to this problem, even if the argument is concerned with the KdV equation.}, in particular Proposition 4.13, Corollary 4.14 and Proposition 4.15, shows that weak solutions obtained in Theorems \ref{theorem1}--\ref{theorem22} are mild solutions and thus they are unique. The precise definitions and uniqueness of mild solutions are given in \cite{BSZ2006}, and we do not pursue it here. 
\end{remark}

The proof of Theorems \ref{theorem1}--\ref{theorem22} relies on the argument introduced in Colliander-Kenig work \cite{CK}, which was further developed by Holmer \cite{Holmerkdv}, i.e., the Fourier restriction norm method for "a suitable extension of solutions" ensures the local well-posedness. Precisely, the Duhamel boundary forcing operator, introduced in \cite{CK}, corresponding to fifth-order KdV operator enables us to successfully construct solutions in $\R_x$ (or extend solutions to \eqref{eq:5kdv} posed on the (right/left) half-line to solutions defined in whole line $\R$, see Section \ref{sec:Duhamel boundary forcing operator}). In other words, the IBVP of \eqref{eq:5kdv} is converted to the IVP of \eqref{eq:5kdv} (integral equation formula).  This work has been done in our previous work \cite{CK2018-1}. After this procedure, we follows the standard iteration method in addition to the energy and nonlinear estimates (will be established in Sections \ref{sec:energy} and \ref{sec:nonlinear}, respectively) to show the local well-posedness of IVP of \eqref{eq:5kdv}. The new ingredients here are the multilinear estimates for $F(u) = (1-\px^2)^{\frac12}\px (u^2)$ and $F(u) = \px(u^3)$ presented in Theorems \ref{thm:nonlinear1} and \ref{thm:nonlinear2}.

When $F(u) = (1-\px^2)^{\frac12}\px(u^2)$, the equation \eqref{eq:5kdv} does not admit the scaling symmetry due to the presence of the nonlocal operator $(1-\px^2)^{\frac12}$ in the nonlinearity. When $F(u) = \px(u^3)$, on the other hand, it is known that the modified Kawahara equation admits the scaling symmetry: if $u$ is a solution to \eqref{eq:5kdv}, $u_{\lambda}$ defined by
\[u_{\lambda}(t,x) := \lambda^2u(\lambda^5t,\lambda x), \qquad \lambda > 0\]
is a solution to \eqref{eq:5kdv} as well. A straightforward calculation gives
\begin{equation}\label{eq:scaling}
\begin{aligned}
\norm{u_{0,\lambda}}_{H^s} &+ \norm{f_{\lambda}}_{H^{\frac{s+2}{5}}} + \norm{g_{\lambda}}_{H^{\frac{s+1}{5}}} + \norm{h_{\lambda}}_{H^{\frac{s}{5}}} \\
=&\lambda^{\frac32}\bra{\lambda}^s\norm{u_0}_{H^s} + \lambda^{-\frac12}\bra{\lambda}^{s+2}\norm{f}_{H^{\frac{s+2}{5}}} + \lambda^{\frac12}\bra{\lambda}^{s+1}\norm{g}_{H^{\frac{s+1}{5}}} + \lambda^{\frac32}\bra{\lambda}^{s}\norm{f}_{H^{\frac{s}{5}}},
\end{aligned}
\end{equation}
which reveals that the $\lambda$-scaled initial and boundary data cannot be small in some sense at the same time due to $\lambda^{-\frac12}\bra{\lambda}^{s+2}\norm{f}_{H^{\frac{s+2}{5}}}$ term in the right-hand side of \eqref{eq:scaling} for $0 < \lambda \ll 1$. Thus, the scaling-rescaling argument no longer applies to the IBVP of \eqref{eq:5kdv} with nonlinearity both  $F(u) = (1-\px^2)^{\frac12}\px(u^2)$ and $F(u) = \px(u^3)$.

To study the IBVP of \eqref{eq:5kdv} for arbitrary initial and boundary data, the energy estimate in a short time interval $[0,T]$, $T \ll 1$, is needed, in particular, (derivatives) time trace estimates. However, the time localized cut-off function is no longer free in the time trace norm (Lemmas \ref{grupo} (b),  \ref{duhamel} (b) and \ref{edbf} (b)), thus, the regularity threshold, in particular the upper-bound of regularity ($s<\frac12$) in Theorems \ref{theorem1}--\ref{theorem22}, is restricted by the time trace norm estimates. See Section \ref{sec:energy} for the details.  

\subsection{Review on the well-posedness results.} 
Both the IVP and the IBVP of the fifth-order KdV-type equations have been extensively studied. When $F(u) = (1-\px^2)^{\frac12}\px(u^2)$, the local well-posedness of \eqref{eq:5kdv} was first established by Tina, Gui and Liu \cite{TGL2008} in $H^s(\R)$, $s \ge -11/16$ by using the Fourier restriction norm method \cite{Bourgain1993}. In addition to the technique \emph{Tao's $[K;Z]$ multiplier norm method} \cite{Tao2001}, Chen and Liu \cite{CL2010} improved the local well-posedness in $H^s(\R)$, $s > -5/4$ and they also showed the ill-posedness, in the sense of the lack of continuity of the flow map, for $s < -5/4$. At the endpoint regularity $H^{-5/4}(\R)$, Chen, Guo and Liu \cite{CGL2010} proved the local well-posedness by using Besov-type function spaces. This is the optimal result until now as far as authors' know.

When $F(u) = \px(u^3)$, the Cauchy problem for \eqref{eq:5kdv} was studied by Jia and Huo \cite{JH2009} and Chen, Li, Miao and Wu \cite{CLMW2009}, independently. They established the local well-posedness in $H^s(\R)$, $s \ge -1/4$, by using the Fourier restriction norm method. In \cite{CLMW2009}, the authors used bilinear $L^2$-block estimates and $TT*$ argument to prove the trilinear estimate, while a direct calculation of trilinear integral operator was performed in\cite{JH2009}. The global well-posedness of \eqref{eq:5kdv} in $H^s(\R)$, $s>-3/22$ was shown by Yan, Li and Yang \cite{YLY2011} via the \emph{I-method} \cite{CKSTT2003}.

The IBVP of the modified Kawahara equation posed on the right half-line in the high regularity Sobolev space $H^s(\R^+)$ ($\frac14 \le s < 2$) has been studied by Tao and Lu \cite{TL2007}. On the other hand, the IBVP of \eqref{eq:5kdv} on the half-lines (both right and left), where the nonlinearity is given by both $F(u) = (1-\px^2)^{\frac12}\px(u^2)$ and $F(u) = \px(u^3)$, in the low regularity setting (in particular, negative regularities) is first considered here as far as we know. We end this section with referring to \cite{San2003, Lar1, Fam2009, Lar2, CK2018-1} and references therein for the IBVP results of the fifth-order KdV-type equations posed on the half-line.

\subsection{Organization of the paper} 
The rest of paper is organized as follows: In Section \ref{sec:pre}, we mainly construct the solution space and observe several basic properties for the IBVP of \eqref{eq:5kdv}. In Section \ref{sec:nonlinear}, we give the proofs of Theorems \ref{thm:nonlinear1} and \ref{thm:nonlinear2}. In Section \ref{sec:Duhamel boundary forcing operator}, we briefly introduce the Duhamel boundary forcing operator for the fifth-order equations. In Section \ref{sec:energy}, we establish energy estimates, in particular time trace estimate, with a short time cut-off function. In Sections \ref{sec:thm proof}, we prove Theorems \ref{theorem1} -- \ref{theorem22}.

\subsection*{Acknowledgments}
Authors appreciate anonymous referee(s) for a careful reading and helpful suggestions.

\section{Preliminaries}\label{sec:pre}
Let $\R^+ = (0,\infty)$. For positive real numbers $x,y \in \R^+$, we mean $x \lesssim y$ by $x \le Cy$ for some $C>0$. Also, $x \sim y$ means $x \lesssim y$ and $y\lesssim x$. Similarly, $\lesssim_a$ and $\sim_a$ can be defined, where the implicit constants depend on $a$. 

For a cut-off function $\psi$ given by
\begin{equation}\label{eq:cutoff0}
\psi \in C_0^{\infty}(\mathbb{R}) \quad \mbox{such that} \quad 0 \le \psi \le1, \quad  \psi \equiv 1 \; \mbox{ on } \; [-1,1], \quad \psi \equiv 0, \; |t| \ge 2,
\end{equation}
we fix the time localized function 
\[\psi_T(t) = \psi (t/T), \qquad 0 < T < 1.\]

\subsection{Riemann-Liouville fractional integral}
A brief summary of the Riemann-Liouville fractional integral operator is, here, given, see \cite{CK, Holmerkdv} for more details. Let $t_+$ be a function defined by
\[t_+ = t \quad \mbox{if} \quad t > 0, \qquad t_+ = 0  \quad \mbox{if} \quad t \le 0,\]
and $t_-$ can be defined by $t_- = (-t)_+$. Let $\alpha $ be a complex number. For $\mbox{Re } \alpha > 0$, the tempered distribution $\frac{t_+^{\alpha-1}}{\Gamma(\alpha)}$ is defined as a locally integrable function by
\begin{equation*}
	\left \langle \frac{t_+^{\alpha-1}}{\Gamma(\alpha)},\ f \right \rangle=\frac{1}{\Gamma(\alpha)}\int_0^{\infty} t^{\alpha-1}f(t) \; dt.
\end{equation*}
It is straightforward to obtain
\begin{equation}\label{eq:derivative}
	\frac{t_+^{\alpha-1}}{\Gamma(\alpha)}=\partial_t^k\left( \frac{t_+^{\alpha+k-1}}{\Gamma(\alpha+k)}\right),
\end{equation}
for all $k\in\mathbb{N}$. The expression \eqref{eq:derivative} facilitates to extend the definition of $\frac{t_+^{\alpha-1}}{\Gamma(\alpha)}$ to all $\alpha \in \mathbb{C}$ in the sense of distributions. The Fourier transform of $\frac{t_+^{\alpha-1}}{\Gamma(\alpha)}$ is given by
\begin{equation}\label{transformada}
	\left(\frac{t_+^{\alpha-1}}{\Gamma(\alpha)}\right)^{\widehat{}}(\tau)=e^{-\frac{1}{2}\pi i \alpha}(\tau-i0)^{-\alpha},
\end{equation}
where $(\tau-i0)^{-\alpha}$ is the distributional limit. When $\alpha \notin \Z$, \eqref{transformada} can be rewritten by 
\begin{equation}\label{transformada1}
	\left(\frac{t_+^{\alpha-1}}{\Gamma(\alpha)}\right)^{\widehat{}}(\tau)=e^{-\frac12 \alpha \pi i}|\tau|^{-\alpha}\chi_{(0,\infty)}+e^{\frac12 \alpha \pi i}|\tau|^{-\alpha}\chi_{(-\infty,0)}.
\end{equation}
Together with \eqref{transformada} and \eqref{transformada1}, we see
\[(\tau-i0)^{-\alpha} = |\tau|^{-\alpha}\chi_{(0,\infty)}+e^{\alpha \pi i}|\tau|^{-\alpha}\chi_{(-\infty,0)}.\]
For $f\in C_0^{\infty}(\mathbb{R}^+)$, we define the \emph{Riemann-Liouville fractional integral} operator by
\begin{equation*}
	\mathcal{I}_{\alpha}f=\frac{t_+^{\alpha-1}}{\Gamma(\alpha)}*f,
\end{equation*}
in particular, 
\[	\mathcal{I}_{\alpha}f(t)=\frac{1}{\Gamma(\alpha)}\int_0^t(t-s)^{\alpha-1}f(s) \; ds,\]
for $\mbox{Re }\alpha>0$. The well-know properties are $\mathcal{I}_0f=f$, $\mathcal{I}_1f(t)=\int_0^tf(s) \; ds$, $\mathcal{I}_{-1}f=f'$ and $\mathcal{I}_{\alpha}\mathcal{I}_{\beta}=\mathcal{I}_{\alpha+\beta}$.

We end this subsection with introducing some lemmas associated to the \emph{Riemann-Liouville fractional integral} operator $\mathcal{I}_{\alpha}f$ without proofs.
\begin{lemma}[Lemma 2.1 in \cite{Holmerkdv}]
	If $f\in C_0^{\infty}(\mathbb{R}^+)$, then $\mathcal{I}_{\alpha}f\in C_0^{\infty}(\mathbb{R}^+)$, for all $\alpha \in \mathbb{C}$.
\end{lemma}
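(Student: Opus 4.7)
The plan is to proceed in two stages, separating the easier case $\mathrm{Re}\,\alpha > 0$ from the general one.

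For $\mathrm{Re}\,\alpha > 0$, I would work directly with the integral formula and, after substituting $u = t-s$ and using that $f$ vanishes on $(-\infty,0]$, write
\[\mathcal{I}_\alpha f(t) = \frac{1}{\Gamma(\alpha)}\int_0^\infty u^{\alpha-1} f(t-u)\,du.\]
Since $u^{\alpha-1}$ is locally integrable on $[0,\infty)$ (because $\mathrm{Re}(\alpha-1) > -1$) and each $f^{(k)}$ is compactly supported, dominated convergence justifies repeated differentiation under the integral, giving $\partial_t^k \mathcal{I}_\alpha f = \mathcal{I}_\alpha(f^{(k)})$ for every $k \ge 0$. Hence $\mathcal{I}_\alpha f \in C^\infty(\mathbb{R})$. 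For the support, if $\supp f \subset [\delta, M]$ with $0 < \delta < M$, then $f(t-u) \equiv 0$ for all $u \ge 0$ whenever $t < \delta$, so $\mathcal{I}_\alpha f$ vanishes on $(-\infty, \delta)$, and thus $\supp \mathcal{I}_\alpha f \subset \mathbb{R}^+$.

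For arbitrary $\alpha \in \mathbb{C}$, I would invoke the distributional identity \eqref{eq:derivative}: convolving both sides with $f$ yields
\[\mathcal{I}_\alpha f = \partial_t^k\, \mathcal{I}_{\alpha+k} f\]
for any $k \in \mathbb{N}$. Choosing $k$ so large that $\mathrm{Re}(\alpha+k) > 0$ reduces the problem to the previous case, which produces $\mathcal{I}_{\alpha+k} f \in C_0^\infty(\mathbb{R}^+)$; ordinary $k$-fold differentiation then preserves both the smoothness and the inclusion of the support in $\mathbb{R}^+$.

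The main subtlety — and therefore the step I would pin down most carefully — is justifying this last identity as a statement about smooth functions rather than a mere distributional one. Concretely, one must verify that the distributional $k$-fold derivative of the convolution $(t_+^{\alpha+k-1}/\Gamma(\alpha+k)) * f$ agrees with the pointwise $k$-fold derivative of the classical smooth function $\mathcal{I}_{\alpha+k} f$ produced in the first step. This is standard since convolution commutes with distributional differentiation and the two notions of derivative coincide on $C^\infty$ outputs, but it is the only place in the argument that goes beyond routine manipulation of the integral formula.
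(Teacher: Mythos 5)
The paper states this lemma without proof, citing Holmer's paper (it says explicitly that these lemmas are introduced \emph{without proofs}), so there is no argument of record here to compare against; your two-step strategy (direct analysis of the integral for $\mathrm{Re}\,\alpha>0$, then reduction to that case via \eqref{eq:derivative}) is the standard one and is essentially what Holmer does.

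One point needs fixing before the argument is airtight: in this paper $C_0^{\infty}(\mathbb{R}^+)$ is defined by $\supp f\subset[0,\infty)$ only, while \emph{compact} support is the strictly smaller class $C_{0,c}^{\infty}(\mathbb{R}^+)$. Your hypothesis ``$\supp f\subset[\delta,M]$ with $0<\delta<M$'' is therefore not available, and the claim that ``each $f^{(k)}$ is compactly supported'' is likewise unwarranted. Fortunately neither is essential, but the reasoning must be rephrased. For differentiation under the integral sign, note that $f^{(k)}(t-u)=0$ whenever $u>t$; hence for $t\in[a,b]$ the integrand $u^{\alpha-1}f^{(k)}(t-u)$ is supported in $u\in[0,b]$ and dominated there by $|u^{\alpha-1}|\sup_{[-b,b]}|f^{(k)}|$, which is integrable since $\mathrm{Re}\,\alpha>0$; this gives the local domination you need without compactness of $\supp f$. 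For the support statement, argue directly: if $t<0$ then $t-u<0$ for every $u\ge 0$, so $f(t-u)\equiv 0$ and $\mathcal{I}_{\alpha}f(t)=0$, whence $\supp\mathcal{I}_{\alpha}f\subset[0,\infty)$, which is exactly what membership in $C_0^{\infty}(\mathbb{R}^+)$ requires (not support bounded away from the origin). With these two repairs the first step and the reduction $\mathcal{I}_{\alpha}f=\partial_t^{k}\mathcal{I}_{\alpha+k}f$ both go through; your remark that the distributional derivative agrees with the classical one on $C^{\infty}$ outputs is correct and closes the argument.
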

\begin{lemma}[Lemma 5.3 in \cite{Holmerkdv}]\label{lio}
	If $0\leq \mathrm{Re} \ \alpha <\infty$ and $s\in \mathbb{R}$, then $\|\mathcal{I}_{-\alpha}h\|_{H_0^s(\mathbb{R}^+)}\leq c \|h\|_{H_0^{s+\alpha}(\mathbb{R}^+)}$, where $c=c(\alpha)$.
\end{lemma}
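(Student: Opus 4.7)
The plan is to reduce to smooth data via density and then read off the estimate on the Fourier side using the formulas derived earlier in the excerpt. Since $H_0^{s+\alpha}(\R^+)$ is the closure of $C_0^{\infty}(\R^+)$ in the full-line $H^{s+\mathrm{Re}\,\alpha}(\R)$ norm (applied to the zero extension), and since the preceding lemma asserts that $\mathcal{I}_{-\alpha}h \in C_0^{\infty}(\R^+)$ whenever $h \in C_0^{\infty}(\R^+)$, it suffices to establish
\[\|\mathcal{I}_{-\alpha}h\|_{H^s(\R)} \lesssim_{\alpha} \|h\|_{H^{s+\mathrm{Re}\,\alpha}(\R)}\]
for $h \in C_0^{\infty}(\R^+)$ and then pass to the limit. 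The same $C_0^{\infty}(\R^+)$ invariance also guarantees that $\|\mathcal{I}_{-\alpha}h\|_{H_0^s(\R^+)}$ coincides with the full-line $H^s(\R)$ norm of the zero extension, so no spurious boundary contribution arises at $t=0$.

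On the Fourier side, since $\mathcal{I}_{-\alpha}h = \tfrac{t_+^{-\alpha-1}}{\Gamma(-\alpha)} \ast h$, formula \eqref{transformada} (applied with $\alpha$ replaced by $-\alpha$) gives
\[\widehat{\mathcal{I}_{-\alpha}h}(\tau) = e^{\pi i \alpha/2}(\tau-i0)^{\alpha}\widehat{h}(\tau),\]
and the displayed identity following \eqref{transformada1} (again with $\alpha \mapsto -\alpha$) yields
\[(\tau-i0)^{\alpha} = |\tau|^{\alpha}\chi_{(0,\infty)}(\tau) + e^{-\alpha\pi i}|\tau|^{\alpha}\chi_{(-\infty,0)}(\tau).\]
Writing $\alpha = a + ib$ with $a \geq 0$, this produces the pointwise bound $|(\tau-i0)^{\alpha}| \leq C(\alpha)|\tau|^{a}$ with a constant depending only on $|b|$. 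Combining this with Plancherel and the elementary inequality $|\tau|^a \leq \langle\tau\rangle^a$ (valid for $a\geq 0$) yields
\[\|\mathcal{I}_{-\alpha}h\|_{H^s(\R)}^2 \lesssim_{\alpha} \int_{\R}\langle\tau\rangle^{2s}|\tau|^{2a}|\widehat{h}(\tau)|^2\,d\tau \leq \int_{\R}\langle\tau\rangle^{2(s+a)}|\widehat{h}(\tau)|^2\,d\tau = \|h\|_{H^{s+a}(\R)}^2,\]
which is the desired estimate.

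The main subtlety I expect is the support step: a priori, fractional differentiation of order $a > 0$ applied to a function supported in $\R^+$ could introduce a distributional piece at $t=0$ (think of $\partial_t \chi_{[0,1]}$), and such a piece would contribute to $\|\mathcal{I}_{-\alpha}h\|_{H^s(\R)}$ but not to any norm measured intrinsically on the half-line. The preceding $C_0^{\infty}(\R^+)$ invariance rules this out completely; without it, one would have to unpack the distributional definition of $t_+^{-\alpha-1}/\Gamma(-\alpha)$ via \eqref{eq:derivative} and verify by hand that no mass concentrates at the origin, which is considerably more delicate than the Fourier-side computation above.
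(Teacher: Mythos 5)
The paper states this lemma without proof, attributing it to Holmer (Lemma 5.3 in \cite{Holmerkdv}), and the Fourier-multiplier argument you give — using \eqref{transformada}--\eqref{transformada1} to identify $\mathcal{I}_{-\alpha}$ with the multiplier $e^{\pi i \alpha/2}(\tau-i0)^\alpha$, bounding its modulus by $C(\alpha)|\tau|^{\mathrm{Re}\,\alpha} \le C(\alpha)\langle\tau\rangle^{\mathrm{Re}\,\alpha}$, and invoking the $C_0^\infty(\R^+)$ invariance (Lemma 2.1 in \cite{Holmerkdv}) together with density of $C_{0,c}^\infty(\R^+)$ to pass from the half-line norm to the full-line Plancherel computation — is precisely the standard proof used there. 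Your proposal is correct and takes essentially the same approach.
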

\begin{lemma}[Lemma 5.4 in \cite{Holmerkdv}]
	If $0\leq \mathrm{Re}\ \alpha <\infty$, $s\in \mathbb{R}$ and $\mu\in C_0^{\infty}(\mathbb{R})$, then
	$\|\mu\mathcal{I}_{\alpha}h\|_{H_0^s(\mathbb{R}^+)}\leq c \|h\|_{H_0^{s-\alpha}(\mathbb{R}^+)},$ where $c=c(\mu, \alpha)$.
\end{lemma}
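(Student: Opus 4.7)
The plan is to combine Stein's analytic interpolation with direct estimates at two endpoints. Set $T_\alpha h := \mu\,\mathcal{I}_\alpha h$; by density it suffices to work with $h\in C_0^\infty(\R^+)$, and for fixed $h,g\in C_0^\infty(\R^+)$ the pairing $\alpha\mapsto\langle T_\alpha h,g\rangle$ extends holomorphically to $\{\mathrm{Re}\,\alpha\ge 0\}$ via the analytic continuation of $\mathcal{I}_\alpha$ explained after \eqref{eq:derivative}. Fixing any positive integer $N$, it therefore suffices to prove the desired bound on the two vertical lines $\mathrm{Re}\,\alpha=0$ and $\mathrm{Re}\,\alpha=N$, with operator norm of sub-exponential growth in $|\mathrm{Im}\,\alpha|$, and then to interpolate across the strip.

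\emph{The line $\mathrm{Re}\,\alpha=0$.} Formula \eqref{transformada1} shows that the Fourier symbol of $\mathcal{I}_{i\beta}$ has modulus $\le e^{\pi|\beta|/2}$, so $\mathcal{I}_{i\beta}$ is bounded on $H^s(\R)$ for every $s\in\R$ with norm $\le e^{\pi|\beta|/2}$. Since the convolution kernel $t_+^{i\beta-1}/\Gamma(i\beta)$ is a distribution supported in $[0,\infty)$, this operator preserves $H_0^s(\R^+)$; multiplication by $\mu\in C_0^\infty(\R)$ is bounded on $H_0^s(\R^+)$, hence
\[\|T_{i\beta}h\|_{H_0^s(\R^+)}\le c(\mu,s)\,e^{\pi|\beta|/2}\|h\|_{H_0^s(\R^+)}.\]

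\emph{The line $\mathrm{Re}\,\alpha=N$.} Use the semigroup identity $\mathcal{I}_{N+i\beta}=\mathcal{I}_{i\beta}\mathcal{I}_N$ together with the previous step to move the $\mathcal{I}_{i\beta}$ factor through at the cost of $e^{\pi|\beta|/2}$; it remains to bound $\mu\mathcal{I}_N:H_0^{s-N}(\R^+)\to H_0^s(\R^+)$. For integer $s\ge0$ this follows from the Leibniz rule together with $\pt\mathcal{I}_N=\mathcal{I}_{N-1}$:
\[\pt^s(\mu\mathcal{I}_Nh)=\sum_{k=0}^{s}\binom{s}{k}\mu^{(s-k)}\,\pt^k\mathcal{I}_Nh,\]
where $\pt^k\mathcal{I}_Nh$ equals $\mathcal{I}_{N-k}h$ for $k\le N$ and $\pt^{k-N}h$ for $k>N$. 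Each summand, truncated by the $C_0^\infty$ function $\mu^{(s-k)}$, is bounded in $L^2$ by $\|h\|_{H_0^{s-N}}$ via elementary Hardy-type estimates for fractional integrals on bounded intervals (and Lemma \ref{lio} in the range $k>N$). Negative integer $s$ is reached by duality against the formal adjoint of $\mu\mathcal{I}_N$ (an analogous $N$-fold integral from the right paired with $\mu g$), and arbitrary real $s$ by real interpolation.

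\emph{Interpolation and the main obstacle.} With both endpoint bounds available and operator norms controlled on the vertical lines by $c(\mu,s)(1+|\beta|)^M e^{\pi|\beta|/2}$, Stein's analytic interpolation theorem yields the announced bound at every $\alpha$ in the strip $0\le\mathrm{Re}\,\alpha\le N$; since $N$ is arbitrary, the lemma follows for all $\mathrm{Re}\,\alpha\ge0$. The main subtlety is verifying admissible growth of the operator norms on vertical lines: the factor $e^{\pi|\beta|/2}$ coming from \eqref{transformada1} saturates the useful rate but remains strictly below $e^{\pi|\beta|}$, which is exactly the threshold permitted by Stein's theorem, so the interpolation still closes. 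A Fourier-side alternative (bypassing interpolation) would study $\widehat{\mu\mathcal{I}_\alpha h}(\tau)=e^{-\pi i\alpha/2}\!\int\hat\mu(\tau-\sigma)(\sigma-i0)^{-\alpha}\hat h(\sigma)\,d\sigma$ and exploit Paley-Wiener analyticity of $\hat h$ (since $h$ is $\R^+$-supported) to regularize the singularity of $(\sigma-i0)^{-\alpha}$ at $\sigma=0$ by deforming the contour into the lower half-plane, at the cost of more delicate frequency-weight bookkeeping.
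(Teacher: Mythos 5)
The paper cites this lemma from \cite{Holmerkdv} without proof, so the comparison is against the argument there. Your framework---analytic interpolation in $\alpha$ across the strip $0\le\mathrm{Re}\,\alpha\le N$---is reasonable and the $\mathrm{Re}\,\alpha=0$ endpoint is correct, but the $\mathrm{Re}\,\alpha=N$ endpoint is circular. In your Leibniz expansion, the summands with $k\le N$ are $\mu^{(s-k)}\mathcal{I}_{N-k}h$ applied to $h\in H_0^{s-N}(\R^+)$; when $s<N$ this input has \emph{negative} regularity and the invoked Hardy-type estimates do not apply. Hardy's inequality on $(0,R)$ gives only $\|\mathcal{I}_m h\|_{L^2(0,R)}\lesssim\|h\|_{L^2(0,R)}$, which is strictly weaker than the needed bound by $\|h\|_{H_0^{s-N}}$ when $s-N<0$: the symbol $(\tau-i0)^{-m}$ is unbounded at $\tau=0$, and controlling $\mu\mathcal{I}_m$ from a negative-order space into $L^2$ requires precisely the interplay between the compact support of $\mu$ and the one-sided support of $h$ that the lemma asserts. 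Concretely, the $k=0$ summand $\mu^{(s)}\mathcal{I}_N h$ \emph{is} the lemma at parameters $(\alpha,s')=(N,0)$, so for $0\le s<N$ the Leibniz step restates the claim rather than reducing it. The duality step for negative integer $s$ inherits the same unproved base case, and interpolation across the $H_0^s(\R^+)$ scale (delicate at half-integers) is asserted rather than justified.

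Two secondary issues. The Stein interpolation has the source Sobolev index $s-\mathrm{Re}\,\alpha$ varying along the strip; converting to a fixed-space analytic family by composing with Bessel potentials $\Lambda^{Nz-s}$ destroys the $[0,\infty)$ support, while composing with $\mathcal{I}_{Nz-s}$ preserves support but reintroduces the low-frequency singularity, so this reduction needs to be carried out, not assumed. Also, the admissibility discussion is off: Stein's hypothesis constrains $\log\|T_{j+i\beta}\|$, not $\|T_{j+i\beta}\|$ itself, so the factor $e^{\pi|\beta|/2}$ is far inside the admissible range rather than ``saturating'' it; this does not harm the proof but signals a misreading of the theorem. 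The Fourier-side computation you relegate to a closing remark---combining \eqref{transformada1} with the analytic extension of $\wh{h}$ into a half-plane (from $\mathrm{supp}\,h\subset[0,\infty)$) to tame the singularity of $(\sigma-i0)^{-\alpha}$ near $\sigma=0$ against the rapidly decaying $\wh{\mu}(\tau-\sigma)$---is where the genuine low-frequency mechanism lives, and developing it would yield a direct, non-circular proof.
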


\subsection{Oscillatory integral}
Let 
\begin{equation}\label{eq:oscil}
B^{(n)}(x)=\frac{1}{2\pi}\int_{\R}(i\xi)^n e^{ix\xi}e^{i \xi^5} \; d\xi
\end{equation} 
for $n = 0, 1, \cdots$. A direct calculation (with the change of variable $\eta = \xi^5$, the change of contour in complex analysis and a property of the gamma function $\Gamma(z)\Gamma(1-z) = \frac{\pi}{\sin(z\pi)}$) gives\footnote{Non-singularity of $B^{(n)}(x)$ at $x=0$ is needed for the continuity property of $\partial_x^k \mathcal L^0f$, $k=0,1,2,3$, see Lemma \ref{continuity}.}
\[B(0) = \frac{\cos\left(\frac{\pi}{10}\right)}{5\sin\left(\frac{\pi}{5}\right)\Gamma(4/5)}, \qquad B'(0) = -\frac{\cos\left(\frac{3\pi}{10}\right)}{5\sin\left(\frac{2\pi}{5}\right)\Gamma(3/5)},\] 
\[B''(0) = -\frac{\cos\left(\frac{3\pi}{10}\right)}{5\sin\left(\frac{2\pi}{5}\right)\Gamma(2/5)} \quad \mbox{and} \quad  B^{(3)}(0) = \frac{\cos\left(\frac{\pi}{10}\right)}{5\sin\left(\frac{\pi}{5}\right)\Gamma(1/5)}.\]
Moreover, we have 
\[\int_0^{\infty} B(y) \; dy = \frac{1}{2\pi} \left( - \frac{\pi}{5} + \pi \right) = \frac25.\]
We refer to \cite{CK2018-1} for the details.

We finish this subsection with introducing some lemmas associated to $B(x)$ without proofs.
\begin{lemma}[Decay of oscillatory integral $B(x)$, \cite{SS2003, CK2018-1}]\label{lem:decay}
	Suppose $x>0$. Then as $x\rightarrow \infty$,
	\begin{itemize}
		\item [(i)] $B(x)\lesssim \langle x \rangle^{-N}$ for all $N>0$.
		\item [(ii)] $B(-x)\lesssim \langle x \rangle^{-3/8}$.
	\end{itemize}
\end{lemma}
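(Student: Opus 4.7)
Both estimates are classical oscillatory integral bounds applied to \eqref{eq:oscil} with $n = 0$; my plan is to analyze the two cases according to whether the phase $\phi(\xi) = \pm x\xi + \xi^5$ has a real stationary point.

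\textbf{Part (i).} For $x > 0$, $\phi'(\xi) = x + 5\xi^4 > 0$ everywhere with $|\phi'(\xi)| \gtrsim \max(x,\xi^4)$, so no stationary point is present. I would introduce a smooth dyadic partition of unity separating the low-frequency regime $|\xi| \lesssim x^{1/4}$ (where $\phi' \sim x$) from the high-frequency regime $|\xi| \gtrsim x^{1/4}$ (where $\phi' \sim \xi^4$). Writing $e^{i\phi(\xi)} = (i\phi'(\xi))^{-1}\partial_\xi e^{i\phi(\xi)}$ and integrating by parts $N$ times---with derivatives landing on both the cutoff and on the $(\phi')^{-1}$ factors, yielding correction terms governed by $\phi''/(\phi')^2 = 20\xi^3/(x+5\xi^4)^2$---produces an integrand effectively of size $(\phi')^{-N}$ on each piece. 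In the low regime this is $x^{-N}$; in the high regime, $\xi^{-4N}$ gives an integrable tail of size $x^{-N}$ after integration against the cutoff support. Summing the two contributions yields $B(x) \lesssim \langle x\rangle^{-N}$ for any $N$.

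\textbf{Part (ii).} For $B(-x)$, the phase $\phi(\xi) = -x\xi + \xi^5$ has two nondegenerate stationary points $\xi_\pm = \pm (x/5)^{1/4}$ with $\phi''(\xi_\pm) = \pm 20 (x/5)^{3/4} \sim x^{3/4}$. To extract the scale, I would apply the change of variables $\xi = x^{1/4}\eta$, which gives
\[
B(-x) = \frac{x^{1/4}}{2\pi}\int_{\R} e^{i x^{5/4}(\eta^5 - \eta)}\,d\eta.
\]
Now the rescaled phase $\tilde\phi(\eta) = \eta^5 - \eta$ is fixed, with critical points $\eta_\pm = \pm 5^{-1/4}$ satisfying $\tilde\phi''(\eta_\pm) \ne 0$, and $\lambda := x^{5/4}$ is the large parameter. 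Using a partition of unity localizing near $\eta_\pm$, the classical stationary phase expansion (e.g.\ \cite{SS2003}, Chapter VIII) supplies an $O(\lambda^{-1/2})$ contribution at each $\eta_\pm$, while on the complement a nonstationary IBP argument---exploiting $|\tilde\phi'(\eta)| = |5\eta^4 - 1|$ bounded below locally and $\sim \eta^4$ at infinity---produces arbitrary decay in $\lambda$. Multiplying by the Jacobian $x^{1/4}$ yields
\[
|B(-x)| \lesssim x^{1/4}\cdot (x^{5/4})^{-1/2} = x^{-3/8},
\]
as claimed.

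\textbf{Main obstacle.} The delicate point is the stationary-phase patching in part (ii): one must choose the cutoff scale near $\eta_\pm$ so that the quadratic Taylor expansion of $\tilde\phi$ accurately governs the near-critical contribution while simultaneously ensuring that derivatives of the cutoff against the IBP factors on the complement do not degrade the sharp $\lambda^{-1/2}$ rate. Once this patching is properly arranged, both bounds reduce to standard oscillatory integral estimates that can be imported directly from \cite{SS2003, CK2018-1}.
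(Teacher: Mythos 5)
Your proof is correct; the paper itself states this lemma without proof, deferring to \cite{SS2003, CK2018-1}, and your rescaled stationary-phase argument for (ii) together with nonstationary integration by parts for (i) is precisely the standard route those references take. One clarification worth making: the ``main obstacle'' you flag in (ii) is in fact benign. After the substitution $\xi = x^{1/4}\eta$, the critical points $\eta_\pm = \pm 5^{-1/4}$ of $\tilde\phi(\eta) = \eta^5 - \eta$ sit at fixed, $x$-independent locations, so one may use a fixed-width bump function around each of them; there is no cutoff scale to balance against the large parameter $\lambda = x^{5/4}$. The near-critical pieces then fall directly under the textbook stationary phase lemma for a compactly supported amplitude, giving $O(\lambda^{-1/2})$, while the complement is handled by nonstationary integration by parts using $|\tilde\phi'(\eta)| = |5\eta^4 - 1| \gtrsim 1 + \eta^4$. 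Combined with the Jacobian $x^{1/4}$ this gives $x^{1/4 - 5/8} = x^{-3/8}$ as claimed. In part (i), a small bookkeeping point: the low-frequency region $|\xi| \lesssim x^{1/4}$ has measure $\sim x^{1/4}$, so $N$ integrations by parts there actually produce $x^{1/4 - N}$ rather than $x^{-N}$; but since $N$ is arbitrary, the conclusion $B(x) \lesssim \langle x \rangle^{-N}$ for every $N$ is unaffected after relabeling.
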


\begin{lemma}[Mellin transform of $B(x)$] \label{mellin}$\;$\\
 \begin{itemize}
		\item[(i)]For Re $\lambda>0$ we have
		\begin{equation}\label{mellin2}
		\int_0^{\infty}x^{\lambda-1}B(x)dx=\frac{\Gamma(\lambda)\Gamma(\frac15-\frac{\lambda}{5})}{5 \pi}\cos \left(\frac{(1+4\lambda)\pi}{10}\right).
		\end{equation}
\item[(ii)] For $0<\mathrm{Re}\ \lambda <\frac{3}{8}$ we have
\[\int_0^{\infty}x^{\lambda-1}B(-x)dx=\frac{\Gamma(\lambda)\Gamma(\frac15-\frac{\lambda}{5})}{5 \pi}\cos \left(\frac{(1-6\lambda)\pi}{10}\right).\]
		\end{itemize}
\end{lemma}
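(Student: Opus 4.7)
The plan is to compute each of the two Mellin integrals by interchanging the order of integration in the oscillatory definition of $B$, evaluating the resulting inner $x$-integral with the classical Mellin identity, and then reducing the remaining $\xi$-integral to another instance of the same identity via the substitution $u=\xi^5$.

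For part (i), I would write
\[
\int_0^\infty x^{\lambda-1}B(x)\,dx = \frac{1}{2\pi}\int_{\R} e^{i\xi^5}\left(\int_0^\infty x^{\lambda-1}e^{ix\xi}\,dx\right)d\xi,
\]
split the outer integral into the half-lines $\xi>0$ and $\xi<0$, and apply the identity
\[
\int_0^\infty x^{\lambda-1}e^{iax}\,dx = \Gamma(\lambda)|a|^{-\lambda}e^{i\pi\lambda\,\sgn(a)/2},\qquad 0<\mathrm{Re}\,\lambda<1,
\]
to reduce the whole expression to
\[
\frac{\Gamma(\lambda)}{2\pi}\left(e^{i\pi\lambda/2}\int_0^\infty \xi^{-\lambda}e^{i\xi^5}\,d\xi + e^{-i\pi\lambda/2}\int_0^\infty \xi^{-\lambda}e^{-i\xi^5}\,d\xi\right).
\]
The substitution $u=\xi^5$ in each remaining integral, followed by the same Mellin identity with parameter $\mu=(1-\lambda)/5$, yields $\frac{1}{5}\Gamma\!\left(\frac{1-\lambda}{5}\right)e^{\pm i\pi(1-\lambda)/10}$. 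Combining the two complex exponentials into a cosine and simplifying the phase via $\frac{\pi\lambda}{2}+\frac{\pi(1-\lambda)}{10}=\frac{\pi(1+4\lambda)}{10}$ gives (i) on the strip $0<\mathrm{Re}\,\lambda<1$; the extension to $\mathrm{Re}\,\lambda>0$ then follows by analytic continuation, since the LHS is holomorphic on the right half-plane by the rapid decay of $B$ at $+\infty$ stated in Lemma \ref{lem:decay}(i) and the RHS is meromorphic. Part (ii) is essentially identical with $e^{-ix\xi}$ in place of $e^{ix\xi}$, which flips the sign of $\pi\lambda/2$ and produces $\cos\!\left(\frac{\pi(1-6\lambda)}{10}\right)$; here the stated range $0<\mathrm{Re}\,\lambda<3/8$ is precisely the range of absolute convergence of the LHS coming from $|B(-x)|\lesssim\langle x\rangle^{-3/8}$ in Lemma \ref{lem:decay}(ii), so no further continuation is needed.

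The main obstacle is justifying Fubini, since the underlying double integral is not absolutely convergent. I would handle this by inserting an Abel regularizer $e^{-\varepsilon x}$ so that Fubini applies unconditionally, carrying out the entire computation above for each $\varepsilon>0$, and then sending $\varepsilon\to 0^+$. On the LHS, dominated convergence applies by the decay estimates for $B$ combined with the stated range of $\lambda$; on the RHS, the regularized Mellin formula $\int_0^\infty x^{\lambda-1}e^{(ia-\varepsilon)x}\,dx=\Gamma(\lambda)(\varepsilon-ia)^{-\lambda}$ and its analogue after the substitution $u=\xi^5$ both converge to the boundary values used above, which is a standard Abel-type limit and causes no genuine analytic trouble.
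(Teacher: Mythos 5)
The paper states this lemma without proof, deferring to \cite{CK2018-1}, so there is no in-paper argument to compare against. Your computation is correct and is the natural route: the Fourier--Mellin identity $\int_0^\infty x^{\lambda-1}e^{iax}\,dx=\Gamma(\lambda)|a|^{-\lambda}e^{i\pi\lambda\,\sgn(a)/2}$ on $0<\mathrm{Re}\,\lambda<1$, the substitution $u=\xi^5$ which produces $\tfrac{1}{5}\Gamma\bigl(\tfrac15-\tfrac{\lambda}{5}\bigr)$, and the phase arithmetic $\tfrac{\pi\lambda}{2}+\tfrac{\pi(1-\lambda)}{10}=\tfrac{(1+4\lambda)\pi}{10}$ for (i), with the $\tfrac{\pi\lambda}{2}$ sign flipping in (ii) to give $\tfrac{(1-6\lambda)\pi}{10}$ after using that $\cos$ is even, reproduce both formulas exactly. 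Your discussion of the $\lambda$-strips in terms of Lemma \ref{lem:decay} is also right, including the analytic continuation in (i) across the removable singularities $\lambda=1+5n$ of $\Gamma\bigl(\tfrac15-\tfrac{\lambda}{5}\bigr)$, where the cosine factor vanishes (a point the paper itself remarks on after the statement).

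The one soft spot is the Fubini step. Inserting $e^{-\varepsilon x}$ alone does \emph{not} render the double $(x,\xi)$-integral absolutely convergent --- the modulus of the integrand is $x^{\mathrm{Re}\,\lambda-1}e^{-\varepsilon x}$, independent of $\xi$, so the iterated $L^1$ norm still diverges. To make the interchange fully rigorous one must also control the $\xi$-direction, e.g.\ by truncating to $|\xi|\le R$ (which is how the oscillatory integral \eqref{eq:oscil} is defined in the first place) and showing via integration by parts in $\xi$ that the truncated $\xi$-integrals admit bounds uniform enough for dominated convergence in $x$, or by inserting an additional Gaussian cutoff in $\xi$. This is a routine repair and does not change the outcome, but as written the claim that $e^{-\varepsilon x}$ makes ``Fubini apply unconditionally'' overstates what the $x$-regularizer achieves.
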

We remark in \eqref{mellin2} that $\Gamma(\frac15-\frac{\lambda}{5})$ has poles at $\lambda = 1+ 5n$, $n=0,1,2, \cdots$, but $\cos \left(\frac{(1+4\lambda)\pi}{10}\right) = 0$ at the same values of $\lambda$. Moreover, the range of $\mbox{Re } \lambda$ relies on the decay rates of $B(x)$ and $B(-x)$ in Lemma \ref{lem:decay}. 

\subsection{Sobolev spaces on the half-line and solution spaces}\label{sec:sol space}
Let $s \ge 0$. We say $f \in H^s(\mathbb{R}^+)$ if there exists $F \in H^s(\R)$ such that $f(x)=F(x)$ for $x>0$, in this case we set $\|f\|_{H^s(\mathbb{R}^+)}=\inf_{F}\|F\|_{H^{s}(\mathbb{R})}$. For $s \in \R$, we say $f \in H_0^s(\mathbb{R}^+)$ if there exists $F \in H^s(\R)$ such that $F$ is the extension of $f$ on $\R$ and $F(x) = 0$ for $x<0$. In this case, we set $\norm{f}_{H^s_0(\R^+)} = \norm{F}_{H^s(\R)}$.
For $s<0$, we define $H^s(\mathbb{R}^+)$ as the dual space of $H_0^{-s}(\mathbb{R}^+)$.

We also set $C_0^{\infty}(\mathbb{R}^+)=\{f\in C^{\infty}(\mathbb{R});\, \supp f \subset [0,\infty)\}$, and define $C_{0,c}^{\infty}(\mathbb{R}^+)$ as the subset of $C_0^{\infty}(\mathbb{R}^+)$, whose members have a compact support on $(0,\infty)$. We remark that $C_{0,c}^{\infty}(\mathbb{R}^+)$ is dense in $H_0^s(\mathbb{R}^+)$ for all $s\in \mathbb{R}$.

We state elementary properties of the Sobolev space on the half-line. We refer to \cite{JK1995, CK, CK2018-1} for the proofs.

\begin{lemma}[Lemma 2.1 in \cite{CK2018-1}]\label{sobolevh0}
	For $-\frac{1}{2}<s<\frac{1}{2}$ and $f\in H^s(\mathbb{R})$, we have
	\[\|\chi_{(0,\infty)}f\|_{H^s(\mathbb{R})}\leq c \|f\|_{H^s(\mathbb{R})}.\]
\end{lemma}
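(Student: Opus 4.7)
The plan is to prove the bound in three ranges of $s$, using duality to reduce the negative-regularity case to the positive one.

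For $s=0$ the inequality is immediate since $|\chi_{(0,\infty)}f| \le |f|$ pointwise. For $0<s<\tfrac12$, I would use the Sobolev--Slobodeckij characterization
\[
\|f\|_{H^s(\R)}^2 \sim \|f\|_{L^2(\R)}^2 + \iint_{\R\times\R} \frac{|f(x)-f(y)|^2}{|x-y|^{1+2s}}\, dx\, dy,
\]
and split the integration region for $g:=\chi_{(0,\infty)}f$ according to the signs of $x$ and $y$. On the diagonal regions $\{x>0,y>0\}$ and $\{x<0,y<0\}$ the integrand equals either $|f(x)-f(y)|^2/|x-y|^{1+2s}$ or $0$, both of which are controlled by the Slobodeckij seminorm of $f$. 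The $L^2$ part is trivial. The only non-trivial contribution comes from the opposite-sign regions, where $g(x)-g(y)=\pm f(x)$ (resp. $\pm f(y)$); by symmetry it suffices to bound
\[
I := \int_0^{\infty}\int_{-\infty}^{0} \frac{|f(x)|^2}{(x-y)^{1+2s}}\, dy\, dx.
\]

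Computing the inner integral gives $\int_{-\infty}^0 (x-y)^{-1-2s}\,dy = (2s)^{-1}x^{-2s}$, so
\[
I = \frac{1}{2s}\int_0^{\infty} \frac{|f(x)|^2}{x^{2s}}\, dx,
\]
and the desired bound $I \lesssim \|f\|_{H^s(\R)}^2$ is precisely the one-dimensional Hardy inequality, valid for $0<s<\tfrac12$ (the restriction $s<\tfrac12$ is what makes the weight locally integrable and guarantees no boundary obstruction at $x=0$).

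For $-\tfrac12 < s < 0$, I would argue by duality. Since $H^s(\R)$ and $H^{-s}(\R)$ are dual with $0 < -s < \tfrac12$, for any $\varphi \in C_c^{\infty}(\R)$,
\[
\bigl|\langle \chi_{(0,\infty)}f,\varphi\rangle\bigr| = \bigl|\langle f,\chi_{(0,\infty)}\varphi\rangle\bigr| \le \|f\|_{H^s(\R)}\,\|\chi_{(0,\infty)}\varphi\|_{H^{-s}(\R)} \lesssim \|f\|_{H^s(\R)}\|\varphi\|_{H^{-s}(\R)},
\]
where the last inequality applies the already-established bound at exponent $-s \in (0,\tfrac12)$. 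Taking the supremum over $\varphi$ yields the claim.

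The main obstacle is the positive-regularity case, specifically the Hardy-type estimate. The endpoint restriction $s < \tfrac12$ is sharp precisely because Hardy's inequality breaks down at $s=\tfrac12$; traces at $x=0$ would otherwise need to be controlled, and multiplication by $\chi_{(0,\infty)}$ creates a jump that is incompatible with $H^{1/2}$ regularity. The endpoint $s > -\tfrac12$ then follows by the dual breakdown. Everything else is routine bookkeeping.
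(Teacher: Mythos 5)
Your proof is correct. The paper does not include its own proof, deferring to \cite{JK1995, CK, CK2018-1}; the argument in those references is usually organized around the observation that on the Fourier side $\chi_{(0,\infty)}$ acts as a combination of the identity and the Hilbert transform, whose boundedness on $L^2(\langle\xi\rangle^{2s}\,d\xi)$ is governed by the $A_2$ condition on the weight $\langle\xi\rangle^{2s}$, satisfied precisely for $|s|<\frac12$. Your route through the Gagliardo--Slobodeckij seminorm plus the one-dimensional fractional Hardy inequality $\int_{\R}|f(x)|^2|x|^{-2s}\,dx \lesssim \|f\|_{\dot H^s(\R)}^2$, valid for $0<s<\frac12$, followed by duality for the negative range, is a different but equally standard path; it is more elementary in the sense that it avoids singular-integral machinery, at the price of importing the fractional Hardy inequality as a black box, whereas the multiplier approach makes the endpoint $|s|=\frac12$ transparent from the failure of the $A_2$ condition. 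One technical point worth making explicit in the duality step: for $-\frac12<s<0$ the product $\chi_{(0,\infty)}f$ is not a priori defined for a general distribution $f\in H^s$, and the identity $\langle\chi_{(0,\infty)}f,\varphi\rangle=\langle f,\chi_{(0,\infty)}\varphi\rangle$ should be read as the \emph{definition} of that product, well-posed because $\chi_{(0,\infty)}\varphi\in H^{-s}(\R)$ by the already-established positive case; equivalently, one approximates $f$ in $H^s$ by Schwartz functions and passes to the limit.
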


\begin{lemma}[Lemma 2.2 in \cite{CK2018-1}]\label{sobolev0}
	If $0\leq s<\frac{1}{2}$, then $\|\psi f\|_{H^s(\mathbb{R})}\leq c \|f\|_{\dot{H}^{s}(\mathbb{R})}$ and $\|\psi f\|_{\dot{H}^{-s}(\mathbb{R})}\leq c \|f\|_{H^{-s}(\mathbb{R})}$ , where the constant $c$ depends only on $s$ and $\psi$. 
\end{lemma}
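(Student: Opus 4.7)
\medskip

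\noindent\textbf{Proof proposal.} The plan is to establish the first estimate by splitting the inhomogeneous norm $\|\psi f\|_{H^s}^2 = \|\psi f\|_{L^2}^2 + \|\psi f\|_{\dot H^s}^2$ and bounding each piece by $\|f\|_{\dot H^s}^2$, and then to obtain the second estimate by duality. The case $s=0$ is trivial (just H\"older with $\|\psi\|_{L^\infty}$), so we may assume $0<s<1/2$, which is exactly the range in which one has the 1D Sobolev embedding
\[
\dot H^s(\R)\;\hookrightarrow\; L^{\frac{2}{1-2s}}(\R).
\]

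For the $L^2$ part of $\|\psi f\|_{H^s}$, I would combine the above embedding with H\"older's inequality: since $\psi\in\Sch(\R)$, in particular $\psi\in L^{1/s}(\R)$, one has
\[
\|\psi f\|_{L^2}\;\le\;\|\psi\|_{L^{1/s}}\,\|f\|_{L^{\frac{2}{1-2s}}}\;\lesssim\;\|f\|_{\dot H^s}.
\]
For the homogeneous piece, I would use the Gagliardo--Slobodeckij representation
\[
\|g\|_{\dot H^s(\R)}^2\;\sim_s\;\Int\frac{|g(x)-g(y)|^2}{|x-y|^{1+2s}}\,dx\,dy,
\]
apply it to $g=\psi f$, and split $\psi(x)f(x)-\psi(y)f(y)=\psi(x)(f(x)-f(y))+f(y)(\psi(x)-\psi(y))$. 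The first piece is bounded by $\|\psi\|_{L^\infty}^2\|f\|_{\dot H^s}^2$ on the nose. For the second piece, the job is to show
\[
K(y):=\int_{\R}\frac{|\psi(x)-\psi(y)|^2}{|x-y|^{1+2s}}\,dx
\]
belongs to $L^{1/(2s)}(\R)$; then H\"older with $|f|^2\in L^{1/(1-2s)}$ (from the Sobolev embedding again) closes the estimate. The $L^{1/(2s)}$ bound on $K$ comes from two standard contributions: near the diagonal, the Lipschitz bound $|\psi(x)-\psi(y)|\lesssim|x-y|$ gives an integrand $\le|x-y|^{1-2s}$ integrable because $s<1$; away from the diagonal, the $L^\infty$-bound on $\psi$ gives decay $|x-y|^{-1-2s}$ integrable because $s>0$. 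For $|y|$ large, compact support of $\psi$ further yields $K(y)\lesssim\langle y\rangle^{-1-2s}$, providing the required tail decay.

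For the second inequality, I would invoke the duality $\dot H^{-s}(\R)=(\dot H^s(\R))^*$ (which is unambiguous for $0<s<1/2$, since $\dot H^s$ contains no nontrivial polynomial in this range):
\[
\|\psi f\|_{\dot H^{-s}}\;=\;\sup_{\|\phi\|_{\dot H^s}=1}|\langle f,\psi\phi\rangle|\;\le\;\|f\|_{H^{-s}}\sup_{\|\phi\|_{\dot H^s}=1}\|\psi\phi\|_{H^s}\;\lesssim\;\|f\|_{H^{-s}},
\]
where the final step is the first inequality applied to $\phi$.

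The main obstacle I anticipate is the second piece of the Gagliardo computation: verifying that the kernel $K(y)$ behaves as claimed, which forces the explicit use of both $s>0$ (for convergence at the diagonal tail) and $s<1/2$ (to match with the Sobolev embedding exponent $2/(1-2s)$), explaining precisely why the sharp range is $0\le s<1/2$. Care is also needed in the $s=0$ endpoint and in justifying the duality identification; handling these two endpoints separately is the cleanest route.
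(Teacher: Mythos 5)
Your argument is correct. The paper quotes this lemma from \cite{CK2018-1} without reproducing a proof; the proof that appears in that literature (going back to Colliander--Kenig \cite{CK} and Holmer \cite{Holmerkdv}) is a Fourier-side one: write $\widehat{\psi f}=\widehat{\psi}\ast\widehat{f}$, use the pointwise bound $\langle\xi\rangle^{s}\lesssim\langle\xi-\eta\rangle^{s}|\eta|^{s}+\langle\xi-\eta\rangle^{s}$, treat the first contribution by Young's inequality, and for the second split the $\eta$-integral at $|\eta|=1$, where the convergence of $\int_{|\eta|\le 1}|\eta|^{-2s}\,d\eta$ is exactly what uses $s<\tfrac12$. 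Your physical-side route reaches the same conclusion via the Gagliardo--Slobodeckij seminorm together with the homogeneous Sobolev embedding $\dot H^{s}(\R)\hookrightarrow L^{2/(1-2s)}(\R)$, locating the restriction $s<\tfrac12$ in the embedding exponent rather than in a near-zero frequency integral; your kernel bound $K(y)\lesssim\langle y\rangle^{-1-2s}$ and the resulting $L^{1/(2s)}$ membership are correct. The Fourier proof is somewhat leaner (only Young and Cauchy--Schwarz, no Sobolev embedding as an outside input) and generalizes mechanically to other weights; your version has the merit of displaying the scaling-critical role of $s=\tfrac12$ through the embedding exponent. The duality reduction of the second inequality to the first is identical in both treatments. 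One small caveat worth stating explicitly: the first inequality is to be read for $f\in H^{s}(\R)$ or for test functions (then extended by density), so that $\|f\|_{\dot H^{s}}$ is a genuine norm rather than a seminorm --- this is implicit in your invocation of the Sobolev embedding.
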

Remark that Lemma \ref{sobolev0} is equivalent that $\|f\|_{H^s(\mathbb{R})} \sim \|f\|_{\dot{H}^{s}(\mathbb{R})}$ for $-\frac12 < s < \frac12$ where $f \in H^s$ with $\supp f \subset [0,1]$.

\begin{lemma}[Proposition 2.4 in \cite{CK}]\label{alta}
	If $\frac{1}{2}<s<\frac{3}{2}$ the following statements are valid:
	\begin{enumerate}
\item [(a)] $H_0^s(\R^+)=\big\{f\in H^s(\R^+);f(0)=0\big\},$\medskip
\item [(b)] If  $f\in H^s(\R^+)$ with $f(0)=0$, then $\|\chi_{(0,\infty)}f\|_{H_0^s(\R^+)}\leq c \|f\|_{H^s(\R^+)}$.
	\end{enumerate}
\end{lemma}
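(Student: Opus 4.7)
The plan is to reduce both parts of the lemma to the sub-$\frac{1}{2}$-regularity statement in Lemma \ref{sobolevh0}, via the elementary Fourier-side equivalence $\norm{g}_{H^s(\R)} \sim \norm{g}_{L^2(\R)} + \norm{g'}_{H^{s-1}(\R)}$, which is valid for all $s \in \R$ by Plancherel. The key observation is that for $s \in (\frac{1}{2}, \frac{3}{2})$ the shifted exponent satisfies $s-1 \in (-\frac{1}{2},\frac{1}{2})$, exactly the range in which multiplication by $\chi_{(0,\infty)}$ is bounded on $H^{s-1}(\R)$.

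The easy direction of (a), namely $H_0^s(\R^+) \subset \set{f \in H^s(\R^+); f(0)=0}$, is settled by Sobolev embedding: if $f \in H_0^s(\R^+)$ then its zero extension $\wt{f}$ lies in $H^s(\R) \hookrightarrow C(\R)$ since $s > \frac{1}{2}$, and as $\wt{f}$ vanishes on $(-\infty,0)$, continuity forces $\wt{f}(0) = 0$. The reverse inclusion follows at once from (b), because $\chi_{(0,\infty)} F$ provides an extension of $f$ to $\R$ that vanishes on $(-\infty,0)$ and still sits in $H^s(\R)$. Hence the proof reduces to establishing the estimate in (b).

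To prove (b), I fix $f \in H^s(\R^+)$ with $f(0)=0$ and pick any extension $F \in H^s(\R)$ with $\norm{F}_{H^s(\R)} \le 2 \norm{f}_{H^s(\R^+)}$. Since $s > \frac{1}{2}$, $F$ is continuous with well-defined trace $F(0) = f(0) = 0$, and the distributional derivative decomposes as
\[(\chi_{(0,\infty)} F)' = \chi_{(0,\infty)} F' + F(0)\, \delta_0 = \chi_{(0,\infty)} F',\]
the Dirac contribution being killed precisely by the vanishing trace. Now $F' \in H^{s-1}(\R)$ with $s-1 \in (-\frac{1}{2},\frac{1}{2})$, so Lemma \ref{sobolevh0} applied at exponent $s-1$ gives $\norm{\chi_{(0,\infty)} F'}_{H^{s-1}(\R)} \lesssim \norm{F'}_{H^{s-1}(\R)} \lesssim \norm{F}_{H^s(\R)}$. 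Combining with the trivial bound $\norm{\chi_{(0,\infty)} F}_{L^2(\R)} \le \norm{F}_{L^2(\R)}$ through the norm equivalence above produces $\norm{\chi_{(0,\infty)} F}_{H^s(\R)} \lesssim \norm{F}_{H^s(\R)}$, and taking the infimum over extensions $F$ completes (b). The principal obstacle is justifying the distributional identity for $(\chi_{(0,\infty)} F)'$ in the first place: this is the exact point at which the threshold $s > \frac{1}{2}$ enters, since the boundary correction $F(0)\delta_0$ is meaningful only once the trace exists, and $\delta_0 \notin H^{s-1}(\R)$ precisely when $s \le \frac{1}{2}$, so the identity would fail at lower regularity for reasons that are not merely technical but structural.
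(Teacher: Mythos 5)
The paper does not prove this lemma; it is quoted from Colliander--Kenig \cite{CK} and Jerison--Kenig \cite{JK1995} without reproduction, so there is no in-text proof to compare against. That said, your strategy — differentiate once to drop into the exponent range $(-\tfrac12,\tfrac12)$ where Lemma \ref{sobolevh0} applies, and observe that the boundary term $F(0)\delta_0$ is annihilated by the zero-trace hypothesis — is the standard route and, I believe, essentially the argument in \cite{CK}. The structure is sound: the easy inclusion of (a) via Sobolev embedding, the reverse inclusion via (b), and the derivative decomposition for (b).

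Two inaccuracies are worth flagging, one of which is a genuine (if small) gap. First, the identity $(\chi_{(0,\infty)} F)' = \chi_{(0,\infty)} F'$ is immediate only when $s \ge 1$, where $F$ is absolutely continuous and $F' \in L^2$. In the subrange $\tfrac12 < s < 1$ the object $\chi_{(0,\infty)} F'$ only makes sense through the bounded extension of multiplication by $\chi_{(0,\infty)}$ to $H^{s-1}(\R)$ with $s-1 < 0$ (supplied by Lemma \ref{sobolevh0}), and the identity must then be verified by density: approximate $F$ in $H^s$ by smooth $F_n$ with $F_n(0)=0$ (possible since the trace map is continuous for $s>\tfrac12$), apply the classical Leibniz rule to each $F_n$, and pass to the limit, using that $\chi_{(0,\infty)} F_n' \to \chi_{(0,\infty)} F'$ in $H^{s-1}$. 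You identify this as "the principal obstacle" but do not close it; a sentence carrying out the density argument would finish the proof. Second, you write that $\delta_0 \notin H^{s-1}(\R)$ precisely when $s \le \tfrac12$; since $\widehat{\delta_0} \equiv 1$, one has $\delta_0 \in H^\sigma(\R)$ iff $\sigma < -\tfrac12$, so the correct statement is $\delta_0 \notin H^{s-1}(\R)$ precisely when $s \ge \tfrac12$. This is the inequality you actually want: for $s>\tfrac12$ a nonzero $\delta_0$-contribution would expel $(\chi_{(0,\infty)} F)'$ from $H^{s-1}$, which is exactly why $f(0)=0$ is not merely sufficient but necessary. Finally, the norm equivalence $\norm{g}_{H^s} \sim \norm{g}_{L^2}+\norm{g'}_{H^{s-1}}$ is claimed for all $s\in\R$ but in fact fails for $s<0$; this is harmless because you only invoke it for $s\in(\tfrac12,\tfrac32)$, where it holds.
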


\begin{lemma}[Proposition 2.5. in \cite{CK}]\label{cut}
	Let $f\in  H_0^s(\mathbb{R}^+)$. For the cut-off function $\psi$ defined in \eqref{eq:cutoff0}, we have $ \|\psi f\|_{H_0^s(\mathbb{R}^+)}\leq c \|f\|_{H_0^s(\mathbb{R}^+)}$ for $-\infty<s<\infty$.
\end{lemma}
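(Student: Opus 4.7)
The plan is to reduce the statement to the classical fact that multiplication by a Schwartz function is a bounded operator on $H^s(\R)$ for every $s \in \R$. First I would unwind the definition of $\|\cdot\|_{H_0^s(\R^+)}$: by hypothesis, $f \in H_0^s(\R^+)$ has a distinguished extension $F \in H^s(\R)$ with $\mathrm{supp}\, F \subset [0,\infty)$ and $\|f\|_{H_0^s(\R^+)} = \|F\|_{H^s(\R)}$. Because $\psi \in C_0^\infty(\R)$ is defined on all of $\R$, the product $\psi F$ (understood in $\mathcal{S}'(\R)$) is still supported in $[0,\infty)$ and restricts to $\psi f$ on $\R^+$, so $\psi F$ is the extension-by-zero of $\psi f$, giving $\|\psi f\|_{H_0^s(\R^+)} = \|\psi F\|_{H^s(\R)}$. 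The lemma therefore reduces to the multiplier estimate
\[
\|\psi F\|_{H^s(\R)} \leq c(\psi,s)\,\|F\|_{H^s(\R)}, \qquad \forall\, F \in H^s(\R),\; s \in \R.
\]

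For $s = n \in \mathbb{N}_0$ this is immediate from the Leibniz rule together with $\|\partial^k \psi\|_{L^\infty} < \infty$ for every $k \geq 0$. For arbitrary $s \geq 0$ I would either complex-interpolate between two integer endpoints $[s], [s]+1$, or equivalently work on the Fourier side, writing $\widehat{\psi F} = \widehat\psi * \widehat F$ and estimating
\[
\langle \xi \rangle^{s} \leq c_s\, \langle \xi - \eta \rangle^{s}\, \langle \eta \rangle^{s},
\]
so that the rapid decay of $\widehat\psi$ (as the Fourier transform of a Schwartz function) yields the desired $L^2$ bound by a straightforward convolution (Young/Minkowski) argument.

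For $s<0$ I would dualize. Since $H^s(\R) = (H^{-s}(\R))^*$ with $-s>0$, for any $\phi \in \mathcal{S}(\R)$,
\[
|\langle \psi F, \phi \rangle| = |\langle F, \psi \phi \rangle| \leq \|F\|_{H^s(\R)}\, \|\psi \phi\|_{H^{-s}(\R)} \leq c\, \|F\|_{H^s(\R)}\, \|\phi\|_{H^{-s}(\R)},
\]
where the last inequality is the multiplier bound of the previous paragraph applied with the positive exponent $-s$. Taking the supremum over $\phi$ with $\|\phi\|_{H^{-s}(\R)} \leq 1$ closes the estimate.

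The one step that is not completely mechanical is the non-integer positive case, which genuinely requires either an interpolation theorem or a Fourier-side convolution estimate; once that is in hand, the duality step is formal and the support-preservation observation is trivial. The implicit constant depends only on a finite number of $L^\infty$-norms of derivatives of $\psi$ (and on $s$), which is consistent with the statement of the lemma.
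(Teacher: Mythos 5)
Your proposal is correct. The paper itself does not supply a proof of this lemma; it cites Proposition 2.5 in \cite{CK}, and your argument is precisely the canonical one behind that reference. The pivotal observation is the support-preservation step: if $F$ is the zero extension of $f$ to $\R$ (so $\supp F\subset[0,\infty)$ and $\|f\|_{H_0^s(\R^+)}=\|F\|_{H^s(\R)}$ by definition), then $\psi F$ still has support in $[0,\infty)$ and restricts to $\psi f$ on $\R^+$, hence $\psi F$ \emph{is} the zero extension of $\psi f$ and $\|\psi f\|_{H_0^s(\R^+)}=\|\psi F\|_{H^s(\R)}$. This reduces the lemma to the classical fact that a fixed $C_0^\infty$ function is a bounded Fourier multiplier on $H^s(\R)$ for every $s$, which you establish correctly: the Peetre inequality $\bra{\xi}^s\lesssim_s\bra{\xi-\eta}^s\bra{\eta}^s$ plus Young's inequality in $\widehat{\psi F}=\widehat\psi\ast\widehat F$ handles $s\ge0$ (and the rapid decay of $\widehat\psi$ makes the $L^1$ weight finite), and duality transfers the bound to $s<0$. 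One small remark: the Fourier-side convolution argument actually closes all $s\in\R$ in one stroke if one uses the two-sided form $\bra{\xi}^s\lesssim_s\bra{\xi-\eta}^{|s|}\bra{\eta}^s$, making the separate duality step optional; but the split you chose is equally standard and equally correct.
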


Let $f \in \Sch (\R^2) $. We define the Fourier transform of $f$ with respect to both spatial and time variables by
\[\wt{f}(\tau , \xi)=\int _{\R ^2} e^{-ix\xi}e^{-it\tau}f(t,x) \;dxdt,\]
and denote by $\wt{f}$ or $\ft (f)$. We use $\ft_x$ and $\ft_t$ (or $\wh{\;}$ without distinction of variables) to denote the Fourier transform with respect to space and time variable respectively. 

For $s,b\in \mathbb{R}$, the classical Bourgain space $X^{s,b}$ \cite{Bourgain1993} associated to \eqref{eq:5kdv} is defined as the completion of $\Sch'(\mathbb{R}^2)$ under the norm
\[\norm{f}_{X^{s,b}}^2 = \int_{\R^2} \bra{\xi}^{2s}\bra{\tau - \xi^5}^{2b}|\wt{f}(\tau,\xi)|^2 \; d\xi d\tau, \]
where $\bra{\cdot} = (1+|\cdot|^2)^{1/2}$. 

As already mentioned in Section \ref{sec:intro}, the modifications of $X^{s,b}$ spaces are needed for our analysis due to the Duhamel boundary forcing operator and the time trace estimates. The modulation exponent $b$ of the standard $X^{s,b}$ space is forced to be taken in the range $(0,\frac12)$ from the $X^{s,b}$ estimation of the Duhamel boundary forcing terms (see Lemma \ref{edbf} (c)). On the other hand, very low frequency interactions in the nonlinear estimates compel the exponent $b$ to be bigger than $1/2$. To balance these inter-contradiction conditions, we define the low frequency localized $X^{s,b}$-type space $D^{\alpha}$ as the completion of $\Sch'(\mathbb{R}^2)$ under the norm
\[\norm{f}_{D^{\alpha}}^2 = \int_{\R^2} \bra{\tau}^{2\alpha}\mathbf{1}_{\set{\xi : |\xi| \le 1}}(\xi)|\wt{f}(\tau,\xi)|^2 \; d\xi d\tau,\]
where $\mathbf{1}_A$ is the characteristic functions on a set $A$. 

Besides, the time trace estimate of the Duhamel parts 
\[\|\psi_T(t) \partial_x^2\mathcal{D}w(x,t)\|_{C(\mathbb{R}_x;H^{\frac{s}{5}}(\mathbb{R}_t))}\lesssim T^{\theta}\|w\|_{X^{s,-b}}\]
holds only for the positive regularity. To meet the negative regularity in the nonlinear estimate (see Lemma \ref{duhamel} (b)), it is necessary to define the (time-adapted) Bourgain space $Y^{s,b}$ associated to \eqref{eq:5kdv} as the completion of $S'(\mathbb{R}^2)$ under the norm
\[\norm{f}_{Y^{s,b}}^2 = \int_{\R^2} \bra{\tau}^{\frac{2s}{5}}\bra{\tau - \xi^5}^{2b}|\wt{f}(\tau,\xi)|^2 \; d\xi d\tau.\]

We make a Littlewood-Paley decomposition. Let $\Z_+ = \Z \cap [0,\infty)$. For $k \in \Z_+$, we set
\[I_0 = \set{\xi \in \R : |\xi| \le 2} \hspace{1em} \mbox{ and } \hspace{1em} I_k = \set{\xi \in \R : 2^{k-1} \le |\xi| \le 2^{k+1}}, \hspace{1em} k \ge 1.\]
Let $\eta_0: \R \to [0,1]$ denote a smooth bump function supported in $ [-2,2]$ and equal to $1$ in $[-1,1]$. For $k \in \Z_+ $, we define 
\[\chi_0(\xi) = \eta_0(\xi), \hspace{1em} \mbox{and} \hspace{1em} \chi_k(\xi) = \eta_0(\xi/2^k) - \eta_0(\xi/2^{k-1}), \hspace{1em} k \ge 1,\]
on the support $I_k$. Let $P_k$ denote the $L^2$ operators defined by $\widehat{P_kv}(\xi)=\chi_k(\xi)\wh{v}(\xi)$. For the modulation decomposition, we use the multiplier $\eta_j$, but the same as $\eta_j(\tau-\xi^5) = \chi_j(\tau-\xi^5)$. For $k,j \in \Z_+$, let
\[D_{k,j}=\{(\tau,\xi) \in \R^2 : \tau - \xi^5 \in I_j, \xi \in I_k \}, \hspace{2em} D_{k,\le j}=\cup_{l\le j}D_{k,l}.\]
The Littlewood-Paley theory allows that
\begin{equation}\label{eq:dyadic X}
\norm{f}_{X^{s,b}}^2 \sim \sum_{k\ge0}\sum_{j\ge0}2^{2sk}2^{2bj}\norm{\eta_j(\tau-\xi^5)\chi_k(\xi)\wt{f}(\tau,\xi)}_{L^2}^2
\end{equation}
and
\[\norm{f}_{D^{\alpha}}^2 \sim \norm{P_0f}_{X^{0,\alpha}}^2.\]

We define the solution space denoted by $Z_{\ell}^{s,b,\alpha}$ under the norm\footnote{$Y^{s,b}$ norm plays a role of the intermediate norm in the Picard iteration argument (see Lemma \ref{duhamel} (b) and Section \ref{sec:nonlinear}).}:
\begin{equation}\label{eq:solution space}
\norm{f}_{Z_{\ell}^{s,b,\alpha}(\R^2) }= \sup_{t \in \R} \norm{f(t,\cdot)}_{H^s} + \sum_{j=0}^{\ell}\sup_{x \in \R} \norm{\px^jf(\cdot,x)}_{H^{\frac{s+2-j}{5}}} + \norm{f}_{X^{s,b} \cap D^{\alpha}},
\end{equation}
for $\ell = 1,2$. From the boundary conditions in \eqref{regularidade} and \eqref{regularidadeleft}, one can see that $Z_{1}^{s,b,\alpha}$ ($Z_{2}^{s,b,\alpha}$) space is for the right half-line (left half-line) problem (see Section \ref{sec:thm proof}). The standard spatial and time localization of $Z_{\ell}^{s,b,\alpha}(\R^2)$ is 
\[Z_{\ell}^{s,b,\alpha}((0 , T)\times \R^+) = Z_{\ell}^{s,b,\alpha} \Big|_{(0 , T)\times \R^+}\]
equipped with the norm
\[\norm{f}_{Z_{\ell}^{s,b,\alpha}((0 , T)\times \R^+) } = \inf\limits_{g \in Z_{\ell}^{s,b,\alpha}} \set{\norm{g}_{Z_{\ell}^{s,b,\alpha}} : g(t,x) = f(t,x) \; \mbox{ on } \; (0,T) \times \R^+}. \]

\section{Nonlinear estimates}\label{sec:nonlinear}
In this section, we are going to establish nonlinear estimates, in particular, the control of $\norm{F(u)}_{X^{s,-b}}$ and $\norm{F(u)}_{Y^{s,-b}}$.

\subsection{$L^2$-block estimates}
Let $a_1,a_2,a_3 \in \R$. The quantities $a_{max} \ge a_{med} \ge a_{min}$ can be conveniently defined to be the maximum, median and minimum values of $a_1,a_2,a_3$ respectively. Similarly, for $b_1,b_2,b_3,b_4 \in \R$, the quantities $b_{max} \ge b_{sub} \ge b_{thd} \ge b_{min}$ are defined to be the maximum, sub-maximum, third-maximum  and minimum values of $b_1,b_2,b_3,b_4$ respectively.

For $\xi_1,\xi_2 \in \R$, let denote the (quadratic) resonance function by
\begin{equation}\label{eq:resonant function}
\begin{aligned}
H = H(\xi_1,\xi_2) &= (\xi_1 + \xi_2)^5 - \xi_1^5 -\xi_2^5 \\
&=\frac52\xi_1\xi_2(\xi_1+\xi_2)(\xi_1^2+\xi_2^2+(\xi_1+\xi_2)^2),
\end{aligned}
\end{equation}
which plays an crucial role in the bilinear $X^{s,b}$-type estimates. 

Let $f,g,h \in L^2(\R^2)$ be compactly supported functions. We define a quantity by
\[J_2(f,g,h) = \int_{\R^4} f(\zeta_1,\xi_1)g(\zeta_2,\xi_2)h(\zeta_1+\zeta_2+H(\xi_1,\xi_2), \xi_1+\xi_2) \; d\xi_1d\xi_2d\zeta_1\zeta_2. \]
The change of variables in the integration yields
\[J_2(f,g,h)=J_2(g^*,h,f)=J_2(h,f^*,g),\]
where 
\begin{equation}\label{f*}
f^*(\zeta,\xi)=f(-\zeta,-\xi).
\end{equation}

From the identities
\begin{equation}\label{eq:symmetry1}
\xi_1 + \xi_2 = \xi_3
\end{equation}
and
\begin{equation}\label{eq:symmetry2}
(\tau_1 - \xi_1^5)+(\tau_2 - \xi_2^5) = (\tau_3 - \xi_3^5) + H(\xi_1,\xi_2)
\end{equation}
on the support of $J_2(f^{\sharp},g^{\sharp},h^{\sharp})$, where $f^{\sharp}(\tau,\xi) = f(\tau-\xi^5,\xi)$ with the property $\norm{f}_{L^2} = \norm{f^{\sharp}}_{L^2}$, we see that $J(f^{\sharp},g^{\sharp},h^{\sharp})$ vanishes unless
\begin{equation}\label{eq:support property}
\begin{array}{c}
2^{k_{max}} \sim 2^{k_{med}} \gtrsim 1\\
2^{j_{max}} \sim \max(2^{j_{med}}, |H|).
\end{array}
\end{equation}

We give the bilinear $L^2$-block estimates for the quadratic nonlinearity $F(u) = (1-\px^2)^{\frac12}\px (u^2)$. See \cite{CLMW2009, CG2011} for the proof. 
\begin{lemma}\label{lem:block estimate} 
	Let $k_i \in \Z,j_i\in \Z_+,i=1,2,3$. Let $f_{k_i,j_i} \in L^2(\R\times\R) $ be nonnegative functions supported in $[2^{k_i-1},2^{k_i+1}]\times I_{j_i}$.
	
	(a) For any $k_1,k_2,k_3 \in \Z$ with $|k_{max}-k_{min}| \le 5$ and $j_1,j_2,j_3 \in \Z_+$, then we have
	\[J_2(f_{k_1,j_1},f_{k_2,j_2},f_{k_3,j_3}) \lesssim 2^{j_{min}/2}2^{j_{med}/4}2^{- \frac34 k_{max}}\prod_{i=1}^3 \|f_{k_i,j_i}\|_{L^2}.\]
	
	(b) If $2^{k_{min}} \ll 2^{k_{med}} \sim 2^{k_{max}}$, then for all $i=1,2,3$ we have
	\[J_2(f_{k_1,j_1},f_{k_2,j_2},f_{k_3,j_3}) \lesssim 2^{(j_1+j_2+j_3)/2}2^{-3k_{max}/2}2^{-(k_i+j_i)/2}\prod_{i=1}^3 \|f_{k_i,j_i}\|_{L^2}.\]
	
	(c) For any $k_1,k_2,k_3 \in \Z$ and $j_1,j_2,j_3 \in \Z_+$, then we have
	\[J_2(f_{k_1,j_1},f_{k_2,j_2},f_{k_3,j_3}) \lesssim 2^{j_{min}/2}2^{k_{min}/2}\prod_{i=1}^3 \|f_{k_i,j_i}\|_{L^2}.\]
\end{lemma}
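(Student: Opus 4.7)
My plan would be the standard Cauchy--Schwarz / measure-of-level-set scheme for bilinear $L^2$-block estimates in Bourgain-type spaces (cf.\ Tao's $[k;Z]$-multiplier framework): for each part, I would fix two of the three labels along which to apply Cauchy--Schwarz, peel off two of the $L^2$-norms, and reduce the remaining integral to bounding the measure of a sliced set cut out by the support conditions \eqref{eq:support property} and the convolution constraint involving the resonance function $H$ in \eqref{eq:resonant function}. All three bounds differ only in how much cancellation the Jacobian of the map $\Phi(\xi_1,\xi_2) = (\xi_1+\xi_2,\, H(\xi_1,\xi_2))$ provides, so the first real computation to do is
\[\det D\Phi(\xi_1,\xi_2) = \partial_{\xi_1}H - \partial_{\xi_2}H = \frac{5}{2}(\xi_1-\xi_2)\Bigl[3(\xi_1+\xi_2)^2(\xi_1^2+\xi_2^2)+ \text{lower order}\Bigr],\]
together with the dual expressions with respect to the other two pairings $(\xi_1,\xi_3)$ and $(\xi_2,\xi_3)$.

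\textbf{Part (c).} This is the baseline, without any resonance gain. After relabeling so that the $\min$-index is realized by factor~$1$, write $J_2$ as an outer integration in $(\zeta_1,\xi_1)$ of the ``inner'' $\zeta_2,\xi_2$-convolution. Cauchy--Schwarz in $(\zeta_1,\xi_1)$, whose support is a rectangle of measure $\lesssim 2^{k_{min}}\cdot 2^{j_{min}}$, pulls out $\|f_{k_1,j_1}\|_{L^2}$ and leaves the $L^2_{\zeta_1,\xi_1}$-norm of the inner integral; another Cauchy--Schwarz plus the change of variable $(\zeta_2,\xi_2) \mapsto (\zeta_1+\zeta_2+H,\xi_1+\xi_2)$ (Jacobian $1$) yields $\|f_{k_2,j_2}\|_{L^2}\|f_{k_3,j_3}\|_{L^2}$. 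This produces the factor $2^{k_{min}/2}2^{j_{min}/2}$.

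\textbf{Parts (a) and (b).} Here I would use the Jacobian of $\Phi$ to exploit the transversality encoded in $H$. Apply Cauchy--Schwarz saving the two factors with the \emph{largest} modulations (so the $2^{j_{min}/2}$ factor survives from the remaining $\zeta$-integration), and reduce to estimating, for each fixed $(\xi_3,\zeta_3)$, the measure of the set of $(\xi_1,\xi_2)$ with $\xi_1+\xi_2=\xi_3$ fixed and $H(\xi_1,\xi_2)$ constrained to an interval of length $\sim 2^{j_{med}}$. For part~(b), the frequency asymmetry $2^{k_{min}}\ll 2^{k_{max}}$ gives $|\det D\Phi|\sim 2^{4k_{max}}\cdot 2^{k_{min}}$ without degeneracy, and dividing $2^{j_{med}}$ by this Jacobian produces the claimed $2^{-3k_{max}/2}2^{-(k_i+j_i)/2}$ once one optimizes the index $i$ indicating which of the three factors is saved.

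\textbf{Main obstacle (part (a)).} The genuinely delicate case is when all three frequencies are comparable, $|k_{max}-k_{min}|\le 5$. Here $\det D\Phi$ degenerates on $\{\xi_1=\xi_2\}$, and the naive measure estimate fails. The standard remedy, which I would adopt, is a dyadic subdivision in $|\xi_1-\xi_2|$: on the resonant piece $|\xi_1-\xi_2|\lesssim 2^{k_{max}/2}$ use the trivial estimate from part~(c), and on $|\xi_1-\xi_2|\sim 2^{m}$ with $m>k_{max}/2$ use the Jacobian bound $|\det D\Phi|\sim 2^{3k_{max}}\cdot 2^{m}$. Optimizing the threshold between these two regimes at $m \approx k_{max}/2$ (which is exactly what the resonance identity $2^{j_{max}}\sim |H|$ dictates) produces the sharp exponent $2^{-3k_{max}/4}$ together with the modulation factor $2^{j_{min}/2}2^{j_{med}/4}$. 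Verifying the arithmetic of this optimization, and checking that the symmetries $J_2(f,g,h)=J_2(g^*,h,f)=J_2(h,f^*,g)$ indeed allow the relabeling to whichever index is $j_{min}$ (respectively $j_{med}$), is the step I expect to take the most care.
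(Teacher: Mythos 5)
The paper does not prove this lemma; it defers entirely to \cite{CLMW2009, CG2011} (and the bilinear level-set method going back to \cite{Tao2001}), and those sources do use the Cauchy--Schwarz/sliced-measure scheme you propose, so your overall route is the right one. Nevertheless several of your quantitative steps are wrong in ways that would change the exponents, so the proof as written does not go through.

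\textbf{The Jacobian.} With $H(\xi_1,\xi_2)=(\xi_1+\xi_2)^5-\xi_1^5-\xi_2^5$ a direct computation gives
\[
\partial_{\xi_1}H-\partial_{\xi_2}H \;=\; 5\xi_2^4-5\xi_1^4 \;=\; -5(\xi_1-\xi_2)(\xi_1+\xi_2)(\xi_1^2+\xi_2^2),
\]
and likewise the derivative that actually enters the level-set estimate after fixing $(\zeta_3,\xi_3)$ and integrating in $\xi_1$ (with $\xi_2=\xi_3-\xi_1$) is $\partial_{\xi_1}H=5(\xi_2^4-\xi_1^4)$. Your displayed formula carries an extra factor of $(\xi_1+\xi_2)$ (and a different coefficient), which in the high-high-high case would give $|\det D\Phi|\sim|\xi_1-\xi_2|\,2^{4k_{max}}$ and hence, after your optimization, $2^{-k_{max}}$ instead of the stated $2^{-3k_{max}/4}$. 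The correct form $|\partial_{\xi_1}H|\sim|\xi_1-\xi_2|\,2^{3k_{max}}$ leads, after a dyadic sum in $m$ (with $|\xi_1-\xi_2|\sim 2^m$), to the balance $2^m\sim 2^{j_{med}-m-3k_{max}}$, i.e.\ $m\approx(j_{med}-3k_{max})/2$, and hence to $|A_{\zeta_3,\xi_3}|^{1/2}\lesssim 2^{j_{min}/2}2^{j_{med}/4}2^{-3k_{max}/4}$. Your claim that the threshold is $\approx k_{max}/2$ ``because $2^{j_{max}}\sim|H|$'' conflates $j_{max}$ with $j_{med}$: the resonance identity constrains $j_{max}$, while the quantity entering the optimization is $j_{med}$, which is free.

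\textbf{Part (b).} The asserted $|\det D\Phi|\sim 2^{4k_{max}}\cdot 2^{k_{min}}$ matches none of the configurations: when $\xi_1,\xi_2$ are the two high modes it is $\sim 2^{3k_{max}+k_{min}}$, and when one of $\xi_1,\xi_2$ is the low mode it is $\sim 2^{4k_{max}}$. The uniform statement that makes (b) fall out is that, after saving the factor labeled $i$ and integrating in the remaining pair, the relevant derivative satisfies $|\partial_{\xi_l}H|\sim 2^{3k_{max}+k_i}$ in every sub-case of $2^{k_{min}}\ll 2^{k_{med}}\sim 2^{k_{max}}$. Then $|A|\lesssim 2^{\min(j_l,j_m)}\cdot 2^{\max(j_l,j_m)}/2^{3k_{max}+k_i}=2^{j_l+j_m-3k_{max}-k_i}$, whose square root is exactly $2^{(j_1+j_2+j_3)/2}2^{-3k_{max}/2}2^{-(k_i+j_i)/2}$.

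\textbf{Part (c).} Your double Cauchy--Schwarz returns $2^{(k_1+j_1)/2}$ for whichever label you integrate first, which equals $2^{(k_{min}+j_{min})/2}$ only when $k_{min}$ and $j_{min}$ sit on the same label. In general you need the single-save version: fix the factor $f_p$ for a $p$ that carries neither $j_{min}$ nor $k_{min}$ (such a $p$ always exists, and can be reached via the cyclic symmetries $J_2(f,g,h)=J_2(g^*,h,f)=J_2(h,f^*,g)$), and estimate the sliced set: for each $\xi$ its $\zeta$-measure is $\lesssim 2^{\min(j_l,j_m)}=2^{j_{min}}$, and the admissible $\xi$-set has measure $\lesssim 2^{\min(k_l,k_m)}=2^{k_{min}}$, which gives $2^{j_{min}/2}2^{k_{min}/2}$ without needing both minima on one label.
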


Similarly, for $\xi_1,\xi_2,\xi_3 \in \R$, let
\begin{equation}\label{eq:tri-resonant function}
\begin{aligned}
G(\xi_1,\xi_2,\xi_3) &= (\xi_1+\xi_2+\xi_3)^5 - \xi_1^5 - \xi_2^5 - \xi_3^5\\
&= \frac52(\xi_1+\xi_2)(\xi_2 + \xi_3)(\xi_3 + \xi_1)(\xi_1^2 + \xi_2^2 +\xi_3^2+(\xi_1 + \xi_2 + \xi_3)^2)
\end{aligned}
\end{equation}
be the (cubic) resonance function, which plays an important role in the trilinear $X^{s,b}$-type estimates. 

For compactly supported functions $f_i \in L^2(\R \times \R)$, $i=1,2,3,4$, we define 
\[J_3(f_1,f_2,f_3,f_4) = \int_{*}f_1(\zeta_1,\xi_1)f_2(\zeta_2,\xi_2)f_3(\zeta_3,\xi_3)f_4(\zeta_1 + \zeta_2 + \zeta_3  + G(\xi_1,\xi_2,\xi_3), \xi_1 + \xi_2 + \xi_3) ,\]
where the $\int_* = \int_{\R^6} \cdot \; d\xi_1d\xi_2 d\xi_3 d\zeta_1 d \zeta_2 d\zeta_3$. From the identities
\begin{equation}\label{eq:symmetry3}
\xi_1+\xi_2+\xi_3 = \xi_4
\end{equation}
and
\begin{equation}\label{eq:symmetry4}
(\tau_1 - \xi_1^5) + (\tau_2 - \xi_2^5) + (\tau_3 - \xi_3^5) = (\tau_4 - \xi_4^5) + G(\xi_1,\xi_2,\xi_3)
\end{equation}
on the support of $J_3(f_1^{\sharp},f_2^{\sharp},f_3^{\sharp},f_4^{\sharp})$, we see that $J_3(f_1^{\sharp},f_2^{\sharp},f_3^{\sharp},f_4^{\sharp})$ vanishes unless
\begin{equation}\label{eq:tri-support property}
\begin{array}{c}
2^{k_{max}} \sim 2^{k_{sub}}\\
2^{j_{max}} \sim \max(2^{j_{sub}}, |G|),
\end{array}
\end{equation}
where $|\xi_i| \sim 2^{k_i}$ and $|\zeta_i| \sim 2^{j_i}$, $i=1,2,3,4$. A direct calculation shows 
\begin{equation}\label{eq:symmetry}
|J(f_1,f_2,f_3,f_4)|=|J(f_2,f_1,f_3,f_4)|=|J(f_3,f_2,f_1,f_4)|=|J(f_1^{\ast},f_2^{\ast},f_4,f_3)|,
\end{equation}
for $f^{\ast}$ as in \eqref{f*}.

The following lemma provides the trilinear $L^2$-block estimates for the cubic nonlinearity $F(u) = \px (u^3)$.
\begin{lemma}\label{lem:tri-L2}
Let $k_i, j_i\in \Z_+$, $i=1,2,3,4$. Let $f_{k_i,j_i} \in L^2(\R \times \R) $ be nonnegative functions supported in $I_{j_i} \times I_{k_i}$.

(a) For any $k_i,j_i \in \Z_+$, $i=1,2,3,4$, we
have
\begin{eqnarray}\label{eq:tri-block estimate-a1}
J_3(f_{k_1,j_1},f_{k_2,j_2},f_{k_3,j_3},f_{k_4,j_4}) \lesssim 2^{(j_{min}+j_{thd})/2}2^{(k_{min}+k_{thd})/2}\prod_{i=1}^4 \|f_{k_i,j_i}\|_{L^2}.
\end{eqnarray}

(b) Let $k_{thd} \le k_{max}-10$.

(b-1) If $(k_i,j_i) = (k_{thd},j_{max})$ for $i=1,2,3,4$, we have
 \[J_3(f_{k_1,j_1},f_{k_2,j_2},f_{k_3,j_3},f_{k_4,j_4}) \lesssim 2^{(j_1+j_2+j_3+j_4)/2}2^{-2k_{max}}2^{k_{thd}/2}2^{-j_{max}/2}\prod_{i=1}^4 \|f_{k_i,j_i}\|_{L^2}.\]

(b-2) If $(k_i,j_i) \neq (k_{thd},j_{max})$ for $i=1,2,3,4$, we have
\[ J_3(f_{k_1,j_1},f_{k_2,j_2},f_{k_3,j_3},f_{k_4,j_4}) \lesssim 2^{(j_1+j_2+j_3+j_4)/2}2^{-2k_{max}}2^{k_{min}/2}2^{-j_{max}/2}\prod_{i=1}^4 \|f_{k_i,j_i}\|_{L^2}.\]
\end{lemma}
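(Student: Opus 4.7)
The plan is to follow the standard $[k;Z]$-multiplier approach: bound $J_3$ by iterated Cauchy--Schwarz and Young-type convolution inequalities, using the symmetries \eqref{eq:symmetry} to freely permute the roles of $f_1,f_2,f_3,f_4$. Part (a) is a crude estimate that ignores resonance; part (b) exploits the large size of $|\partial G/\partial \xi|$ in the two large-frequency directions, afforded by the hypothesis $k_{\thd}\le k_{\max}-10$, via a change of variables into the resonance function $G$.

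For part (a), the plan is to decouple the $\zeta$- and $\xi$-integrations. For fixed $\xi_1,\xi_2,\xi_3$ the inner $\zeta$-integral is a three-fold convolution in which $G$ appears only as a constant translation; Young's inequality
\[
\norm{h_a*h_b*h_c}_{L^2_\zeta} \lesssim \norm{h_a}_{L^2}\norm{h_b}_{L^1}\norm{h_c}_{L^1}
\]
combined with $\norm{h_i}_{L^1}\lesssim 2^{j_i/2}\norm{h_i}_{L^2}$ yields $2^{(j_a+j_b)/2}\prod_i\norm{h_i}_{L^2}$ for any chosen pair of indices $(a,b)$. Optimizing by \eqref{eq:symmetry} so as to extract the two smallest $j$'s produces the factor $2^{(j_{\min}+j_{\thd})/2}$. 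After reducing to the functions $F_i(\xi):=\norm{f_i(\cdot,\xi)}_{L^2_\zeta}$ supported in $I_{k_i}$, the same three-fold convolution argument applied to the $\xi$-integration under the constraint $\xi_4=\xi_1+\xi_2+\xi_3$ delivers the companion $2^{(k_{\min}+k_{\thd})/2}$.

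For part (b), the new ingredient is the resonance gain. Under $k_{\thd}\le k_{\max}-10$, two of $|\xi_1|,\dots,|\xi_4|$ are $\sim 2^{k_{\max}}$ and the other two are $\le 2^{k_{\max}-10}$, and a direct computation gives $|\partial G/\partial\xi_{i^*}|=5|\xi_4^4-\xi_{i^*}^4|\sim 2^{4k_{\max}}$ at any large-frequency index $i^*$. The modulation constraint \eqref{eq:tri-support property} (combined with the support of $f_4$) forces $|G|\lesssim 2^{j_{\max}}$, so the change of variables $\xi_{i^*}\mapsto G$ confines the $\xi_{i^*}$-integration to a set of measure $\lesssim 2^{j_{\max}-4k_{\max}}$; extracting the corresponding Jacobian through Cauchy--Schwarz contributes $2^{-2k_{\max}}$ and combines with the leftover $\zeta$-bookkeeping to produce $2^{(j_1+j_2+j_3+j_4)/2}2^{-j_{\max}/2}$. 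The remaining free frequency integration then yields $2^{k_{\min}/2}$ in case (b-2), or $2^{k_{\thd}/2}$ in case (b-1).

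The principal obstacle is the dichotomy between (b-1) and (b-2). The resonance change of variables must be carried out in a \emph{large}-frequency direction (where $|\partial G/\partial\xi|$ is of the right size), whereas the Cauchy--Schwarz step must be arranged so as to isolate the function carrying the maximal modulation $j_{\max}$. In case (b-2), both requirements can be satisfied simultaneously by using \eqref{eq:symmetry} to place the $j_{\max}$-function at a large-frequency slot, leaving the minimal small-frequency integration $I_{k_{\min}}$ and producing $2^{k_{\min}/2}$. In case (b-1) the $j_{\max}$-function is rigidly attached to frequency $\sim 2^{k_{\thd}}$ by hypothesis, so the resonance change of variables must be performed at a different, large-frequency index and the remaining free integration runs over $I_{k_{\thd}}$, yielding the weaker $2^{k_{\thd}/2}$. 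Coordinating these choices without double-counting the frequency and modulation volumes is the main delicate step; the remainder is a routine combination of Cauchy--Schwarz and Fubini.
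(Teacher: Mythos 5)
The paper does not prove Lemma~\ref{lem:tri-L2}; it simply refers to \cite{KP2015,Kwak2015} for the argument, so there is no internal proof to compare against. Your plan for part~(a) is sound: after separating the $\zeta$- and $\xi$-integrations, the $L^1$--$L^2$ size of each block plus two applications of Cauchy--Schwarz/Young do produce $2^{(j_{\min}+j_{\thd})/2}2^{(k_{\min}+k_{\thd})/2}$, with the symmetries \eqref{eq:symmetry} providing the freedom to assign the two $L^1$ factors to the smallest $j$'s and, independently, the two smallest $k$'s.

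For part~(b), however, the key computational claim is wrong. With $\xi_1,\xi_2,\xi_3$ as independent variables and $\xi_4=\xi_1+\xi_2+\xi_3$ one indeed has $\partial G/\partial\xi_i=5(\xi_4^4-\xi_i^4)$, but this is \emph{not} $\sim 2^{4k_{\max}}$ at ``any large-frequency index.'' Under $k_{\thd}\le k_{\max}-10$, the two large frequencies have comparable size and, since the other two add to their difference, necessarily the same sign. So if the two large slots are $\{i^*,4\}$, then $\xi_4\approx\xi_{i^*}$ and $|\xi_4^4-\xi_{i^*}^4|\sim 2^{3k_{\max}}|\xi_4-\xi_{i^*}|=2^{3k_{\max}}\bigl|\sum_{j\neq i^*}\xi_j\bigr|$, which can be as small as $2^{3k_{\max}}$. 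The factor is $\sim 2^{4k_{\max}}$ precisely when $|\xi_{i^*}|$ and $|\xi_4|$ are at \emph{disparate} scales, so the non-degenerate change of variables is at a \emph{small}-frequency index when $\xi_4$ is large and at a large one only when $\xi_4$ is small. Stating a blanket rule ``resonance change of variables at a large-frequency direction'' and using $|\xi_4^4-\xi_{i^*}^4|\sim 2^{4k_{\max}}$ unconditionally is therefore a genuine error, not a presentational shortcut; in the degenerate configuration your Jacobian bound, and hence the $2^{-2k_{\max}}$ you extract, fails.

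The dichotomy you offer for (b-1) versus (b-2) is also not the one in the statement. In (b-2) the $j_{\max}$-function merely avoids frequency $k_{\thd}$; it may perfectly well live at $k_{\min}$, a small-frequency slot. ``Using \eqref{eq:symmetry} to place the $j_{\max}$-function at a large-frequency slot'' is not an available move -- permuting the arguments of $J_3$ reorders roles but does not change any $k_i$ -- so in the sub-case $(k_i,j_i)=(k_{\min},j_{\max})$ both of your competing requirements (Cauchy--Schwarz the $j_{\max}$-function out, then change variables in a ``large'' direction with the small $\xi_4$ left over as the free integration) do not line up, and the single Cauchy--Schwarz sketch you describe does not visibly produce $2^{k_{\min}/2}$. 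Finally, a smaller but real accounting issue: to arrive at $2^{(j_1+\cdots+j_4)/2}2^{-j_{\max}/2}$ one needs the pointwise constraint $|\zeta_4-G|\lesssim 2^{j_{\sub}}$ coming from the three functions remaining \emph{after} the first Cauchy--Schwarz, not the looser a priori support bound $|G|\lesssim 2^{j_{\max}}$ you invoke; as written your factors do not multiply out to the stated right-hand side. I would recommend consulting the detailed case analysis in \cite{KP2015,Kwak2015}, which handles the degenerate Jacobian configurations separately and is more structured than a single Cauchy--Schwarz with a one-variable change of variables.
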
 

We refer to \cite{KP2015, Kwak2015} for the proof of Lemma \ref{lem:tri-L2}. In \cite{Kwak2015}, the second author established (cubic) $L^2$-block estimates for functions $f_{k_i,j_i} \in L^2(\R \times \Z)$, but the proof, here, is almost identical and easier, see \cite{KP2015}.

\subsection{$F(u) = (1-\px^2)^{\frac12}\px (u^2)$ case}\label{sec:nonlinear1}
We first prove Theorem \ref{thm:nonlinear1}.
\begin{proposition}\label{prop:bi1}
	For $-5/4 < s $, there exists $b = b(s) < 1/2$ such that for all $\alpha > 1/2$, we have
	\begin{equation}\label{eq:bilinear1}
	\norm{(1-\px^2)^{\frac12}\px(uv)}_{X^{s,-b}} \le c\norm{u}_{X^{s,b} \cap D^{\alpha}}\norm{v}_{X^{s,b} \cap D^{\alpha}}.
	\end{equation}
\end{proposition}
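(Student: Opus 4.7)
The plan is to follow the Tao $[k;Z]$-multiplier norm method outlined in the introduction. I will Littlewood--Paley decompose each factor in both frequency and modulation, expand the $X^{s,-b}$ norm dyadically via \eqref{eq:dyadic X}, and absorb the multiplier by $\bra{\xi}^s(1+\xi^2)^{1/2}|\xi| \lesssim \max(1,2^{(1+s)k})2^k$ on the support of $\chi_k$. By duality against unit-norm test functions in each dyadic box, the $L^2$-piece is converted into a bilinear form $J_2(\widetilde{u}_{k_1,j_1}^{\sharp}, \widetilde{v}_{k_2,j_2}^{\sharp}, g_{k,j}^{\sharp})$, to which Lemma \ref{lem:block estimate} applies, subject to the support constraints $2^{k_{max}}\sim 2^{k_{med}}\gtrsim 1$ (outside the LL regime) and $2^{j_{max}} \sim \max(2^{j_{med}}, |H|)$ with $H$ as in \eqref{eq:resonant function}. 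The remaining work is to sum the resulting dyadic bounds so as to recover $\|u\|_{X^{s,b}\cap D^{\alpha}}\|v\|_{X^{s,b}\cap D^{\alpha}}$.

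I then split into four frequency interaction regimes. In the \emph{high-low} (and symmetric \emph{low-high}) case $2^{k_{min}} \ll 2^{k_{max}}$, Lemma \ref{lem:block estimate}(b) supplies the decay $2^{-3k_{max}/2}2^{-(k_i+j_i)/2}$ with $i$ chosen to sit on the output side, and the resonance $|H|\sim 2^{4k_{max}+k_{min}}$ forces $j_{max}\gtrsim 4k_{max}+k_{min}$ whenever $|H|$ dominates $2^{j_{med}}$; this leaves enough room to sum for any $s>-5/4$ and $b<1/2$ close to $1/2$. In the \emph{high-high-to-high} case $k\sim k_1\sim k_2\gtrsim 1$, the resonance $|H|\sim 2^{5k}$ together with Lemma \ref{lem:block estimate}(a) produces, after summation of modulations, the balance $2^{(5/4-s)k}\cdot 2^{-5bk}$; this is absolutely summable in $k$ precisely when $b>(5/4-s)/5$, and since $(5/4-s)/5<1/2$ exactly for $s>-5/4$, a choice $b<1/2$ is always available. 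The \emph{high-high-to-low} case $k\ll k_1\sim k_2$ is analogous to high-low, with an even more favorable output weight $\max(1,2^{(1+s)k})2^k$, and the argument is symmetric via $J_2(f,g,h)=J_2(g^{*},h,f)$.

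The \emph{low-low-to-low} regime $k,k_1,k_2\lesssim 1$ is where the trade-off $b<1/2$ genuinely fails to control the $\tau$-sum, since Lemma \ref{lem:block estimate}(c) leads to $\sum_{j\le \min(j_1,j_2)} 2^{(1-2b)j}$ which diverges for $b<1/2$. Here I replace the $X^{s,b}$ bound on this dyadic piece by the $D^{\alpha}$ bound with $\alpha>1/2$: because $|\xi|\le 1$ enforces $\bra{\tau}\sim \bra{\tau-\xi^5}$, the $D^{\alpha}$ norm controls the $L^2$-size of $u_{0,j_1}$ with weight $2^{-\alpha j_1}$, and the summability conditions reduce to $2b+2\alpha>1$ and $\alpha>0$, both satisfied for $\alpha>1/2$. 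The main obstacle will be the endpoint analysis of the high-high-to-high case near $s=-5/4$: every piece of the modulation bookkeeping (the positive coefficient $1/2-b$ on $j_{min}$ absorbed into $j_{med}$, the resulting coefficient $3/4-2b$ on $j_{med}$, and the use of $|H|\sim 2^{5k}$ on $j_{max}$) must be exploited in the correct order, and $b$ must be chosen as $b=1/2-\varepsilon(s)$ with $\varepsilon(s)\to 0^+$ as $s\to (-5/4)^+$.
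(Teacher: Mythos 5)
Your overall strategy coincides with the paper's: dyadic decomposition in frequency and modulation, the $L^2$-block estimates of Lemma \ref{lem:block estimate}, the resonance function $H$ from \eqref{eq:resonant function}, and the use of $D^{\alpha}$ to compensate for the lost $\tau$-integrability at $b<\frac12$. Your identification of $s=-\frac54$ with the \emph{high-high-to-high} case via $b>(\frac54-s)/5$ also matches the paper. There are, however, two concrete gaps in the high-low analysis that would prevent the argument from closing as written.

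First, you fix the index $i$ in Lemma \ref{lem:block estimate}(b) ``on the output side.'' When the \emph{low-frequency input} carries the largest modulation, $(k_i,j_i)=(k_{min},j_{max})$, this choice discards the essential gain $2^{-j_{max}/2}$. Keeping track of the constraint $j_{max}\sim\max(j_{med},|H|)\gtrsim 4k_{max}+k_{min}$ and summing the modulations with the output choice, one is left with a factor $2^{(\frac12-b)(4k_{max}+k_{min})}$; since the block estimate and the symbol together produce only the flat exponent $2^{0\cdot k_{max}}$ (the two-derivative gain of (b) exactly cancels $(1-\px^2)^{\frac12}\px$), the net $k_{max}$-exponent is $2-4b>0$ and the dyadic sum diverges. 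The paper flags precisely this situation in Remark \ref{rem:lowhigh} and, in Cases II-b and III-b, separates the sub-cases $j_1\neq j_{max}$ and $j_1=j_{max}$, switching to $(k_i,j_i)=(k_1,j_1)$ in the latter. Second, you use $D^{\alpha}$ only in the \emph{low-low-to-low} regime, but it is equally indispensable whenever a single input sits in $|\xi|\le1$ inside a high-low or high-high-to-low interaction (Cases II-a and III-a). There the resonance $|H|\sim|\xi_{min}|\,2^{4k_{max}}$ can be arbitrarily small, so $j_{max}\gtrsim\log|H|$ gives no lower bound, and the modulation sum $\sum_{j}2^{(\frac12-b)j}$ on that factor diverges for $b<\frac12$ exactly as in your low-low-to-low discussion; only the weight $\bra{\tau}^{-\alpha}$, $\alpha>\frac12$, controls it. (The further subtlety that Case II-a then produces a vanishing $k_3$-exponent, forcing a Cauchy--Schwarz in $k_3$ over the Littlewood--Paley pieces rather than a supremum, is particular to $(1-\px^2)^{\frac12}\px$ and is noted in the remark following that case in the paper.)
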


\begin{proof}
	Let
	\begin{equation}\label{eq:restriction1}
	\wt{f}_1(\tau_1,\xi_1) = \beta_1(\tau_1,\xi_1)\wt{u}(\tau_1,\xi_1) \hspace{1em} \mbox{and} \hspace{1em} \wt{f}_2(\tau_2,\xi_2) = \beta_2(\tau_2,\xi_2)\wt{v}(\tau_2,\xi_2),
	\end{equation}
	where
	\begin{equation}\label{eq:restriction2}
	\beta_i(\tau_i,\xi_i) = \bra{\tau_i-\xi_i^5}^b + \mathbf{1}_{|\xi_i|\le1}(\xi_i)\bra{\tau_i}^{\alpha}, \hspace{1em} i=1,2.
	\end{equation}
	Note that $f_1, f_2 \in L^2 \Leftrightarrow u, v \in X^{s,b} \cap D^{\alpha}$ and
	\begin{equation}\label{eq:weight}
	\begin{aligned}
	\frac{1}{\beta_i(\tau_i,\xi_i)} \lesssim \begin{cases}\begin{array}{ll}\bra{\tau_i-\xi_i^5}^{-b}, & \hspace{1em} \mbox{when} \hspace{1em} |\xi_i| > 1, \\ \bra{\tau_i}^{-\alpha}, & \hspace{1em} \mbox{when} \hspace{1em} |\xi_i| \le 1.  \end{array}\end{cases}
	\end{aligned}
	\end{equation}
	
By the duality argument, \eqref{eq:bilinear1} is equivalent to 
	\begin{equation}\label{eq:bilinear1-1}
	\iint\limits_{\substack{\xi_1+\xi_2=\xi \\ \tau_1+\tau_2=\tau}} \frac{|\xi|\bra{\xi}^{s+1}\wt{f}_1(\tau_1,\xi_1)\wt{f}_2(\tau_2,\xi_2)\wt{f}_3(\tau,\xi)}{\bra{\xi_1}^s\bra{\xi_2}^s\bra{\tau-\xi^5}^b\beta_1(\tau_1,\xi_1)\beta_2(\tau_2,\xi_2)} \lesssim \norm{f_1}_{L^2}\norm{f_2}_{L^2}\norm{f_3}_{L^2}.
	\end{equation}
	For $k_i,j_i \in \Z_+$, we make the Littlewood-Paley decomposition of $f_i$, $i=1,2,3$, into $f_{k_i,j_i}$, $i=1,2,3$, by $f_{k_i,j_i}(\tau,\xi) = \eta_{j_i}(\tau-\xi^5)\chi_{k_i}(\xi)\wt{f}_i(\tau,\xi)$. We divide the frequency regions of integration 
	\begin{equation}\label{eq:bilinear1-2}
	\iint\limits_{\substack{\xi_1+\xi_2=\xi \\ \tau_1+\tau_2=\tau}} \frac{|\xi|\bra{\xi}^{s+1}\wt{f}_1(\tau_1,\xi_1)\wt{f}_2(\tau_2,\xi_2)\wt{f}_3(\tau,\xi)}{\bra{\xi_1}^s\bra{\xi_2}^s\bra{\tau-\xi^5}^b\beta_1(\tau_1,\xi_1)\beta_2(\tau_2,\xi_2)}
	\end{equation}
	into several regions associated to the relation of frequencies to prove \eqref{eq:bilinear1-1}.
	
	\textbf{Case I.} \emph{high $\times$ high $\Rightarrow$ high} ($k_3 \ge 10, |k_3-k_1|,|k_3-k_2|\le 5$). From \eqref{eq:support property} and \eqref{eq:resonant function}, $j_{max} \ge 5k_3 -5$ holds in this case. The change of variables yields that \eqref{eq:bilinear1-2} is bounded by
	\[\sum_{\substack{k_3 \ge 10 \\|k_3-k_1| \le 5 \\ |k_3-k_2|\le 5}}\sum_{j_1,j_2,j_3 \ge 0} 2^{(2-s)k_3}2^{-b(j_1+j_2+j_3)}J_2(f_{k_1,j_1}^{\sharp},f_{k_2,j_2}^{\sharp},f_{k_3,j_3}^{\sharp}).\]
By applying Lemma \ref{lem:block estimate} (a) to $J_2(f_{k_1,j_1}^{\sharp},f_{k_2,j_2}^{\sharp},f_{k_3,j_3}^{\sharp})$, and using the Cauchy-Schwarz inequality and \eqref{eq:dyadic X}, it suffices to show
	\begin{equation}\label{eq:bilinear1-3}
	\sum_{\substack{k_3 \ge 10 \\|k_3-k_1| \le 5 \\ |k_3-k_2|\le 5}}\sum_{j_1,j_2,j_3 \ge 0} 2^{2(2-s)k_3}2^{-2b(j_1+j_2+j_3)}2^{j_{min}}2^{j_{med}/2}2^{- \frac32 k_{max}} \lesssim 1.
	\end{equation}
	Without loss of generality, we may assume that $j_1 \le j_2 \le j_3$.  Given $-5/4 < s$, we can choose $\max(3/8 , 3/20-s/5) < b < 1/2$. A computation of the summation over $0 \le j_1 \le j_2 \le j_3$ with $5k_3 - 5 \le j_3$ and $k_j$, $j=1,2,3$ gives
	\[\mbox{LHS of } \eqref{eq:bilinear1-3} \lesssim  \sum_{k_3 \ge 10} 2^{(1/2 -2s)k_3}2^{-10bk_3} \lesssim 1,\]
	which completes the proof of \eqref{eq:bilinear1-3}.

	\textbf{Case II} \emph{high $\times$ low $\Rightarrow$ high} ($k_3 \ge 10, 0 \le k_1 \le k_3-5, |k_3-k_2| \le 5$).\footnote{We may assume that $\xi_1$ the low frequency without loss of generality due to the symmetry.} We further divide the case into two cases: $k_1 = 0$ and $k_1 \ge 1$.
	
	\textbf{Case II-a} $k_1 = 0$. Without loss of generality, we may assume that $j_2 \le j_3$. By \eqref{eq:weight} and Lemma \ref{lem:block estimate} (b), \eqref{eq:bilinear1-2} on this case is dominated by
\begin{equation}\label{eq:bilinear1-4}
\sum_{\substack{k_3 \ge 10 \\ |k_3-k_2|\le 5}}\sum_{j_1,j_2,j_3 \ge 0} 2^{2k_3}2^{-\alpha j_1-bj_2-bj_3}2^{(j_1+j_2+j_3)/2}2^{-3k_{max}/2}2^{-(k_3+j_3)/2}\prod_{\ell =1}^{3} \norm{f_{k_\ell,j_\ell}}_{L^2}.
\end{equation}
Note that
\[\sum_{\substack{0 \le j_1 \\0 \le j_2 \le j_3}} 2^{(1-2\alpha)j_1}2^{(1-2b)j_2}2^{-2bj_3} \lesssim \sum_{0 \le j_1, j_3} 2^{(1-2\alpha)j_1}2^{(1-4b)j_3} \lesssim 1,\]
whenever we choose $1/4 < b <1/2$ and for all $\alpha > 1/2$. We use the Cauchy-Schwarz inequality to obtain  
\[\eqref{eq:bilinear1-4} \lesssim \norm{f_1}_{L^2}\sum_{k_3 \ge 10} \norm{f_{k_3}}_{L^2}\sum_{|k_3-k'|\le 5}\norm{f_{k'}}_{L^2} \lesssim \norm{f_1}_{L^2}\norm{f_2}_{L^2}\norm{f_3}_{L^2}.\]

\begin{remark}
The proof of Proposition \ref{prop:bi1} is indeed analogous to the proof of Proposition 5.1 in \cite{CK2018-1}. However, the \emph{high-low} interaction component with very low frequency ($|\xi_1| \le 1$) (\textbf{Case II-a} above) of $(1-\px^2)^{\frac12}\px (u^2)$ is slightly worse than the same one of $\px(u^2)$ in some sense, since the \emph{high-low} bilinear local smoothing effect exactly cancels two derivatives in high frequency (two derivative gains). As a consequence of this observation, the argument used in the proof of Proposition 5.1 in \cite{CK2018-1} causes a logarithmic divergence in $k_3$-summation, and thus more delicate computation, here, is required as above compared with \textbf{Case II-a} in the proof of Proposition 5.1 in \cite{CK2018-1}.
\end{remark}	
	
	\textbf{Case II-b} $k_1 \ge 1$. In this case, we have from \eqref{eq:support property} and \eqref{eq:resonant function} that $j_{max} \ge 4k_3 + k_1 - 5$. By \eqref{eq:weight} and Lemma \ref{lem:block estimate} (b), it suffices, similarly as \textbf{Case I}, to show
	\begin{equation}\label{eq:bilinear1-5}
	\sum_{\substack{k_3 \ge 10 \\ 1 \le k_1 \le k_3 - 5 \\ |k_3-k_2|\le 5}}\sum_{j_1,j_2,j_3 \ge 0} 2^{4k_3}2^{-2sk_1}2^{-2b(j_1+j_2+j_3)}2^{(j_1+j_2+j_3)}2^{-3k_{max}}2^{-(k_i+j_i)} \lesssim 1.
	\end{equation}
Without loss of generality, we may assume that $j_2 \le j_3$. 
	
	If $j_1 \neq j_{max}$,  given $-5/2 < s$, by choosing $\max((5-s)/15 , 1/3) < b < 1/2$. We perform the summation over $0 \le j_1, j_2 \le j_3$ with the fact $4k_3 + k_1 - 5 \le j_3$ after choosing $(k_i,j_i) = (k_3,j_3)$ to obtain
	\[\mbox{LHS of } \eqref{eq:bilinear1-5} \lesssim  \sum_{k_3 \ge 10  }\sum_{1 \le k_1 \le k_3 - 5} 2^{(8-24b)k_3}2^{(2-6b-2s)k_1} \lesssim 1.\]
	
	If $j_1 = j_{max}$,  given $-5/2 < s$, we can choose $\max((5-s)/15 , 7/24) < b < 1/2$. We perform the summation over $0 \le j_1, j_2 \le j_3$ with the fact $4k_3 + k_1 - 5 \le j_3$ after choosing $(k_i,j_i) = (k_1,j_1)$ to obtain
	\[\mbox{LHS of } \eqref{eq:bilinear1-5} \lesssim  \sum_{k_3 \ge 10  }\sum_{1 \le k_1 \le k_3 - 5} 2^{(9-24b)k_3}2^{(1-6b-2s)k_1} \lesssim 1.\] 
Thus, given $-5/2 < s$, we choose $\max((5-s)/15 , 7/24) < b < 1/2$ such that \eqref{eq:bilinear1-5} holds.  

\begin{remark}\label{rem:lowhigh}
In view of Lemma \ref{lem:block estimate}, the case when the low frequency mode has the largest modulation ($(k_i,j_i) = (k_{min},j_{max})$) is the worst among other cases.
\end{remark}
	
	\textbf{Case III.} \emph{high $\times$ high $\Rightarrow$ low} ($k_2 \ge 10, |k_1-k_2| \le 5, 0 \le k_3 \le k_2 - 5$). We, similarly, further divide the case into two cases: $k_3 = 0$ and $k_3 \ge 1$.
	
	\textbf{Case III-a} $k_3 = 0$. In this case, we know $\bra{\xi_3} \sim 1$. We further decompose the low frequency component $f_3=\sum_{l\leq 0}f_{3,l}$ with $f_{3,l}=\ft^{-1}1_{|\xi|\sim 2^l}\ft f_3$. Then, from \eqref{eq:weight}, \eqref{eq:bilinear1-2} is bounded by
	\begin{equation}\label{eq:hhl}
	\sum_{\substack{k_2 \ge 10 \\ |k_1-k_2| \le 5}}\sum_{l\le0}\sum_{j_1,j_2,j_3 \ge 0} 2^{-2sk_2}2^{l}2^{-b(j_1+j_2+j_3)}J(f_{k_1,j_1}^{\sharp},f_{k_2,j_2}^{\sharp},f_{l,j_3}^{\sharp}),
	\end{equation}
	where $f_{l,j_3}(\tau,\xi) = \eta_{j_3}(\tau-\xi^5)\wt{f}_{3,l}(\tau,\xi)$. Without loss of generality, we may assume that $j_1 \le j_2$.  
	
	From Remark \ref{rem:lowhigh}, the worst case occurs when $j_3=j_{max}$. Lemma \ref{lem:block estimate} (b) in $J(f_{k_1,j_1}^{\sharp},f_{k_2,j_2}^{\sharp},f_{l,j_3}^{\sharp})$ and the Cauchy-Schwarz inequality in terms of $k_2,l,j_i's$ yield
	\[\eqref{eq:hhl} \lesssim \sum_{\substack{k_2 \ge 10 \\ |k_1-k_2| \le 5}}\sum_{l\le 0}\sum_{0 \le j_1 \le j_2 \le j_3} 2^{-4sk_2}2^{2l}2^{-2b(j_1+j_2+j_3)}2^{(j_1+j_2+j_3)}2^{-3k_{max}}2^{-(l+j_3)}.\]
	Given $-7/4 < s$, we can choose $\max((5-4s)/24 , 1/3) < b < 1/2$.  Since $j_{max} \ge 4k_2 + l - 5$ and $1/3 <b < 1/2$, we have
	\[\sum_{\substack{0 \le j_1 \le j_2 \le j_3 \\4k_2 + l - 5 \le j_3}} 2^{(1-2b)j_1}2^{(1-2b)j_2}2^{-2bj_3} \lesssim 2^{(2-6b)(4k_2+l)},\]
 which implies
	\[\eqref{eq:hhl} \lesssim \sum_{k_2 \ge 10}\sum_{l\le 0}2^{(5-4s-24b)k_2}2^{(3-6b)l} \lesssim 1.\]
	
		
	\textbf{Case III-b} $k_3 \ge 1$. From \eqref{eq:support property} and \eqref{eq:resonant function}, we know $j_{max} \ge 4k_2 + k_3 - 5$ in this case. Similarly, it suffices from \eqref{eq:weight} and Lemma \ref{lem:block estimate} (b) to show
	\begin{equation}\label{eq:bilinear1-8}
	\sum_{\substack{k_2 \ge 10 \\ |k_1-k_2| \le 5 \\ 1 \le k_3 \le k_2 -5}}\sum_{j_1,j_2,j_3 \ge 0} 2^{2(2+s)k_3}2^{-4sk_2}2^{-2b(j_1+j_2+j_3)}2^{(j_1+j_2+j_3)}2^{-3k_{max}}2^{-(k_i+j_i)} \lesssim 1.
	\end{equation}
Without loss of generality, we may assume that $j_1 \le j_2$. 
	
	
	Similarly, it suffice to consider the case when $j_3 = j_{max}$. For given $-7/4 < s$, we can choose $ \max((5-4s)/24, (5+2s)/6) < b < 1/2$\footnote{Similarly, when $s \ge -5/4$, the $(5+2s)/6 < b$  implies $(5-s)/15 < b$, which guarantees $10 -2s -30b < 0$.}.  We perform the summation over $0 \le j_1 \le j_2 \le j_3$ in addition to $4k_2 + k_3 - 5 \le j_3$ after choosing $(k_i,j_i) = (k_3,j_3)$ to obtain
	\[\mbox{LHS of } \eqref{eq:bilinear1-8} \lesssim  \sum_{\substack{k_2 \ge 10 \\ |k_1-k_2| \le 5 \\ 1 \le k_3 \le k_2 -5}} 2^{(5-4s-24b)k_2}2^{(5+2s-6b)k_3} \lesssim 1,\] 
	which completes the proof of \eqref{eq:bilinear1-8}.
	
The \emph{low $\times$low $\Rightarrow$low} interaction component can be directly controlled by the Cauchy-Schwarz inequality, since  the low frequency localized space $D^{\alpha}$ with $\alpha >1/2$ allows the $L^2$ integrability with respect to $\tau$-variables.
	
	Therefore, the proof of \eqref{eq:bilinear1} is completed.
\end{proof}

\begin{remark}
In view of the proof of Proposition 5.1 in \cite{CK2018-1}, one can see that the regularity threshold appears in the \emph{high $\times$ high $\Rightarrow$ low} interaction component, which is the well-known worst component of quadratic nonlinearity (for semi-linear "dispersive" equations), while the regularity threshold $-5/4$, here, occurs in the \emph{high $\times$ high $\Rightarrow$ high} interaction case. It is because the \emph{high $\times$ high $\Rightarrow$ low} interaction component of $(1-\px^2)^{\frac12}\px (u^2)$ is no longer different from $\px(u^2)$ (roughly, $(1-\px^2)^{\frac12} \sim 1$).
\end{remark}

\begin{proposition}\label{prop:bi2}
	For $-5/4 < s \le 0$, there exists $b = b(s) < 1/2$ such that for all $\alpha > 1/2$, we have
	\begin{equation}\label{eq:bilinear2}
	\norm{(1-\px^2)^{\frac12}\px(uv)}_{Y^{s,-b}} \le c\norm{u}_{X^{s,b} \cap D^{\alpha}}\norm{v}_{X^{s,b} \cap D^{\alpha}}.
	\end{equation}
\end{proposition}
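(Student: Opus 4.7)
When $s=0$ we have $Y^{s,-b}=X^{s,-b}$, so the estimate reduces directly to Proposition \ref{prop:bi1}. We therefore assume $-5/4<s<0$ and follow the duality scheme of Proposition \ref{prop:bi1}: with $f_1, f_2$ as in \eqref{eq:restriction1}--\eqref{eq:restriction2}, \eqref{eq:bilinear2} is equivalent to bounding
\[I_Y:=\iint\limits_{\substack{\xi_1+\xi_2=\xi\\ \tau_1+\tau_2=\tau}}\frac{|\xi|\bra{\xi}\bra{\tau}^{s/5}\wt{f}_1(\tau_1,\xi_1)\wt{f}_2(\tau_2,\xi_2)\wt{f}_3(\tau,\xi)}{\bra{\xi_1}^s\bra{\xi_2}^s\bra{\tau-\xi^5}^b\beta_1(\tau_1,\xi_1)\beta_2(\tau_2,\xi_2)}\lesssim \prod_{i=1}^{3}\norm{f_i}_{L^2}.\]
This is the same integral that appears in the proof of Proposition \ref{prop:bi1}, except that the output factor $\bra{\xi}^{s+1}$ has been replaced by $\bra{\xi}\bra{\tau}^{s/5}$. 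My strategy is to relate $\bra{\tau}^{s/5}$ back to either $\bra{\xi}^s$ or to the modulation weight $\bra{\sigma}^{-b}$, where $\sigma:=\tau-\xi^5$, depending on the region of integration.

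I would split the $(\tau,\xi)$ integration into three parts. \textbf{Part I} (low output modulation: $|\xi|>2$, $\bra{\sigma}\le\bra{\xi}^5/2$): here $|\tau|\ge|\xi|^5-|\sigma|\ge|\xi|^5/2$, so $\bra{\tau}\gtrsim\bra{\xi}^5$ and, since $s\le 0$, $\bra{\tau}^{s/5}\lesssim\bra{\xi}^s$; hence the Part I contribution is majorized by the integral in \eqref{eq:bilinear1-1} and is controlled by Proposition \ref{prop:bi1} with the same $(b,\alpha)$. \textbf{Part II} (high output modulation: $|\xi|>2$, $\bra{\sigma}>\bra{\xi}^5/2$): here $\bra{\sigma}^{-b}\lesssim\bra{\xi}^{-5b}$, and combined with $\bra{\tau}^{s/5}\le 1$ the output weight is bounded by $|\xi|\bra{\xi}^{1-5b}$; moreover the output modulation is essentially pinned, $j_3\sim 5k_3$. \textbf{Part III} (low output frequency: $|\xi|\le 2$): since $|\xi^5|\le 32$, $\bra{\sigma}\sim\bra{\tau}$, so $\bra{\tau}^{s/5}\bra{\sigma}^{-b}\lesssim\bra{\tau}^{s/5-b}$, which lies in $L^2_{\tau}$ once we choose $b\in(s/5+1/2,\,1/2)$ (a non-empty interval since $s<0$); Cauchy-Schwarz in $\tau_3$ then reduces Part III to a bilinear estimate on $f_1,f_2$ analogous to Cases II-a and III-a in the proof of Proposition \ref{prop:bi1}.

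For Part II I would perform a Littlewood-Paley decomposition in the $\xi_i$'s and in the modulation variables $\sigma_1,\sigma_2$, and estimate each frequency interaction case (high-high-high, high-low-high, high-high-low) via the bilinear $L^2$-block estimates of Lemma \ref{lem:block estimate}, in direct analogy with the proof of Proposition \ref{prop:bi1} but with the coefficient $2^{(2-5b)k_3}$ in place of $2^{(s+2)k_3}$; the resonance identity $\sigma_1+\sigma_2-\sigma=H(\xi_1,\xi_2)$ is respected on the support of integration and provides the usual constraint $\max(j_1,j_2,j_3)\ge 5k_{\max}-O(1)$.

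The main obstacle will be the high-high-high interaction inside Part II (the analogue of Case I of the proof of Proposition \ref{prop:bi1}): replacing $\bra{\tau}^{s/5}$ by $1$ costs an extra factor $\bra{\xi}^{-s}$ compared with the original estimate, and this loss must be absorbed by the pinning $j_3\sim 5k_3$ together with the constraint $\max(j_1,j_2,j_3)\ge 5k_3-O(1)$ coming from $|H|\sim 2^{5k_3}$ in the HHH regime. Carefully tracking the dyadic sums, and distinguishing which of $j_1,j_2,j_3$ is maximal when invoking Lemma \ref{lem:block estimate}, in the spirit of Lemma 5.10(b) of \cite{Holmerkdv}, shows that the summation still converges for some $b<1/2$ precisely when $s>-5/4$, giving the claimed range.
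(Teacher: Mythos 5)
Your overall splitting (fall back on Proposition \ref{prop:bi1} whenever $\bra{\tau}^{s/5}\lesssim\bra{\xi}^s$, treat the low output frequency by the $L^2_\tau$-integrability of $\bra{\tau}^{s/5-b}$, and isolate the high-modulation/high-frequency regime as the crux) is the same as the paper's, and your Parts I and III are sound. The gap is in Part II, precisely in the \emph{high $\times$ high $\Rightarrow$ high} interaction that you yourself flag as the main obstacle: your proposed resolution does not close. After you bound $\bra{\tau}^{s/5}\le 1$ and $\bra{\tau-\xi^5}^{-b}\lesssim\bra{\xi}^{-5b}$, the dyadic coefficient becomes $2^{(2-2s-5b)k_3}$ (note the $2^{-2sk_3}$ from $\bra{\xi_1}^{-s}\bra{\xi_2}^{-s}$, with no $\bra{\xi}^{s}$ left to cancel half of it), and in the worst configuration $j_1=j_2=O(1)$, $j_3=j_{max}\gtrsim 5k_3$, Lemma \ref{lem:block estimate} (a) only supplies $2^{-\frac34 k_{max}}$. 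The pinning $j_3\gtrsim 5k_3$ gives you nothing further here: it is already fully spent in producing the single factor $2^{-5bk_3}$, exactly as $2^{-bj_{max}}\le 2^{-5bk_3}$ is spent in \eqref{eq:bilinear1-3}. The resulting condition is $\frac54-2s-5b<0$, i.e.\ $b>\frac14-\frac{2s}{5}$, which exceeds $\frac12$ as soon as $s<-\frac58$. So the block-estimate route caps out far short of $s>-\frac54$; the asserted compensation is a net loss of $2^{-sk_3}$ relative to \eqref{eq:bilinear1-3}, not a wash. (Two further technical problems with the same step: discarding $\bra{\tau-\xi^5}^{-b}$ entirely leaves no decay with which to sum, or Cauchy--Schwarz, over $j_3$; and in the \emph{high $\times$ high $\Rightarrow$ low} interaction inside Part II the true constraint is $j_{max}\gtrsim 4k_{max}+k_{min}\gg 5k_3$, so replacing $\bra{\tau-\xi^5}^{-b}$ by $\bra{\xi}^{-5b}$ throws away almost all of the available modulation decay.)

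What makes the full range $s>-\frac54$ reachable in the paper is a mechanism that Lemma \ref{lem:block estimate} cannot see. The paper reduces to $|\tau|\le\frac1{32}|\xi|^5$ (sending the complement to Proposition \ref{prop:bi1}); this not only pins $|\tau-\xi^5|\sim|\xi|^5$ but, crucially, keeps $\tau$ far from the degenerate value $\frac{1}{16}\xi^5$, i.e.\ \eqref{eq:same size1}. It then runs a direct Cauchy--Schwarz in $(\xi_1,\tau_1)$, integrates the $\tau_1$-variable via \eqref{eq:integral1}, and changes variables $\mu=\tau-\xi^5+H$, using the pointwise Jacobian bound $|\xi|^{\frac32}|\mu-(\tau-\frac1{16}\xi^5)|^{\frac12}\lesssim|d\mu/d\xi_1|$ together with \eqref{eq:integral2}; the factor $\bra{\tau-\frac1{16}\xi^5}^{-1/4}\sim|\xi|^{-5/4}$ so obtained is exactly the extra gain needed to convert your exponent $\frac54-2s-5b$ into the paper's $5-2s-15b$, whose negativity for some $b<\frac12$ is equivalent to $s>-\frac54$. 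In other words, the quantitative non-degeneracy of $\xi_1\mapsto H(\xi_1,\xi-\xi_1)$ away from $\xi_1=\xi/2$ — available only because $\tau$ has been separated from $\frac1{16}\xi^5$ — must be exploited, and the fixed-modulation $L^2$-block estimates average over the degenerate configuration and cannot deliver it. To repair your plan you would need either this change-of-variables argument or a refined block estimate conditional on $|\tau|\ll|\xi|^5$.
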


We state the elementary integral estimates without proof.
\begin{lemma}[Lemmas 5.12, 5.13 in \cite{Holmerkdv}]
Let $\alpha, \beta \in \R$. 

(a) If $\frac14 < b < \frac12$, then
\begin{equation}\label{eq:integral1}
\int_{-\infty}^{\infty} \frac{dx}{\bra{x-\alpha}^{2b}\bra{x-\beta}^{2b}} \le \frac{c}{\bra{\alpha-\beta}^{4b-1}}.
\end{equation}

(b) If $b < \frac12$, then
\begin{equation}\label{eq:integral2}
\int_{|x|\le\beta} \frac{dx}{\bra{x}^{4b-1}|\alpha-x|^{1/2}} \le \frac{c(1+\beta)^{2-4b}}{\bra{\alpha}^{1/2}}.
\end{equation}

(c) Moreover, if $\alpha \in \R$ and $\frac13 < b < \frac12$, we have
\begin{equation}\label{eq:integral3}
\int_{-\infty}^{\infty} \frac{dx}{\bra{x}^{2b}\bra{x - \alpha}^{4b-1}} \le \frac{c}{\bra{\alpha}^{6b-2}}.
\end{equation}
\end{lemma}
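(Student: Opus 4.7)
The plan is to prove all three bounds by splitting the integration domain into two pieces on which one of the Japanese brackets is essentially constant, so that the remaining one-variable integral is elementary. All three estimates are classical calculus computations (this is precisely why the paper quotes them from Holmer \cite{Holmerkdv} without proof), and the only subtleties are bookkeeping of the exponents and the ranges of $b$ required to keep the integrals convergent.

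For part (a), I would first reduce to $\alpha-\beta \ge 0$ by symmetry and translate so that $\alpha = -\beta = \gamma \ge 0$, writing $M = \bra{\alpha-\beta} = \bra{2\gamma}$. Then I would split $\R = A_1 \cup A_2 \cup A_3$ where $A_1 = \{|x-\alpha| \le M/2\}$, $A_2 = \{|x-\beta| \le M/2\}$, and $A_3$ is the complement. On $A_1$ the bracket $\bra{x-\beta}$ is comparable to $M$, so the contribution is $M^{-2b}\int_{|y|\le M/2}\bra{y}^{-2b}\,dy \lesssim M^{-2b}M^{1-2b} = M^{1-4b}$ since $2b<1$. The region $A_2$ is identical by symmetry, and on $A_3$ both brackets are $\gtrsim \bra{x}$, giving an integrable tail $\int \bra{x}^{-4b}\,dx$ that is finite because $4b > 1$. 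Combining yields $M^{1-4b} = \bra{\alpha-\beta}^{-(4b-1)}$, which is \eqref{eq:integral1}.

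For part (b), I would treat two cases. If $|\alpha| \ge 2(1+\beta)$, then $|\alpha-x| \sim \bra{\alpha}$ on the whole integration domain, so the integral reduces to $\bra{\alpha}^{-1/2}\int_{|x|\le\beta}\bra{x}^{1-4b}\,dx$. Since $1-4b > -1$, this one-variable integral is $\lesssim (1+\beta)^{2-4b}$ (regardless of the sign of $1-4b$), which matches the right-hand side. In the opposite case $|\alpha| < 2(1+\beta)$, the target bound becomes $c(1+\beta)^{3/2-4b}$, so I would bound $\bra{x}^{1-4b} \le \max(1,(1+\beta)^{1-4b})$ (peeling off the positive-exponent case) and use the trivial bound $\int_{|x|\le\beta}|\alpha-x|^{-1/2}\,dx \lesssim (1+\beta)^{1/2}$ via a direct change of variables $y = x - \alpha$, producing the needed power.

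Part (c) is essentially an iterated version of (a). I would split $\R$ into $B_1 = \{|x| \le \bra{\alpha}/2\}$ and its complement. On $B_1$ we have $\bra{x-\alpha} \sim \bra{\alpha}$, so that piece is bounded by $\bra{\alpha}^{1-4b}\int_{|x|\le\bra{\alpha}/2}\bra{x}^{-2b}\,dx \lesssim \bra{\alpha}^{1-4b}\bra{\alpha}^{1-2b} = \bra{\alpha}^{2-6b}$, since $2b<1$. On the complement we have $\bra{x} \gtrsim \bra{\alpha}$, so that piece is bounded by $\bra{\alpha}^{-2b}\int \bra{x-\alpha}^{1-4b}\mathbf{1}_{|x|\ge\bra{\alpha}/2}\,dx$; since $4b-1 > 1/3 > 0$ is not enough by itself, I would further decompose this integral by whether $|x-\alpha|$ is comparable to $\bra{\alpha}$ or larger, and use $4b-1 > 2b$ (which holds because $b > 1/3$ implies $2b > 1 - 2b$, equivalently $4b > 1$, so more carefully $6b-2 > 0$) to ensure decay. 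Collecting gives $\bra{\alpha}^{-(6b-2)}$, which is \eqref{eq:integral3}.

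The only place any real obstacle arises is part (b) in the regime $|\alpha| < 2(1+\beta)$, where the factor $\bra{\alpha}^{-1/2}$ on the right is not merely a byproduct of the integrand's singularity at $x = \alpha$; I would have to be slightly careful to extract it rather than use the crude $\bra{\alpha}^{-1/2} \lesssim 1$ bound, which would lose a factor $(1+\beta)^{1/2}$. The cleanest resolution is to write $\bra{\alpha}^{1/2}\int_{|x|\le\beta}|\alpha-x|^{-1/2}\bra{x}^{1-4b}\,dx$ and estimate it via $\sup_{|x|\le\beta} \bra{\alpha}^{1/2}|\alpha-x|^{-1/2}\bra{x-\alpha}^{-\epsilon}$ multiplied by an integrable weight, or equivalently to interpolate between the two cases. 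Beyond this, everything reduces to one-variable power integrals.
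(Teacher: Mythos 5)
Note first that the paper itself states this lemma without proof (it quotes Lemmas 5.12--5.13 of \cite{Holmerkdv}), so your argument must stand on its own. Parts (a) and (c) are essentially right but each contains a slip. In (a), on $A_3$ you only assert that $\int\bra{x}^{-4b}\,dx$ is finite, which yields an $O(1)$ bound, not the required $M^{1-4b}$ (recall $M\ge 1$ and $1-4b<0$); the fix is to note that $A_3\subset\{|x|\gtrsim M\}$, so the tail integral is $\lesssim M^{1-4b}$. In (c), the inequality ``$4b-1>2b$'' is false for $b<\tfrac12$ (it is equivalent to $b>\tfrac12$), and the parenthetical chain offered to justify it does not do so. What the complement of $B_1$ actually requires is a further split into $\{|x-\alpha|\le|\alpha|/2\}$, where $\bra{x}\sim\bra{\alpha}$ and $\int_{|y|\le|\alpha|/2}\bra{y}^{1-4b}\,dy\lesssim\bra{\alpha}^{2-4b}$ (using $-1<1-4b$ and $2-4b>0$), and the far region, where both brackets are $\sim\bra{x}\gtrsim\bra{\alpha}$ and $\int_{|x|\gtrsim\bra{\alpha}}\bra{x}^{1-6b}\,dx\lesssim\bra{\alpha}^{2-6b}$ because $6b-1>1$. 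With these corrections (a) and (c) close.

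The genuine gap is in part (b), in the case $|\alpha|<2(1+\beta)$ and precisely in the regime $\tfrac14<b<\tfrac12$ that the paper actually uses. There your decoupled bound gives $\max(1,(1+\beta)^{1-4b})\cdot(1+\beta)^{1/2}=(1+\beta)^{1/2}$, while the target is $(1+\beta)^{3/2-4b}$, which is strictly smaller once $b>\tfrac14$; the estimate is sharp (take $\alpha=\beta$ large, where both sides are $\sim\beta^{3/2-4b}$), so no argument that separates the two weights can succeed. Your proposed repair does not work as written: $\sup_{|x|\le\beta}\bra{\alpha}^{1/2}|\alpha-x|^{-1/2}\bra{x-\alpha}^{-\epsilon}$ is infinite whenever $\alpha$ lies in $[-\beta,\beta]$, and ``interpolating between the two cases'' of a case distinction is not an argument. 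The missing idea is to keep the weights coupled near the singularity: for $1\le|\alpha|\lesssim 1+\beta$, split at $|x-\alpha|\le|\alpha|/2$. On that piece $\bra{x}\sim\bra{\alpha}$, so $\bra{x}^{1-4b}\sim\bra{\alpha}^{1-4b}$ while $\int|\alpha-x|^{-1/2}\,dx\lesssim|\alpha|^{1/2}$, giving $\bra{\alpha}^{3/2-4b}=\bra{\alpha}^{2-4b}\bra{\alpha}^{-1/2}\lesssim(1+\beta)^{2-4b}\bra{\alpha}^{-1/2}$ since $2-4b>0$; on the complementary piece $|\alpha-x|^{-1/2}\lesssim\bra{\alpha}^{-1/2}$ and $\int_{|x|\le\beta}\bra{x}^{1-4b}\,dx\lesssim(1+\beta)^{2-4b}$. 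This local comparison $\bra{x}\sim\bra{\alpha}$ is where the factor $\bra{\alpha}^{-1/2}$ really comes from, and it is absent from your proposal.
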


The proof of \eqref{eq:integral3} is almost identical to the proof of \eqref{eq:integral1} and \eqref{eq:integral2}, hence we omit the detail. 

\begin{proof}[Proof of Proposition \ref{prop:bi2}]
We may assume that $|\tau| \le \frac{1}{32}|\xi|^5$ for $-5/4 < s\le0$, otherwise, it follows \eqref{eq:bilinear1} in the proof of Proposition \ref{prop:bi1} due to $\bra{\tau}^{\frac{s}{5}} \lesssim \bra{\xi}^s$. A direct calculation gives 
	\[\frac{31}{32}|\xi|^5 \le \frac{31}{32}|\xi|^5 - (|\tau| - \frac{1}{32}|\xi|^5) = |\xi|^5 - |\tau| \le |\tau-\xi^5| \le |\tau|+ |\xi|^5 \le \frac{33}{32}|\xi|^5,\]
	which implies
	\begin{equation}\label{eq:same size}
	|\tau-\xi^5| \sim |\xi|^5
	\end{equation} 
	under the assumption $|\tau| \le \frac{1}{32}|\xi|^5$. Moreover, we have
	\begin{equation}\label{eq:same size1}
	|\tau-\frac{1}{16}\xi^5| \sim |\xi|^5.
	\end{equation} 
 We use the same notation $f_i$ defined as in \eqref{eq:restriction1} under \eqref{eq:restriction2} and \eqref{eq:weight}. Then, \eqref{eq:bilinear2} is equivalent to
	\begin{equation}\label{eq:bilinear2-1}
	\iint\limits_{\substack{\xi_1+\xi_2=\xi \\ \tau_1+\tau_2=\tau}} \frac{|\xi|\bra{\xi}\bra{\tau}^{\frac{s}{5}}\wt{f}_1(\tau_1,\xi_1)\wt{f}_2(\tau_2,\xi_2)\wt{f}_3(\tau,\xi)}{\bra{\xi_1}^s\bra{\xi_2}^s\bra{\xi}^{5b}\beta_1(\tau_1,\xi_1)\beta_2(\tau_2,\xi_2)} \lesssim \norm{f_1}_{L^2}\norm{f_2}_{L^2}\norm{f_3}_{L^2}.
	\end{equation}
	We may assume from the symmetry that $|\xi_1| \le |\xi_2|$ without loss of generality.
	
	\textbf{Case I} $|\xi_2|< 1$. From the identity \eqref{eq:symmetry1}, we know $|\xi| < 1$ in this case, which implies $|\tau| \lesssim 1$. Then, the left-hand side of \eqref{eq:bilinear2-1} is equivalent to
	\[\iint\limits_{\substack{\xi_1+\xi_2=\xi \\ \tau_1+\tau_2=\tau \\ |\xi_1|,|\xi_2|,|\xi| < 1}} \frac{\wt{f}_1(\tau_1,\xi_1)\wt{f}_2(\tau_2,\xi_2)\wt{f}_3(\tau,\xi)}{\bra{\tau_1}^{\alpha}\bra{\tau_2}^{\alpha}}.\]
The Cauchy-Schwarz inequality yields \eqref{eq:bilinear2-1} thanks to $\alpha > 1/2$.
	
	\textbf{Case II} $|\xi_2| \ge 1$. We further split the region of $\xi_1$ into two regions.
	
	\textbf{Case II-1} $|\xi_1| < 1$. From the identity \eqref{eq:symmetry1}, we know $|\xi| \ge 1$. Moreover, $\bra{\tau}^{s/5} \lesssim 1$ in the negative regularity regime. Then, the left-hand side of \eqref{eq:bilinear2-1} is bounded by
	\begin{equation}\label{eq:bilinear2-2}
	\iint\limits_{\ast} \frac{|\xi|^2\wt{f}_1(\tau_1,\xi_1)\wt{f}_2(\tau_2,\xi_2)\wt{f}_3(\tau,\xi)}{\bra{\xi_2}^s\bra{\xi}^{5b}\bra{\tau_1}^{\alpha}\bra{\tau_2-\xi_2^5}^{b}}
	\end{equation}
	where
	\[\ast = \{(\tau_1,\tau_2,\tau,\xi_1,\xi_2,\xi) \in \R^6: \xi_1+\xi_2=\xi,\;  \tau_1+\tau_2=\tau,\;  |\xi_1| < 1,\; |\xi_2|, |\xi| \ge 1\}.\]
	From \eqref{eq:same size} and \eqref{eq:resonant function} under the assumption $|\xi_1| < 1 \le |\xi|$, we know
	\begin{equation}\label{eq:big condition}
	|\tau - \xi^5| \sim |\xi|^5 \gg |\xi_1||\xi|^4 \sim |H|.
	\end{equation}
From \eqref{eq:support property}, we, thus, divide this case into the following two cases:
	\[|\tau - \xi^5| \sim |\tau_1 - \xi_1^5| \hspace{1em} \mbox{or} \hspace{1em} |\tau - \xi^5| \sim |\tau_2 - \xi_2^5| \gg |\tau_1 - \xi_1^5|.\]
	
	For the first case, we denote the region of $\xi_1$ in the integral by $A = \set{\xi_1 : |\xi_1| \le |\xi|^{-2}} \cup \set{\xi_1 : |\xi|^{-2} < |\xi_1| \le 1} =: A_1 \cup A_2$.
	
	On $A_1$, for given $-3/2 < s \le 0$ we can choose $b=b(s)$ satisfying $\frac{1-s}{5} < b < \frac12$. Since $\bra{\tau_2 - \xi_2^5}^{-b} \lesssim 1$, the Cauchy-Schwarz inequality with respect to $\xi_1,\xi_2, \tau_1,\tau_2$ yields
	\[\eqref{eq:bilinear2-2} \lesssim |\xi|^{2-s-5b-1}\norm{f_1}_{L^2}\norm{f_2}_{L^2}\norm{f_3}_{L^2} \lesssim \norm{f_1}_{L^2}\norm{f_2}_{L^2}\norm{f_3}_{L^2}.\]
	
	On $A_2$, from \eqref{eq:symmetry2} and \eqref{eq:big condition}, we always have 
	\[|\tau_2- \xi_2^5| = |\tau-\xi^5 - (\tau_1 - \xi_1^5) + H| \gtrsim |H| \sim |\xi_1||\xi|^4 \gtrsim |\xi|^2,\]
	which guarantees $\bra{\tau_2 - \xi_2^5}^{-b} \lesssim |\xi|^{-2b}$. For given $-3/2 < s \le 0$, we can choose $b=b(s)$ satisfying $\frac{2-s}{7} < b < \frac12$. Then, the Cauchy-Schwarz inequality with respect to $\xi_1,\xi_2, \tau_1,\tau_2$ yields 
	\[\eqref{eq:bilinear2-2} \lesssim |\xi|^{2-s-7b}\norm{f_1}_{L^2}\norm{f_2}_{L^2}\norm{f_3}_{L^2} \lesssim \norm{f_1}_{L^2}\norm{f_2}_{L^2}\norm{f_3}_{L^2}.\]
	
	For the second case ($|\tau - \xi^5| \sim |\tau_2 - \xi_2^5|\gg |\tau_1 - \xi_1^5|$), we know
	\[|\tau_2- \xi_2^5| \sim |\tau - \xi^5| \sim |\xi|^5.\]
For given $-3 < s \le 0$ we can choose $b=b(s)$ satisfying $\frac{2-s}{10} < b < \frac12$. Then, the Cauchy-Schwarz inequality with respect to $\xi_1,\xi_2, \tau_1,\tau_2$ yields
	\[\eqref{eq:bilinear2-2} \lesssim |\xi|^{2-s-10b}\norm{f_1}_{L^2}\norm{f_2}_{L^2}\norm{f_3}_{L^2} \lesssim \norm{f_1}_{L^2}\norm{f_2}_{L^2}\norm{f_3}_{L^2}.\]
	
	\textbf{Case II-2} $1 \le |\xi_1| \le |\xi_2|$. We may further assume that $|\tau_1 - \xi_1^2| \le |\tau_2 - \xi_2^5|$ due to the symmetry. 
	
	\textbf{Case II-2.a} $|\tau_2 - \xi_2^5| \le 100000 |\tau -\xi^5|$. In this case, it suffices to show from the Cauchy-Schwarz inequality that
	\begin{equation}\label{eq:bilinear2-3}
	\sup_{\substack{\xi,\tau \in \R\\ |\tau| \le \frac{1}{32}|\xi|^5}}\Big(\iint\limits_{\substack{\xi_1+\xi_2=\xi \\ \tau_1+\tau_2=\tau}} \frac{|\xi|^2\bra{\xi}^2\bra{\tau}^{\frac{2s}{5}}}{\bra{\xi_1}^{2s}\bra{\xi_2}^{2s}\bra{\xi}^{10b}\bra{\tau_1-\xi_1^5}^{2b}\bra{\tau_2-\xi_2^5}^{2b}} \; d\xi_1\;d\tau_1\Big)^{1/2} \le c.
	\end{equation}
	Under the assumption, we only consider the case when $|\xi| \ge 1$. Otherwise, \eqref{eq:same size} implies $|\tau_1-\xi_1^5| \lesssim 1$, and hence we have \eqref{eq:bilinear2-1} similarly as \textbf{Case II-1} for $-2 \le s \le 0$. Indeed, from the identity \eqref{eq:symmetry2} under this condition, we know $|H| \lesssim 1$. Since 
	\[|H| = \frac52|\xi_1||\xi_2||\xi|(\xi_1^2+\xi_2^2+\xi^2) \ge 5|\xi_1|^2|\xi_2|^2|\xi|,\]
	we have $|\xi_1|^{-s}|\xi_2|^{-s} \lesssim |\xi|^{\frac{s}{2}}$, and hence $|\xi|^{1+s/2} \le 1$ for $-2 \le s \le 0$. The Cauchy-Schwarz inequality with respect to $\xi_1, \xi, \tau_1, \tau$ guarantees \eqref{eq:bilinear2-1}.
	
	We now consider \eqref{eq:bilinear2-3} on the case when $|\xi| \ge 1$. We use \eqref{eq:integral1} in addition to \eqref{eq:symmetry2} so that the left-hand side of \eqref{eq:bilinear2-3} is bounded by
	\begin{equation}\label{eq:bilinear2-4}
	\frac{|\xi|^2\bra{\tau}^{\frac{s}{5}}}{\bra{\xi}^{5b}}\left(\int_{\xi_1+\xi_2=\xi} \frac{d\xi_1}{\bra{\xi_1}^{2s}\bra{\xi_2}^{2s}\bra{\tau-\xi^5 + H}^{4b-1}}\right)^{1/2}.
	\end{equation}
	The support property ($|\tau-\xi^5| \gtrsim |H|$) and \eqref{eq:same size} implies
	\[|\xi_1|^{-2s}|\xi_2|^{-2s} \lesssim |\xi|^{-4s},\]
	and hence \eqref{eq:bilinear2-4} can be controlled by
	\begin{equation}\label{eq:bilinear2-5}
	\frac{|\xi|^{2-2s}\bra{\tau}^{\frac{s}{5}}}{\bra{\xi}^{5b}}\left(\int_{\R} \frac{d\xi_1}{\bra{\tau-\xi^5 + H}^{4b-1}}\right)^{1/2}.
	\end{equation}
	Let $\mu = \tau - \xi^5 + H$. Note that $|\mu| \le 2|\tau-\xi^5|$ in this case. Then, by the direct calculation, we know
	\[\mu - (\tau - \frac{1}{16}\xi^5) = -\frac{5}{16}\xi(\xi-2\xi_1)^2(2\xi^2 + (\xi-2\xi_1)^2)\]
	and
	\[d\mu = \frac52\xi(\xi^2 + (\xi-2\xi)^2)(\xi-2\xi_1) \; d\xi_1.\]
	Since
	\[|\xi|^{\frac32}|\mu-(\tau-\frac{1}{16}\xi^5)|^{\frac12} \le |\xi||\xi-2\xi_1||2\xi^2 +(\xi-2\xi_1)^2| \le 2|\xi||\xi-2\xi_1||\xi^2 +(\xi-2\xi_1)^2|,\]
	we can reduce \eqref{eq:bilinear2-5} by
	\begin{equation}\label{eq:bilinear2-6}
	\frac{|\xi|^{2-2s}\bra{\tau}^{\frac{s}{5}}}{\bra{\xi}^{5b}|\xi|^{\frac34}}\left(\int_{|\mu| \lesssim |\tau - \xi^5|} \frac{d\mu}{\bra{\mu}^{4b-1}|\mu - (\tau-\frac{1}{16}\xi^5)|^{1/2}}\right)^{1/2}.
	\end{equation}
	By \eqref{eq:integral2}, \eqref{eq:bilinear2-6} is bounded by
	\[\frac{|\xi|^{2-2s}\bra{\tau}^{\frac{s}{5}}\bra{\tau-\xi^5}^{1-2b}}{\bra{\xi}^{5b}|\xi|^{\frac34}\bra{\tau-\frac{1}{16}\xi^5}^{1/4}}.\]
	For given $-5/4 < s \le 0$, we choose $b = b(s)$ satisfying $\frac{5-2s}{15} \le b < \frac12$. From \eqref{eq:same size} and \eqref{eq:same size1} with $|\xi| \ge 1$ and $s \le 0$, we obtain
	\[\frac{|\xi|^{2-2s}\bra{\tau}^{\frac{s}{5}}\bra{\tau-\xi^5}^{1-2b}}{\bra{\xi}^{5b}|\xi|^{\frac34}\bra{\tau-\frac{1}{16}\xi^5}^{1/4}} \lesssim |\xi|^{5-2s-15b} \lesssim 1.\]
	
	\textbf{Case II-2.b} $|\tau - \xi^5| \le \frac{1}{100000} |\tau_2 -\xi_2^5|$. In this case, it suffices to show from the Cauchy-Schwarz inequality that
	\begin{equation}\label{eq:bilinear2-7}
	\sup_{\xi_2,\tau_2 \in \R}\Big(\iint\limits_{\substack{\xi_1+\xi_2=\xi \\ \tau_1+\tau_2=\tau}} \frac{|\xi|^2\bra{\xi}^2\bra{\tau}^{\frac{2s}{5}}}{\bra{\xi_1}^{2s}\bra{\xi_2}^{2s}\bra{\xi}^{10b}\bra{\tau_1-\xi_1^5}^{2b}\bra{\tau_2-\xi_2^5}^{2b}} \; d\xi\;d\tau\Big)^{1/2} \le c.
	\end{equation}
	In this case we fix $-2 < s \le 0$. Since $-5/2 <-2 < s$, we can choose $b = b(s)$ satisfying $-s/5 \le b < \frac12$. From the fact that
	\[\bra{\tau}^{\frac{2s}{5}+2b} \lesssim \bra{\xi^5}^{\frac{2s}{5}+2b} \sim \bra{\xi}^{2s+10b},\]
	the left-hand side of \eqref{eq:bilinear2-7} is bounded by
	\begin{equation}\label{eq:bilinear2-8}
	\frac{1}{\bra{\tau_2-\xi_2^5}^{b}}\Big(\iint\limits_{\substack{\xi_1+\xi_2=\xi \\ \tau_1+\tau_2=\tau}} \frac{|\xi|^2\bra{\xi}^{2s+2}}{\bra{\xi_1}^{2s}\bra{\xi_2}^{2s}\bra{\tau}^{2b}\bra{\tau_1-\xi_1^5}^{2b}} \; d\xi\;d\tau\Big)^{1/2}.
	\end{equation}
	
	When $|H| \le \frac12|\tau_2-\xi_2^5|$, we can know the following facts:
	\[\begin{aligned}
	&\hspace{-6em}\diamond |\tau-\xi^5| \ll |\tau_2-\xi_2^5| \; \mbox{ and } \; |\tau| \le \frac{1}{32}|\xi|^5 \; \mbox{ imply } \; |\xi|^5 \ll |\tau_2 - \xi_2^5|.\\
	&\hspace{-6em}\diamond \bra{\tau_2 - \xi_2^5 - H + \xi^5} \sim \bra{\tau_2 - \xi_2^5}.\\
	&\hspace{-6em}\diamond \bra{\xi_1}^{-2s}\bra{\xi_2}^{-2s} \lesssim |\tau_2 - \xi_2^5|^{-s}|\xi|^s.
	\end{aligned}\]
We perform the integration in \eqref{eq:bilinear2-8} in terms of $\tau$ variable by using \eqref{eq:integral1}, then \eqref{eq:bilinear2-8} is bounded by
	\[\begin{aligned}
	&\frac{1}{\bra{\tau_2-\xi_2^5}^{b}}\left(\int_{\R} \frac{|\xi|^2\bra{\xi}^{2s+2}}{\bra{\xi_1}^{2s}\bra{\xi_2}^{2s}\bra{\tau_2 - \xi_2^5 - H + \xi^5}^{4b-1}} \; d\xi\right)^{1/2}\\
	&\sim \frac{1}{\bra{\tau_2-\xi_2^5}^{3b-1/2}}\left(\int_{|\xi| \le |\tau_2 - \xi_2^5|^{1/5}} \frac{|\xi|^2\bra{\xi}^{2s+2}}{\bra{\xi_1}^{2s}\bra{\xi_2}^{2s}} \; d\xi\right)^{1/2}\\
	&\lesssim \frac{|\tau_2-\xi_5^5|^{-s/2}}{\bra{\tau_2-\xi_2^5}^{3b-1/2}}\left(\int_{|\xi| \le |\tau_2 - \xi_2^5|^{1/5}} |\xi|^{2+s}\bra{\xi}^{2s+2} \; d\xi\right)^{1/2}.
	\end{aligned}\]
	For given $-5/2 <-2< s \le 0$, we choose can $b = b(s)$ satisfying $\frac{5-s}{15} < b < \frac12$\footnote{The strict inequality $\frac{5-s}{15} < b$ covers the logarithmic divergence when $s = -\frac53$.}. Then, by performing integration in terms of $\xi$, we have
	\[\frac{|\tau_2-\xi_5^5|^{-s/2}}{\bra{\tau_2-\xi_2^5}^{3b-1/2}}\left(\int_{|\xi| \le |\tau_2 - \xi_2^5|^{1/5}} |\xi|^{2+s}\bra{\xi}^{2s+2} \; d\xi\right)^{1/2} \lesssim \bra{\tau_2-\xi_2^5}^{\frac{1}{10}(10-2s-30b)}\lesssim 1.\]
	
	For the other case ($|H| > \frac12|\tau_2-\xi_2^5|$), we can know the following facts:
	\begin{equation}\label{eq:facts}
	\begin{aligned}
	&\hspace{-17em}\diamond 10|\xi| \le |\xi_1| \sim |\xi_2|.\\
	&\hspace{-17em}\diamond |\xi - \xi_2| \sim |\xi_2|.\\
	&\hspace{-17em}\diamond |\xi| \sim \frac{|\tau_2 - \xi_2^5|}{|\xi_2|^4}.\\
	&\hspace{-17em}\diamond |\xi|^5 \ll |\tau_2 - \xi_2^5|.\\
	&\hspace{-17em}\diamond \bra{\xi_1}^{-2s}\bra{\xi_2}^{-2s} \lesssim |\tau_2 - \xi_2^5|^{-s}|\xi|^s.
	\end{aligned}
	\end{equation}
	To verify the first one in \eqref{eq:facts}\footnote{It is not difficult to verify the others.}, suppose that $|\xi_1|\le 10|\xi|$. From \eqref{eq:symmetry1}, we know $|\xi_2| \le 11|\xi|$. Then,
	\[\begin{aligned}
	|H| &= \frac{5}{2}|\xi_1||\xi_2||\xi|(|\xi_1|^2 + |\xi_2|^2 + |\xi|^2) \\
	&\le 30525|\xi|^5 \le \frac{976800}{31}|\tau-\xi^5| \le \frac13|\tau_2 - \xi_2^5|,
	\end{aligned}\]
	which contradicts to the assumption $|H| > \frac12|\tau_2-\xi_2^5|$.
	
	Now, under the conditions \eqref{eq:facts}, we control the following integral:
	\begin{equation}\label{eq:bilinear2-10}
	\iint\limits_{\substack{\xi_1+\xi_2=\xi \\ \tau_1+\tau_2=\tau}} \frac{|\xi|^2\bra{\xi}^{2s+2}\bra{\xi_1}^{-2s}\bra{\xi_2}^{-2s}}{\bra{\tau_2-\xi_2^5}^{2b}\bra{\tau}^{2b}\bra{\tau_1-\xi_1^5}^{2b}} \; d\xi\;d\tau.
	\end{equation}
	
	When $|\xi| \le 1$, \eqref{eq:facts} and \eqref{eq:integral1} yield
	\[\begin{aligned}
	\eqref{eq:bilinear2-10} &\lesssim \iint\limits_{\substack{\xi_1+\xi_2=\xi \\ \tau_1+\tau_2=\tau}} \frac{|\xi|^{2+s}\bra{\xi}^{2s+2}\bra{\tau_2-\xi_2^5}^{-s-2b}}{\bra{\tau}^{2b}\bra{\tau_1-\xi_1^5}^{2b}} \; d\tau\;d\xi\\
	&\lesssim \int_{|\xi |\le 1} \frac{|\xi|^{2+s}\bra{\xi}^{2s+2}\bra{\tau_2-\xi_2^5}^{-s-2b}}{\bra{\tau_2-\xi_2^5 - H + \xi^5}^{4b-1}} \; d\xi
	\end{aligned}\]
	Let $\mu = \tau_2 - \xi_2^5 - H + \xi^5$, then we have $d\mu = 5(\xi-\xi_2)^4 \; d\xi$. From the facts \eqref{eq:facts} with $|\xi| \le 1$, since $|\xi_2|^{-4} \lesssim |\tau_2 - \xi_2^5|^{-1}$, the change of variable enables us to get
	\[\eqref{eq:bilinear2-10} \lesssim \int_{|\mu| \le |\tau_2 - \xi_2^5|} \frac{\bra{\tau_2-\xi_2^5}^{-s-2b-1}}{\bra{\mu}^{4b-1}} \; d\mu\]
	for $-2 < s \le 0$. For given $-2 < s \le 0$, we can choose $b=b(s)$ satisfying $\frac{1-s}{6} \le b < \frac12$. Then, by performing the integration in terms of $\mu$, we have
	\[\eqref{eq:bilinear2-10} \lesssim \bra{\tau_2-\xi_2^5}^{-s-6b+1} \lesssim 1.\]
	
	Now, we focus on the case when $|\xi| > 1$. Similarly as before, \eqref{eq:bilinear2-10} can be reduced by 
	\begin{equation}\label{eq:bilinear2-11}
	\int_{|\xi| > 1} \frac{|\xi|^{4+3s}\bra{\tau_2-\xi_2^5}^{-s-2b}}{\bra{\tau_2-\xi_2^5 - H + \xi^5}^{4b-1}} \; d\xi.
	\end{equation}
		
	We use the change of variable $\mu = \tau_2 - \xi_2^5 - H + \xi^5$ with
	\[d\mu = 5(\xi-\xi_2)^4 \; d\xi.\]
	If $-2 < s \le -5/3$, since 
	\[|\xi_2|^{-4} \sim |\xi||\tau_2-\xi_2^5|^{-1} \hspace{1em}(\Rightarrow |\xi|^{5+3s} \lesssim 1),\]
	we have 
	\[\eqref{eq:bilinear2-11} \lesssim \int_{|\mu| < |\tau_2-\xi_2^5|} \frac{\bra{\tau_2-\xi_2^5}^{-s-2b-1}}{\bra{\mu}^{4b-1}} \; d\mu \lesssim \bra{\tau_2-\xi_2^5}^{-s-6b+1} \lesssim 1\]
	by choosing $b = b(s)$ satisfying $(1-s)/6 < b < 1/2$.
	
	Otherwise ($-5/3 < s \le 0$), we can choose $b=b(s)$ satisfying $\frac{5-s}{15} \le b < \frac12$. Then, from the fact $|\xi| \ll |\tau_2 - \xi_2^5|^{1/5}$, we obtain 
	\[\begin{aligned}
	\eqref{eq:bilinear2-11} &\lesssim \int_{|\mu| \le |\tau_2 - \xi_2^5|} \frac{\bra{\tau_2-\xi_2^5}^{\frac{5+3s}{5}-s-2b-1}}{\bra{\mu}^{4b-1}} \; d\mu\\
	&\lesssim \bra{\tau_2 - \xi_2^5}^{\frac{10-2s-30b}{5}} \lesssim 1.
	\end{aligned}\]
	
	Therefore, we complete the proof of Proposition \ref{prop:bi2}.
\end{proof}

\subsection{$F(u) = \px(u^3)$ case}\label{sec:nonlinear2}
We now prove Theorem \ref{thm:nonlinear2}. 
\begin{proposition}\label{prop:tri1}
	For $-1/4 \le s $, there exists $b = b(s) < 1/2$ such that for all $\alpha > 1/2$, we have
	\begin{equation}\label{eq:trilinear1}
	\norm{\px(uvw)}_{X^{s,-b}} \le c\norm{u}_{X^{s,b} \cap D^{\alpha}}\norm{v}_{X^{s,b} \cap D^{\alpha}}\norm{w}_{X^{s,b} \cap D^{\alpha}}.
	\end{equation}
\end{proposition}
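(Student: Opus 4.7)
The plan is to extend the duality/dyadic framework of Proposition \ref{prop:bi1} to the trilinear setting: dualize, perform a Littlewood--Paley decomposition in both frequency and modulation, and estimate each block via the trilinear $L^2$-bounds of Lemma \ref{lem:tri-L2}, supplemented by the sharp $L^6_{t,x}$ Strichartz estimate with derivative gain for $\{e^{t\partial_x^5}\}$ from \cite{CT2005} in the regime where Lemma \ref{lem:tri-L2} does not apply. Setting $\wt f_i = \beta_i \wt u_i$ with $\beta_i$ as in \eqref{eq:restriction2} so that $u_i\in X^{s,b}\cap D^{\alpha}\Leftrightarrow f_i\in L^2$ and \eqref{eq:weight} holds, the dual formulation of \eqref{eq:trilinear1} becomes
\begin{equation*}
\int\limits_{\substack{\xi_1+\xi_2+\xi_3=\xi_4 \\ \tau_1+\tau_2+\tau_3=\tau_4}} \frac{|\xi_4|\bra{\xi_4}^{s}\prod_{i=1}^{4}\wt f_i(\tau_i,\xi_i)}{\bra{\tau_4-\xi_4^5}^{b}\prod_{i=1}^{3}\bra{\xi_i}^s\beta_i(\tau_i,\xi_i)} \lesssim \prod_{i=1}^{4}\|f_i\|_{L^2},
\end{equation*}
and after the decomposition $f_i=\sum_{k_i,j_i} f_{k_i,j_i}$ the task reduces to a summable dyadic bound on $J_3(f_{k_1,j_1}^{\sharp},\ldots,f_{k_4,j_4}^{\sharp})$.

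The support condition \eqref{eq:tri-support property} forces $2^{k_{max}}\sim 2^{k_{sub}}$ and $2^{j_{max}}\gtrsim \max(2^{j_{sub}},|G|)$. I split the analysis according to whether $k_{thd}\le k_{max}-10$ (at least one of the four frequencies is much smaller than the others) or $k_{thd}>k_{max}-10$ (all four frequencies are comparable). In the first regime, the resonance identity \eqref{eq:tri-resonant function} yields $|G|\gtrsim 2^{4k_{max}}\cdot 2^{k_\bullet}$ for a suitable smaller frequency $k_\bullet$, so $j_{max}\gtrsim 4k_{max}+k_\bullet$ and Lemma \ref{lem:tri-L2}(b) supplies the decisive factor $2^{-2k_{max}-j_{max}/2}\prod 2^{j_i/2}$. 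Combined with the modulation weights $2^{-2b\sum j_i}$ and the multiplier weight $2^{(1+s)k_4-s(k_1+k_2+k_3)}$, the $j$-sums close for some $b=b(s)<1/2$ and the $k$-sums converge precisely when $s\ge -1/4$; both the case $(k_i,j_i)=(k_{thd},j_{max})$ (handled by (b-1)) and its complement (handled by (b-2)) must be kept separate, with the worst configurations appearing, as in Remark \ref{rem:lowhigh}, when the smallest frequency carries the largest modulation. Sub-regions with $|\xi_i|\le 1$ are treated via the $\bra{\tau_i}^{-\alpha}$ piece of \eqref{eq:weight} and Cauchy--Schwarz in $\tau_i$, exactly as in \textbf{Case II-a} of Proposition \ref{prop:bi1}, to avoid the logarithmic divergence that reliance on $\bra{\tau_i-\xi_i^5}^{-b}$ alone would cause.

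The main obstacle is the regime $k_{thd}>k_{max}-10$, i.e.\ the \emph{high}$\times$\emph{high}$\times$\emph{high} $\Rightarrow$ \emph{high} interaction, in which Lemma \ref{lem:tri-L2}(b) is inapplicable and Lemma \ref{lem:tri-L2}(a) is too weak by itself. I handle it in physical space: since $b<1/2$, I bound
\begin{equation*}
\|P_{k_4}(u_{k_1}v_{k_2}w_{k_3})\|_{X^{0,-b}}\lesssim \|u_{k_1}v_{k_2}w_{k_3}\|_{L^2_{t,x}} \le \prod_{i=1}^{3}\|u_{k_i}\|_{L^6_{t,x}}
\end{equation*}
by Hölder, and apply the sharp $L^6_{t,x}$ Strichartz estimate with derivative gain for $\{e^{t\partial_x^5}\}$ of \cite{CT2005}, transferred to $X^{0,b}$ with $b<1/2$ by the standard modulation decomposition (splitting each $u_{k_i}$ according to whether its modulation is $\lesssim 2^{4k_{max}}$ or larger, using the free Strichartz for the low-modulation piece and the trivial $L^2_{t,x}$ embedding together with the resonance gain for the high-modulation piece). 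The resulting bound has the form $2^{\gamma(s)k_{max}}\prod_{i=1}^{3}\|u_{k_i}\|_{X^{s,b}}$ with $\gamma(s)\le 0$ precisely when $s\ge -1/4$, so the remaining $k_{max}$-summation is geometric. Assembling this bound with the Lemma \ref{lem:tri-L2}(b) contributions from the first regime and the trivial \emph{low}$\times$\emph{low}$\times$\emph{low} interactions produces \eqref{eq:trilinear1} for every $s\ge -1/4$, some $b(s)<1/2$ and every $\alpha>1/2$, matching the sharp threshold of \cite{JH2009,CLMW2009}.
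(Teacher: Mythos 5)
You correctly identify the overall architecture (dualize, dyadic decomposition in $(k,j)$, block estimates from Lemma \ref{lem:tri-L2}, Strichartz for the fully balanced interaction, $D^{\alpha}$ for $|\xi_i|\le1$), but there is a genuine gap in the split you propose and, as a consequence, in how you close the frequency sums.

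Your dichotomy $k_{thd}\le k_{max}-10$ versus $k_{thd}>k_{max}-10$ is fine mathematically, but the second alternative is \emph{not} the \emph{high}$\times$\emph{high}$\times$\emph{high} $\Rightarrow$ \emph{high} interaction: it only forces $k_{max}\sim k_{sub}\sim k_{thd}$, while $k_{min}$ can still be arbitrarily small. Hence it also contains the \emph{high}$\times$\emph{high}$\times$\emph{low} $\Rightarrow$ \emph{high} and \emph{high}$\times$\emph{high}$\times$\emph{high} $\Rightarrow$ \emph{low} interactions (the paper's Cases II and III). Your proposed estimate there is to write $\|P_{k_4}(u_{k_1}v_{k_2}w_{k_3})\|_{X^{0,-b}}\lesssim\prod_{i=1}^3\|u_{k_i}\|_{L^6_{t,x}}$ and apply the $L^6$ Strichartz with gain $2^{-k_i/2}$ to each input. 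When $k_1\ll k_2\sim k_3\sim k_4=:k$ this produces, after inserting the multiplier weight $2^{(1+s)k}\bra{\xi_1}^{-s}\bra{\xi_2}^{-s}\bra{\xi_3}^{-s}$, a net power $2^{-sk-(\frac12+s)k_1}$: at $s=-\frac14$ this is $2^{k/4-k_1/4}$, which grows in $k$ when $k_1$ is small, so the $k$-summation does not close. The paper avoids this by \emph{changing roles}: in Case II-b with $j_1=j_{\max}$, it uses \eqref{eq:symmetry} to put the dual piece $f_{k_4,j_4}$ (not $f_{k_1,j_1}$) into $L^6$, so the derivative gain is $2^{-(k_2+k_3+k_4)/2}\sim 2^{-3k/2}$ rather than $2^{-(k_1+k_2+k_3)/2}$, making the $k$-exponent $-(\frac12+s)\le-\frac14$. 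And when $j_1\neq j_{\max}$ it proves a bespoke block estimate \eqref{eq:j4} — a one-dimensional Cauchy--Schwarz after the change of variables $\mu=G$ — yielding $2^{-3k_4/2}2^{(j_1+j_2+j_3)/2}$, which is \emph{not} deducible from your Hölder$+L^6$ argument. Neither the role-change nor \eqref{eq:j4} appears in your outline, and without one of them the $k$-sums in Cases II and III do not converge at $s=-\frac14$. The modulation-threshold decomposition you allude to (low modulation uses free Strichartz, high modulation uses resonance gain) can salvage some subcases but you would still have to resolve the exact same obstruction: in the frequency-balanced but modulation-unbalanced configuration with $j_1=j_{\max}$, the only known route is to move the Strichartz gain off the low-frequency factor, which is precisely the role-change. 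A secondary but real imprecision: in the regime $k_{thd}\le k_{max}-10$ you invoke $|G|\gtrsim 2^{4k_{max}+k_\bullet}$, but Lemma \ref{lem:tri-L2}(b) already carries the factor $2^{-j_{max}/2}$ and does not require, and the paper's proof in Cases IV--V does not use, any lower bound on $|G|$ there; what is actually required is to distinguish the configurations $(k_i,j_i)=(k_{thd},j_{max})$ (Lemma \ref{lem:tri-L2}(b-1)) from its complement (b-2), which you do note. The first imprecision is harmless; the missing role-change / bespoke block estimate is not, and it is where your argument fails at the claimed threshold $s\ge-\frac14$.
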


Before proving Proposition \ref{prop:tri1}, we bring the Strichartz estimates for the fifth-order dispersive equations.
\begin{lemma}[Strichartz estimates for $e^{t\px^5}$ operator \cite{CT2005}]\label{eq:strichartz}
Assume that $-1 < \sigma \le \frac32$ and $0 \le \theta \le 1$. Then there exists $C>0$ depending on $\sigma$ and $\theta$ such that
\[\norm{D^{\frac{\sigma\theta}{2}e^{t\px^5}\varphi}}_{L^q_tL^p_x} \le C \norm{\varphi}_{L^2}\]
for $\varphi \in L^2$, where $p=\frac{2}{1-\theta}$ and $q = \frac{10}{\theta(\sigma +1)}$. In particular, we have 
\begin{equation}\label{eq:strichartz}
\norm{e^{t\px^5}P_k\varphi}_{L_{t,x}^6} \lesssim  2^{-k/2}\norm{P_k\varphi}_{L^2}, \quad k\ge 1.
\end{equation}
\end{lemma}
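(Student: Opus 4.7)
The plan is to follow the standard Strichartz-estimate architecture for dispersive equations, specialized to the fifth-order phase $\phi(\xi) = \xi^5$: derive a frequency-localized dispersive bound, interpolate with $L^2$ conservation, run a $TT^*$ argument combined with the Hardy-Littlewood-Sobolev inequality, and finally recombine the dyadic pieces via Littlewood-Paley square-function theory. First, I would reduce the assertion to the frequency-localized form: for each $k \in \Z_+$, show
\[
\norm{e^{t\px^5}P_k\varphi}_{L^q_tL^p_x} \lesssim 2^{-\sigma\theta k/2}\norm{P_k\varphi}_{L^2}
\]
uniformly in $k$. Once this holds, Minkowski's inequality in $L^q_tL^p_x$ (valid since $q,p \ge 2$ in the regime considered) together with the Littlewood-Paley square-function theorem recombines the blocks to yield the full inequality; the particular case \eqref{eq:strichartz} is then recovered by setting $\sigma = 3/2$ and $\theta = 2/3$, giving $p = q = 6$ and $\sigma\theta/2 = 1/2$.

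Second, I would establish the dispersive decay on a single Littlewood-Paley block. The oscillatory kernel
\[
K_t^{(k)}(x) = \int_{\R}e^{i(x\xi + t\xi^5)}\chi_k(\xi)\,d\xi
\]
has second derivative of phase $\phi''(\xi) = 20\xi^3$, which is comparable to $2^{3k}$ on $\supp\chi_k$ for $k \ge 1$, and higher derivatives are controlled similarly. Van der Corput's lemma in the stationary regime together with integration by parts in the non-stationary regime then yields $\norm{K_t^{(k)}}_{L^\infty_x} \lesssim |t|^{-1/2}2^{-k/2}$ uniformly in $k$. Combining this with Plancherel's identity and Riesz-Thorin interpolation produces the dispersive $L^{p'}_x \to L^p_x$ bound $\norm{e^{t\px^5}P_k\varphi}_{L^p_x} \lesssim (|t|^{-1/2}2^{-k/2})^{1-2/p}\norm{P_k\varphi}_{L^{p'}_x}$ for $2 \le p \le \infty$.

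Third, I would run the $TT^*$ argument. Setting $T_k\varphi(t,x) = e^{t\px^5}P_k\varphi(x)$, the composition $T_kT_k^*$ is convolution in time by the free propagator, and inserting the above dispersive bound and invoking the Hardy-Littlewood-Sobolev inequality in the time variable converts the $|t|^{-(1-2/p)/2}$ decay into boundedness $L^{q'}_tL^{p'}_x \to L^q_tL^p_x$, provided the scaling identity $\sigma\theta/2 + 1/2 = 5/q + 1/p$ holds --- which is precisely what the hypotheses $p = 2/(1-\theta)$ and $q = 10/(\theta(\sigma+1))$ encode. Standard $TT^*$ duality then yields the $k$-uniform frequency-localized Strichartz estimate with the correct derivative weight $2^{-\sigma\theta k/2}$.

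The main obstacle is handling the degeneracy of $\phi$ near $\xi = 0$: for very low frequencies the second-order stationary-phase analysis gives no genuine time decay and no frequency gain, so the block $P_0\varphi$ must be treated separately via Sobolev embedding and Bernstein-type bounds (this is harmless since the derivative weight $2^{-\sigma\theta k/2}$ is trivial when $k = 0$, and the particular case \eqref{eq:strichartz} is only claimed for $k \ge 1$). A secondary technical point is verifying the admissibility range $-1 < \sigma \le 3/2$: the lower bound ensures that the Hardy-Littlewood-Sobolev kernel $|t|^{-\alpha}$ is locally integrable with $\alpha < 1$, while the upper bound prevents the forbidden endpoint pair where $p = \infty$ or $q = \infty$ would arise. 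With those ranges respected, the summation in $k$ closes and the argument goes through in the classical manner.
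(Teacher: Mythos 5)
The paper does not prove this lemma at all: it is quoted verbatim from Cui--Tao \cite{CT2005} (ultimately the Kenig--Ponce--Vega framework), so there is no in-paper argument to compare against. Your outline is precisely the standard proof used in that reference --- frequency-localized dispersive bound via van der Corput, $TT^{\ast}$ plus Hardy--Littlewood--Sobolev, Littlewood--Paley recombination --- and your verification of the scaling identity $\sigma\theta/2+1/2=5/q+1/p$ and of the specialization $\sigma=3/2$, $\theta=2/3$ giving \eqref{eq:strichartz} are both correct.

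Two points need repair. First, the kernel bound is off: with phase $x\xi+t\xi^5$ one has $|\partial_\xi^2(x\xi+t\xi^5)|=20|t||\xi|^3\gtrsim |t|2^{3k}$ on $\supp\chi_k$, so van der Corput gives $\norm{K_t^{(k)}}_{L^\infty}\lesssim (|t|2^{3k})^{-1/2}=|t|^{-1/2}2^{-3k/2}$, not $|t|^{-1/2}2^{-k/2}$. This is not cosmetic: feeding your stated bound through interpolation and $TT^{\ast}$ yields a gain of only $2^{-k(1-2/p)/4}=2^{-k/6}$ at $p=q=6$, which is strictly weaker than \eqref{eq:strichartz}; the correct exponent $2^{-3k/2}$ produces $2^{-3k(1-2/p)/4}=2^{-k/2}$ as required. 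Second, the bare $TT^{\ast}$/HLS argument forces the admissibility relation $(1-2/p)/2=2/q$, i.e. $q=4/\theta$, which is exactly the line $\sigma=3/2$; the remaining pairs with $-1<\sigma<3/2$ (where $q=10/(\theta(\sigma+1))>4/\theta$) do not come out of $TT^{\ast}$ alone but require interpolating that sharp line with the degenerate endpoint $\sigma\to-1$, $q=\infty$, which is the Sobolev embedding $\norm{D^{-\theta/2}\varphi}_{L^p}\lesssim\norm{\varphi}_{L^2}$. Since the paper only ever uses the single estimate \eqref{eq:strichartz}, which lies on the sharp line, this second gap is harmless for the application, but it should be acknowledged if you claim the full range of the lemma.
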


\begin{proof}[Proof of Proposition \ref{prop:tri1}.]
Similar mechanism as in the proof of Proposition \ref{prop:bi1} will be used. Let 
\begin{equation}\label{eq:uvw}
\begin{split}
\wt{f}_1(\tau_1,\xi_1) &= \beta_1(\tau_1,\xi_1)\wt{u}(\tau_1,\xi_1),  \qquad \wt{f}_2(\tau_2,\xi_2) = \beta_2(\tau_2,\xi_2)\wt{v}(\tau_2,\xi_2)\\
&\mbox{and} \quad \wt{f}_3(\tau_3,\xi_3) = \beta_3(\tau_3,\xi_3)\wt{w}(\tau_3,\xi_3),
\end{split}
\end{equation}
where
	\begin{equation}\label{eq:uvw2}
	\beta_i(\tau_i,\xi_i) = \bra{\tau_i-\xi_i^5}^b + \mathbf{1}_{|\xi_i|\le1}(\xi_i)\bra{\tau_i}^{\alpha}, \hspace{1em} i=1,2,3
	\end{equation}
	satisfying
	\begin{equation}\label{eq:weight1}
	\begin{aligned}
	\frac{1}{\beta_i(\tau_i,\xi_i)} \lesssim \begin{cases}\begin{array}{ll}\bra{\tau_i-\xi_i^5}^{-b}, & \hspace{1em} \mbox{when} \hspace{1em} |\xi_i| > 1, \\ \bra{\tau_i}^{-\alpha}, & \hspace{1em} \mbox{when} \hspace{1em} |\xi_i| \le 1.  \end{array}\end{cases}
	\end{aligned}
	\end{equation}
	Note that $f_1, f_2, f_3 \in L^2 \Leftrightarrow u, v, w \in X^{s,b} \cap D^{\alpha}$. By the duality argument, \eqref{eq:trilinear1} is equivalent to 
	\begin{equation}\label{eq:trilinear1-1}
	\iint\limits_{\substack{\xi_1+\xi_2+\xi_3=\xi \\ \tau_1+\tau_2+\tau_3=\tau}} \frac{|\xi|\bra{\xi}^{s}\wt{f}_1(\tau_1,\xi_1)\wt{f}_2(\tau_2,\xi_2)\wt{f}_3(\tau_3,\xi_3)\wt{f}_4(\tau,\xi)}{\bra{\xi_1}^s\bra{\xi_2}^s\bra{\xi_3}^s\bra{\tau-\xi^5}^b\beta_1(\tau_1,\xi_1)\beta_2(\tau_2,\xi_2)\beta_3(\tau_3,\xi_3)} \lesssim \prod_{i=1}^{4}\norm{f_i}_{L^2}.
	\end{equation}
	Let $k_i,j_i \in \Z_+$. We decompose $f_i$, $i=1,2,3,4$, into $f_{k_i,j_i}$, $i=1,2,3,4$, by $f_{k_i,j_i}(\tau,\xi) = \eta_{j_i}(\tau-\xi^5)\chi_{k_i}(\xi)\wt{f}_i(\tau,\xi)$. We divide the frequency regions of integration 
	\begin{equation}\label{eq:trilinear1-2}
\iint\limits_{\substack{\xi_1+\xi_2+\xi_3=\xi \\ \tau_1+\tau_2+\tau_3=\tau}} \frac{|\xi|\bra{\xi}^{s}\wt{f}_1(\tau_1,\xi_1)\wt{f}_2(\tau_2,\xi_2)\wt{f}_3(\tau_3,\xi_3)\wt{f}_4(\tau,\xi)}{\bra{\xi_1}^s\bra{\xi_2}^s\bra{\xi_3}^s\bra{\tau-\xi^5}^b\beta_1(\tau_1,\xi_1)\beta_2(\tau_2,\xi_2)\beta_3(\tau_3,\xi_3)}.	
\end{equation}
into several regions associated to the relation of frequencies to prove \eqref{eq:trilinear1-1}.
	
\textbf{Case I.}	high-high-high $\Rightarrow$ high ($k_4 \ge 10$ and $|k_1-k_4|, |k_2-k_4|, |k_3-k_4| \le 5$). Without loss of generality, we may assume $j_4 = j_{max}$. The change of variables yields in this case that \eqref{eq:trilinear1-2} is bounded by
	\begin{equation}\label{eq:trilinear1-I.1}
\sum_{\substack{k_4 \ge 10 \\|k_4-k_i| \le 5, \; i=1,2,3}}\sum_{j_1,j_2,j_3,j_4 \ge 0} 2^{(1-2s)k_4}2^{-b(j_1+j_2+j_3+j_4)}J_3(f_{k_1,j_1}^{\sharp},f_{k_2,j_2}^{\sharp},f_{k_3,j_3}^{\sharp},f_{k_4,j_4}^{\sharp}).
\end{equation}
On th other hand, since $f_{k_i,j_i}^{\sharp}(\tau, \xi) = f_{k_i,j_i}(\tau-\xi^5,\xi)$, we get
\[\begin{aligned}
\Big|& J_3(f_{k_1,j_1}^{\sharp},f_{k_2,j_2}^{\sharp},f_{k_3,j_3}^{\sharp},f_{k_4,j_4}^{\sharp}) \Big| = \left| \int(f_{k_1,j_1}^{\sharp}\ast f_{k_2,j_2}^{\sharp}\ast f_{k_3,j_3}^{\sharp})f_{k_4,j_4}^{\sharp} \right| \\
&\lesssim \norm{f_{k_1,j_1}^{\sharp}\ast f_{k_2,j_2}^{\sharp}\ast f_{k_3,j_3}^{\sharp}}_{L^2}\norm{f_{k_4,j_4}^{\sharp}}_{L^2} \lesssim \prod_{i=1}^{3}\norm{\ft^{-1}(f_{k_i,j_i}^{\sharp})}_{L^6}\norm{f_{k_4,j_4}}_{L^2}.
\end{aligned}\]
The Fourier inversion formula, Minkowski inequality, \eqref{eq:strichartz} and the Cauchy-Schwarz inequality yield
\[\begin{aligned}
\norm{\ft^{-1}(f_{k_i,j_i}^{\sharp})}_{L^6} &= \left\|\int e^{it\tau}e^{ix\xi}e^{it\xi^5}f_{k_i,j_i}(\tau,\xi) \;d\xi d\tau \right\|_{L^6}\\
&\lesssim \int\left\| \int e^{ix\xi}e^{it\xi^5}f_{k_i,j_i}(\tau,\xi) \;d\xi  \right\|_{L^6} \; d\tau\\
&\lesssim 2^{-k_i/2}2^{j_i/2}\norm{f_{k_i,j_i}}_{L^2}.
\end{aligned}\]
Using this, we estimate \eqref{eq:trilinear1-I.1} by
\begin{equation}\label{eq:trilinear1-I.2} 
\sum_{\substack{k_4 \ge 10 \\|k_4-k_i| \le 5,\; i=1,2,3}}\sum_{0\le j_1\le j_2 \le j_3 \le j_4} 2^{(1-2s)k_4}2^{-\frac32k_4}2^{(\frac12-b)(j_1+j_2+j_3+j_4)}2^{-j_4/2} \prod_{i=1}^4\norm{f_{k_i,j_i}}_{L^2}.
\end{equation}
The choice of of $ \frac38 < b < \frac12$ ensures the $\ell^2$-summability of $2^{(\frac12-b)(j_1+j_2+j_3+j_4)}2^{-j_4/2}$ over $0\le j_1\le j_2 \le j_3 \le j_4$. On the other hand, we see that the frequency summation includes only one infinite sum as
\[\sum_{\substack{k_4 \ge 10 \\|k_4-k_i| \le 5,\; i=1,2,3}} = \sum_{k_4 \ge 10}\sum_{k_4 - 5 \le k_3\le k_4 +5}\sum_{k_4 - 5 \le k_2\le k_4 +5}\sum_{k_4 - 5 \le k_1\le k_4 +5}.\]
We therefore have for $s \ge -\frac14$ that
\[\eqref{eq:trilinear1-I.2} \lesssim \norm{f_1}_{L^2}\norm{f_2}_{L^2}\norm{f_3}_{L^2}\norm{f_4}_{L^2}.\]

\textbf{Case II.} high-high-low $\Rightarrow$ high ($k_4 \ge 10$, $|k_2-k_4|, |k_3-k_4| \le 5$ and $k_1 \le k_4-10$)\footnote{We may assume that $\xi_1$ is the lowest frequency without loss of generality due to the symmetry.}. In this case, we know from \eqref{eq:tri-support property} and \eqref{eq:tri-resonant function} that $j_{max} \ge 5k_4$. We further divide the case into two cases: $k_1 = 0$ and $k_1 \ge 1$.

\textbf{Case II-a.} $k_1=0$. It suffices to consider
\[\sum_{\substack{k_4 \ge 10 \\|k_4-k_i| \le 5, \; i=2,3 }}\sum_{j_1,j_2,j_3,j_4 \ge 0} 2^{(1-s)k_4}2^{-\alpha j_1}2^{-b(j_2+j_3+j_4)}J_3(f_{0,j_1}^{\sharp},f_{k_2,j_2}^{\sharp},f_{k_3,j_3}^{\sharp},f_{k_4,j_4}^{\sharp}).\]
By \eqref{eq:tri-block estimate-a1}, we can control $J_3(f_{0,j_1}^{\sharp},f_{k_2,j_2}^{\sharp},f_{k_3,j_3}^{\sharp},f_{k_4,j_4}^{\sharp})$, and hence it suffices to show
\begin{equation}\label{eq:trilinear1-II.2}
\sum_{\substack{k_4 \ge 10 \\|k_4-k_i| \le 5, \; i=2,3 }}\sum_{j_1,j_2,j_3,j_4 \ge 0} 2^{2(1-s)k_4}2^{(1-2\alpha) j_1}2^{(1-2b)(j_2+j_3+j_4)}2^{k_4}2^{-(j_{sub}+j_{max})} \lesssim 1.
\end{equation}
Without loss of generality, we may assume $j_2 \le j_3 \le j_4$. When $j_4 = j_{max}$, we know $j_3 \le j_{sub}$. For $s > -1$, by choosing $\max(\frac{3-2s}{10}, \frac14) < b < \frac12$, we have
\[\begin{aligned}
\mbox{LHS of }\eqref{eq:trilinear1-II.2} &\lesssim \sum_{\substack{k_4 \ge 10 \\|k_4-k_i| \le 5, \; i=2,3 }}\sum_{\substack{0 \le j_1 \\  0 \le j_2 \le j_3 \le j_4}} 2^{(3-2s)k_4}2^{(1-2\alpha) j_1}2^{(1-2b)j_2}2^{-2bj_3}2^{-2bj_4)} \\
&\lesssim \sum_{\substack{k_4 \ge 10 \\|k_4-k_i| \le 5, \; i=2,3 }}\sum_{\substack{0 \le j_1, j_3 \\ j_4 \ge 5k_4}} 2^{(3-2s)k_4}2^{(1-2\alpha) j_1}2^{(1-4b)j_3}2^{-2bj_4}\\
&\lesssim \sum_{\substack{k_4 \ge 10 \\|k_4-k_i| \le 5, \; i=2,3 }}2^{(3-2s-10b)k_4} \lesssim 1.
\end{aligned}\]
whenever $\alpha > \frac12$. When $j_4 \neq j_{max}$, we know $j_3 \le j_1$. Since $j_{max} \ge 5k_4$, we have
\[\begin{aligned}
\mbox{LHS of }\eqref{eq:trilinear1-II.2} &\lesssim \sum_{\substack{k_4 \ge 10 \\|k_4-k_i| \le 5, \; i=2,3 }}\sum_{\substack{0 \le j_1 \\  0 \le j_2 \le j_3 \le j_4}} 2^{(-2-2s)k_4}2^{(1-2\alpha) j_1}2^{(1-2b)(j_2+j_3)}2^{-2bj_4)} \\
&\lesssim \sum_{0 \le j_1, j_4} 2^{(1-2\alpha) j_1}2^{(2-6b)j_4} \lesssim 1.
\end{aligned}\]
whenever $s > -1$, $\alpha >\frac12$ and $\frac13 < b < \frac12$.

\textbf{Case II-b.} $k_1 \ge 1$. Without loss of generality, we may assume that $j_1 \le j_2 \le j_3 \le j_4$. Similarly as before, it suffices to show
\begin{equation}\label{eq:trilinear1-II.3}
\begin{aligned}
\sum_{\substack{k_4 \ge 10 \\|k_4-k_i| \le 5, \; i=2,3\\1 \le k_1 \le k_4-10 }}\sum_{j_1,j_2,j_3,j_4 \ge 0} 2^{(1-s)k_4}2^{-sk_1}2^{-b(j_1 +j_2+j_3+j_4)}J_3(f_{k_1,j_1}^{\sharp},f_{k_2,j_2}^{\sharp},&f_{k_3,j_3}^{\sharp},f_{k_4,j_4}^{\sharp}) \\
&\lesssim \prod_{i=1}^4\norm{f_{k_i,j_i}}_{L^2}.
\end{aligned}
\end{equation}
If $j_1 = j_{\max}$, we apply the argument used in \textbf{Case I} to $J_3(f_{k_1,j_1}^{\sharp},f_{k_2,j_2}^{\sharp},f_{k_3,j_3}^{\sharp},f_{k_4,j_4}^{\sharp})$ by changing the role of $f_{k_1,j_1}$ and $f_{k_4,j_4}$. It is possible thanks to \eqref{eq:symmetry}. Similarly as before, we have 
\[\mbox{LHS of }\eqref{eq:trilinear1-II.3} \lesssim \sum_{\substack{k_4 \ge 10 \\|k_4-k_i| \le 5, \; i=2,3 \\1 \le k_1 \le k_4-10 }}\sum_{\substack{j_1,j_2,j_3,j_4 \ge 0 \\ j_1 = j_{max}}} 2^{-(1/2+s)k_4}2^{-sk_1}2^{(\frac12-b)(j_2+j_3+j_4)}2^{-bj_1}\prod_{i=1}^{4}\norm{f_{k_i,j_i}}_{L^2}.\]
Since the frequency summation includes only two infinite sums (but one of them is for low frequency mode), for $s \ge -\frac14$, by choosing $\frac38 < b <\frac12$, we can have
\begin{equation}\label{eq:trilinear1-II.4}
\mbox{LHS of }\eqref{eq:trilinear1-II.3} \lesssim \norm{f_1}_{L^2}\norm{f_2}_{L^2}\norm{f_3}_{L^2}\norm{f_4}_{L^2}.
\end{equation}
If $j_1 \neq j_{max}$ (we assume $j_4 = j_{max}$), we can obtain
\begin{equation}\label{eq:trilinear1-II.5}
J_3(f_{k_1,j_1}^{\sharp},f_{k_2,j_2}^{\sharp},f_{k_3,j_3}^{\sharp},f_{k_4,j_4}^{\sharp}) \lesssim 2^{-\frac32k_4}2^{(j_1+j_2+j_3)/2}\prod_{i=1}^{4}\norm{f_{k_i,j_i}}_{L^2}.
\end{equation}
Then, similarly as the case when $j_1=j_{max}$, we have \eqref{eq:trilinear1-II.4}. Now it remains to show \eqref{eq:trilinear1-II.5}. It suffices to  show 
\begin{equation}\label{eq:j4}
\int_{\R^3} g_1(\xi_1)g_2(\xi_2)g_3(\xi_3)g_4(G(\xi_1,\xi_2,\xi_3),\xi_1+\xi_2+\xi_3)\;d\xi_1d\xi_2d\xi_3 \lesssim 2^{-\frac32k_4} \prod_{i=1}^{4}\norm{g_i}_{L^2}
\end{equation}
for $L^2$-functions $g_i : \R \to \R_{\ge 0}$ supported in $I_{k_i}$, $i=1,2,3$, and  $g_4 : \R^2 \to \R_{\ge 0}$ supported in $I_{j_4} \times I_{k_4}$, where $G$ is defined as in \eqref{eq:tri-resonant function}. Indeed, if \eqref{eq:j4} holds true, then
\[\begin{aligned}
&J_3(f_{k_1,j_1}^{\sharp},f_{k_2,j_2}^{\sharp},f_{k_3,j_3}^{\sharp},f_{k_4,j_4}^{\sharp}) \\
&= \iint_{\ast} f_{k_1,j_1}^{\sharp}(\tau_1, \xi_1)f_{k_2,j_2}^{\sharp}(\tau_2, \xi_2)f_{k_3,j_3}^{\sharp}(\tau_3, \xi_3)f_{k_4,j_4}^{\sharp}(\tau_1 + \tau_2 + \tau_3 + G(\xi_1, \xi_2, \xi_3), \xi_1 +\xi_2 + \xi_3) \\
&\lesssim 2^{-\frac32k_4}\norm{f_{k_4,j_4}}_{L^2}\int_{\R^3} \norm{f_{k_1,j_1}(\tau_1)}_{L_{\xi_1}^2}\norm{f_{k_2,j_2}(\tau_2)}_{L_{\xi_2}^2}\norm{f_{k_3,j_3}(\tau_3)}_{L_{\xi_3}^2}\; d\tau_1d\tau_2d\tau_3 \\
&\lesssim 2^{-\frac32k_4}2^{(j_1+j_2+j_3)/2}\prod_{i=1}^{4}\norm{f_{k_i,j_i}}_{L^2}.
\end{aligned}\]
The change of variables ($\xi_1' = \xi_1$, $\xi_2' = \xi_1+\xi_2$ and $\xi_3' = \xi_3$) gives
\[\mbox{LHS of }\eqref{eq:j4} = \int g_1(\xi_1)g_2(\xi_2-\xi_1)g_3(\xi_3)g_4(G(\xi_1,\xi_2-\xi_1,\xi_3),\xi_2+\xi_3)\;d\xi_1d\xi_2d\xi_3.\]
Note that $|\xi_i| \sim 2^{k_i}$, $i=1,2,3$, still holds. A direct calculation gives
\[|\partial_{\xi_1} G(\xi_1,\xi_2-\xi_1,\xi_3)| = |-5\xi_1^4 + 5 (\xi_2-\xi_1)^4| \sim 2^{4k_4},\]
and then the Cauchy-Schwarz inequality with respect to $\xi_1$ and $\xi_2$, and the change of variable ($\mu = G(\xi_1,\xi_2-\xi_1,\xi_3)$) ensure
\[\begin{aligned}
\mbox{LHS of }\eqref{eq:j4} &\lesssim  2^{-2k_4}\int g_3(\xi_3)\norm{g_1}_{L^2}\norm{g_2}_{L^2}\norm{g_4}_{L^2} \; d\xi_3\\
&\lesssim 2^{-2k_4}2^{k_3/2}\prod_{i=1}^{4}\norm{g_i}_{L^2},
\end{aligned}\]
which completes the proof of \eqref{eq:j4}. Thanks to \eqref{eq:symmetry}, our assumption $j_4 = j_{max}$ does not lose the generality. 

\textbf{Case III.} high-high-high $\Rightarrow$ low ($k_3 \ge 10$, $|k_1-k_3|, |k_2-k_3| \le 5$ and $k_4 \le k_3-10$). In this case, we also have $j_{max} \ge 5k_4$ similarly as \textbf{Case II}. It suffices to show
\begin{equation}\label{eq:trilinear1-III.1}
\begin{aligned}
\sum_{\substack{k_3 \ge 10 \\|k_3-k_i| \le 5, \; i=1,2\\0 \le k_4 \le k_3-10 }}\sum_{j_1,j_2,j_3,j_4 \ge 0} 2^{(1-s)k_4}2^{-3sk_3}2^{-b(j_1 +j_2+j_3+j_4)}J_3(f_{k_1,j_1}^{\sharp},&f_{k_2,j_2}^{\sharp},f_{k_3,j_3}^{\sharp},f_{k_4,j_4}^{\sharp}) \\
&\lesssim \prod_{i=1}^4\norm{f_{k_i,j_i}}_{L^2}.
\end{aligned}
\end{equation}
The exact same argument as in \textbf{Case II-b} (by replacing the role of $j_1$ and $j_4$) can be applied to the left-hand side of \eqref{eq:trilinear1-III.1} and hence, for $s \ge -1/4$, by choosing $\frac38 < b < \frac12$, we prove \eqref{eq:trilinear1-III.1}.

\textbf{Case IV.} high-low-low $\Rightarrow$ high ($k_4 \ge 10$, $|k_3-k_4| \le 5$ and $k_1, k_2 \le k_4 -10$)\footnote{Due to the symmetry, the assumption $|\xi_1|, |\xi_2| \ll |\xi_3|$ does not lose the generality.}. We further assume that $k_1 \le k_2$ without loss of generality.

\textbf{Case IV-a.} $k_2 = 0$. By Lemma \ref{lem:tri-L2} (b-2)\footnote{Since $\xi_1$ and $\xi_2$ are comparable, we can avoid the case when $(k_i,j_i) = (k_{thd},j_{max})$, $i=1,2$.} and the Cauchy-Schwarz inequality, it suffices to show
\begin{equation}\label{eq:trilinear1-IV.1}
\sum_{\substack{k_4 \ge 10 \\|k_4-k_3| \le 5}}\sum_{j_1,j_2,j_3,j_4 \ge 0} 2^{2k_4}2^{-4k_4}2^{(1-2\alpha) (j_1+j_2)}2^{(1-2b)(j_3+j_4)}2^{-j_{max}} \lesssim 1.
\end{equation}
Without loss of generality, we may assume $j_3 \le j_4$. Since $\alpha > \frac12$, by choosing $\frac14 < b < \frac12$, we can show \eqref{eq:trilinear1-IV.1} for any $s \in \R$.

\textbf{Case IV-b.} $k_2 \ge 1$ and $k_1 = 0$. It suffices to consider
\begin{equation}\label{eq:trilinear1-IV.2}
\sum_{\substack{k_4 \ge 10 \\|k_4-k_3| \le 5 \\ 1 \le k_2 \le k_4 -10}}\sum_{j_1,j_2,j_3,j_4 \ge 0} 2^{k_4}2^{-sk_2}2^{-\alpha j_1}2^{-b(j_2+j_3+j_4)}J_3(f_{0,j_1}^{\sharp},f_{k_2,j_2}^{\sharp},f_{k_3,j_3}^{\sharp},f_{k_4,j_4}^{\sharp}).
\end{equation}
If $j_2 = j_{max}$, we have 
\[J_3(f_{0,j_1}^{\sharp},f_{k_2,j_2}^{\sharp},f_{k_3,j_3}^{\sharp},f_{k_4,j_4}^{\sharp}) \lesssim 2^{-2k_4}2^{k_2/2}2^{(j_1+j_3+j_4)/2}\norm{f_{0,j_1}}_{L^2}\prod_{i=2}^4\norm{f_{k_i,j_i}}_{L^2},\]
thanks to Lemma \ref{lem:tri-L2} (b-1). Otherwise, we have
\[J_3(f_{0,j_1}^{\sharp},f_{k_2,j_2}^{\sharp},f_{k_3,j_3}^{\sharp},f_{k_4,j_4}^{\sharp}) \lesssim 2^{-2k_4}2^{(j_1+j_2+j_3+j_4)/2}2^{-j_{max}/2}\norm{f_{0,j_1}}_{L^2}\prod_{i=2}^4\norm{f_{k_i,j_i}}_{L^2},\]
thanks to Lemma \ref{lem:tri-L2} (b-2). In both cases, for $s \ge -\frac12$, by choosing $\frac13 < b < \frac12$, we have
\[\eqref{eq:trilinear1-IV.2} \lesssim \norm{f_1}_{L^2}\norm{f_2}_{L^2}\norm{f_3}_{L^2}\norm{f_4}_{L^2},\]
whenever $\alpha > \frac12$.

\textbf{Case IV-c.} $k_1 \ge 1$. It suffices to consider
\begin{equation}\label{eq:trilinear1-IV.3}
\sum_{\substack{k_4 \ge 10 \\|k_4-k_3| \le 5 \\ 1 \le k_1 \le k_2 \le k_4 -10}}\sum_{j_1,j_2,j_3,j_4 \ge 0} 2^{k_4}2^{-sk_1}2^{-sk_2}2^{-b(j_1+j_2+j_3+j_4)}J_3(f_{k_1,j_1}^{\sharp},f_{k_2,j_2}^{\sharp},f_{k_3,j_3}^{\sharp},f_{k_4,j_4}^{\sharp}).
\end{equation}
Since the worst bound of $J_3(f_{k_1,j_1}^{\sharp},f_{k_2,j_2}^{\sharp},f_{k_3,j_3}^{\sharp},f_{k_4,j_4}^{\sharp})$ is
\[2^{-2k_4}2^{k_2/2}2^{(j_1+j_2+j_3+j_4)/2} 2^{-j_{max}/2},\]
for $s \ge -\frac14$, by choosing $\frac38 < b < \frac12$, we have
\[\eqref{eq:trilinear1-IV.3} \lesssim \sup_{k_4 > 10} 2^{-(\frac12 +2s)k_4}\sum_{j_{max} \ge 0}2^{(\frac32-4b)j_{max}} \prod_{i=1}^4 \norm{f_i}_{L^2} \lesssim \prod_{i=1}^4 \norm{f_i}_{L^2}. \]

\textbf{Case V.} high-high-low $\Rightarrow$ low ($k_3 \ge 10$, $|k_2-k_3| \le 5$ and $k_1,k_4 \le k_3 -10$)\footnote{We may assume that $\xi_1$ is the lowest frequency among $\xi_1, \xi_2, \xi_3$, without loss of generality due to the symmetry.}. We further divide the case in two cases $k_1 = 0$ and $k_1 \ge 1$. 

\textbf{Case V-a.} $k_1=0$. It suffices to consider
\begin{equation}\label{eq:trilinear1-V.1}
\sum_{\substack{k_3 \ge 10 \\|k_2-k_3| \le 5 \\ 0 \le k_4 \le k_3 -10}}\sum_{j_1,j_2,j_3,j_4 \ge 0} 2^{(1+s)k_4}2^{-2sk_3}2^{-\alpha j_1}2^{-b(j_2+j_3+j_4)}J_3(f_{0,j_1}^{\sharp},f_{k_2,j_2}^{\sharp},f_{k_3,j_3}^{\sharp},f_{k_4,j_4}^{\sharp}).
\end{equation}
The worst case happens when $k_4 \ge 1$ and $j_4 = j_{max}$. By Lemma \ref{lem:tri-L2} (b-1), we have for $s \ge -\frac12$ that
\[\begin{aligned}
\eqref{eq:trilinear1-V.1} &\lesssim \sum_{\substack{k_3 \ge 10 \\|k_2-k_3| \le 5 \\ 0 \le k_4 \le k_3 -10}}\sum_{0 \le j_1,j_2,j_3 \le j_4}2^{(\frac32+s)k_4}2^{-(2+2s)k_3}2^{(\frac12-\alpha )j_1 + (\frac12-b)(j_2+j_3)-bj_4}\norm{f_{0,j_1}}_{L^2}\prod_{i=2}^4\norm{f_{k_i,j_i}}_{L^2}\\
&\lesssim  \prod_{i=1}^4 \norm{f_i}_{L^2},
\end{aligned}\]
by choosing $\frac13 < b <\frac12$ and $\alpha > \frac12$.
 
\textbf{Case V-b.} $k_1 \ge 1$. Similarly as before, the worst bound of $J_3(f_{0,j_1}^{\sharp},f_{k_2,j_2}^{\sharp},f_{k_3,j_3}^{\sharp},f_{k_4,j_4}^{\sharp})$ is 
\[2^{-2k_3}2^{k_{thd}/2}2^{(j_1+j_2+j_3+j_4)/2}2^{-j_{max}/2}\]
thanks to Lemma \ref{lem:tri-L2} (b-1). Hence, for $s \ge -\frac14$, by choosing $\frac38 < b < \frac12$, we can obtain
\[\begin{aligned}
\sum_{\substack{k_3 \ge 10 \\|k_2-k_3| \le 5 \\ 1 \le k_1, k_4 \le k_3 -10}}\sum_{j_1,j_2,j_3,j_4 \ge 0} 2^{(1+s)k_4}2^{-2sk_3}2^{-sk_1}2^{-b(j_1+j_2+j_3+j_4)}J_3(f_{k_1,j_1}^{\sharp},f_{k_2,j_2}^{\sharp},&f_{k_3,j_3}^{\sharp},f_{k_4,j_4}^{\sharp}) \\
&\lesssim \prod_{i=1}^4 \norm{f_i}_{L^2}.
\end{aligned}\]

The \emph{low $\times$low $\times$ low $\Rightarrow$low} interaction component can be directly controlled by the Cauchy-Schwarz inequality, since  the low frequency localized space $D^{\alpha}$ with $\alpha >1/2$ allows the $L^2$ integrability with respect to $\tau$-variables.

Collecting all, we therefore complete the proof of Proposition \ref{prop:tri1}.
\end{proof}

\begin{proposition}\label{prop:tri2}
	For $-1/4 \le s \le 0 $, there exists $b = b(s) < 1/2$ such that for all $\alpha > 1/2$, we have
	\begin{equation}\label{eq:trilinear2}
	\norm{\px(uvw)}_{Y^{s,-b}} \le c\norm{u}_{X^{s,b} \cap D^{\alpha}}\norm{v}_{X^{s,b} \cap D^{\alpha}}\norm{w}_{X^{s,b} \cap D^{\alpha}}.
	\end{equation}
\end{proposition}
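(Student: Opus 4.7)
The plan is to follow exactly the template set by Proposition \ref{prop:bi2} for the bilinear case, but with the cubic resonance function $G$ from \eqref{eq:tri-resonant function} and the trilinear $L^2$-block estimates of Lemma \ref{lem:tri-L2} in place of $H$ and Lemma \ref{lem:block estimate}. First I would reduce to the region $|\tau| \le \frac{1}{32}|\xi|^5$: on its complement, the bound $\bra{\tau}^{s/5} \lesssim \bra{\xi}^{s}$ (valid since $s \le 0$) immediately reduces \eqref{eq:trilinear2} to Proposition \ref{prop:tri1}. Inside this region one has $|\tau - \xi^5| \sim |\xi|^5$, which is the crucial gain: the weight $\bra{\xi}^{5b}$ in the denominator of the dual form effectively acts like $\bra{\tau-\xi^5}^b$, and is what will compensate the loss from moving to the $Y^{s,-b}$ norm.

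Next, introducing $f_i$ as in \eqref{eq:uvw}--\eqref{eq:weight1}, the statement \eqref{eq:trilinear2} is equivalent (by duality) to
\[
\iint\limits_{\substack{\xi_1+\xi_2+\xi_3=\xi \\ \tau_1+\tau_2+\tau_3=\tau}} \frac{|\xi|\bra{\tau}^{s/5}\prod_{i=1}^{3}\wt{f}_i(\tau_i,\xi_i)\,\wt{f}_4(\tau,\xi)}{\bra{\xi_1}^s\bra{\xi_2}^s\bra{\xi_3}^s\bra{\xi}^{5b}\prod_{i=1}^{3}\beta_i(\tau_i,\xi_i)} \lesssim \prod_{i=1}^{4}\norm{f_i}_{L^2},
\]
restricted to $|\tau| \le \frac{1}{32}|\xi|^5$. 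I would split according to the relative sizes of the $|\xi_i|$. When all three satisfy $|\xi_i|<1$, then $|\xi|,|\tau|\lesssim 1$, and a direct Cauchy--Schwarz using the $D^{\alpha}$ weights ($\alpha>1/2$) closes the bound. When some $|\xi_i|\ge 1$ while others are $<1$, one exploits the identity \eqref{eq:symmetry4} together with $|\tau-\xi^5|\sim|\xi|^5$: either the large modulation is matched by a $\bra{\tau_i-\xi_i^5}^b$ on a high-frequency factor (which provides powers of $|\xi|$ via \eqref{eq:weight1}), or by $G$ itself, both leading to gains in $|\xi|$ comparable to those in Case II of Proposition \ref{prop:bi2}. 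The sub-case analysis is a routine extension of \textbf{Case II-1} of Proposition \ref{prop:bi2}, now with one extra low-frequency argument which is absorbed using $\bra{\tau_i}^{-\alpha}$.

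The main work is the regime where $|\xi_1|,|\xi_2|,|\xi_3| \ge 1$. By symmetry I may assume $|\tau_1-\xi_1^5|\le|\tau_2-\xi_2^5|\le|\tau_3-\xi_3^5|$. I split as in \textbf{Case II-2} of Proposition \ref{prop:bi2} into the sub-cases $|\tau_3-\xi_3^5| \le C|\tau-\xi^5|$ and $|\tau-\xi^5|\ll|\tau_3-\xi_3^5|$. In the first, one applies Cauchy--Schwarz in $\xi_1,\xi_2,\tau_1,\tau_2$ and uses \eqref{eq:integral1} twice together with the identity \eqref{eq:symmetry4}, then performs a change of variables $(\xi_1,\xi_2) \mapsto (\mu,\nu)$ adapted to the critical geometry of $G(\xi_1,\xi_2,\xi-\xi_1-\xi_2)$ in order to extract a Jacobian factor $\sim |\xi|^{3}$; combined with the weight $\bra{\xi}^{-5b}$ and using $|\tau-\xi^5|\sim|\xi|^5$, one obtains the bound under the constraint $b \ge (5-2s)/15$, which is compatible with $b<1/2$ precisely for $s\ge -\frac14$ (sharp threshold appearing just as in Proposition \ref{prop:tri1}). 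In the second sub-case, one applies Cauchy--Schwarz in $\xi,\tau$ instead and uses the factorization of $G$ together with a distinction based on whether $|G|$ is small or large compared to $|\tau_3-\xi_3^5|$; in the first alternative the weight $\bra{\tau}^{s/5+b}\lesssim\bra{\xi}^{s+5b}$ is used to shift modulation weights, and in the second one uses the three-fold factorization in \eqref{eq:tri-resonant function} to show that one of the factors $\xi_i+\xi_j$ is comparable to $\xi_3$, after which a similar change-of-variables argument applies.

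The hardest step will be the analogue of the final sub-case in \textbf{Case II-2.b} of Proposition \ref{prop:bi2}, namely $|G|\gg|\tau_3-\xi_3^5|$ with all $|\xi_i|$ comparable (high-high-high-low interactions of the resolvent kernel): the cubic resonance $G$ does not separate variables as cleanly as $H$, so identifying the effective Jacobian for the change of variables and pinning down the decay of $|\xi|^{4+3s}$-type factors against $|\xi_3|^{-4}$ will require a careful case split analogous to the dichotomy $-2<s\le -5/3$ vs $-5/3<s\le 0$ used in Proposition \ref{prop:bi2}. The constraint $s\ge -\frac14$ is more generous, so the choice $\frac{5-2s}{15}\le b<\frac12$ (or close variants) should suffice throughout, matching the threshold and modulation choice of Proposition \ref{prop:tri1}.
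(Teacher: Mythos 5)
Your overall strategy is the right one and does match the paper's: restrict to $|\tau|\lesssim|\xi|^5$ (so $\bra{\tau-\xi^5}\sim\bra{\xi}^5$), dualize, split by the sizes of $|\xi_1|,|\xi_2|,|\xi_3|,|\xi|$, and close each piece with Cauchy--Schwarz, the elementary integral bounds \eqref{eq:integral1}--\eqref{eq:integral3}, and a change of variables $\mu = \tau-\xi^5 + G$ (or a variant) whose Jacobian produces a factor $\sim|\xi_{\max}|^4$. However, the opening sentence of your proposal is misleading: the $L^2$-block estimates of Lemma~\ref{lem:tri-L2} are \emph{not} used anywhere in the $Y^{s,-b}$ estimate (nor is Lemma~\ref{lem:block estimate} used in Proposition~\ref{prop:bi2}); those lemmas belong to the $X^{s,-b}$ estimates of Propositions~\ref{prop:bi1} and \ref{prop:tri1}. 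The $Y^{s,-b}$ proofs are purely integral/Cauchy--Schwarz arguments. Fortunately the rest of your proposal describes that method, so this is a surface-level inconsistency rather than a wrong route.

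There are two more substantive issues. First, the arithmetic in your claimed threshold is wrong: the constraint $b\ge(5-2s)/15$ is compatible with $b<\frac12$ precisely when $s>-\frac54$, not $s\ge-\frac14$. In fact the $-\frac14$ threshold in the paper does not come from that weight balance at all; it surfaces in a specific subcase (the ``$|\tau-\xi^5|\ll |\tau_3-\xi_3^5|\ll |G|$'' subcase of the low $\times$ high $\times$ high $\Rightarrow$ low interaction, the paper's Case~V-d) where a $b$-independent restriction $s\ge-\frac14$ appears after the Jacobian gain $|\xi_3|^{-2}$ and the remaining $\xi_1$-integration. Second, and relatedly, your case split is organized only by the sizes of $|\xi_i|$ and by which modulation is largest. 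But the critical structural information in the trilinear setting is where the \emph{output} frequency $|\xi|$ sits relative to the largest input $|\xi_3|$. The paper needs six separate frequency configurations (hhh$\to$h, lhh$\to$h, hhh$\to$l, llh$\to$h, lhh$\to$l, lll$\to$l), and in the configurations where $|\xi|\ll|\xi_3|$ the Cauchy--Schwarz must be performed fixing $(\xi_3,\tau_3)$ and integrating over $(\xi,\tau,\xi_1,\tau_1)$, with the change of variable applied to $-\xi^5-\Sigma$-type quantities so that the Jacobian is $|\xi_3|^4$ rather than $|\xi|^4$. Your proposal gestures at this (``Cauchy--Schwarz in $\xi,\tau$ instead'') and correctly flags the $|G|\gg|\tau_3-\xi_3^5|$ subcase as the hardest, but as written it does not exhibit how the $\bra{\tau}^{s/5}$ weight, the $\bra{\xi}^{-5b}$ weight, and the Jacobian interact in each of these configurations, and that is where the genuine work in the paper's proof lies. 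In short: right template, right tools, but the frequency-output case analysis and the exponent bookkeeping need to be worked out in detail before one can claim the $s\ge-\frac14$ range.
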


\begin{proof}
Similarly as in the proof of Proposition \ref{prop:bi2}, it is enough to consider the case when $|\tau| \le \frac{1}{2}|\xi|^5$, which ensures \eqref{eq:same size} (we recall here)
\begin{equation}\label{eq:same size.tri}
	|\tau-\xi^5| \sim |\xi|^5. 
\end{equation} 
Let $f_i$, $i=1,2,3$, be $L^2$-functions defined in \eqref{eq:uvw} under \eqref{eq:uvw2} and \eqref{eq:weight1}. Then, \eqref{eq:trilinear2} is equivalent to
	\begin{equation}\label{eq:trilinear2-1}
	\iint\limits_{\substack{\xi_1+\xi_2+\xi_3=\xi \\ \tau_1+\tau_2 + \tau_3 =\tau}} \frac{|\xi|\bra{\tau}^{\frac{s}{5}}\wt{f}_1(\tau_1,\xi_1)\wt{f}_2(\tau_2,\xi_2)\wt{f}_3(\tau_3,\xi_3)\wt{f}_4(\tau,\xi)}{\bra{\xi_1}^s\bra{\xi_2}^s\bra{\xi_3}^s\bra{\xi}^{5b}\beta_1(\tau_1,\xi_1)\beta_2(\tau_2,\xi_2)\beta_2(\tau_3,\xi_3)} \lesssim \prod_{i=1}^4\norm{f_i}_{L^2}.
	\end{equation}
Due to the symmetry, we may assume $|\xi_1| \le |\xi_2| \le |\xi_3|$ without loss of generality.
	
\textbf{Case I} (high $\times$ high $\times$ high $\Rightarrow$ high). $|\xi_1| > 1$, $|\xi_1| \sim |\xi_3| \sim |\xi|$. From \eqref{eq:same size.tri} and \eqref{eq:tri-resonant function}, we know
\[|\tau-\xi^5| \sim |\xi|^5 \gg |\xi|^2|\xi_1+\xi_2||\xi_2+\xi_3||\xi_3+\xi_1| \sim |G|.\]
Taking the Cauchy-Schwarz inequality to the left-hand side of \eqref{eq:trilinear2-1} in addition to \eqref{eq:symmetry4}, it suffices to show
\[\sup_{\substack{\xi,\tau \in \R\\ |\tau| \le \frac{1}{2}|\xi|^5}} |\xi|^{1-3s-5b}\Big(\iint\limits_{\substack{|\xi_1|, |\xi_2| \le |\xi| \\ \tau_1, \tau_2 \in \R}} \frac{d\xi_1d\xi_2\;d\tau_1d\tau_2}{\bra{\tau_1-\xi_1^5}^{2b}\bra{\tau_2-\xi_2^5}^{2b}\bra{\tau_1-\xi_1^5 + (\tau_2-\xi_2^5) - \Sigma_1}^{2b}} \Big)^{1/2} \le c,\]
where
\begin{equation}\label{eq:Sigma1}
\Sigma_1 = \tau-\xi^5 + G(\xi_1,\xi_2,\xi-\xi_1-\xi_2).
\end{equation}
Since 
\[\iint\limits_{ \tau_1, \tau_2 \in \R} \frac{d\tau_1d\tau_2}{\bra{\tau_1-\xi_1^5}^{2b}\bra{\tau_2-\xi_2^5}^{2b}\bra{\tau_1-\xi_1^5 + (\tau_2-\xi_2^5) - \Sigma_1}^{2b}} \lesssim \bra{\tau-\xi^5}^{2-6b}\sim|\xi|^{5(2-6b)}\]
thanks to \eqref{eq:integral1} and \eqref{eq:integral3} for $\frac13 < b < \frac12$, for $-1 < s \le 0$, the choice $\max(\frac13, \frac{7-3s}{20}) < b < \frac12$ ensures
\[\eqref{eq:trilinear2-1} \lesssim \sup_{\xi \in \R}|\xi|^{7-3s-20b}\prod_{i=1}^4\norm{f_i}_{L^2} \lesssim \prod_{i=1}^4\norm{f_i}_{L^2}.\]

\textbf{Case II} (low $\times$ high $\times$ high $\Rightarrow$ high). $|\xi_2| > 1$, $|\xi_1| \ll |\xi_2| \sim |\xi_3| \sim |\xi|$. When $|\xi_1| \le 1$, the left-hand side of \eqref{eq:trilinear2-1} is bounded by
	\begin{equation}\label{eq:trilinear2-3}
	\iint\limits_{\ast} \frac{|\xi|^{1-2s-5b}\wt{f}_1(\tau_1,\xi_1)\wt{f}_2(\tau_2,\xi_2)\wt{f}_3(\tau_3,\xi_3)\wt{f}_4(\tau,\xi)}{\bra{\tau_1}^{\alpha}\bra{\tau_2-\xi_2^5}^{b}\bra{\tau_3-\xi_3^5}^{b}}
	\end{equation}
	where
	\[\ast = \{(\tau_1,\tau_2, \tau_3,\tau,\xi_1,\xi_2,\xi_3,\xi) \in \R^8: \xi_1+\xi_2 + \xi_3=\xi,\;  \tau_1+\tau_2 + \tau_3=\tau,\;  |\xi_1| < 1 \; |\xi_2| \sim |\xi_3| \sim  |\xi| \ge 1\}.\]
From \eqref{eq:symmetry4} and \eqref{eq:integral1}, we have for fixed $\tau, \xi, \tau_1, \xi_1$ that
\[\begin{aligned}
\iint\limits_{|\xi_2| \sim |\xi|, \tau_2} \frac{\wt{f}_2(\tau_2,\xi_2)}{\bra{\tau_2-\xi_2^5}^{b}\bra{\tau_3-\xi_3^5}^{b}} &\lesssim \norm{f_2}_{L^2}\left(\iint\limits_{|\xi_2| \sim |\xi|, \tau_2} \frac{d\xi_2d\tau_2}{\bra{\tau_2-\xi_2^5}^{2b}\bra{\tau_2-\xi_2^5  + \Sigma_2}^{2b}}\right)^{\frac12} \\
&\lesssim \norm{f_2}_{L^2} \left(\int_{|\xi_2| \sim |\xi|}\bra{\Sigma_2}^{1-4b} \; d\xi_2 \right)^{\frac12} \\
&\lesssim |\xi|^{\frac12}\norm{f_2}_{L^2},
\end{aligned}\]
for $\frac14 < b < \frac12$ ($\bra{\Sigma_2}^{1-4b} \lesssim 1$), where
\[\Sigma_2 = (\tau_1-\xi_1^5) - (\tau-\xi^5) - G(\xi_1,\xi_2,\xi-\xi_1-\xi_2).\]
Hence, for $-\frac12 < s \le 0$, the choice $\max(\frac14, \frac{3-4s}{10}) < b < \frac12$ in addition to $\alpha > \frac12$ yields
\[\eqref{eq:trilinear2-3} \lesssim \norm{f_1}_{L^2}\norm{f_2}_{L^2}\norm{f_3}_{L^2}\norm{f_4}_{L^2}.\]
	
When $|\xi_1| > 1$, we know from \eqref{eq:same size.tri} and \eqref{eq:tri-resonant function} that
\[|G| \sim |\xi|^5 \sim |\tau-\xi^5|.\]
Taking the Cauchy-Schwarz inequality in addition to \eqref{eq:symmetry4}, it suffices to show
\begin{equation}\label{eq:trilinear2-4}
\sup_{\substack{\xi,\tau \in \R\\ |\tau| \le \frac{1}{2}|\xi|^5}} |\xi|^{1-3s-5b}\Big(\iint\limits_{\substack{|\xi_1|, |\xi_2| \le |\xi| \\ \tau_1, \tau_2 \in \R}} \frac{d\xi_1d\xi_2\;d\tau_1d\tau_2}{\bra{\tau_1-\xi_1^5}^{2b}\bra{\tau_2-\xi_2^5}^{2b}\bra{\tau_1-\xi_1^5 + (\tau_2-\xi_2^5) - \Sigma_1}^{2b}} \Big)^{1/2} \le c,
\end{equation}
where $\Sigma_1$ is defined in \eqref{eq:Sigma1}. Let $\mu = \Sigma_1$. Since 
\[|\partial_{\xi_1} \Sigma_1| = |-5\xi_1^4 + 5(\xi-\xi_1-\xi_2)^4| \sim |\xi|^4,\]
we have for $\frac13 < b < \frac12$ that
\begin{equation}\label{eq:trilinear2-5}
\begin{aligned}
\int_{|\xi_1| \le |\xi|} \frac{d\xi_1}{\bra{\tau - \xi^5 + G(\xi_1,\xi_2,\xi-\xi_1-\xi_2)}^{6b-2}} &\sim \int_{|\mu| \le |\tau - \xi^5|} \frac{ |\xi|^{-4} \; d\mu}{\bra{\mu}^{6b-2}}\\
&\lesssim |\xi|^{-4}|\tau-\xi^5|^{3-6b} \sim |\xi|^{11-30b},
\end{aligned}
\end{equation}
where $\tau, \xi_2, \xi$ are fixed. Hence, by \eqref{eq:integral1}, \eqref{eq:integral3}, \eqref{eq:trilinear2-5} and the Cauchy-Schwarz inequality, we have
\[\begin{aligned} 
\Big(\iint\limits_{\substack{|\xi_1| \le |\xi|, |\xi_2| \sim |\xi| \\ \tau_1, \tau_2 \in \R}}& \frac{d\xi_1d\xi_2\;d\tau_1d\tau_2}{\bra{\tau_1-\xi_1^5}^{2b}\bra{\tau_2-\xi_2^5}^{2b}\bra{\tau_1-\xi_1^5 + (\tau_2-\xi_2^5) - \Sigma_1}^{2b}} \Big)^{1/2}\\
&\lesssim \Big(\int_{|\xi_2| \sim |\xi|}\int_{|\xi_1| \le |\xi| } \frac{d\xi_1\;d\xi_2}{\bra{\tau - \xi^5 + G(\xi_1,\xi_2,\xi-\xi_1-\xi_2)}^{6b-2}} \Big)^{\frac12} \\
&\lesssim |\xi|^{6-15b},
\end{aligned}\]
for $\frac13 < b < \frac12$, which implies that for $-1 < s \le 0$, the choice $\max(\frac13, \frac{7-3s}{20}) < b < \frac12$ ensures \eqref{eq:trilinear2-4}.
	
\textbf{Case III} (high $\times$ high $\times$ high $\Rightarrow$ low). $|\xi_1| > 1$, $|\xi| \ll |\xi_1| \sim |\xi_3|$. When $|\xi| \le 1$, we know from \eqref{eq:same size.tri} and \eqref{eq:tri-resonant function} that
\[|\tau - \xi^5| \lesssim 1 \ll |\xi_3|^5 \sim |G|.\]
We assume that $|\tau_1 - \xi_1^5| \le |\tau_2 - \xi_2^5| \le |\tau_3 - \xi_3^5|$. Taking the Cauchy-Schwarz inequality to the left-hand side of \eqref{eq:trilinear2-1} in addition to \eqref{eq:symmetry4}, it suffices to show
\begin{equation}\label{eq:trilinear2-6}
\sup_{|\xi_3| > 1, \tau_3 \in \R} \frac{|\xi_3|^{-3s}}{\bra{\tau_3 - \xi_3^5}^b}\Big(\iint\limits_{\substack{|\xi| \le 1, |\xi_1| \sim  |\xi_3| \\ \tau, \tau_1 \in \R}} \frac{d\xi d\xi_1\;d\tau d\tau_1}{\bra{\tau_1-\xi_1^5}^{2b}\bra{\tau-\xi^5}^{2b}\bra{\tau-\xi^5 - (\tau_1-\xi_1^5) - \Sigma_3}^{2b}} \Big)^{1/2} \le c,
\end{equation}
due to $\bra{\tau-\xi^5} \sim 1$, where 
\begin{equation}\label{eq:Sigma3}
\Sigma_3 = \tau_3-\xi_3^5 - G(\xi_1,\xi - \xi_1 - \xi_3,\xi_3).
\end{equation}
Using \eqref{eq:integral1} and \eqref{eq:integral3} for $\frac13 < b < \frac12$, we have
\[\mbox{LHS of } \eqref{eq:trilinear2-6} \lesssim \sup_{|\xi_3| > 1, \tau_3 \in \R} \frac{|\xi_3|^{-3s}}{\bra{\tau_3 - \xi_3^5}^b}\Big(\int\limits_{|\xi| \le 1, |\xi_1| \sim  |\xi_3| } \frac{d\xi d\xi_1}{\bra{\Sigma_3}^{6b-2}} \Big)^{1/2}.\]
If $|\tau_3 - \xi_3^5| \gg |G| \sim |\xi_3|^5$, for $-\frac32 < s \le 0$, we choose $\max(\frac13, \frac{11}{40} - \frac{3s}{20}) < b < \frac12$ so that the Cauchy-Schwarz inequality yields
\[\mbox{LHS of } \eqref{eq:trilinear2-6} \lesssim \sup_{|\xi_3| > 1, \tau_3 \in \R} |\xi_3|^{-3s}\bra{\tau_3 - \xi_3^5}^{1-4b}|\xi_3|^{\frac12} \lesssim \sup_{|\xi_3| > 1} |\xi_3|^{\frac{11}{2}-3s-20b} \lesssim 1. \]
Otherwise ($|\tau_3 - \xi_3^5| \sim |G| \sim |\xi_3|^5$), let $\mu = \Sigma_3$. Since 
\[|\partial_{\xi} \Sigma_3| = |5\xi^4 -5(\xi-\xi_1-\xi_3)^4| \sim |\xi_3|^4,\]
we have for $\frac13 < b < \frac12$ that
\begin{equation}\label{eq:trilinear2-7}
\begin{aligned}
\int_{|\xi| \le 1, |\xi_1| \sim |\xi_3|} \frac{d\xi \; d\xi_1}{\bra{\tau_3-\xi_3^5 + G(\xi_1,\xi - \xi_1 - \xi_2,\xi_3)}^{6b-2}} &\sim \int_{|\xi_1| \sim |\xi_3|}\int_{|\mu| \le |\xi_3|^5} \frac{ |\xi_3|^{-4} \; d\mu}{\bra{\mu}^{6b-2}}\;d\xi_1\\
&\lesssim |\xi_3|^{-4}|\xi_3|^{5(3-6b)}|\xi_3| \sim |\xi_3|^{12-30b}.
\end{aligned}
\end{equation}
For $-\frac43 < s \le 0$, the choice $\max(\frac13, \frac{6-3s}{20}) < b < \frac12$ in addition to \eqref{eq:trilinear2-7} yields
\[\mbox{LHS of } \eqref{eq:trilinear2-6} \lesssim \sup_{|\xi_3| > 1} |\xi_3|^{-3s}|\xi_3|^{-5b}|\xi_3|^{6-15b} \lesssim 1. \]
We remark in the above argument that our assumption $|\tau_1 - \xi_1^5| \le |\tau_2 - \xi_2^5| \le |\tau_3 - \xi_3^5|$ does not lose the generality. 

When $|\xi| > 1$, a direct computation for $-\frac52 < -1 <s \le 0$ gives
\begin{equation}\label{eq:tau-xi}
\frac{|\xi|\bra{\tau}^{\frac{s}{5}}}{\bra{\xi}^{5b}} \lesssim \frac{|\xi|^{1+s}}{\bra{\tau}^b}.
\end{equation}
We assume $|\tau_1 - \xi_1^5| \le |\tau_2 - \xi_2^5| \le |\tau_3 - \xi_3^5|$. If $|\tau_3 - \xi_3^5| \gg |G|$, we know
\[\bra{-\xi^5 - (\tau_3 - \xi_3^5) + G)} \sim |\tau_3 - \xi_3^5| \quad \mbox{and} \quad |\xi|^2|\xi_3| \ll |G| \ll |\tau_3-\xi_3^5|.\]
Similarly, thanks to \eqref{eq:integral1} and \eqref{eq:integral3} for $\frac13 < b < \frac12$, it suffices to show 
\begin{equation}\label{eq:trilinear2-8}
 \sup_{|\xi_3| > 1, \tau_3 \in \R} \frac{|\xi_3|^{-3s}}{\bra{\tau_3 - \xi_3^5}^b}\Big(\int\limits_{\substack{1 < |\xi| \lesssim  |\tau_3-\xi_3^5|^{\frac12}|\xi_3|^{-\frac32} \\ |\xi_1| \sim  |\xi_3|}} \frac{|\xi|^{2+2s} d\xi d\xi_1}{\bra{- \xi^5 - \Sigma_3}^{6b-2}} \Big)^{1/2} \lesssim 1.
\end{equation}
For $-\frac{7}{18}< s \le 0$, the choice $\frac{7}{16} < b < \frac12$ ensures
\[\mbox{LHS of } \eqref{eq:trilinear2-8} \lesssim \sup_{|\xi_3| > 1, \tau_3 \in \R}|\xi_3|^{-3s + \frac12 -\frac32(\frac{3+2s}{2})}|\tau_3 - \xi_3^5|^{1-4b + \frac{3+2s}{4}} \lesssim 1.\]
Otherwise ($|\tau_3 - \xi_3^5| \sim |G| \sim |\xi_3|^5$), similarly as before, it suffices to show
\begin{equation}\label{eq:trilinear2-8}
 \sup_{|\xi_3| > 1} |\xi_3|^{-3s-5b}\Big(\int\limits_{\substack{1 < |\xi| \lesssim  |\xi_3| \\ |\xi_1| \sim  |\xi_3|}} \frac{|\xi|^{2+2s} d\xi d\xi_1}{\bra{- \xi^5 - \Sigma_3}^{6b-2}} \Big)^{1/2} \lesssim 1
\end{equation}
for $\frac13 < b < \frac12$. Let $\mu = -\xi^5 - \Sigma_3$. Since $|\mu| \lesssim |G| \sim |\xi_3|^5$ and 
\[|\partial_{\xi} (-\xi^5 - \Sigma_3)| = |5(\xi-\xi_1-\xi_3)^4| \sim |\xi_3|^4,\]
we have for $\frac13 < b < \frac12$ that
\begin{equation}\label{eq:trilinear2-9}
\begin{aligned}
\int_{\substack{1 < |\xi| \lesssim  |\xi_3| \\ |\xi_1| \sim  |\xi_3|}} \frac{d\xi \; d\xi_1}{\bra {- \xi^5 - \Sigma_3}^{6b-2}} &\sim \int_{|\xi_1| \sim |\xi_3|}\int_{|\mu| \le |\xi_3|^5} \frac{ |\xi_3|^{-4} \; d\mu}{\bra{\mu}^{6b-2}}\;d\xi_1\\
&\lesssim |\xi_3|^{-4}|\xi_3|^{5(3-6b)}|\xi_3| \sim |\xi_3|^{12-30b}.
\end{aligned}
\end{equation}
For $-\frac32 < s \le 0$, the choice $\max(\frac13, \frac{7-2s}{20}) < b < \frac12$ in addition to \eqref{eq:trilinear2-9} ensures
\[\mbox{LHS of } \eqref{eq:trilinear2-8} \lesssim \sup_{|\xi_3| > 1}|\xi_3|^{7-2s-20b} \lesssim 1.\]

\textbf{Case IV} (low $\times$ low $\times$ high $\Rightarrow$ high). $|\xi_3| > 1$, $|\xi_2| \ll |\xi_3| \sim |\xi_3|$. When $|\xi_2| \le 1$, the left-hand side of \eqref{eq:trilinear2-1} is bounded by
	\begin{equation}\label{eq:trilinear2-10}
	\iint\limits_{\ast} \frac{|\xi|^{1-s-5b}\wt{f}_1(\tau_1,\xi_1)\wt{f}_2(\tau_2,\xi_2)\wt{f}_3(\tau_3,\xi_3)\wt{f}_4(\tau,\xi)}{\bra{\tau_1}^{\alpha}\bra{\tau_2}^{\alpha}\bra{\tau_3-\xi_3^5}^{b}}
	\end{equation}
	where
	\[\ast = \{(\tau_1,\tau_2, \tau_3,\tau,\xi_1,\xi_2,\xi_3,\xi) \in \R^8: \xi_1+\xi_2 + \xi_3=\xi,\;  \tau_1+\tau_2 + \tau_3=\tau,\;  |\xi_1|\le |\xi_2| < 1,\; 1 < |\xi_3| \sim  |\xi| \}.\]
Since $\bra{\tau_3-\xi_3^5}^{-b} \lesssim 1$, for $-\frac32 < s \le 0$, the choice $\frac{1-s}{5} < b < \frac12$ in addition to $\alpha > \frac12$ yields
\[\eqref{eq:trilinear2-10} \lesssim \norm{f_1}_{L^2}\norm{f_2}_{L^2}\norm{f_3}_{L^2}\norm{f_4}_{L^2}.\]

When $|\xi_1| \le 1 < |\xi_2|$, the exact same argument used in the \textbf{Case II} for $|\xi_1| \le 1$ can be directly applied to this case, hence we omit the detail.

When $1 < |\xi_1| \le |\xi_2|$, we know from \eqref{eq:tri-resonant function} and \eqref{eq:same size.tri} that
\[|G| \sim |\xi|^4|\xi_1+\xi_2| \ll |\xi|^5 \sim |\tau-\xi^5|.\]
The similar argument used in \textbf{Case I} can be applied to this case, hence we omit the detail.

\textbf{Case V} (low $\times$ high $\times$ high $\Rightarrow$ low). $|\xi_2| > 1$, $|\xi_1|, |\xi| \ll |\xi_2| \sim |\xi_3|$. We may assume that 
\begin{equation}\label{eq:trilinear2-11}
|\tau_2 - \xi_2^5| \le |\tau_3 - \xi_3^5|
\end{equation} 
without loss of generality. We split this case into several cases.

\textbf{Case V-a} $|\xi|, |\xi_1| \le 1$. In this case, we know $|\tau| \le 1$ and $\bra{\tau-\xi^5} \sim 1$. When $|\xi_2 + \xi_3| \le |\xi_3|^{-\frac32}$, we have from \eqref{eq:integral1} and the Cauchy-Schwarz inequality that
\[\begin{aligned}
\mbox{LHS of } \eqref{eq:trilinear2-1} &\lesssim \sup_{|\xi_3| \ge 1}|\xi_3|^{-2s} \Big(\iint\limits_{\substack{|\xi_1| \le 1, \; |\xi_2 + \xi_3| \le |\xi_3|^{-\frac32} \\ \tau_1 , \; \tau_2}} \frac{d\tau_1d\tau_2d\xi_1d\xi_2}{\bra{\tau_1}^{2\alpha}\bra{\tau_2-\xi_2^5}^{2b}\bra{\tau_2-\xi_2^5 + \Sigma_4}^{2b}}\Big)^{\frac12}\prod_{i=1}^4\norm{f_i}_{L^2}\\
&\lesssim \sup_{|\xi_3| \ge 1}|\xi_3|^{-2s-\frac34}\prod_{i=1}^4\norm{f_i}_{L^2} \lesssim \prod_{i=1}^4\norm{f_i}_{L^2},
\end{aligned}\]
whenever $-\frac{3}{8} \le s \le 0$, $\frac14 < b < \frac12$ and $\alpha > \frac12$, where 
\[\Sigma_4 = (\tau_1 - \xi_1^5) + (\tau_3 - \xi_3^5) - G(\xi_1,\xi_2, \xi_3).\]
Otherwise ($|\xi_2 + \xi_3| > |\xi_3|^{-\frac32}$), we know from \eqref{eq:tri-resonant function} that
\[|G| \sim |\xi_3|^4|\xi_2+\xi_3| > |\xi_3|^{\frac52}.\]
If $|\tau_3 - \xi_3^5| \ll |G|$, we know from \eqref{eq:symmetry4} that $|\tau_1| \sim |G| > |\xi_3|^{\frac52}$. Similarly as before, we have from \eqref{eq:integral1} and the Cauchy-Schwarz inequality that
\[\begin{aligned}
\mbox{LHS of } \eqref{eq:trilinear2-1} &\lesssim \sup_{|\xi_3| \ge 1}|\xi_3|^{-2s} \Big(\iint\limits_{\substack{|\xi| \le 1, \; |\xi_2| \sim |\xi_3| \\ |\tau| \le 1, \; \tau_2}} \frac{d\tau d\tau_2d\xi d\xi_2}{\bra{\tau_1}^{2\alpha}\bra{\tau_2-\xi_2^5}^{2b}\bra{\tau_2-\xi_2^5 + \Sigma_4}^{2b}}\Big)^{\frac12}\prod_{i=1}^4\norm{f_i}_{L^2}\\
&\lesssim \sup_{|\xi_3| \ge 1}|\xi_3|^{-2s-\frac{5\alpha}{2}}|\xi_3|^{\frac12}\prod_{i=1}^4\norm{f_i}_{L^2} \lesssim \prod_{i=1}^4\norm{f_i}_{L^2},
\end{aligned}\]
whenever $-\frac{3}{8} \le s \le 0$, $\frac14 < b < \frac12$ and $\alpha > \frac12$. If $|\tau_3-\xi_3^5| \gtrsim |G| > |\xi_3|^{\frac52}$, for $-\frac58 < s$, by choosing $-\frac{4s}{5} < b < \frac12$, we, similarly, have from \eqref{eq:integral1} and the Cauchy-Schwarz inequality that
\[\begin{aligned}
\mbox{LHS of } \eqref{eq:trilinear2-1} &\lesssim \sup_{|\xi_3| \ge 1}|\xi_3|^{-2s} \Big(\iint\limits_{\substack{|\xi|, |\xi_1| \le 1\\ |\tau| \le 1, \; \tau_1}} \frac{d\tau d\tau_1d\xi d\xi_1}{\bra{\tau_1}^{2\alpha}\bra{\tau_2-\xi_2^5}^{2b}\bra{\tau_3-\xi_3^5}^{2b}}\Big)^{\frac12}\prod_{i=1}^4\norm{f_i}_{L^2}\\
&\lesssim \sup_{|\xi_3| \ge 1}|\xi_3|^{-2s-\frac{5b}{2}}\prod_{i=1}^4\norm{f_i}_{L^2} \lesssim \prod_{i=1}^4\norm{f_i}_{L^2},
\end{aligned}\]
whenever $\alpha > \frac12$.

\textbf{Case V-b} $|\xi_1| \le 1< |\xi|$. We use \eqref{eq:tau-xi} for $-\frac52 < -1 <s \le 0$. We know from \eqref{eq:tri-resonant function} and \eqref{eq:same size.tri} that
\begin{equation}\label{eq:trilinear2-12}
|\tau-\xi^5| \sim |\xi|^5 \ll |\xi_3|^4|\xi| \sim |G|.
\end{equation}
If $|\tau_3 - \xi_3^5| \gtrsim |G|$, since $|\xi| \lesssim |\tau_3 - \xi_3^5|^{\frac15}$, for $-\frac23 < s \le 0$, by choosing $\max(\frac14, \frac{3+2s}{10},\frac{3-3s}{10}) < b < \frac12$, we have from \eqref{eq:integral1} and the Cauchy-Schwarz inequality that
\[\begin{aligned}
\mbox{LHS of } \eqref{eq:trilinear2-1} &\lesssim \sup_{\substack{|\xi_3| \ge 1\\ \tau_3 \in \R}}\frac{|\xi_3|^{-2s}}{\bra{\tau_3-\xi_3^5}^b} \Big(\iint\limits_{\substack{|\xi_1| \le 1 < |\xi| \lesssim |\tau_3 - \xi_3^5|^{\frac15}\\ \tau, \; \tau_1}} \frac{|\xi|^{2+2s}d\tau d\tau_1d\xi d\xi_1}{\bra{\tau}^{2b}\bra{\tau_1}^{2\alpha}\bra{\tau + \Sigma_5}^{2b}}\Big)^{\frac12}\prod_{i=1}^4\norm{f_i}_{L^2}\\
&\lesssim \sup_{\substack{|\xi_3| \ge 1 \\ |\tau_3-\xi_3^5| \ge |\xi_3|^4}}|\xi_3|^{-2s}|\tau_3-\xi_3^5|^{-b}|\tau_3-\xi_3^5|^{\frac15(\frac{3+2s}{2})} \prod_{i=1}^4\norm{f_i}_{L^2} \\
&\lesssim \sup_{|\xi_3| \ge 1}|\xi_3|^{-2s - 4b +\frac{6+4s}{5}}\prod_{i=1}^4\norm{f_i}_{L^2} \lesssim \prod_{i=1}^4\norm{f_i}_{L^2},
\end{aligned}\]
where
\[\Sigma_5 = -\xi^5- (\tau_1 - \xi_1^5) - (\tau_3-\xi_3^5) + G(\xi_1,\xi - \xi_1 - \xi_2,\xi_3).\]
Otherwise ($|\tau_3 - \xi_3^5| \ll |G|$), we know from \eqref{eq:symmetry4} under \eqref{eq:trilinear2-11} and \eqref{eq:trilinear2-12} that $|\tau_1 - \xi_1^5| \sim |\tau_1| \sim |G| > |\xi_3|^4$. Then, the left-hand side of \eqref{eq:trilinear2-1} is bounded by
\begin{equation}\label{eq:trilinear2-13}
\sup_{|\xi_3| \ge 1}|\xi_3|^{-2s}\Big(\iint\limits_{\substack{|\xi_1| \ll |\xi_3|, \; |\xi_2| \sim |\xi_3|\\ \tau, \; \tau_2\\|\tau_1| \sim |G| > |\xi_3|^4}} \frac{|\xi|^{2+2s}d\xi d\xi_2 d\tau d\tau_2}{\bra{\tau}^{2b}\bra{\tau_1}^{2\alpha}\bra{\tau_2 - \xi_2^2}^{2b}\bra{\tau - (\tau_2 - \xi_2^5) + \Sigma_6}^{2b}} \Big)^{\frac12}\prod_{i=1}^4\norm{f_i}_{L^2}
\end{equation}
where
\[\Sigma_6 =  -\xi^5 -(\tau_1 - \xi_1^5) + G(\xi_1,\xi_2,\xi-\xi_1-\xi_2).\]
Let $\mu = \Sigma_6$. Since $|\mu| \lesssim |\tau_1| $ and 
\[|\partial_{\xi} (\Sigma_6)| = |5(\xi-\xi_1-\xi_2)^4| \sim |\xi_3|^4,\]
we have for $\frac13 < b < \frac12$ and $\alpha > \frac12$ that
\begin{equation}\label{eq:trilinear2-14}
\begin{aligned}
\iint\limits_{\substack{1 < |\xi| \lesssim  |\xi_3| \\ |\xi_2| \sim  |\xi_3| \\ |\tau_1| > |\xi_3|^4}} \frac{|\tau_1|^{-2\alpha}d\xi \; d\xi_2}{\bra {\Sigma_3}^{6b-2}} &\sim \int\limits_{\substack{|\xi_2| \sim |\xi_3|\\ |\tau_1| > |\xi_3|^4}}\int\limits_{|\mu| \le |\tau_1|} \frac{ |\tau_1|^{-2\alpha}|\xi_3|^{-4} \; d\mu}{\bra{\mu}^{6b-2}}\;d\xi_2\\
&\lesssim |\xi_3|^{-4}|\xi_3|^{4(3-6b-2\alpha)}|\xi_3| \sim |\xi_3|^{9-24b-8\alpha}.
\end{aligned}
\end{equation}
For $-\frac74 \le s \le 0$ and $\alpha > \frac12$, the choice $\max(\frac13 , \frac{7-2s}{24}, \frac{5-4s}{24}) < b  <\frac12$ in addition to \eqref{eq:trilinear2-14} ensures
\[\eqref{eq:trilinear2-13} \lesssim \sup_{|\xi_3| \ge 1}|\xi_3|^{-2s}\max(|\xi_3|^{1+s},1)|\xi_3|^{\frac{9-24b-8\alpha}{2}}\prod_{i=1}^4\norm{f_i}_{L^2} \lesssim \prod_{i=1}^4\norm{f_i}_{L^2}.\]

\textbf{Case V-c} $|\xi| \le 1< |\xi_1|$. We know from \eqref{eq:same size.tri} and \eqref{eq:tri-resonant function} that
\[|\tau| \lesssim 1, \quad |\tau - \xi^5| \lesssim 1 \ll |\xi_3|^4|\xi_1| \sim |G|.\]
If $|\tau_1 - \xi_1^5| \sim |G| \gg |\tau_3-\xi_3^5|$, since $\bra{\tau-\xi^5}^{-2b} \sim 1$ and $\bra{\tau_3 - \xi_3^5}^{-b} \lesssim 1$, the left-hand side of \eqref{eq:trilinear2-1} is bounded by
\begin{equation}\label{eq:trilinear2-15}
\sup_{|\xi_3| \ge 1}|\xi_3|^{-2s}\Big(\iint\limits_{\substack{|\xi| \le 1 < |\xi_1| \ll |\xi_3|\\ |\tau| \lesssim 1, \; |\tau_1-\xi_1^5| \sim |G| \sim  |\xi_3|^4|\xi_1|}} \frac{|\xi_1|^{-2s}d\xi d\xi_1 d\tau d\tau_1}{\bra{\tau-\xi^5}^{2b}\bra{\tau_1-\xi_1^5}^{2b}\bra{\tau - \xi^5 - (\tau_1 - \xi_1^5) - \Sigma_3}^{2b}} \Big)^{\frac12}\prod_{i=1}^4\norm{f_i}_{L^2},
\end{equation}
where $\Sigma_3$ is defined in \eqref{eq:Sigma3}. Note that $|\Sigma_3| \sim |G|$. For $-\frac43 < s \le 0$, we choose $\max(\frac13, \frac{19}{54}-\frac{s}{9}) < b < \frac12$. Using \eqref{eq:integral1} and \eqref{eq:integral3}, and taking the Cauchy-Schwarz inequality with respect to $\xi, \xi_1$, we have
\[\mbox{LHS of} \eqref{eq:trilinear2-15} \lesssim \sup_{|\xi_3| \ge 1}|\xi_3|^{-2s}|\xi_3|^{4(1-3b)}|\xi_3|^{-s+\frac32-3b}\prod_{i=1}^4\norm{f_i}_{L^2} \lesssim \prod_{i=1}^4\norm{f_i}_{L^2}.\]
Otherwise ($|\tau_3 - \xi_3^5| \sim |G|$), let $\mu = \Sigma_3$. Since $|\mu| \lesssim |\tau_3-\xi_3^5| $ and 
\[|\partial_{\xi_1} \Sigma_3| = |-5\xi_1^5 + 5(\xi-\xi_1-\xi_3)^4| \sim |\xi_3|^4,\]
we have for $\frac13 < b < \frac12$ that
\begin{equation}\label{eq:trilinear2-16}
\begin{aligned}
\int_{|\xi| \le 1 < |\xi_1| \lesssim  |\xi_3|} \frac{d\xi \; d\xi_1}{\bra {\Sigma_3}^{6b-2}} &\sim \int_{|\xi|  \le 1}\int_{|\mu| \le |\tau_3 - \xi_3^5|} \frac{ |\xi_3|^{-4} \; d\mu}{\bra{\mu}^{6b-2}}\;d\xi\\
&\lesssim |\xi_3|^{-4}|\tau_3-\xi_3^5|^{3-6b}.
\end{aligned}
\end{equation}
For $-\frac23 \le s \le 0$, the choice $\frac38 < b < \frac12$ in addition to the Cauchy-Schwarz inequality, \eqref{eq:integral1}, \eqref{eq:integral3} and\eqref{eq:trilinear2-16} ensures
\[\begin{aligned}
&\mbox{LHS of } \eqref{eq:trilinear2-1}\\
&\lesssim \sup_{\substack{|\xi_3| \ge 1\\ \tau_3 \in \R}}\frac{|\xi_3|^{-3s}}{\bra{\tau_3-\xi_3^5}^b}\Big(\iint\limits_{\substack{|\xi| \le 1 < |\xi_1| \ll |\xi_3|\\ |\tau| \lesssim 1, \; |\tau_3-\xi_3^5| \sim |G|}} \frac{d\xi d\xi_1 d\tau d\tau_1}{\bra{\tau-\xi^5}^{2b}\bra{\tau_1-\xi_1^5}^{2b}\bra{\tau - \xi^5 - (\tau_1 - \xi_1^5) - \Sigma_3}^{2b}} \Big)^{\frac12}\prod_{i=1}^4\norm{f_i}_{L^2} \\
&\lesssim \sup_{|\xi_3| \ge 1,\; |\tau_3 -\xi_3^5| \ge 1}|\xi_3|^{-3s-2}|\tau_3-\xi_3^5|^{\frac32 - 4b}\prod_{i=1}^4\norm{f_i}_{L^2} \lesssim \prod_{i=1}^4\norm{f_i}_{L^2}.
\end{aligned}\]

\textbf{Case V-d} $1< |\xi_1|, |\xi|$. In this case, we know from \eqref{eq:tri-resonant function} that $|G| \sim |\xi_3|^4|\xi_2+\xi_3|$. We first consider the case when $|\tau - \xi^5| \sim |\xi|^5 \gtrsim |G| \gtrsim |\tau_3 - \xi_3^5|$. If $|\xi| \le |\xi_3|^{\frac45}$, from \eqref{eq:tau-xi} for $-\frac52 < -1 <s \le 0$, it suffices to show
\begin{equation}\label{eq:trilinear2-17}
\sup_{|\xi_3| \ge 1} |\xi_3|^{-3s+\frac45(1+s)} \Big(\iint\limits_{\substack{|\xi| \le |\xi_3|^{\frac45} \\|\xi_1| \ll |\xi_3|\\ |\tau - \xi^5| \gtrsim |G|, \; \tau_2}} \frac{d\xi d\xi_1 d\tau d\tau_1}{\bra{\tau}^{2b}\bra{\tau_1-\xi_1^5}^{2b}\bra{\tau - \xi^5 - (\tau_1 - \xi_1^5) - \Sigma_3}^{2b}} \Big)^{\frac12} \le c,
\end{equation}
where $\Sigma_3$ is defined in \eqref{eq:Sigma3} and whenever $-1 < s \le 0$. Let $\mu = \xi^5 + \Sigma_3$. Since $|\mu| \lesssim |\xi|^5 \le |\xi_3|^4 $ and 
\[|\partial_{\xi_1} (\xi^5 + \Sigma_3)| = |-5\xi_1^5 + 5(\xi-\xi_1-\xi_3)^4| \sim |\xi_3|^4,\]
we have for $\frac13 < b < \frac12$ that
\begin{equation}\label{eq:trilinear2-18}
\begin{aligned}
\int_{|\xi| \le |\xi_3|^{\frac45}, \;|\xi_1| \ll |\xi_3|} \frac{d\xi \; d\xi_1}{\bra {\Sigma_3}^{6b-2}} &\sim \int_{|\xi|  \le  |\xi_3|^{\frac45}}\int_{|\mu| \le |\xi_3|^4} \frac{ |\xi_3|^{-4} \; d\mu}{\bra{\mu}^{6b-2}}\;d\xi \\
&\lesssim |\xi_3|^{-4}|\xi_3|^{4(3-6b)}|\xi_3|^{\frac45}.
\end{aligned}
\end{equation}
For $-\frac{4}{11} < s \le 0$, the choice $\max( \frac13, \frac{26-11s}{60}) < b < \frac12$ in addition to the Cauchy-Schwarz inequality, \eqref{eq:integral1}, \eqref{eq:integral3} and\eqref{eq:trilinear2-18} ensures
\[\mbox{LHS of } \eqref{eq:trilinear2-17} \lesssim \sup_{|\xi_3| \ge 1}|\xi_3|^{-3s+\frac45(1+s)}|\xi_3|^{-2}|\xi_3|^{2(3-6b)}|\xi_3|^{\frac25} \lesssim 1.\]
Otherwise ($|\xi_3|^{\frac45} < |\xi|$), we know $|\xi| \ll |\xi_3| \le |\xi|^{\frac54}$. Then, it suffices to show
\begin{equation}\label{eq:trilinear2-19}
\sup_{|\xi| \ge 1} |\xi|^{1-5b}|\xi|^{-\frac{15s}{4}} \Big(\iint\limits_{\substack{|\xi_2| \le |\xi|^{\frac54} \\|\xi_1| \ll |\xi_3|\\ |\tau - \xi^5| \gtrsim |G|, \;\tau_1,\; \tau_2}} \frac{d\xi_1 d\xi_2 d\tau_1 d\tau_2}{\bra{\tau_1-\xi_1^5}^{2b}\bra{\tau_2-\xi_2^5}^{2b}\bra{\tau_1 - \xi_1^5 + (\tau_2 - \xi_2^5) - \Sigma_1}^{2b}} \Big)^{\frac12} \le c,
\end{equation}
where $\Sigma_1$ is defined in \eqref{eq:Sigma1}. Let $\mu = \Sigma_1$. Since $|\mu| \lesssim |\xi|^5$ and 
\[|\partial_{\xi_1} \Sigma_1| = |-5\xi_1^5 + 5(\xi-\xi_1-\xi_2)^4| \sim |\xi_3|^4,\]
we have for $\frac13 < b < \frac12$ that
\begin{equation}\label{eq:trilinear2-20}
\begin{aligned}
\int_{|\xi_2| \le |\xi|^{\frac54},\; |\xi_1| \ll |\xi_3|} \frac{d\xi_1 \; d\xi_2}{\bra {\Sigma_1}^{6b-2}} &\sim \int_{|\xi_2|  \le  |\xi|^{\frac54}}\int_{|\mu| \le |\xi|^5} \frac{ |\xi_3|^{-4} \; d\mu}{\bra{\mu}^{6b-2}}\;d\xi \\
&\lesssim |\xi|^{-4}|\xi|^{5(3-6b)}|\xi|^{\frac54}.
\end{aligned}
\end{equation}
For $-\frac{23}{30} < s \le 0$, the choice $\max( \frac13, \frac{57-30s}{160}) < b < \frac12$ in addition to the Cauchy-Schwarz inequality, \eqref{eq:integral1}, \eqref{eq:integral3} and\eqref{eq:trilinear2-20} ensures
\[\mbox{LHS of } \eqref{eq:trilinear2-19} \lesssim \sup_{|\xi| \ge 1} |\xi|^{1-5b}|\xi|^{-\frac{15s}{4}} |\xi|^{-2}|\xi|^{\frac52(3-6b)}|\xi|^{\frac58}\lesssim 1.\]

Now we consider the case when $|\tau - \xi^5| \sim |\xi|^5 \ll |\xi_3|^4|\xi_2+\xi_3| \sim |G|$. If $|\tau_3 - \xi_3^5| \ll |G|$, we know from \eqref{eq:symmetry4} and \eqref{eq:trilinear2-11} that $|\tau_1 - \xi_1^5| \sim |G|$. Similarly as before, we can obtain for $\frac38 < b < \frac12$ that
\[\begin{aligned}
\sup_{|\tau_1 - \xi_1^5| \ge 1} |\tau_1 - \xi_1^5|^{-2b}&\Big(\iint_{\substack{1 \le |\xi_1| \ll |\xi_3| \\ \tau, \tau_2 \\ |\tau_1 - \xi_1^5| \sim |G| \gg |\tau_3 - \xi_3^5|}} \frac{d\tau\; d\tau_2 \; d\xi}{\bra{\tau}^{2b}\bra{\tau_1-\xi_1^5}^{2b}\bra{\tau_2-\xi_2^5}^{2b}\bra{\tau - \xi^5 - (\tau_2 - \xi_2^5) - \Sigma_7}^{2b}} \Big)\\
&\lesssim |\xi_3|^{-4}|\tau_1-\xi_1^5|^{3-6b} \lesssim |\xi_3|^{-4},
\end{aligned}\]
where
\[\Sigma_7 = \tau_1 - \xi_1^5 - G(\xi_1,\xi_2,\xi-\xi_1-\xi_2),\]
due to
\[|\partial_{\xi} (-\xi^5 - \Sigma)| = |-5(\xi-\xi_1-\xi_2)^4| \sim |\xi_3|^{4}.\]
This in addition to \eqref{eq:tau-xi} implies
\[\mbox{LHS of } \eqref{eq:trilinear2-1} \lesssim \sup_{|\xi_3| \ge 1} |\xi_3|^{1-2s}|\xi_3|^{-2}\left(\int_{|\xi_1| \ll |\xi_3|} \; d\xi_1\right)^{\frac12}\prod_{i=1}^4\norm{f_i}_{L^2}\prod_{i=1}^4\norm{f_i}_{L^2} \lesssim \prod_{i=1}^4\norm{f_i}_{L^2},\] 
for $-\frac14 \le s \le 0$. Note that the above argument does not depend on the choice of the maximum modulation among $|\tau_i-\xi_i^5|$, $i=1,2,3$, and hence we can have
\[\mbox{LHS of } \eqref{eq:trilinear2-1}  \prod_{i=1}^4\norm{f_i}_{L^2},\] 
for the case when $|\tau_3 - \xi_3^5| \gtrsim |G|$.

\textbf{Case VI} (low $\times$ low $\times$ low $\Rightarrow$ low) $|\xi_3|< 1$. We know from the identity \eqref{eq:symmetry3} that $|\xi| < 1$, which implies $|\tau| \lesssim 1$. Then, the left-hand side of \eqref{eq:trilinear2-1} is equivalent to
	\[\iint\limits_{\substack{\xi_1+\xi_2 + \xi_3=\xi \\ \tau_1+\tau_2 + \tau_3=\tau \\ |\xi_1|,|\xi_2|,|\xi_3|,|\xi| < 1}} \frac{\wt{f}_1(\tau_1,\xi_1)\wt{f}_2(\tau_2,\xi_2)\wt{f}_3(\tau_3,\xi_3)\wt{f}_4(\tau,\xi)}{\bra{\tau_1}^{\alpha}\bra{\tau_2}^{\alpha}\bra{\tau_3}^{\alpha}},\]
which is bounded by 
\[\norm{f_1}_{L^2}\norm{f_2}_{L^2}\norm{f_3}_{L^2}\norm{f_4}_{L^2}\]
by taking the Cauchy-Schwarz inequality, since $\alpha > 1/2$. 

Therefore we complete the proof of Proposition \ref{prop:tri2}.
\end{proof}

\section{Duhamel boundary forcing operator}\label{sec:Duhamel boundary forcing operator}
In this section, we introduce the Duhamel boundary forcing operator, which was introduced by Colliander and Kenig \cite{CK} and further developed by several researchers \cite{Holmerkdv, CC, Cavalcante, CK2018-1}, which helps to construct the solution operator involving the boundary forcing conditions. We, particularly, refer to \cite{CK2018-1} for the fifth-order KdV-type equation.
  
\subsection{Duhamel boundary forcing operator class}
We introduce the Duhamel boundary forcing operator associated to the linear fifth-order equation. Let 
\begin{equation}\label{eq:M}
M = \frac{1}{B(0)\Gamma(4/5)}.
\end{equation} 
For $f\in C_0^{\infty}(\mathbb{R}^+)$, define the boundary forcing operator $\mathcal{L}^0$ of order $0$
\begin{equation}\label{eq:BFO}	
\mathcal{L}^0f(t,x):=M\int_0^te^{(t-t')\partial_x^5}\delta_0(x)\mathcal{I}_{-\frac45}f(t')dt'.
\end{equation}
By the change of variable and \eqref{eq:oscil}, we represent \eqref{eq:BFO} by
\[	\mathcal{L}^0f(t,x)=M\int_0^t B\left(\frac{x}{(t-t')^{1/5}}\right)\frac{\mathcal{I}_{-\frac45}f(t')}{(t-t')^{1/5}}dt'.\]
Moreover, a straightforward calculation gives
\begin{equation}\label{eq:BFO1}
\mathcal{L}^0(\partial_t f)(t,x) = M\delta_0(x)\mathcal{I}_{-\frac45}f(t) + \partial_x^5\mathcal{L}^0f(t,x).
\end{equation}

We state the several lemmas associated to $\mathcal{L}^0f$ defined as in \eqref{eq:BFO}. We refer to \cite{CK2018-1} and references therein for the proofs. 
\begin{lemma}[Continuity and decay property of $\mathcal{L}^0f$ \cite{CK2018-1}]\label{continuity}
	Let $f\in C_{0,c}^{\infty}(\mathbb{R}^+)$.
	\begin{itemize}
\item[(a)] For fixed $ 0 \le t \le 1$, $\partial_x^k \mathcal{L}^0f(t,x)$, $k=0,1,2,3$, is continuous in $x \in \mathbb{R}$ and has the decay property in terms of the spatial variable as follows:
\[|\partial_x^k \mathcal{L}^0f(t,x)| \lesssim_{N} \norm{f}_{H^{N+k}}\bra{x}^{-N}, \quad N \ge 0.\]
\item[(b)] For fixed $ 0 \le t \le 1$, $\partial_x^4\mathcal{L}^0f(t,x)$ is continuous in $x$ for $x\neq 0$ and is discontinuous at $x =0$ satisfying
\[\lim_{x\rightarrow 0^{-}}\partial_x^4\mathcal{L}^0f(t,x)=c_1\mathcal{I}_{-4/5}f(t),\ \lim_{x\rightarrow 0^{+}}\partial_x^4\mathcal{L}^0f(t,x)=c_2\mathcal{I}_{-4/5}f(t)\]
for $c_1 \neq c_2$. $\partial_x^4\mathcal{L}^0f(t,x)$ also has the decay property in terms of the spatial variable
\[|\partial_x^4\mathcal{L}^0f(t,x)| \lesssim_{N} \norm{f}_{H^{N+4}}\bra{x}^{-N}, \quad N \ge 0.\]
	\end{itemize}
In particular, we have $\mathcal{L}^0f(t,0) = f(t)$.
\end{lemma}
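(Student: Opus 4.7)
The plan is to prove part (a) by differentiating $\mathcal{L}^0 f$ in $x$ under the integral sign. Iterating the scaling identity $\partial_x[B(x/(t-t')^{1/5})]=(t-t')^{-1/5}B'(x/(t-t')^{1/5})$ yields, formally,
\[
\partial_x^k \mathcal{L}^0 f(t,x)=M\int_0^t (t-t')^{-(k+1)/5}\,B^{(k)}\!\left(\frac{x}{(t-t')^{1/5}}\right)\mathcal{I}_{-4/5}f(t')\,dt',\qquad k=0,1,2,3.
\]
To legitimize this formula and to obtain continuity in $x$ at the same time, I would invoke dominated convergence: the only potential singularity is at $t'=t$; there the exponent $(k+1)/5$ is strictly less than $1$, hence integrable; and the finiteness of $B^{(k)}(0)$ (the explicit values listed in the excerpt via contour deformation and the reflection formula for $\Gamma$) together with smoothness of $\mathcal{I}_{-4/5}f$ on $[0,1]$ supplies a locally uniform integrable majorant. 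Evaluating at $x=0$, using $\int_0^t(t-t')^{-1/5}g(t')dt'=\Gamma(4/5)\mathcal{I}_{4/5}g(t)$, the composition law $\mathcal{I}_{4/5}\mathcal{I}_{-4/5}f=f$, and the normalization \eqref{eq:M} recovers the final assertion $\mathcal{L}^0 f(t,0)=f(t)$.

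For the decay as $x\to+\infty$, I would transfer Lemma \ref{lem:decay}(i) to every derivative $B^{(k)}$: repeated integration by parts in the defining oscillatory integral $B^{(k)}(y)=\frac{1}{2\pi}\int(i\xi)^k e^{iy\xi+i\xi^5}d\xi$, which has no real stationary point for $y>0$, yields $|B^{(k)}(y)|\lesssim_N\langle y\rangle^{-N}$ for every $N$. Since $t-t'\le 1$ this gives $|B^{(k)}(x/(t-t')^{1/5})|\lesssim\langle x\rangle^{-N}$ uniformly in $t'$, and the $t'$ integral is absorbed into $\|f\|_{H^{N+k}}$ via $\|\mathcal{I}_{-4/5}f\|_{L^\infty}\lesssim\|f\|_{H^{1}}$. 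The decay as $x\to-\infty$ is the main obstacle: Lemma \ref{lem:decay}(ii) alone provides only $\langle x\rangle^{-3/8}$, reflecting a real stationary point of the phase $y\xi+\xi^5$ when $y<0$. To upgrade to arbitrary polynomial decay, I would switch to the Fourier picture
\[
\partial_x^k \mathcal{L}^0 f(t,x)=\frac{M}{2\pi}\int_{\R}(i\xi)^k\,e^{ix\xi}e^{it\xi^5}\,G(\xi)\,d\xi,\qquad G(\xi):=\int_0^t e^{-it'\xi^5}\mathcal{I}_{-4/5}f(t')\,dt',
\]
and integrate by parts $N$ times in $\xi$ to gain the factor $|x|^{-N}$; every $\xi$-derivative either brings down a $(t-t')\xi^4$ from one of the exponentials or falls inside $G$ producing a further $\xi^4$ paired with a $t'$ factor, and a Plancherel bookkeeping in the time variable converts the resulting $\xi$ powers into the sharp Sobolev norm $\|f\|_{H^{N+k}}$.

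For part (b), the identity \eqref{eq:BFO1} reads, away from $x=0$, $\partial_x^5\mathcal{L}^0f(t,x)=\mathcal{L}^0(\partial_t f)(t,x)$, whose continuity and decay on each half-line follow from part (a) applied to $\partial_t f$. Integrating in $x$ from $+\infty$ for $x>0$ and from $-\infty$ for $x<0$ (the boundary terms vanish by the decay just obtained) defines $\partial_x^4\mathcal{L}^0 f(t,x)$ on $\R\setminus\{0\}$ as a continuous function with the stated decay. The $-M\delta_0(x)\mathcal{I}_{-4/5}f(t)$ component of \eqref{eq:BFO1} is precisely the distributional derivative of the one-sided jump, so $(c_2-c_1)\mathcal{I}_{-4/5}f(t)=-M\mathcal{I}_{-4/5}f(t)$, i.e.\ $c_2-c_1=-M$; the individual values $c_1,c_2$ are read off from the one-sided limits of the principal-value oscillatory integral $\frac{1}{2\pi}\int(i\xi)^4 e^{iy\xi+i\xi^5}d\xi$ as $y\to 0^\pm$, evaluated by a contour deformation in $\xi$ in the same spirit as the values $B^{(k)}(0)$, $k=0,1,2,3$, quoted in the excerpt.
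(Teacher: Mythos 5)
The paper does not actually prove this lemma; it is quoted verbatim from \cite{CK2018-1}, so there is no in-paper proof to compare against.  Your strategy (differentiate under the $t'$-integral using $\partial_x[B(x/(t-t')^{1/5})]=(t-t')^{-1/5}B'(x/(t-t')^{1/5})$; check integrability of $(t-t')^{-(k+1)/5}$; rapid decay of $B^{(k)}$ for $x\to+\infty$ from non-stationary phase; Fourier-side integration by parts for $x\to-\infty$; and then derive the jump from \eqref{eq:BFO1}) is the standard Holmer--Colliander--Kenig template and is the right framework.  However, two steps do not survive scrutiny as written.

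\textbf{Gap 1: the $x\to-\infty$ decay is not actually closed.}  After $N$ integrations by parts in $\xi$ using $e^{ix\xi}$, the integrand is (schematically) $\partial_\xi^N\bigl[(i\xi)^k e^{it\xi^5}G(\xi)\bigr]$.  Each $\xi$-derivative on $e^{it\xi^5}$ or on $G(\xi)=\int_0^t e^{-it'\xi^5}\mathcal{I}_{-4/5}f(t')\,dt'$ costs a factor of order $\xi^4$.  But $G(\xi)$ itself decays only like $|\xi|^{-5}$ when $t$ lies in the support of $\mathcal{I}_{-4/5}f$ (the boundary term at $t'=t$ dominates), and $G^{(j)}(\xi)=O(|\xi|^{4j-5})$.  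Consequently the $\xi$-integral after two or more integrations by parts is not absolutely convergent, so the ``Plancherel bookkeeping'' phrase is hiding exactly the content of the proof.  To make this rigorous one must, e.g., change variables $\eta=\xi^5$ (so the Jacobian $|\xi|^{-4}$ cancels one $\xi^4$ per layer), pair the resulting $\eta$-integral against the time-Fourier transform of $\mathcal{I}_{-4/5}f$, and interleave $t'$-integration by parts; this is where the exponent $N+k$ in $\|f\|_{H^{N+k}}$ actually comes from, and none of it is present in your sketch.

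\textbf{Gap 2 (a side remark, not fatal): the one-sided constants $c_1,c_2$.}  You propose to read off $c_1,c_2$ from the one-sided limits of the oscillatory integral $\frac{1}{2\pi}\int(i\xi)^4 e^{iy\xi+i\xi^5}d\xi$ as $y\to0^\pm$.  This is not the mechanism.  If those one-sided limits existed they would have to agree (continuity of $B^{(4)}$ off an isolated singularity is not the point), and in fact $B^{(4)}(0)$ is not given by the pattern for $B^{(k)}(0)$, $k\le3$, since the corresponding formula involves $\Gamma\bigl(\frac{4-k}{5}\bigr)$, which blows up at $k=4$.  The jump in $\partial_x^4\mathcal{L}^0f$ at $x=0$ comes from the $(t-t')^{-1}$ kernel: substituting $u=x/(t-t')^{1/5}$, the integration range in $u$ tends to $(0,+\infty)$ or $(-\infty,0)$ according to the sign of $x$, and the two half-line $u$-integrals of $u^{-1}B^{(4)}(u)$ are different finite numbers.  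Fortunately your argument via \eqref{eq:BFO1}, which gives $c_2-c_1=-M\neq0$, is precisely what the lemma requires, so the conclusion stands even though the claimed route to the individual values does not.

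Two smaller remarks: (i) your continuity-at-$x=0$ step via dominated convergence needs $B^{(k)}$ bounded in a neighborhood of $0$, not just at $0$---that is exactly the footnote's ``non-singularity'' hypothesis, worth stating explicitly; (ii) $\|\mathcal{I}_{-4/5}f\|_{L^\infty}\lesssim\|f\|_{H^1}$ is not quite right (one needs roughly $H^{13/10+\epsilon}$ on the half-line via Lemma \ref{lio} plus Sobolev), though this only affects the low-$N$ cases where the stated bound is weak anyway.
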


In the following, we give the generalization of the boundary forcing operator $\mathcal{L}^0f$ and its properties introduced in \cite{CK2018-1}.

Let $\mbox{Re}\ \lambda > 0$ and $g\in C_0^{\infty}(\mathbb{R}^+)$ be given. Define
\[\mathcal{L}_{\pm}^{\lambda}g(t,x)=\left[\frac{x_{\mp}^{\lambda-1}}{\Gamma(\lambda)}*\mathcal{L}^0\big(\mathcal{I}_{-\frac{\lambda}{5}}g\big)(t, \cdot)   \right](x),\]
where $*$ denotes the convolution operator. Note that $\mathcal{L}_{\pm}^{\lambda}$ is for the right / left half-line problem, respectively. With $\frac{x_{-}^{\lambda-1}}{\Gamma(\lambda)}=\frac{(-x)_+^{\lambda-1}}{\Gamma(\lambda)}$ (in the sense of distribution), we represent each of them by
 \begin{equation}\label{forcing2}
\mathcal{L}_{+}^{\lambda}g(t,x)=\frac{1}{\Gamma(\lambda)}\int_{x}^{\infty}(y-x)^{\lambda-1}\mathcal{L}^0\big(\mathcal{I}_{-\frac{\lambda}{5}}g\big)(t,y)dy.
\end{equation}
 and 
\begin{equation}\label{forcing1}
\mathcal{L}_{-}^{\lambda}g(t,x)=\frac{1}{\Gamma(\lambda)}\int_{-\infty}^x(x-y)^{\lambda-1}\mathcal{L}^0\big(\mathcal{I}_{-\frac{\lambda}{5}}g\big)(t,y)dy.
\end{equation}

For $\mbox{Re} \lambda > -5$, the integration by parts in \eqref{forcing2} and \eqref{forcing1}, the decay property in Lemma \ref{continuity} and \eqref{eq:BFO1} yield
\begin{equation}\label{classe22}
\begin{aligned}
\mathcal{L}_{+}^{\lambda}g(t,x)&=\left[\frac{x_{-}^{(\lambda+5)-1}}{\Gamma(\lambda+5)}*\px^5\mathcal{L}^0\big(\mathcal{I}_{-\frac{\lambda}{5}}g\big)(t, \cdot)   \right](x)\\
&=M\frac{x_{-}^{(\lambda+5)-1}}{\Gamma(\lambda+5)}\mathcal{I}_{-\frac{4}{5}-\frac{\lambda}{5}}g(t)-\int_{x}^{\infty}\frac{(y-x)^{(\lambda+5)-1}}{\Gamma(\lambda+5)}\mathcal{L}^0\big(\partial_t\mathcal{I}_{-\frac{\lambda}{5}}g\big)(t,y)dy
\end{aligned}
\end{equation}
and
\[\begin{aligned}
\mathcal{L}_{-}^{\lambda}g(t,x)&= \left[\frac{x_{+}^{(\lambda+5)-1}}{\Gamma(\lambda+5)}*\px^5\mathcal{L}^0\big(\mathcal{I}_{-\frac{\lambda}{5}}g\big)(t, \cdot)   \right](x)\\
&=-M\frac{x_{+}^{(\lambda+5)-1}}{\Gamma(\lambda+5)}\mathcal{I}_{-\frac{4}{5}-\frac{\lambda}{5}}g(t)+\int_{-\infty}^x\frac{(x-y)^{(\lambda+5)-1}}{\Gamma(\lambda+5)}\mathcal{L}^0\big(\partial_t\mathcal{I}_{-\frac{\lambda}{5}}g\big)(t,y)dy,
\end{aligned}\]
respectively. It, thus, immediately satisfies (in the sense of distributions)
\begin{equation*}
(\partial_t-\partial_x^5)\mathcal{L}_{-}^{\lambda}g(t,x)=M\frac{x_{+}^{\lambda-1}}{\Gamma(\lambda)}\mathcal{I}_{-\frac{4}{5}-\frac{\lambda}{5}}g(t)
\end{equation*}
and
\begin{equation*}
(\partial_t-\partial_x^5)\mathcal{L}_{+}^{\lambda}g(t,x)=M\frac{x_{-}^{\lambda-1}}{\Gamma(\lambda)}\mathcal{I}_{-\frac{4}{5}-\frac{\lambda}{5}}g(t).
\end{equation*}

\begin{lemma}[Spatial continuity and decay properties for $\mathcal{L}_{\pm}^{\lambda}g(t,x)$ \cite{CK2018-1}]\label{holmer1}
	Let $g\in C_0^{\infty}(\mathbb{R}^+)$ and $M$ be as in \eqref{eq:M}. Then, we have
	\[\mathcal{L}_{\pm}^{-k}g=\partial_x^k\mathcal{L}^{0}\mathcal{I}_{\frac{k}{5}}g, \qquad k=0,1,2,3,4.\]
	Moreover, $\mathcal{L}_{\pm}^{-4}g(t,x)$ is continuous in $x \in \R \setminus \set{0}$ and has a step discontinuity of size $Mg(t)$ at $x=0$. For $\lambda>-4$, $\mathcal{L}_{\pm}^{\lambda}g(t,x)$ is continuous in $x \in\mathbb{R}$. For $-4\leq\lambda\leq 1$ and  $0\leq t\leq 1$, $\mathcal{L}_{\pm }^{\lambda}g(t,x)$ satisfies the following decay bounds:
	\begin{align*}
	&|\mathcal{L}_{-}^{\lambda}g(t,x)|\leq c_{m,\lambda,g}\langle x\rangle^{-m},\; \text{for all}\quad x\leq 0 \quad \text{and} \quad m\geq0,\\ 
	&|\mathcal{L}_{-}^{\lambda}g(t,x)|\leq c_{\lambda,g}\langle x\rangle^{\lambda-1}, \quad \text{for all} \quad  x\geq 0.\\
	&|\mathcal{L}_{+}^{\lambda}g(t,x)|\leq c_{m,\lambda,g}\langle x\rangle^{-m}, \quad \text{for all} \quad x\geq 0 \quad \text{and} \quad m\geq0,\\ \intertext{and}
	&|\mathcal{L}_{+}^{\lambda}g(t,x)|\leq c_{\lambda,g}\langle x\rangle^{\lambda-1},\quad \text{for all}\quad  x\leq 0.
	\end{align*}
\end{lemma}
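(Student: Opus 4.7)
The identity $\mathcal{L}_{\pm}^{-k}g = \partial_x^k \mathcal{L}^0 \mathcal{I}_{k/5} g$ I would prove first, by a distributional computation. Iterating \eqref{eq:derivative}, the tempered distribution $\frac{x_{\mp}^{-k-1}}{\Gamma(-k)}$ extends to $(\mp 1)^k \delta_0^{(k)}$, where $\delta_0^{(k)}$ denotes the $k$-th derivative of the Dirac mass at the origin. Since convolution with $\delta_0^{(k)}$ is $k$-fold differentiation, substituting into the defining formula for $\mathcal{L}_{\pm}^{\lambda}$ at $\lambda = -k$ produces the stated identity (with the sign absorbed into the $\pm$ convention).

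The continuity and jump assertions I would extract from the integration-by-parts representation \eqref{classe22} (and its mirror for $\mathcal{L}_{-}^{\lambda}$), which holds for $\mbox{Re}\,\lambda > -5$. This representation splits $\mathcal{L}_{\pm}^{\lambda}g(t,x)$ into (i) a boundary term proportional to $\frac{x_{\mp}^{\lambda+4}}{\Gamma(\lambda+5)}\mathcal{I}_{-(\lambda+4)/5}g(t)$, and (ii) an integral against the kernel $\frac{(y-x)^{\lambda+4}}{\Gamma(\lambda+5)}$ with the continuous, rapidly decaying integrand $\mathcal{L}^0(\partial_t \mathcal{I}_{-\lambda/5}g)(t,y)$ furnished by Lemma \ref{continuity}(a). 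For $\lambda > -4$ the boundary term vanishes at $x=0$ and is continuous on $\R$, and the integral term is continuous by dominated convergence, so $\mathcal{L}_{\pm}^{\lambda}g$ is continuous in $x$. At the threshold $\lambda = -4$ the boundary term collapses to $M\,\mathbf{1}_{\{\mp x > 0\}}g(t)$, whose jump at $x=0$ is exactly $M g(t)$, while the integral remains continuous; this pinpoints the step discontinuity of size $M g(t)$.

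For the decay bounds, by symmetry under $x \mapsto -x$ it suffices to treat $\mathcal{L}_{-}^{\lambda}$. When $\mbox{Re}\,\lambda > 0$, I would use \eqref{forcing1} directly. For $x \le 0$, the integration range $\{y \le x\}$ lies in $\{y \le 0\}$, on which $\mathcal{L}^0(\mathcal{I}_{-\lambda/5}g)(t,y)$ enjoys the rapid decay of Lemma \ref{continuity}(a); a dyadic split at $y = 2x$, combined with the kernel estimate and the $\bra{y}^{-m}$-bound on the integrand, yields $|\mathcal{L}_{-}^{\lambda}g(t,x)| \lesssim \bra{x}^{-m}$ for every $m \ge 0$. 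For $x \ge 0$, I would split the $y$-integral at $y = x/2$: on $y \le x/2$ the kernel obeys $(x-y)^{\lambda-1} \lesssim \bra{x}^{\lambda-1}$, which multiplies the integrable $\mathcal{L}^0(\cdot)$; on $x/2 \le y \le x$ rapid decay $|\mathcal{L}^0(\cdot)| \lesssim \bra{x}^{-m}$ plays against an integrable kernel on a bounded interval. Both pieces give the desired $\bra{x}^{\lambda-1}$-bound. For the remaining range $-4 \le \lambda \le 0$, I would revert to \eqref{classe22} and repeat the dyadic splitting with the smoother kernel $x_{+}^{\lambda+4}$; at the integer values $\lambda = -k$, $k=0,1,2,3,4$, a direct appeal to the first paragraph combined with the decay estimates in Lemma \ref{continuity} for $\partial_x^k \mathcal{L}^0$ also works.

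The main obstacle will be the intermediate regime $-4 < \lambda < 0$: one must verify that the boundary and integral pieces in \eqref{classe22} separately satisfy the target bound and that their sum reproduces the correct exponent $\lambda - 1$ on the polynomial side, not the larger $\lambda + 4$ that the kernel naively suggests. I expect this to require careful re-aggregation of the boundary contribution with the $\mathcal{L}^0$-integral after a cancellation; a clean unified treatment can also be extracted by viewing both sides of the desired bound as analytic functions of $\lambda$ on vertical strips and interpolating between the anchors $\lambda = -k$ and $\mbox{Re}\,\lambda > 0$.
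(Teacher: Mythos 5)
The paper does not prove this lemma; it is imported verbatim from \cite{CK2018-1} under the blanket citation ``We refer to \cite{CK2018-1} and references therein for the proofs.'' So there is no in-paper proof to compare against, and I evaluate your proposal on its own terms.

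Your first three paragraphs are sound. The distributional identity $\frac{x_+^{-k-1}}{\Gamma(-k)}=\delta_0^{(k)}$ (iterating \eqref{eq:derivative}) does give $\mathcal{L}_{-}^{-k}g=\partial_x^k\mathcal{L}^0\mathcal{I}_{k/5}g$; note however that $\frac{x_-^{-k-1}}{\Gamma(-k)}=(-1)^k\delta_0^{(k)}$, so $\mathcal{L}_{+}^{-k}g=(-1)^k\partial_x^k\mathcal{L}^0\mathcal{I}_{k/5}g$, and ``the sign absorbed into the $\pm$ convention'' needs to be said precisely rather than waved at. The continuity and jump analysis via \eqref{classe22} is correct: at $\lambda=-4$ the boundary term is exactly $\mp M\,x_{\pm}^0\,g(t)$, giving the step of size $Mg(t)$, and for $\lambda>-4$ both pieces are continuous. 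The dyadic splits for $\mathrm{Re}\,\lambda>0$ are carried out correctly for both the rapid-decay side and the $\langle x\rangle^{\lambda-1}$ side.

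The genuine gap is the range $-4\leq\lambda<0$ on the $\langle x\rangle^{\lambda-1}$ side, which you yourself call ``the main obstacle'' but do not close. Your intuition that the boundary term and the integral in \eqref{classe22} must cancel is right, but the mechanism is a concrete five-fold moment cancellation that you need to exhibit. From \eqref{eq:BFO1}, $\mathcal{L}^0(\partial_t h)(t,y)=M\delta_0(y)\mathcal{I}_{-4/5}h(t)+\partial_y^5\mathcal{L}^0h(t,y)$, so by the decay in Lemma~\ref{continuity} and integration by parts one has
\[
\int_{\R}\mathcal{L}^0(\partial_t h)(t,y)\,dy = M\,\mathcal{I}_{-4/5}h(t),\qquad
\int_{\R}y^{j}\,\mathcal{L}^0(\partial_t h)(t,y)\,dy = 0 \quad\text{for } j=1,2,3,4.
\]
Expanding $(x-y)^{\lambda+4}=x^{\lambda+4}\sum_{j\ge0}\binom{\lambda+4}{j}(-y/x)^j$ inside the integral of \eqref{classe22} (with $h=\mathcal{I}_{-\lambda/5}g$, using rapid decay in $y$ to control the tail $y>x$ or $y<x$ as appropriate), the $j=0$ term produces exactly $+M\frac{x_{\mp}^{\lambda+4}}{\Gamma(\lambda+5)}\mathcal{I}_{-(\lambda+4)/5}g(t)$, which cancels the boundary term; the $j=1,\dots,4$ terms vanish; and the first surviving term is $j=5$, of size $O(|x|^{\lambda+4-5})=O(|x|^{\lambda-1})$. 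Without this computation the bound $\langle x\rangle^{\lambda-1}$ is unproved on exactly the range where the lemma is used, so the argument is incomplete. Your alternative suggestion of an analytic-continuation or interpolation argument in $\lambda$ is not obviously viable: pointwise decay bounds of the form $\langle x\rangle^{\lambda-1}$ are not preserved by complex interpolation in $\lambda$ in any off-the-shelf way, so if you go that route you would still have to justify a Phragm\'en--Lindel\"of-type argument on the strip, which again needs the endpoint information that the moment computation provides.
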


\begin{lemma}[Values of $\mathcal{L}_{+}^{\lambda}f(t,0)$ and $\mathcal{L}_{-}^{\lambda}f(t,0)$ \cite{CK2018-1}]\label{trace1}
	 For $\mbox{Re}\ \lambda>-4$,
	\[\mathcal{L}_{+}^{\lambda}f(t,0)=\frac{1}{B(0)\Gamma(4/5)}\frac{\cos\left(\frac{(1+4\lambda)\pi}{10}\right)}{5\sin\left(\frac{(1-\lambda)\pi}{5}\right)}f(t)\]
	and
\[\mathcal{L}_{-}^{\lambda}f(t,0)= \frac{1}{B(0)\Gamma(4/5)}\frac{\cos\left(\frac{(1-6\lambda)\pi}{10}\right)}{5\sin\left(\frac{(1-\lambda)\pi}{5}\right)}f(t)\]
	\end{lemma}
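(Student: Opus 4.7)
The strategy is to evaluate the defining integral at $x=0$ directly, unfold the definition of $\mathcal{L}^0$, swap the order of integration by Fubini, perform a scaling change of variables in the spatial integral so that the integrand in $y$ decouples from $t'$, and then apply the Mellin transform formula of Lemma \ref{mellin}. What remains is a time convolution that collapses via the semigroup property of the Riemann--Liouville integrals.

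Concretely, for the plus case and for $\mathrm{Re}\,\lambda>0$, setting $x=0$ in \eqref{forcing2} and inserting the representation of $\mathcal{L}^0$ yields
\[
\mathcal{L}_+^\lambda g(t,0)=\frac{M}{\Gamma(\lambda)}\int_0^t (t-t')^{-1/5}\mathcal{I}_{-\frac{4+\lambda}{5}}g(t')\left[\int_0^\infty y^{\lambda-1}B\!\left(\frac{y}{(t-t')^{1/5}}\right)dy\right]dt'.
\]
The rapid decay of $B$ at $+\infty$ (Lemma \ref{lem:decay}(i)) justifies Fubini. The substitution $u=y/(t-t')^{1/5}$ converts the inner integral into $(t-t')^{\lambda/5}\int_0^\infty u^{\lambda-1}B(u)\,du$, which by Lemma \ref{mellin}(i) equals $(t-t')^{\lambda/5}\cdot\frac{\Gamma(\lambda)\Gamma(\frac{1-\lambda}{5})}{5\pi}\cos\bigl(\tfrac{(1+4\lambda)\pi}{10}\bigr)$. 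The $\Gamma(\lambda)$ cancels the prefactor $1/\Gamma(\lambda)$, and what remains is
\[
\frac{M\,\Gamma(\frac{1-\lambda}{5})}{5\pi}\cos\!\left(\tfrac{(1+4\lambda)\pi}{10}\right)\int_0^t (t-t')^{\frac{4+\lambda}{5}-1}\,\mathcal{I}_{-\frac{4+\lambda}{5}}g(t')\,dt' = \frac{M\,\Gamma(\frac{1-\lambda}{5})\Gamma(\frac{4+\lambda}{5})}{5\pi}\cos\!\left(\tfrac{(1+4\lambda)\pi}{10}\right)g(t),
\]
where we used $\int_0^t(t-t')^{\alpha-1}h(t')\,dt'=\Gamma(\alpha)\mathcal{I}_\alpha h(t)$ with $\alpha=\frac{4+\lambda}{5}$ together with $\mathcal{I}_\alpha\mathcal{I}_{-\alpha}=\mathrm{Id}$, which is precisely the point where $\mathrm{Re}\,\lambda>-4$ enters. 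Applying the reflection identity $\Gamma(\tfrac{1-\lambda}{5})\Gamma(\tfrac{4+\lambda}{5})=\pi/\sin(\tfrac{(1-\lambda)\pi}{5})$ and recalling $M=1/(B(0)\Gamma(4/5))$ produces the stated formula. The minus case is identical except that $y\mapsto -y$ turns the $y$-integral into one against $B(-u)$, so Lemma \ref{mellin}(ii) replaces $\cos(\tfrac{(1+4\lambda)\pi}{10})$ by $\cos(\tfrac{(1-6\lambda)\pi}{10})$; the rest of the manipulation is unchanged.

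The main obstacle is that the direct computation above is only valid in a restricted $\lambda$-strip: for $\mathcal{L}_+^\lambda$ one needs $\mathrm{Re}\,\lambda>0$ to exploit Lemma \ref{mellin}(i), and for $\mathcal{L}_-^\lambda$ one is restricted to $0<\mathrm{Re}\,\lambda<3/8$ by the weaker decay $B(-u)\lesssim\langle u\rangle^{-3/8}$ in Lemma \ref{lem:decay}(ii). To extend to $\mathrm{Re}\,\lambda>-4$, I would argue by analytic continuation in $\lambda$: the alternative representation \eqref{classe22} (together with its analogue for $\mathcal{L}_-^\lambda$) shows that $\mathcal{L}_\pm^\lambda g(t,0)$ depends holomorphically on $\lambda$ in the half-plane $\mathrm{Re}\,\lambda>-5$, since the kernel $\frac{x_\mp^{\lambda+4}}{\Gamma(\lambda+5)}$ is entire in $\lambda$ and the remaining integrals converge absolutely by Lemma \ref{continuity}. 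On the other hand, the proposed right-hand sides are meromorphic in $\lambda$, and at each pole $\lambda=1-5n$ of $1/\sin(\tfrac{(1-\lambda)\pi}{5})$ the numerator cosine vanishes as well, so the ratios are actually entire. Since both sides are analytic on $\{\mathrm{Re}\,\lambda>-4\}$ and agree on an open subset, the identities extend to the full range by the identity principle.
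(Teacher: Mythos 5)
Your proof is correct and follows the natural approach that the auxiliary lemmas (the Mellin transform formulas of Lemma \ref{mellin} and the alternative representation \eqref{classe22}) are plainly designed for: unfold, Fubini, scale, Mellin, collapse the fractional-integral convolution via the semigroup property, and then extend the $\lambda$-range by analytic continuation using \eqref{classe22} to supply holomorphy and the zero-pole cancellation $\cos(\tfrac{(1+4\lambda)\pi}{10})=0$ (resp.\ $\cos(\tfrac{(1-6\lambda)\pi}{10})=0$) at $\lambda=1-5n$ to see the right-hand side is analytic. This is essentially the same argument the authors indicate in \cite{CK2018-1}, so no further comment is needed.
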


\subsection{Linear version}\label{section4}
We consider the linearized equation of \eqref{eq:5kdv}.
\begin{equation}\label{eq:lin. 5kdv}
\partial_t u -\partial_x^5u =0.
\end{equation}
The unitary group associated to \eqref{eq:lin. 5kdv} as
\begin{equation*}
e^{t\partial_x^5}\phi(x)=\frac{1}{2\pi}\int e^{ix\xi}e^{it\xi^5}\hat{\phi}(\xi)d\xi,
\end{equation*}
allows 
\[\begin{cases}
(\partial_t-\partial_x^5)e^{t\partial_x^5}\phi(x) =0,& (t,x)\in\mathbb{R}\times\mathbb{R},\\
e^{t\partial_x^5}\phi(x)\big|_{t=0}=\phi(x),& x\in\mathbb{R}.
\end{cases}\]
Recall $\mathcal{L}_{+}^{\lambda}$ in \eqref{classe22} for the right half-line problem. Let $a_j$ and $b_j$ be constants depending on $\lambda_j$, $j=1,2$, given by
\[a_j = \frac{1}{B(0)\Gamma\left(\frac45\right)}\frac{\cos\left(\frac{(1+4\lambda_j)\pi}{10}\right)}{5\sin\left(\frac{(1-\lambda_j)\pi}{5}\right)}\quad \text{and}\quad b_j = \frac{1}{B(0)\Gamma\left(\frac45\right)}\frac{\cos\left(\frac{(4\lambda_j-3)\pi}{10}\right)}{5\sin\left(\frac{(2-\lambda_j)\pi}{5}\right)}.\]
Let us choose $\gamma_1$ and $\gamma_2$ satisfying
\[ \left[\begin{array}{c}
f(t) \\
\mathcal{I}_{\frac15}g(t)  \end{array} \right]=
A
 \left[\begin{array}{c}
\gamma_1(t)\\
\gamma_2(t)   \end{array} \right], \]
where
\[A(\lambda_1,\lambda_2)=
\left[\begin{array}{cc}
a_1 & a_2 \\
	b_1 & b_2 \end{array} \right].\]

We choose an appropriate $\lambda_j$, $j=1,2$, such $A$ is invertible. Then, $u$ defined by
\[u(t,x)= \mathcal{L}_+^{\lambda_1}\gamma_1(t,x)+\mathcal{L}_+^{\lambda_2}\gamma_2(t,x),\]
solves
\[\begin{cases}
(\partial_t-\partial_x^5)u= 0,\\
u(0,x) = 0,\\
u(t,0) = f(t), \; \px u(t,0) = g(t).
\end{cases}\]

See Section 3 in \cite{CK2018-1} for more details.
 
\subsection{Nonlinear version}\label{section4-1}

The Duhamel inhomogeneous solution operator $\mathcal{D}$
\begin{equation*}
\mathcal{D}w(t,x)=\int_0^te^{(t-t')\partial_x^5}w(t',x)dt'
\end{equation*}
solves
\[\left \{
\begin{array}{l}
(\partial_t-\partial_x^5)\mathcal{D}w(t,x) =w(t,x),\ (t,x)\in\mathbb{R}\times\mathbb{R},\\
\mathcal{D}w(x,0) =0,\ x\in\mathbb{R}.
\end{array}
\right.\]

By choosing a suitable  $\gamma_1$ and $\gamma_2$ depending on not only $f$ and $g$, but also $e^{t\px^5}\phi(x)$ and $\mathcal{D}w$,  $u$ defined by
\[u(t,x)= \mathcal{L}_+^{\lambda_1}\gamma_1(t,x)+\mathcal{L}_+^{\lambda_2}\gamma_2(t,x)+ e^{t\px^5}\phi(x) + \mathcal{D}w\]
solves
\[\begin{cases}
(\partial_t-\partial_x^5)u= w,\\
u(0,x) = \phi(x),\\
u(t,0) = f(t), \; \px u(t,0) = g(t).
\end{cases}\]

We refer to \cite{CK2018-1} for more details.

\section{Energy estimates}\label{sec:energy}
We are going to give the fundamental energy estimates.
\begin{lemma}\label{grupo}
	Let $s\in\mathbb{R}$ and $0<T \le 1$. If $\phi\in H^s(\mathbb{R})$, then
	\begin{itemize}
		\item[(a)] \emph{(Space traces)} $\|\psi_T(t)e^{t\partial_x^5}\phi(x)\|_{C\big(\mathbb{R}_t;\,H^s(\mathbb{R}_x)\big)}\lesssim \|\phi\|_{H^s(\mathbb{R})}$;
		\item[(b)] \emph{((Derivatives) Time traces)} In particular, for $-\frac92+j \le s \le \frac12 +j$,
		\[
		\|\psi_T(t) \partial_x^{j}e^{t\partial_x^5}\phi(x)\|_{C(\mathbb{R}_x;H^{\frac{s+2-j}{5}}(\mathbb{R}_t))}\lesssim \|\phi\|_{H^s(\mathbb{R})}, \quad j\in\{0,1,2\};
		\]
		\item [(c)] \emph{(Bourgain spaces)} For $b \le \frac12$ and $\alpha > \frac12$, we have
		\[\|\psi_T(t)e^{t\partial_x^5}\phi(x)\|_{X^{s,b} \cap D^{\alpha}}\lesssim T^{\frac12 - \alpha} \|\phi\|_{H^s(\mathbb{R})}.\]
	\end{itemize}
	The implicit constants do not depend on $0 < T \le 1$ but $\psi$.
\end{lemma}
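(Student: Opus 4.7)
I would treat the three estimates in a routine Fourier-analytic manner, with part (b) being the substantive one; (a) and (c) are short reductions.

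Part (a) is immediate: since $e^{t\px^5}$ acts on the Fourier side as multiplication by the unit-modulus symbol $e^{it\xi^5}$, it is an isometry on $H^s(\R)$ for every $s$, and $|\psi_T|\le 1$ gives the $H^s_x$-bound; continuity of $t\mapsto e^{t\px^5}\phi$ is the standard strong continuity of the $C^0$-group. For part (c), I start from the factorization $\wt{\psi_T e^{t\px^5}\phi}(\tau,\xi)=\wh{\psi_T}(\tau-\xi^5)\wh{\phi}(\xi)$ and apply the change of variable $\mu=\tau-\xi^5$, which gives
\begin{equation*}
\norm{\psi_T e^{t\px^5}\phi}_{X^{s,b}}=\norm{\phi}_{H^s}\norm{\psi_T}_{H^b}.
\end{equation*}
For the $D^\alpha$ component the restriction $|\xi|\le 1$ yields $\bra{\tau}\lesssim \bra{\tau-\xi^5}\bra{\xi^5}\lesssim\bra{\tau-\xi^5}$, so that
\begin{equation*}
\norm{\psi_T e^{t\px^5}\phi}_{D^\alpha}\lesssim \norm{P_{\le 1}\phi}_{L^2}\norm{\psi_T}_{H^\alpha}\lesssim \norm{\phi}_{H^s}\norm{\psi_T}_{H^\alpha}.
\end{equation*}
The dilation $\wh{\psi_T}(\tau)=T\wh{\psi}(T\tau)$ yields $\norm{\psi_T}_{H^\sigma}\lesssim T^{1/2-\sigma}$ uniformly in $0<T\le 1$ for $\sigma\ge 0$; since $\alpha>1/2\ge b$ and $T\le 1$, the $D^\alpha$ factor $T^{1/2-\alpha}$ dominates, giving the stated bound.

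For part (b), the idea is to decouple frequencies: decompose $\phi=P_{\le 1}\phi+P_{>1}\phi$. The low-frequency piece $\px^j e^{t\px^5}P_{\le 1}\phi(x)=\int_{|\xi|\le 1}e^{ix\xi}(i\xi)^j e^{it\xi^5}\wh{\phi}(\xi)\,d\xi$ is $C^\infty$ in $(t,x)$ with every $(t,x)$-derivative controlled by $\norm{\phi}_{L^2}\lesssim\norm{\phi}_{H^s}$ (Cauchy-Schwarz on a bounded $\xi$-set, valid for every $s$); multiplication by $\psi_T$ then produces a smooth function supported in $[-2,2]$, continuous in $x$, with uniformly bounded $H^\sigma_t$-norm for any $\sigma$. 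For the high-frequency piece, Plancherel in $t$ combined with the change of variable $\tau=\xi^5$ (a bijection on $\{|\xi|>1\}$ with Jacobian $5|\xi|^4$) produces the classical Kenig--Ponce--Vega identity
\begin{equation*}
\norm{\px^j e^{t\px^5}P_{>1}\phi(x)}_{\dot{H}^{(s+2-j)/5}_t}^2 \sim \int_{|\xi|>1}|\xi|^{10\sigma+2j-4}|\wh{\phi}(\xi)|^2\,d\xi = \int_{|\xi|>1}|\xi|^{2s}|\wh{\phi}(\xi)|^2\,d\xi,
\end{equation*}
uniformly in $x$, where $\sigma=(s+2-j)/5$; this bounds the time trace before any cutoff.

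The main obstacle is upgrading this homogeneous estimate to the inhomogeneous $H^{(s+2-j)/5}_t$ norm after multiplication by $\psi_T$, with a constant independent of $T\le 1$. Here I exploit that $\psi_T$ times any function is supported in the fixed set $[-2,2]$, so $\norm{\cdot}_{H^\sigma_t}$ and $\norm{\cdot}_{\dot{H}^\sigma_t}$ are equivalent on such functions for $|\sigma|\le 1/2$ (via Lemma \ref{sobolev0} applied with a fixed enveloping cutoff that equals $1$ on $[-2,2]$), with a constant independent of $T$. The range $\sigma\in[-1/2,1/2]$ is exactly the hypothesis $s\in[-9/2+j,1/2+j]$ of the lemma. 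Continuity of the $H^\sigma_t$-valued map in $x$ is obtained for Schwartz $\phi$ by dominated convergence in the Fourier representation, and extended to general $\phi$ by density combined with the uniform bound just established.
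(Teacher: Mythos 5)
Your parts (a) and (c) are fine and essentially standard; part (b) is where you take a genuinely different route from the paper, and it is also where your argument has a real gap.

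For (b), the paper works directly on the Fourier side: it writes out $\ft_t[\psi_T\px^j e^{t\px^5}\phi]$, makes the change of variable $\eta=\xi^5$ in the high-frequency contribution, and then performs a painstaking case analysis according to the relative sizes of $|\tau|$, $|\eta|$, $1/T$ and the sign of $\tau\cdot\eta$, using the smoothness of $\psi$ (polynomial decay of $\wh\psi_T$) and the Hardy--Littlewood maximal function to get $T$-uniform bounds. You instead invoke the Kenig--Ponce--Vega smoothing identity to obtain the homogeneous bound $\sup_x\norm{\px^j e^{t\px^5}P_{>1}\phi}_{\dot H^\sigma_t}\lesssim\norm{\phi}_{H^s}$ with $\sigma=(s+2-j)/5$, and then try to upgrade to $\norm{\psi_T(\cdot)}_{H^\sigma_t}$. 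That is an attractive and more conceptual route, but the passage from the homogeneous bound on $h$ to the cut-off bound on $\psi_T h$ is precisely where all the $T$-dependence is concentrated, and it is not closed in your write-up.

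Concretely: what you actually argue is that, because $\psi_T h$ is supported in the fixed interval $[-2,2]$, one has $\norm{\psi_T h}_{H^\sigma_t}\sim\norm{\psi_T h}_{\dot H^\sigma_t}$ with a $T$-independent constant (this part is fine, via a fixed enveloping cutoff and Lemma \ref{sobolev0}). But this only exchanges inhomogeneous for homogeneous norms \emph{of the already-cut-off function}. You still need the separate estimate $\norm{\psi_T h}_{\dot H^\sigma_t}\lesssim\norm{h}_{\dot H^\sigma_t}$ \emph{uniformly in} $0<T\le 1$, and that is not automatic: $\psi_T$ is not an a priori bounded pointwise multiplier on $\dot H^\sigma$ for $\sigma\neq 0$, and a naive application of Lemma \ref{sobolev0} with $\psi_T$ in place of $\psi$ would give a constant depending on $\psi_T$. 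The missing ingredient is the exact dilation invariance of $\dot H^\sigma$: writing $D_\lambda f(t)=f(\lambda t)$ one has $\psi_T h=D_{1/T}\bigl(\psi\cdot D_T h\bigr)$ and $\norm{D_\lambda f}_{\dot H^\sigma}=\lambda^{\sigma-\frac12}\norm{f}_{\dot H^\sigma}$, so that $\norm{\psi_T h}_{\dot H^\sigma}=T^{\frac12-\sigma}\norm{\psi\cdot D_T h}_{\dot H^\sigma}\lesssim T^{\frac12-\sigma}\norm{D_T h}_{\dot H^\sigma}=\norm{h}_{\dot H^\sigma}$, where the middle inequality now uses the \emph{fixed} cutoff $\psi$. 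This argument must be made, and note that even then Lemma \ref{sobolev0} is stated with strict inequality $0\le s<\frac12$, so the endpoints $\sigma=\pm\frac12$ included in the lemma's range $s=\pm\frac12+j$ (mod the shift) are not directly covered and require additional justification — the paper's direct Fourier-side proof handles these endpoints explicitly through \eqref{eq:L2-T} and the maximal-function step.

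A secondary (smaller) issue: for the low-frequency piece you assert the cut-off product $\psi_T\cdot\px^j e^{t\px^5}P_{\le1}\phi$ has ``uniformly bounded $H^\sigma_t$-norm for any $\sigma$''. This is false: $\norm{\psi_T g}_{H^1_t}\sim T^{-1/2}$ already for $g\equiv1$. The correct statement, obtainable by interpolation between $\norm{\psi_T g}_{L^2_t}\lesssim T^{1/2}$ and $\norm{\psi_T g}_{H^1_t}\lesssim T^{-1/2}$, is uniform boundedness only for $\sigma\le\frac12$; fortunately that is all the lemma requires, but the overstatement should be corrected.
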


\begin{remark}
In contrast to IVP, the time localization may restrict the regularity range (both upper- and lower-bounds) due to the (derivatives) time trace estimates. Similar phenomenon can be seen in $X^{s,b}$ estimates (see, in particular, Lemma 2.11 in \cite{Tao2006}), while the modulation exponent $b$ is affected to be taken by the cut-off function. See (b) and (c) in Lemma \ref{grupo} for the comparison.
\end{remark}

\begin{proof}
The proofs of (a) and (c) are standard, hence we omit the details and refer to \cite{Tao2006}. 

\textbf{(b).} Let $\phi = \phi_1 + \phi_2$, where $\wh{\phi}_1(\xi) = \chi_{\le 1}(\xi)\wh{\phi}(\xi) $. For $\phi_1$, we observe
\[\ft_t[\psi_T \partial_x^{j}e^{t\partial_x^5}\phi_1](\tau,x) = \int e^{ix\xi} (i\xi)^j \wh{\phi}_1(\xi)\wh{\psi}_T(\tau-\xi^5) \; d\xi,\]
which yields
\begin{equation}\label{eq:h_t norm}
\|\psi_T \partial_x^{j}e^{t\partial_x^5}\phi_1\|_{H^{\frac{s+2-j}{5}}} = \left(\int \bra{\tau}^{\frac{2(s+2-j)}{5}}\left| \int e^{ix\xi} (i\xi)^j \wh{\phi}_1(\xi)\wh{\psi}_T(\tau-\xi^5) \; d\xi \right|^2 \; d\tau  \right)^{\frac12}.
\end{equation}
When $|\tau| \le 1$, it directly follows from $|\wh{\psi}_T(\tau-\xi^5) | \lesssim T$ and the Cauchy-Schwarz inequality that
\[\mbox{RHS of } \eqref{eq:h_t norm} \lesssim T \norm{\phi_1}_{H^s} \lesssim \norm{\phi}_{H^s}.\]
When $|\tau| > 1$, we know $|\wh{\psi}_T(\tau-\xi^5) | \sim |\wh{\psi}_T(\tau) |$, and hence we have
\[\mbox{RHS of } \eqref{eq:h_t norm} \lesssim \left(\int_{|\tau| > 1} |\tau|^{\frac{2(s+2-j)}{5}} |\wh{\psi}_T(\tau)|^2 \; d\tau \right)^{\frac12} \norm{\phi_1}_{H^s}.\]
Note that 
\begin{equation}\label{eq:L2-T}
\int_{|\tau| > 1} |\tau|^{2\sigma} |\wh{\psi}_T(\tau)|^2 \; d\tau = T^{1-2\sigma}\int_{|\tau| > T} |\tau|^{2\sigma} |\wh{\psi}(\tau)|^2 \; d\tau.
\end{equation}
Hence, for $\frac{s+2-j}{5} \le \frac12$, we have 
\[\mbox{RHS of } \eqref{eq:h_t norm} \lesssim \norm{\phi}_{H^s}.\]
For $\phi_2$, observe that
\[
\partial_x^j e^{t\px^5}\phi_2(x) = \int e^{ix\xi}(i\xi)^je^{it\xi^5} \wh{\phi}_2(\xi) \; d\xi =\int e^{it\eta}e^{ix\eta^{\frac15}}(i\eta^{\frac15})^{j}\wh{\phi}_2(\eta^{\frac15}) \; \frac{d \eta}{5\eta^{\frac45}}.
\]
It implies
\begin{equation}\label{eq:b.2}
\norm{\psi_T \partial_x^j e^{t\px^5}\phi_2}_{H_t^{\frac{s+2-j}{5}}} = \left(\int \bra{\tau}^{\frac{2(s+2-j)}{5}}\left|\int e^{ix\eta^{\frac15}}(i\eta^{\frac15})^j\wh{\phi}_2(\eta^{\frac15}) \wh{\psi}_T(\tau-\eta) \; \frac{d \eta}{5\eta^{\frac45}}\right|^{2} \; d\tau \right)^{\frac12}. 
\end{equation}
It is known from the support of $\phi_2$ that $|\eta| > 1$. For $|\tau| \le 1$, the weight $\bra{\tau}^{\frac{2(s+2-j)}{5}}$ and the integration with respect to $\tau$ are negligible. If $T|\eta| \le 1$, we know $|\wh{\psi}_T(\tau-\eta)| \lesssim T$. Since
\[
\left|\int_{1<|\eta| < 1/T} |\eta|^{\frac j5}\wh{\phi}_2(\eta^{\frac15}) \; \frac{d \eta}{5\eta^{\frac45}} \right| \lesssim \max(1, T^{\frac{s+2-j}{5}-\frac12})\norm{\phi_2}_{H_x^s},
\]\footnote{When $\frac{s+2-j}{5}=\frac12$, the constant depending on $T$ is $\log \frac1T $ instead of $T^{\frac{s+2-j}{5}-\frac12}$, but it does not influence on our analysis.}
for $0 < T \le 1$, we have
\[
\norm{\psi_T \partial_x^j e^{t\px^5}\phi_2}_{H_t^{\frac{s+2-j}{5}}} \lesssim T^{\frac12 + \frac{s+2-j}{5}} \norm{\phi}_{H_x^s} \lesssim \norm{\phi}_{H_x^s},
\]
for $\frac12 + \frac{s+2-j}{5} \ge 0$. If $T|\eta| > 1$, we know $|\wh{\psi}_T(\tau-\eta)| \lesssim |\eta|^{-1}$. Since
\[
\left|\int_{1/T<|\eta| } |\eta|^{\frac j5 -1}\wh{\phi}_2(\eta^{\frac15}) \; \frac{d \eta}{5\eta^{\frac45}} \right| \lesssim \left(\int_{|\eta|>1/T}|\eta|^{-\frac25(s+2-j)-2} \;d\eta \right)^{\frac12} \norm{\phi}_{H_x^s},
\]
we have
\[\mbox{LHS of }\eqref{eq:b.2} \lesssim T^{\frac12 + \frac{s+2-j}{5}} \norm{\phi}_{H_x^s}\lesssim \norm{\phi}_{H_x^s},
\]
for $\frac12 + \frac{s+2-j}{5} \ge 0$. 

We may assume, from now on, that $|\tau|, |\eta| > 1$. For given $\tau$, we further divide the region in $\eta$ into the following:
\begin{equation}\label{eq:cases}
\mathbf{I. }\; |\eta| < \frac12|\tau|, \qquad \mathbf{II. }\; 2|\tau| < |\eta|, \qquad \mathbf{III. }\; \frac12|\tau| \le |\eta| \le 2|\eta|.
\end{equation}
The way to divide the integration region as \eqref{eq:cases} will be used repeatedly.

\textbf{I}. $|\eta| < \frac12|\tau|$. If $T|\tau| \le 1$, we know $|\wh{\psi}_T(\tau-\eta)| \lesssim T$. Since
\begin{equation}\label{eq:b.3}
\left|\int_{|\eta| \le |\tau| } |\eta|^{\frac j5 }\wh{\phi}_2(\eta^{\frac15}) \; \frac{d \eta}{5\eta^{\frac45}} \right|^2 \lesssim |\tau|^{-\frac25(s+2-j)+1} \norm{\phi_2}_{H_x^s}^2,
\end{equation}
we have
\[\mbox{LHS of }\eqref{eq:b.2} \lesssim T\left(\int_{1 < |\tau| \le 1/T} |\tau| \; d\tau\right)^{\frac12} \norm{f}_{H_x^s} \lesssim \norm{\phi}_{H_x^s}.\]
If $T|\tau| > 1$, we know $|\wh{\psi}_T(\tau-\eta)| \lesssim T^{1-k}|\tau|^{-k}$, for any positive $k$. For $k>1$, we have from \eqref{eq:b.3} that
\[
\begin{aligned}
\mbox{LHS of }\eqref{eq:b.2} \lesssim T^{1-k}\left(\int_{1/T < |\tau|} |\tau|^{-2k+1} \; d\tau\right)^{\frac12} \norm{\phi}_{H_x^s} \lesssim \norm{\phi}_{H_x^s}.
\end{aligned}\]
We remark that the smoothness of $\psi$ guarantees not the good bound of $T$, but the integrability, in other words, we only need a large $k$ for
\[\int_{1/T < |\tau|} |\tau|^{-2k+1} \; d\tau < \infty.\]

\textbf{II}. $2|\tau| < |\eta|$. If $T|\eta| < 1$, we know $|\wh{\psi}_T(\tau - \eta) | \lesssim T$. Then, the right-hand side of \eqref{eq:b.2} is bounded by
\begin{equation}\label{eq:b.4}
T\left(\int_{|\tau| \le 1/T} \left|\int_{|\eta| \le 1/T} |\eta|^{\frac{s+2-j}{5}}|\eta|^{\frac j5}\wh{\phi}_2(\eta^{\frac15}) \; \frac{d \eta}{5\eta^{\frac45}}\right|^{2} \; d\tau \right)^{\frac12}.
\end{equation}
Since 
\[
\int \left(|\eta|^{\frac{s+2-j}{5}}|\eta|^{\frac j5}\wh{\phi}_2(\eta^{\frac15})|\eta|^{-\frac45} \right)^2\; d \eta = c\int |\xi|^{2s}|\wh{\phi}_2|^2 \; d\xi,
\]
the Cauchy-Schwarz inequality with respect to $\eta$ and $\tau$ yields $\eqref{eq:b.4} \lesssim \norm{\phi}_{H^s}$. If $T|\eta| > 1$, we know $|\wh{\psi}_T(\tau - \eta) | \lesssim T^{1-k}|\eta|^{k}$, for any positive $k$. On the region $|\tau| < 1/T$, similarly as before, we have
\[
T^{1-k}\left(\int_{|\tau| \le 1/T} \left|\int_{|\eta| > 1/T} |\eta|^{-k}|\eta|^{\frac{s+2-j}{5}}|\eta|^{\frac j5}\wh{\phi}_2(\eta^{\frac15}) \; \frac{d \eta}{5\eta^{\frac45}}\right|^{2} \; d\tau \right)^{\frac12},
\]
which implies from the Cauchy-Schwarz inequality that
\[\mbox{LHS of } \eqref{eq:b.2} \lesssim T^{1-k}T^{-\frac12}\left(\int_{|\eta| > 1/T} |\eta|^{-2k} \; d\eta \right)^{\frac12} \norm{\phi}_{H^s} \lesssim \norm{\phi}_{H^s}.\]
On the region $|\tau| \ge 1/T$, it suffices to control
\[
T^{1-k}\left(\int_{|\tau| > 1/T} \left|\int_{|\tau| < |\eta|} |\eta|^{-k}|\eta|^{\frac{s+2-j}{5}}|\eta|^{\frac j5}\wh{\phi}_2(\eta^{\frac15}) \; \frac{d \eta}{5\eta^{\frac45}}\right|^{2} \; d\tau \right)^{\frac12},
\]
since
\[
\int_{|\tau| < |\eta|} |\eta|^{-2k}\; d\eta \lesssim |\tau|^{-2k+1},
\]
the Cauchy-Schwarz inequality and the change of variable yield
\begin{equation}\label{eq:b.5}
\mbox{LHS of } \eqref{eq:b.2} \lesssim T^{1-k}\left(\int_{|\tau| > 1/T} |\tau|^{-2k+1} \; d\eta \right)^{\frac12} \norm{\phi}_{H^s} \lesssim \norm{\phi}_{H^s}.
\end{equation}

\textbf{III}. $\frac12|\tau| < |\eta| < 2|\tau|$. In this case, we know
\[|\tau|^{\frac{s+2-j}{5}}|\eta|^{\frac{j-4}{5}} \sim |\eta|^{\frac{s-2}{5}}\]
in the integrand of the right-hand side of \eqref{eq:b.2}.
We further assume $|\tau|,|\eta| > 1/T$, otherwise we can use the same way to control \eqref{eq:b.4}. If $\tau \cdot \eta <0$, we know $|\wh{\psi}_T(\tau - \eta) | \lesssim T^{1-k}|\tau|^{k}$. Since
\[\left(\int_{|\eta| \sim |\tau|} 1 \; d\eta \right)^{\frac12} \lesssim |\tau|^{\frac12},\]
we have $\mbox{LHS of } \eqref{eq:b.2}\lesssim \norm{\phi}_{H^s}$ similarly as \eqref{eq:b.5}. If $\tau \cdot \eta > 0$, we further divide the case into $|\tau - \eta| < 1/T$ and $|\tau - \eta| > 1/T$. For the former case, let 
\begin{equation}\label{eq:b.6}
\Phi(\eta) = |\eta|^{\frac{s-2}{5}}\wh{\phi}_2(\eta^{\frac15}).
\end{equation}
We note that $\norm{\Phi}_{L^2} \sim \norm{\phi_2}_{H^s}$. Since $|\wh{\psi}_T(\tau - \eta) | \lesssim T$, we have
\[
\left|\int_{|\eta - \tau| < 1/T} |\eta|^{\frac{s-2}{5}}\wh{\phi}_2(\eta^{\frac15})\wh{\psi}_T(\tau - \eta) \; d\eta \right| \lesssim \left|T\int_{|\eta - \tau| < 1/T} \Phi(\eta) \; d\eta \right| \lesssim M\Phi(\tau),
\]
where $Mf(x)$ is the Hardy-Littlewood maximal function of $f$. Since $\norm{Mf}_{L^p} \lesssim \norm{f}_{L^p}$ for $1< p \le \infty$ (see, in particular, \cite{Stein1993}), we have
\[
\mbox{LHS of } \eqref{eq:b.2} \lesssim \left(\int |M\Phi(\eta)|^2 \; d\tau \right)^{\frac12} \lesssim \norm{\Phi}_{L^2} \lesssim \norm{\phi}_{H^s}.
\]
For the latter case, the integration region in $\eta$ can be reduced to $\tau + 1/T < \eta < 2\tau$ for positive $\tau$ and $\eta$, since the exact same argument can be applied to the other regions.\footnote{Indeed, we only have four regions; $\tau + \frac1T < \eta < 2\tau$ and $\frac12 \tau < \eta < \tau - \frac1T$ for positive $\tau, \eta$, and $\tau + \frac1T < \eta < \frac12\tau$ and $2\tau  < \eta < \tau - \frac1T$ for negative $\tau, \eta$, and the same argument can be applied on each region. \label{fn:4regions}} 
Since $|\wh{\psi}_T(\tau - \eta) | \lesssim T^{1-k}|\tau - \eta|^{-k}$ in this case, the left-hand side of \eqref{eq:b.2} is bounded by
\begin{equation}\label{eq:b.7}
T^{1-k}\left(\int_{|\tau| > 1/T} \left| \int_{\tau + 1/T}^{2\tau} \frac{\Phi(\eta)}{|\tau-\eta|^k} \; d\eta \right|^2 \; d\tau \right)^{\frac12},
\end{equation}
where $\Phi$ is defined as in \eqref{eq:b.6}. Let $\epsilon = (k-1)/2$ for $k > 1$. Then, the change of variable, the Cauchy-Schwarz inequality and the Fubini theorem yield
\[\begin{aligned}
\eqref{eq:b.7} &\lesssim T^{1-k}\left(\int_{|\tau| > 1/T} \left| \int_{1/T}^{\tau} \frac{\Phi(\tau + h)}{|h|^k} \; dh \right|^2 \; d\tau \right)^{\frac12}\\
&\lesssim T^{1-k}T^{\epsilon}\left(\int_{|\tau| > 1/T}\int_{|h| > 1/T} \frac{|\Phi(\tau + h)|^2}{|h|^{2k-1-2\epsilon}} \; dh d\tau \right)^{\frac12}\\
&\lesssim \norm{\Phi}_{L^2} \lesssim \norm{\phi}_{H^s}.
\end{aligned}\]
Therefore, we complete the proof of (b).
\end{proof}

\begin{remark}\label{rem:cut}
The proof of Lemma \ref{grupo} (b) exactly shows the proof of 
\[\norm{\psi_T(t)f(t)}_{H^{\sigma}} \lesssim \norm{f}_{H^{\sigma}},\]
whenever $-\frac12 \le \sigma \le \frac12$. With this, we have a variant of Lemma \ref{cut}
\[\norm{\psi_T(t)f(t)}_{H_0^{\frac{s+2}{5}}} \lesssim \norm{f(t)}_{H_0^{\frac{s+2}{5}}},\]
whenever $-\frac12 < \frac{s+2}{5} < \frac12$.
\end{remark}

\begin{lemma}\label{duhamel}
Let $s\in \mathbb{R}$ and $0 < T \le 1$. For $0 < b < \frac12 < \alpha < 1-b$, there exists  $\theta = \theta(s,j,b,\alpha)$ such that
	\begin{itemize}
		\item[(a)] \emph{(Space traces)} 
		\begin{equation*}
		\|\psi_T(t)\mathcal{D}w(x,t)\|_{C\big(\mathbb{R}_t;\,H^s(\mathbb{R}_x)\big)}\lesssim T^{\theta}\|w\|_{X^{s,-b}};
		\end{equation*}
		\item[(b)] \emph{((Derivatives) Time traces)} for $ -\frac92+ j \le s \le \frac12 + j$, $j=0,1,2$,
		\begin{equation*}
		\begin{aligned}
		\|\psi_T(t) \partial_x^j&\mathcal{D}w(x,t)\|_{C(\mathbb{R}_x;H^{\frac{s+2-j}{5}}(\mathbb{R}_t))} \\
		&\lesssim	\begin{cases} T^{\theta}\|w\|_{X^{s,-b}}, \quad &\mbox{if } \; 0 \le \frac{s+2-j}{5} \le \frac12, \\ T^{\theta}(\|w\|_{X^{s,-b}}+\|w\|_{Y^{s,-b}}), \quad &\mbox{if }\; -\frac12 \le \frac{s+2-j}{5} \le 0\end{cases};
		\end{aligned}
		\end{equation*}
		\item[(c)] \emph{(Bourgain spaces estimates)} 
		\begin{equation*}
		\|\psi_T(t)\mathcal{D}w(x,t)\|_{X^{s,b} \cap D^{\alpha}}\lesssim T^{\theta} \|w\|_{X^{s,-b}}.
		\end{equation*}
	\end{itemize}
\end{lemma}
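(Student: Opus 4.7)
The plan is to follow the strategy used in the proof of Lemma 5.4 of \cite{Holmerkdv} (and the corresponding estimates in the authors' previous work \cite{CK2018-1}), adapted to the fifth-order setting, with the added wrinkle in (b) of covering negative values of $\frac{s+2-j}{5}$ via the auxiliary space $Y^{s,-b}$. The starting point is the Fourier representation
\[\mathcal{D}w(t,x) = c\iint e^{ix\xi}e^{it\xi^5}\frac{e^{it(\tau-\xi^5)}-1}{i(\tau-\xi^5)}\wt{w}(\tau,\xi)\,d\tau\,d\xi,\]
to which I would apply a smooth frequency cutoff $\eta_0(\tau-\xi^5)$ splitting the integrand into a low-modulation piece ($|\tau-\xi^5|\le 2$) and a high-modulation piece ($|\tau-\xi^5|\ge 1$), handled separately.

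On the high-modulation piece the denominator $\tau-\xi^5$ is non-singular, so I split $e^{it\tau}-e^{it\xi^5}$ into two terms: the $e^{it\xi^5}$ piece is a free evolution of $\wh\phi(\xi)=\int(1-\eta_0)(\tau-\xi^5)^{-1}\wt w\,d\tau$, which the Cauchy--Schwarz inequality in $\tau$ against $\bra{\tau-\xi^5}^{-b}$ places in $H^s$ with norm $\lesssim \|w\|_{X^{s,-b}}$, after which Lemma \ref{grupo} finishes (a), (b), and (c); the $e^{it\tau}$ piece is a bounded space--time Fourier multiplier of $w$ and is controlled directly by $\|w\|_{X^{s,-b}}$. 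On the low-modulation piece the Taylor expansion
\[\frac{e^{it(\tau-\xi^5)}-1}{i(\tau-\xi^5)} = \sum_{k\ge 1}\frac{(it)^k}{k!}(\tau-\xi^5)^{k-1}\]
converts the contribution into a convergent series of weighted free evolutions $\psi_T(t)t^k e^{t\px^5}\phi_k(x)$, where $\wh{\phi}_k(\xi)=\int \eta_0(\tau-\xi^5)(\tau-\xi^5)^{k-1}\wt w(\tau,\xi)\,d\tau$ is controlled in $H^s$ via Cauchy--Schwarz. The prefactor $\psi_T(t)t^k$ is where the small-time gain $T^\theta$ in (a), (c) is generated; combined with Lemma \ref{grupo} (a), (c) this finishes those parts for any $b<\frac12<\alpha<1-b$.

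The substantive work is in (b). The high-modulation contribution to the time trace reduces on the Fourier side to controlling a weighted $L^2_\xi$ integral that, under $|\tau|\lesssim |\xi|^5$ (so $\bra\tau^{(s+2-j)/5}\lesssim \bra\xi^{s+2-j}$), is bounded by $\|w\|_{X^{s,-b}}^2$ provided $\frac{s+2-j}{5}\ge 0$; this explains why $X^{s,-b}$ alone suffices in the first alternative. The low-modulation contribution and the free-evolution piece are absorbed into Lemma \ref{grupo} (b), whose regularity window $-\frac92+j\le s\le \frac12+j$ dictates the corresponding window here. The main obstacle, and the reason $Y^{s,-b}$ enters the second alternative, is the complementary regime $|\tau|\gg |\xi|^5$: there $\bra\tau^{(s+2-j)/5}$ with $(s+2-j)/5<0$ cannot be absorbed by $\bra\xi^s$, but $|\tau-\xi^5|\sim |\tau|$ lets one rewrite $\bra\tau^{(s+2-j)/5}\lesssim \bra\tau^{s/5}\bra{\tau-\xi^5}^{(2-j)/5}$ and, after inserting $\bra{\tau-\xi^5}^{-b}\bra{\tau-\xi^5}^{b}$ and using Cauchy--Schwarz in $\tau$, replace the $X^{s,-b}$ weight $\bra\xi^s$ by the $Y^{s,-b}$ weight $\bra\tau^{s/5}$, the gap between exponents being absorbed precisely because $-\frac12\le\frac{s+2-j}{5}\le 0$. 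The small-time gain $T^\theta$ comes either from the explicit $\psi_T(t)t^k$ factor (low modulation) or from splitting $|\tau|\le 1/T$ versus $|\tau|>1/T$ after absorbing $\psi_T$ on the Fourier side (high modulation), exactly as in the proof of Lemma \ref{grupo} (b); ensuring $\theta>0$ is precisely where the hypothesis $b<\frac12<\alpha<1-b$ is invoked, so that the elementary integrals \eqref{eq:integral1}--\eqref{eq:integral3} converge with room to spare.
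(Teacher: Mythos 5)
Your overall strategy — a low/high modulation decomposition, Taylor expansion on the low-modulation piece, reduction to the free evolution plus a direct multiplier estimate on the high-modulation piece, and the observation that $Y^{s,-b}$ is needed precisely in the regime $|\tau|\gg|\xi|^5$ when $\frac{s+2-j}{5}<0$ — matches the paper's proof in spirit. But you place the modulation threshold at a fixed size $O(1)$ (splitting at $|\tau-\xi^5|\lesssim 1$ versus $\gtrsim 1$), whereas the paper splits at $|\tau'-\xi^5|\lesssim 1/T$ versus $\gtrsim 1/T$ (equivalently, writes $w=w_1+w_2$ at modulation level $1/T$). This is not a cosmetic choice: the $T$-dependent threshold is exactly what produces the small-time gain $T^{\theta}$ from the high-modulation side. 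After Cauchy--Schwarz in $\tau$, your datum for the $e^{it\xi^5}$ piece satisfies $\norm{\phi}_{H^s}\lesssim\pl\int_{|\mu|\ge 1}\bra{\mu}^{2b-2}\,d\mu\pr^{1/2}\norm{w}_{X^{s,-b}}$, which is a $T$-independent constant; with the threshold $1/T$ the same tail integral is $\sim T^{1-2b}$, giving the factor $T^{1/2-b}$ that the lemma asserts.

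As written, your high-modulation contribution to (a) is bounded only by $\norm{w}_{X^{s,-b}}$ with no power of $T$, and your high-modulation contribution to (c), fed through Lemma~\ref{grupo}~(c), produces $T^{1/2-\alpha}\norm{w}_{X^{s,-b}}$ — a factor that blows up as $T\to 0$ since $\alpha>\frac12$ — rather than the required $T^{1-\alpha-b}$ with $1-\alpha-b>0$. The low-modulation Taylor series does gain $T$ from $\psi_T(t)t^k$, but a sum of a $T^1$ piece and an $O(1)$ (or worse) piece carries no positive power of $T$, and $T^{\theta}>0$ is precisely what closes the fixed-point iteration in Section~\ref{sec:thm proof}. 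Similarly, in (b) you attribute the high-modulation gain to the $|\tau|\lessgtr 1/T$ split "as in Lemma~\ref{grupo}~(b)", but that lemma carries no $T$-factor at all; in the paper the $T^{1/2-b}$ in (b) again comes from the high-modulation tail $\int_{|\tau'-\xi^5|>1/T}\bra{\tau'-\xi^5}^{2b-2}\,d\tau'\sim T^{1-2b}$, and from the balance $T\cdot\pl 1+T^{-1/2-b}\pr$ in the Taylor series once its $\tau$-support is $|\tau-\xi^5|\le 1/T$. Replacing your fixed threshold by $1/T$ throughout (and redoing the low-modulation Cauchy--Schwarz, which then contributes $T^{-1/2-b}$, balanced by the $T$ from $\psi_T(t)t^k$) would repair the argument and recover the claimed gains.
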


\begin{remark}\label{rem:Ysb}
In view of the proof of Lemma \ref{duhamel} (b), the intermediate norm $Y^{s,b}$ is needed only for the regularity region $-\frac12 \le \frac{s+2-j}{5} \le 0$ (equivalently, $-\frac92 + j \le s \le j-2$), and thus, the nonlinear estimates in $Y^{s,b}$ norm (Theorems \ref{thm:nonlinear1} (b) and \ref{thm:nonlinear2} (b)) for the negative regularities are enough for our analysis.
\end{remark}

\begin{remark}\label{rem:choice of theta}
In view of the proof of Lemma \ref{duhamel}, under the condition $-\frac12 \le \frac{s+2-j}{5} \le \frac12$, $j=0,1,2$, we can choose $\theta = 1- \alpha -b$ uniformly in $s$ and $j$ for $b < \frac12 < \alpha < \frac34 - b$ such that $T^{\frac32-2\alpha - b}$ can be small enough (by choosing small $T \ll 1$) to close the iteration argument. See Section \ref{sec:thm proof}.
\end{remark}

\begin{proof}[Proof of Lemma \ref{duhamel}]$\;$\\

\textbf{(a).} A direct calculation gives
\begin{equation}\label{eq:duhamel fourier}
\ft[\psi_T\mathcal{D}w](\tau,\xi) = c\int \wt{w}(\tau',\xi) \frac{\wh{\psi}_T(\tau-\tau') - \wh{\psi}_T(\tau - \xi^5)}{i(\tau' - \xi^5)} \; d\tau'.
\end{equation}
Since $\norm{\psi_T \mathcal{D}w}_{C_tH^s} \lesssim \norm{\bra{\xi}^s\ft[\psi_T\mathcal{D}w](\tau,\xi)}_{L_{\xi}^2L_{\tau}^1}$, it suffices to control
\begin{equation}\label{eq:a.1}
\left(\int \bra{\xi}^{2s}\left|\int |\wt{w}(\tau',\xi)| \int \frac{|\wh{\psi}_T(\tau-\tau') - \wh{\psi}_T(\tau - \xi^5)|}{|\tau' - \xi^5|} \; d\tau d\tau' \right|^2 \; d\xi \right)^{\frac12},
\end{equation}
due to \eqref{eq:duhamel fourier}. On the region $|\tau' - \xi^5| \le T^{-1}$, we note from mean value theorem that
\[\frac{|\wh{\psi}_T(\tau-\tau') - \wh{\psi}_T(\tau - \xi^5)|}{|\tau' - \xi^5|} = T^2 |\wh{\psi}'(T(\tau-\xi^5) + \sigma)|,\]
for small $\sigma$ depending on $\tau'$ and $\xi^5$. Since $T\wh{\psi}'(T\tau)$ is $L^1$ integrable with respect to $\tau$, the Cauchy-Schwarz inequality yields
\[
\begin{aligned}
\eqref{eq:a.1} &\lesssim T \left(\int \bra{\xi}^{2s}\left|\int_{|\tau'-\xi^5| \le 1/T} |\wt{w}(\tau',\xi)| d\tau' \right|^2 \; d\xi \right)^{\frac12}\\
&\lesssim T^{\frac12 - b}(1+T^{\frac12 + b}) \norm{w}_{X^{s,-b}}.
\end{aligned}
\]
On the other hand, on the region $|\tau' - \xi^5| > T^{-1}$, we use the $L^1$ integrability of $\wh{\psi}_T$, so that
\[
\begin{aligned}
\eqref{eq:a.1} &\lesssim T \left(\int \bra{\xi}^{2s}\left|\int_{|\tau'-\xi^5| > 1/T} \frac{|\wt{w}(\tau',\xi)|}{|\tau' - \xi^5|} d\tau' \right|^2 \; d\xi \right)^{\frac12}\\
&\lesssim T^{\frac12 - b} \norm{w}_{X^{s,-b}}.
\end{aligned}
\]

\textbf{(b).}  A direct calculate gives
\begin{equation}\label{eq:duhamel fourier(b)}
\psi_T\partial_x^j\mathcal{D}w(t,x) = c\int e^{ix\xi}e^{it\xi^5}(i\xi)^j\psi_T(t) \int \wt{w}(\tau',\xi) \frac{e^{it(\tau'-\xi^5) }-1}{i(\tau' - \xi^5)} \; d\tau'd\xi.
\end{equation}
We denote by $w = w_1 + w_2$, where 
\[\wt{w}_1(\tau,\xi) = \chi_{\le 1/T}(\tau-\xi^5)\wt{w}(\tau,\xi),\]
for a characteristic function $\chi$.

For $w_1$, we use the  Taylor expansion of $e^x$ at $x =0$. Then, we can rewrite \eqref{eq:duhamel fourier(b)} for $w_1$ as
\[
\begin{aligned}
\psi_T\partial_x^j\mathcal{D}w(t,x) &= c\int e^{ix\xi}e^{it\xi^5}(i\xi)^j\psi_T(t) \int \wt{w}_1(\tau',\xi) \frac{e^{it(\tau'-\xi^5) }- 1}{i(\tau' - \xi^5)} \; d\tau'd\xi\\
&=cT\sum_{k=1}^{\infty}\frac{i^{k-1}}{k!}\psi_T^k(t) \int e^{ix\xi}e^{it\xi^5}(i\xi)^j\wh{F}_1^k(\xi)\; d\xi\\
&=cT\sum_{k=1}^{\infty}\frac{i^{k-1}}{k!}\psi_T^k(t)\partial_x^je^{t\partial_x^5}F_1^k(x),
\end{aligned}
\]
where $\psi^k(t) = t^k\psi(t)$ and 
\[\wh{F}_1^k(\xi) = \int \wt{w}_1(\tau,\xi) (T(\tau-\xi^5))^{k-1} \; d\tau.\]
Since
\[
\begin{aligned}
\norm{F_1^k}_{H^s} &= \left(\int \bra{\xi}^{2s} \left| \int \wt{w}_1(\tau,\xi) (T(\tau-\xi^5))^{k-1} \; d\tau \right|^2 \; d\xi \right)^{\frac12}\\
&\lesssim (1+T^{-\frac12-b})\norm{w}_{X^{s,-b}},
\end{aligned}
\]
we have from Lemma \ref{grupo} (b) that
\[
\begin{aligned}
\norm{\psi_T\partial_x^j\mathcal{D}w}_{L_x^{\infty}H_t^{\frac{s+2-j}{5}}} &\lesssim T\sum_{k=1}^{\infty}\frac{1}{k!}\norm{F_1^k}_{H_x^s}\\
&\lesssim T(1+T^{-\frac12-b})\norm{w}_{X^{s,-b}}\sum_{k=1}^{\infty}\frac{1}{k!}\\ 
&\lesssim T^{\frac12-b}\norm{w}_{X^{s,-b}},
\end{aligned}
\]
when $-\frac92 + j \le s \le \frac12 +j$.

For $w_2$, recall \eqref{eq:duhamel fourier(b)}
\[
\begin{aligned}
\psi_T\partial_x^j\mathcal{D}w(t,x) &= c\int e^{ix\xi}e^{it\xi^5}(i\xi)^j\psi_T(t) \int \frac{\wt{w}(\tau',\xi)}{i(\tau' - \xi^5)} \left(e^{it(\tau'-\xi^5) }- 1\right) \; d\tau'd\xi\\
&= I -II.
\end{aligned}
\]
We first consider $II$. Let 
\[\wh{W}(\xi) = \int \frac{\wt{w}_2(\tau,\xi)}{i(\tau-\xi^5)} \; d\tau.\]
Note that 
\[\norm{W}_{H^s} \lesssim T^{\frac12 - b} \norm{w_2}_{X^{s,-b}}.\]
Then, it immediately follows from 
\[II = \psi_T(t) \partial_x^j e^{t\partial_x^5}F(x)\]
and Lemma \ref{grupo} (b) that
\[\norm{\psi_T \partial_x^j e^{t\partial_x^5}F}_{C_xH^{\frac{s+2-j}{5}}} \lesssim \norm{F}_{H^s} \lesssim T^{\frac12 -b}\norm{w}_{X^{s,-b}},\]
when $-\frac92 + j \le s \le \frac12 +j$.

Now it remains to deal with $I$. Taking the Fourier transform to $I$ with respect to $t$ variable, we have
\[
\int e^{ix\xi} (i\xi)^j \int \frac{\wt{w}_2(\tau',\xi)}{i(\tau'-\xi^5)} \wh{\psi}_T(\tau-\tau')\; d\tau'd\xi,
\]
and hence it suffices to control
\begin{equation}\label{eq:b.8}
\left( \int \bra{\tau}^{\frac{2(s+2-j)}{5}} \left| \int e^{ix\xi} (i\xi)^j \int \frac{\wt{w}_2(\tau',\xi)}{i(\tau'-\xi^5)} \wh{\psi}_T(\tau-\tau')\; d\tau'd\xi \right|^2 \; d\tau \right)^{\frac12}.
\end{equation} 
The argument is very similar used in the proof of Lemma \ref{grupo} (b), while the relation among $|\tau|, |\tau'|$ and $|\xi|^5$ should be taken into account carefully. Hence, we only give, here, a short idea on each case. We first split the region in $\tau$ as follows:
\[\mathbf{Case\; I. }\; |\tau| \le 1, \qquad \mathbf{Case\; II. }\; 1 < |\tau| \le \frac1T, \qquad \mathbf{Case\; III. }\; \frac1T < |\tau|.\]

\textbf{Case I.} $|\tau| \le 1$. In this case, the weight $\bra{\tau}^{\frac{2(s+2-j)}{5}}$ and the integration with respect to $\tau$ can be negligible. If $|\xi|^5 \le 1$, the weight $|\xi|^{j}$ and the integration with respect to $\xi$ is negligible as well. Moreover, we know $|\wh{\psi}_T(\tau - \tau')| \lesssim T^{1-k}|\tau'-\xi^5|^{-k}$ for any positive $k$, since $|\tau' - \xi^5| > 1/T$. Then, the Cauchy-Schwarz inequality gives
\[\eqref{eq:b.8} \lesssim T^{\frac32 -b} \norm{w_2}_{X^{s,-b}}.\]
When $1<|\xi|^5$, we further divide the case into $1 < |\xi|^5 < 1/T$ and $1/T < |\xi|^5$. For the former case, we still have $|\wh{\psi}_T(\tau - \tau')| \lesssim T^{1-k}|\tau'-\xi^5|^{-k}$. The Cauchy-Schwarz inequality gives
\[
\begin{aligned}
\eqref{eq:b.8} &\lesssim T^{1-k}\left( \int_{|\xi|^5 < 1/T}|\xi|^{2j-2s} \int_{|\tau'-\xi^5| > 1/T} |\tau' - \xi^5|^{-2k-2+2b} \;d\xi d\tau  \right)^{\frac12}\norm{w_2}_{X^{s,-b}} \\
&\lesssim T^{\frac32 - b}\max(1, T^{-\frac12 + \frac{s+2-j}{5}}) \norm{w_2}_{X^{s,-b}},
\end{aligned}\]
which enables us to obtain $T^{\theta}$ for positive $\theta>0$ when $5(b-1)-2+j < s$ (roughly $-\frac12 \le \frac{s+2-j}{5}$).

For the latter case, we split the region in $\tau'$ similarly as \eqref{eq:cases} (with corresponding variables $|\xi|^5$ and $|\tau'-\xi^5|$). Then, we can apply the same argument to each case. Indeed, for $|\tau'-\xi^5| < \frac12|\xi|^5$, we can control \eqref{eq:b.8} by using $|\wh{\psi}_T(\tau - \tau')| \lesssim T^{1-k}|\xi|^{-5k}$, while we use $|\wh{\psi}_T(\tau - \tau')| \lesssim T^{1-k}|\tau' - \xi^5|^{-k}$ in the case when $2|\xi|^5 < |\tau'-\xi^5|$. Hence, we have for both cases that
\[\eqref{eq:b.8} \lesssim T^{1 -b + \frac{s+2-j}{5}} \norm{w_2}_{X^{s,-b}}.\]

As seen in the proof of Lemma \ref{grupo} (b), the case when $\frac12|\xi|^5 < |\tau'-\xi^5| < 2|\xi|^5$ is more complicated. Since $|\tau| \le 1$, this case is equivalent to the case when $\frac12|\tau-\xi^5| < |\tau'-\xi^5| < 2|\tau-\xi^5|$. If $(\tau'-\xi^5)\cdot(\tau-\xi^5)<0$, by using the facts that $|\wh{\psi}_T(\tau - \tau')| \lesssim T^{1-k}|\xi|^{-5k}$ and
\[ \int_{|\tau'-\xi^5| \sim |\xi|^5} 1 \; d\tau' \lesssim |\xi|^{5},\]
we obtain
\[\eqref{eq:b.8} \lesssim T^{1 -b + \frac{s+2-j}{5}} \norm{w_2}_{X^{s,-b}}.\]
Otherwise ($(\tau'-\xi^5)\cdot(\tau-\xi^5) > 0$), we use the Hardy-Littlewood maximal function of $|\tau'-\xi^5|^{-b}\wt{w}_2(\tau',\xi)$ for $|\tau - \tau'| < 1/T$, and the smoothness of $\psi$ ($|\wh{\psi}_T(\tau - \tau')| \lesssim |\tau-\tau'|^{-1}$) for $1/T < |\tau - \tau'| < |\tau-\xi^5|$, so that 
\[\eqref{eq:b.8} \lesssim T^{\frac12 -b + \frac{s+2-j}{5}} \norm{w_2}_{X^{s,-b}},\]
which imposes the regularity restriction $0 \le \frac{s+2-j}{5}$. In order to cover $-\frac12 \le \frac{s+2-j}{5} < 0$ regime, we use $Y^{s,b}$ space for the case when $\frac12|\xi|^5 < |\tau'-\xi^5| < 2|\xi|^5$. It suffice to consider
\begin{equation}\label{eq:b.8-1}
\left( \int_{|\tau| \le 1}  \left| \int_{|\xi|^5\le 1/T} |\xi|^j \int_{|\tau' - \xi^5| > 1/T} \frac{\wt{w}_2(\tau',\xi)}{|\tau'-\xi^5|} \wh{\psi}_T(\tau-\tau')\; d\tau'd\xi \right|^2 \; d\tau \right)^{\frac12}.
\end{equation}

We may assume $|\tau'| > 2$, otherwise, we use $\bra{\tau'}^{\frac s5} \sim 1$ and $|\wh{\psi}_T(\tau-\tau')| \lesssim T$ to obtain 
\[\eqref{eq:b.8-1} \lesssim T^{\frac{19}{10}-b-\frac j5} \norm{w_2}_{Y^{s,-b}}.\]
Since $\bra{\tau'} \sim \bra{\tau-\tau'}$, we have from \eqref{eq:L2-T} that 
\[\begin{aligned}
\eqref{eq:b.8-1} &\lesssim \left| \int_{|\xi|^5\le 1/T} |\xi|^{j-5+5b} \int_{|\tau'|> 2} \bra{\tau'}^{\frac s5}\bra{-b}\wt{w}_2(\tau',\xi)\bra{\tau-\tau'}^{-\frac s5}\wh{\psi}_T(\tau-\tau')\; d\tau'd\xi \right|\\
&\lesssim T^{\frac{9}{10}-b-\frac j5}\left( \int_{|\tau| \ge 1} |\tau|^{-\frac{2s}{5}}|\wh{\psi}_T(\tau)|^2 \; d\tau \right)^{\frac12} \norm{w_2}_{Y^{s,-b}}\\
&\lesssim T^{\frac12-b+\frac{2-j}{5}}T^{\frac12 +\frac{s}{5}} \norm{w_2}_{Y^{s,-b}}.
\end{aligned}\]
Thus we cover $-\frac12 \le \frac{s+2-j}{5} < 0$.

\textbf{Case II.} $1 < |\tau| \le \frac1T$. For $|\xi|^5 < 1/T$, we can apply the same argument in \textbf{Case I}, since, roughly speaking, the spare bound $T^{\frac12 + \frac{s+2-j}{5}}$  obtained in \textbf{Case I} controls
\[\left(\int_{|\tau| < 1/T} |\tau|^{\frac{2(s+2-j)}{5}}\; d\tau\right)^{\frac12}. \]
Moreover, the case when $1/T < |\xi|^5$, and $|\tau'-\xi^5| < \frac12|\xi|^5$ or $2|\xi|^5 < |\tau'-\xi^5|$, can be dealt with similarly. Hence, we have for these cases that
\[\eqref{eq:b.8} \lesssim T^{\frac12 -b} \norm{w_2}_{X^{s,-b}}.\]
For the rest case, in view of the proof, we can see that $L^2$ integral with respect to $\tau$ is performed for $w_2$. Moreover, the weight $|\tau|^{\frac{s+2-j}{5}} \lesssim T^{-\frac{s+2-j}{5}}$ (when $0 \le \frac{s+2-j}{5}$), which is killed by the spare bound $T^{\frac{s+2-j}{5}}$. Hence we have for the rest case that
\[\eqref{eq:b.8} \lesssim T^{\frac12 -b} \norm{w_2}_{X^{s,-b}}.\]
Otherwise (when $-\frac12 \le \frac{s+2-j}{5} < 0$), we have similarly as before that
\[\eqref{eq:b.8-1} \lesssim T^{1-b+\frac{s+2-j}{5}} \norm{w_2}_{Y^{s,-b}}.\]

\textbf{Case III.} $1/T < |\tau|$. This case is much more complicated. When $|\xi|^5 < 1/T$, $\xi^5$ is negligible compared with $\tau$ and $\tau'$, and hence \eqref{eq:b.8} is reduced to\footnote{When $\frac{s+2-j}{5}=\frac12$, the constant depending on $T$ is $-\log T $ instead of $T^{\frac{s+2-j}{5}-\frac12}$, but it does not influence on our analysis.}
\begin{equation}\label{eq:b.9}
\max\left(1, T^{\frac{s+2-j}{5}-\frac12} \right)\left( \int \bra{\tau}^{\frac{2(s+2-j)}{5}} \left|  \int \frac{\wt{w}_2^{\ast}(\tau')}{|\tau'|} \wh{\psi}_T(\tau-\tau')\; d\tau' \right|^2 \; d\tau \right)^{\frac12},
\end{equation}
where $\wt{w}_2^{\ast}(\tau') = \norm{\bra{\cdot}\wt{w}_2(\cdot,\tau')}_{L^2}$. Then, the following cases can be treated via the similar way:
\begin{itemize}
\item[III.a] $|\tau'| < \frac12 |\tau|$, in this case we use $|\wh{\psi}_T(\tau-\tau')| \lesssim T^{1-k}|\tau|^{-k}$,
\item[III.b] $2|\tau| < |\tau'|$, in this case we use $|\wh{\psi}_T(\tau-\tau')| \lesssim T^{1-k}|\tau'|^{-k}$,
\item[III.c] $\frac12 |\tau| < |\tau'| < 2 |\tau|$ and $\tau \cdot \tau' < 0$, in this case we use $|\wh{\psi}_T(\tau-\tau')| \lesssim T^{1-k}|\tau|^{-k}$.
\end{itemize}
Indeed, we roughly have
\[\left( \int \bra{\tau}^{\frac{2(s+2-j)}{5}} \left|  \int \frac{\wt{w}_2^{\ast}(\tau')}{|\tau'|} \wh{\psi}_T(\tau-\tau')\; d\tau' \right|^2 \; d\tau \right)^{\frac12} \lesssim T^{1-b-\frac{s+2-j}{5}}\norm{w_2}_{X^{s,-b}},\]
which, together with \eqref{eq:b.9}, implies
\begin{equation}\label{eq:b.9-1}
\eqref{eq:b.8} \lesssim T^{\frac12-b}\norm{w_2}_{X^{s,-b}}.
\end{equation}
On the other hand, when $(s+2-j)/5 > 1/2$ ($s> 0$), we have from \eqref{eq:b.9} that
\[\left( \int \bra{\tau}^{\frac{2(s+2-j)}{5}} \left|  \int \frac{\wt{w}_2^{\ast}(\tau')}{|\tau'|} \wh{\psi}_T(\tau-\tau')\; d\tau' \right|^2 \; d\tau \right)^{\frac12},\]
which does not guarantee \eqref{eq:b.9-1}. However, it is possible to obtain
\[\eqref{eq:b.8} \lesssim T^{\theta} \norm{w_2}_{Y^{s,-b}},\]
for positive $\theta > 0$. Precisely, for III.b and III.c , we have
\[\begin{aligned}
\eqref{eq:b.8} &\lesssim \left( \int_{1/T<|\tau|} \bra{\tau}^{\frac{2(2-j)}{5}} \left| \int_{|\xi|^5 \le 1/T} |\xi|^j \int \frac{\wt{w}_2(\tau',\xi)}{i(\tau'-\xi^5)} \wh{\psi}_T(\tau-\tau')\; d\tau'd\xi \right|^2 \; d\tau \right)^{\frac12}\\
&\lesssim T^{\frac{2-j}{5}-\frac12} \left( \int \bra{\tau}^{\frac{2(2-j)}{5}} \left|  \int \frac{\bra{\tau'}^{\frac{s}{5}}\wt{w}_2(\tau')}{|\tau'|} \wh{\psi}_T(\tau-\tau')\; d\tau' \right|^2 \; d\tau \right)^{\frac12}\\
&\lesssim T^{\frac12-b}\norm{w_2}_{Y^{s,-b}}.
\end{aligned}\] 
For III.a, it follows from
\[\left( \int \bra{\tau}^{\frac{2(s+2-j)}{5}} \left|  \int \bra{\tau'}^{-\frac s5}\frac{\bra{\tau'}^{\frac s5}\wt{w}_2(\tau')}{|\tau'|} \wh{\psi}_T(\tau-\tau')\; d\tau' \right|^2 \; d\tau \right)^{\frac12} \lesssim T^{1-b-\frac{2-j}{5}}\norm{w_2}_{Y^{s,-b}},\]
thanks to
\[\int_{1/T< |\tau'|} |\tau'|^{-2-\frac{2s}{5}+2b} \; d\tau' \lesssim T^{1+\frac{2s}{5}-2b}.\]

For the case when $\tau \cdot \tau' > 0$, we, similarly, split the case into $|\tau - \tau'| < 1/T$ and $|\tau-\tau'|>1/T$. Then, by using the Hardy-Littlewood maximal function of $|\tau'|^{-b}\wt{w}_2^{\ast}(\tau')$ for $|\tau - \tau'| < 1/T$, and the smoothness of $\psi$ ($|\wh{\psi}_T(\tau - \tau')| \lesssim |\tau-\tau'|^{-1}$) for $1/T < |\tau - \tau'| < |\tau|$ similarly as before, we have for the rest case that
\[\eqref{eq:b.8} \lesssim T^{\frac12 -b} \norm{w_2}_{X^{s,-b}}.\]
By the same reason, we have 
\[\eqref{eq:b.8} \lesssim T^{\frac12 -b} \norm{w_2}_{Y^{s,-b}},\]
when $(s+2-j)/5 > 1/2$.

Now we consider the case when $|\xi|^5 > 1/T$. For given $\tau, \xi$, we further divide the case into $|\tau' - \xi^5| \le \frac12 |\tau - \xi^5|$, $2|\tau - \xi^5| \le |\tau' - \xi^5|$ and $\frac12|\tau - \xi^5| < |\tau' - \xi^5| < 2|\tau - \xi^5|$.

For the case when $|\tau' - \xi^5| \le \frac12 |\tau - \xi^5|$, we know $|\tau - \xi^5| > 1/T$ and $|\wh{\psi}_T(\tau - \tau')| \lesssim T^{1-k}|\tau-\xi^5|^{-k}$. Moreover, the region of $\xi$ can be expressed as $\cup_{j=1}^{4}\mathcal{A}_j$, where
\[\mathcal{A}_1 = \left\{\xi : |\xi|^5 > \frac1T, \; 2|\tau| < |\xi|^5 \right\},\] 
\[\mathcal{A}_2 = \left\{\xi : |\xi|^5 > \frac1T, \; |\xi|^5 < \frac12|\tau| \right\},\]
\[\mathcal{A}_3 = \left\{\xi : |\xi|^5 > \frac1T, \; \frac12|\tau| \le |\xi|^5 \le 2|\tau|, \; \tau \cdot \xi^5 < 0 \right\}\]
and 
\[\mathcal{A}_4 = \left\{\xi : |\xi|^5 > \frac1T, \; \frac12|\tau| \le |\xi|^5 \le 2|\tau|, \; \tau \cdot \xi^5 > 0 \right\}.\]
On $\mathcal{A}_1$, we have $|\tau|^{\frac{s+2-j}{5}} \lesssim |\xi|^{s+2-j}$\footnote{This property restricts the regularity condition as $\frac{s+2-j}{5} > 0$. However, in the case when $\frac{s+2-j}{5} > 0$, since $|\tau|^{\frac{s+2-j}{5}} \lesssim T^{-\frac{s+2-j}{5}}$, the same argument yields
\[\eqref{eq:b.8} \lesssim T^{\frac12 - b}\norm{w_2}_{X^{s,-b}}.\]}, $|\tau-\xi^5| \sim |\xi|^5$ and $|\wh{\psi}_T(\tau-\tau')| \lesssim T^{1-k}|\xi|^{-5k}$ for $k > 1$. Let
\[\wt{W}^{\ast}(\tau',\xi) = \bra{\tau'-\xi^5}^{-b}\bra{\xi}^s\wt{w}_2(\tau',\xi).\]
Note that $\norm{W^{\ast}}_{L_{x,t}^2} = \norm{w_2}_{X^{s,-b}}$. Then, we have
\begin{equation}\label{eq:b.10}
\begin{aligned}
\eqref{eq:b.8} &\lesssim T^{1-k}\left( \int_{|\tau| > 1/T}  \left| \int_{\mathcal{A}_1} |\xi|^{2-5k} \int_{1/T < |\tau'-\xi^5| } |\tau'-\xi^5|^{-1+b}\wt{W}^{\ast}(\tau',\xi)\; d\tau'd\xi \right|^2 \; d\tau \right)^{\frac12}\\
&\lesssim T^{1-k}T^{\frac12-b} \left( \int_{|\tau| > 1/T}  |\tau|^{1-2k} \; d\tau \right)^{\frac12} \norm{W^{\ast}}_{L^2}\\
&\lesssim T^{\frac12 - b}\norm{w_2}_{X^{s,-b}}.
\end{aligned}
\end{equation}
On $\mathcal{A}_2$, we have $|\tau-\xi^5| \sim |\tau|$ and $|\wh{\psi}_T(\tau-\tau')| \lesssim T^{1-k}|\tau|^{-k}$ for $k > 1$. Then, similarly as \eqref{eq:b.10}, we have
\[\eqref{eq:b.8} \lesssim T^{\frac12 - b}\norm{w_2}_{X^{s,-b}}.\]
On $\mathcal{A}_3$, since $|\tau-\xi^5| \sim |\tau| \sim |\xi|^5$, we have 
\[\eqref{eq:b.8} \lesssim T^{\frac12 - b}\norm{w_2}_{X^{s,-b}},\] 
similarly as on $\mathcal{A}_1$ or $\mathcal{A}_2$. On $\mathcal{A}_4$, we have $|\tau|^{\frac{s+2-j}{5}} \sim |\xi|^{s+2-j}$ and $|\wh{\psi}_T(\tau-\tau')| \lesssim T^{1-k}|\tau-\xi^5|^{-k}$ for $k > 1$. Moreover, it is enough to consider the region $\tau + \frac1T < \xi^5 < 2\tau$ due to the footnote \ref{fn:4regions} in the proof of Lemma \ref{grupo} (b). Then, we have
\begin{align}
\eqref{eq:b.8} &\lesssim T^{1-k}\left( \int_{\tau > 1/T}  \left| \int_{\tau + 1/T}^{2\tau} \xi^{2}|\tau-\xi^5|^{-k} \int_{1/T < |\tau'-\xi^5| } |\tau'-\xi^5|^{-1+b}\wt{W}^{\ast}(\tau',\xi)\; d\tau'd\xi \right|^2 \; d\tau \right)^{\frac12} \nonumber \\
&\lesssim T^{1-k}T^{\frac12-b} \left( \int_{|\tau| > 1/T}  \left| \int_{\tau + 1/T}^{2\tau} \xi^{2}|\tau-\xi^5|^{-k}\wt{W}^{\ast\ast}(\xi)\; d\xi \right|^2 \; d\tau \right)^{\frac12}, \label{eq:b.11}
\end{align}
where $\wt{W}^{\ast\ast}(\xi) = \norm{\wt{W}^{\ast}(\cdot, \xi)}_{L^2}$. Let $h =  \xi^5 - \tau$. Then the change of variables, the Cauchy-Schwarz inequality and the Fubini theorem yields
\[\begin{aligned}
\eqref{eq:b.11} &\lesssim T^{1-k}T^{\frac12-b} \left( \int_{|\tau| > 1/T}  \left| \int_{1/T}^{\tau} |h|^{-k}\wt{W}^{\ast\ast}((\tau + h)^{\frac15})(\tau + h)^{-\frac25}\; dh \right|^2 \; d\tau \right)^{\frac12}\\
&\lesssim ^{\frac32-k-b + \epsilon}\left( \int_{|\tau| > 1/T} \int_{1/T}^{\tau} |h|^{-2k+1+2\epsilon}|\wt{W}^{\ast\ast}((\tau + h)^{\frac15})|^2(\tau + h)^{-\frac45}\; dh  \; d\tau \right)^{\frac12}\\
&\lesssim T^{\frac12 -b}\norm{w_2}_{X^{s,-b}},
\end{aligned}\]
for small $0<\epsilon \ll 1$, which implies 
\[\eqref{eq:b.8} \lesssim T^{\frac12 - b}\norm{w_2}_{X^{s,-b}}.\]

For the case when $2|\tau - \xi^5| \le |\tau' - \xi^5|$, the region of $\xi$ can be further divided by 
\[\mathcal{B}_1 = \left\{\xi : |\xi|^5 > \frac1T, \; |\tau - \xi^5| < \frac1T \right\}\]
and 
\[\mathcal{B}_2 = \left\{\xi : |\xi|^5 > \frac1T, \; |\tau - \xi^5|  \ge \frac1T  \right\}.\]
On $\mathcal{B}_1$, we know $|\tau|^{\frac{s+2-j}{5}} \sim |\xi|^{s+2-j}$. Since $|\wh{\psi}_T(\tau-\tau')| \lesssim T$ and 
\[\int_{1/T < |\tau'-\xi^5| } |\tau'-\xi^5|^{-1+b}\wt{W}^{\ast}(\tau',\xi)\; d\tau' \lesssim T^{1-2b}\wt{W}^{\ast\ast}(\xi),\]
we have from the change of variable ($\eta = \xi^5$) that
\[
\begin{aligned}
\eqref{eq:b.8} &\lesssim T^{\frac12-b} \left( \int_{|\tau| > 1/T}  \left| T\int_{|\eta - \tau| < 1/T} \wt{W}^{\ast\ast}(\eta^{\frac15})\eta^{-\frac25}\; d\eta \right|^2 \; d\tau \right)^{\frac12}\\
&\lesssim T^{\frac12-b} \left( \int_{|\tau| > 1/T}  |M\wt{W}^{\ast\ast\ast}(\tau)|^2 \; d\tau \right)^{\frac12},
\end{aligned}
\]
where $\wt{W}^{\ast\ast\ast}(\eta) = \wt{W}^{\ast\ast}(\eta^{\frac15})\eta^{-\frac25}$. Note that $\norm{\wt{W}^{\ast\ast\ast}}_{L^2} = c\norm{w_2}_{X^{s,-b}}$. Therefore, we have
\[\eqref{eq:b.8} \lesssim T^{\frac12 - b}\norm{w_2}_{X^{s,-b}}.\]
On $\mathcal{B}_2$, by dividing the region of $\xi$ as $\mathcal{A}_j$, $j=1,2,3,4$, we have similarly 
\[\eqref{eq:b.8} \lesssim T^{\frac12 - b}\norm{w_2}_{X^{s,-b}}.\]

For the rest case ($\frac12|\tau - \xi^5| < |\tau' - \xi^5| < 2|\tau - \xi^5|$), we further divide the region of $\tau'$ as $ \mathcal{C}_1\cup \mathcal{C}_2$, where
\[\mathcal{C}_1 = \left\{\tau' : |\tau'| > 1/T, \;  \frac12|\tau - \xi^5| < |\tau' - \xi^5| < 2|\tau - \xi^5|, \; (\tau'-\xi^5)\cdot(\tau-\xi^5)<0\right\}\]
and
\[\mathcal{C}_2 = \left\{\tau' : |\tau'| > 1/T, \;  \frac12|\tau - \xi^5| < |\tau' - \xi^5| < 2|\tau - \xi^5|, \; (\tau'-\xi^5)\cdot(\tau-\xi^5)>0\right\}.\]
On $\mathcal{C}_1$, since
\[|\wh{\psi}_T(\tau - \tau')| \lesssim T^{1-k}|\tau-\xi^5|^{-k} \sim T^{1-k}|\tau-\xi^5|^{-k},\]
for $k \ge 0$, by dividing the region of $\xi$ as $\mathcal{A}_j$, $j=1,2,3,4$, we have similarly 
\[\eqref{eq:b.8} \lesssim T^{\frac12 - b}\norm{w_2}_{X^{s,-b}}.\]
On the other hand, we further split the set $\mathcal{C}_2$ by
\[\mathcal{C}_{21} = \left\{\tau' : |\tau'| > 1/T, \;  \frac12|\tau - \xi^5| < |\tau' - \xi^5| < 2|\tau - \xi^5|, \; (\tau'-\xi^5)\cdot(\tau-\xi^5)>0,\; |\tau - \tau'| < \frac1T \right\}\] 
and
\[\mathcal{C}_{22} = \left\{\tau' : |\tau'| > 1/T, \;  \frac12|\tau - \xi^5| < |\tau' - \xi^5| < 2|\tau - \xi^5|, \; (\tau'-\xi^5)\cdot(\tau-\xi^5)>0 ,\; |\tau - \tau'| > \frac1T\right\}.\] 
On $\mathcal{C}_{21}$, \eqref{eq:b.8} is reduced by
\begin{equation}\label{eq:b.12}
\left( \int_{|\tau| > 1/T} |\tau|^{\frac{2(s+2-j)}{5}} \left| \int_{|\xi|^5>1/T} |\xi|^{j-s}|\tau-\xi^5|^{-1+b}M\wt{W}^{\ast}(\tau,\xi)d\xi \right|^2 \; d\tau \right)^{\frac12}.
\end{equation}
Then, by dividing the region of $\xi$ in \eqref{eq:b.12} as $\mathcal{A}_{j}$, $j=1,2,3,4$, we have similarly 
\[\eqref{eq:b.8} \lesssim T^{\frac12 - b}\norm{w_2}_{X^{s,-b}}.\]
On $\mathcal{C}_{22}$, we know $|\wh{\psi}_T(\tau-\tau')| \lesssim T^{1-k}|\tau-\tau'|^{-k}$ for $k \ge 0$. Then, \eqref{eq:b.8} is reduced by
\begin{equation}\label{eq:b.13}
\begin{aligned}
T&^{1-k+\epsilon}\Big( \int_{|\tau| > 1/T} |\tau|^{\frac{2(s+2-j)}{5}} \\
& \times \Big| \int_{|\xi|^5>1/T} |\xi|^{j-s}|\tau-\xi^5|^{-1+b}\Big(\int_{\frac1T}^{\tau-\xi^5}|h|^{-2k+1+2\epsilon}|\wt{W}^{\ast}(\tau+h,\xi)|^2 \; dh\Big)^{\frac12} d\xi \Big|^2 \; d\tau \Big)^{\frac12},
\end{aligned}
\end{equation}
for small $0< \epsilon \ll 1$. Then, for $k \gg 1$ large enough, by dividing the region of $\xi$ in \eqref{eq:b.13} as $\mathcal{A}_{j}$, $j=1,2,3,4$, we have similarly 
\[\eqref{eq:b.8} \lesssim T^{\frac12 - b}\norm{w_2}_{X^{s,-b}}.\]
Therefore, we have for $-\frac92 + j\le s \le \frac12 +j$ that
\[\|\psi_T(t) \partial_x^j\mathcal{D}w(x,t)\|_{C(\mathbb{R}_x;H^{\frac{s+2-j}{5}}(\mathbb{R}_t))} \lesssim T^{\theta}\left( \|w\|_{X^{s,-b}} + \|w\|_{X^{Y,-b}} \right),\]
for some $\theta = \theta(s,j,b) > 0$.

\textbf{(c).} Since
\[\ft[\psi_T \mathcal{D}w](\tau,\xi) = \int \wt{w}(\tau',\xi)\frac{\wh{\psi}_T(\tau - \tau') - \wh{\psi}_T(\tau - \xi^5)}{i(\tau'-\xi^5)} \; d\tau',\]
it suffices to show
\begin{equation}\label{eq:c.1}
\left(\int_{|\xi|\le 1}\int\bra{\tau}^{2\alpha} \left| \int \wt{w}(\tau',\xi)\frac{\wh{\psi}_T(\tau - \tau') - \wh{\psi}_T(\tau - \xi^5)}{i(\tau'-\xi^5)} \; d\tau'\right|^2 \; d\tau d\xi\right)^{\frac12} \lesssim T^{\theta}\norm{w}_{X^{s,-b}}
\end{equation}
and
\begin{equation}\label{eq:c.2}
\begin{aligned}
\Big(\int_{|\xi| > 1}|\xi|^{2s}\int\bra{\tau-\xi^5}^{2b} \Big| \int \wt{w}(\tau',\xi)\frac{\wh{\psi}_T(\tau - \tau') - \wh{\psi}_T(\tau - \xi^5)}{i(\tau'-\xi^5)} \; d\tau'\Big|^2& \; d\tau d\xi\Big)^{\frac12}\\
 &\lesssim T^{\theta}\norm{w}_{X^{s,-b}},
\end{aligned}
\end{equation}
for some $\theta = \theta(\alpha,b) > 0$. For $T|\tau'-\xi^5| \le 1$, we use the mean value theorem in order to deal with
\[\frac{|\wh{\psi}_T(\tau - \tau') - \wh{\psi}_T(\tau - \xi^5)|}{|\tau'-\xi^5|} \lesssim T^2|\wt{\psi}'(T(\tau-\xi^5) + \delta)|,\]
for some $|\delta| \le 1$, in the left-hand side of \eqref{eq:c.1} and \eqref{eq:c.2}. Then, since
\[\left(\int \bra{\tau}^{2\sigma} T^4|\wh{\psi}'(T\tau)|^2 \; d\tau \right)^{\frac12} \lesssim T^{\frac32-\sigma}\norm{\bra{\tau}^{\sigma}\wh{\psi}'(\tau)}_{L^2} \]
and
\[\Big(\int\bra{\xi}^{2s}\Big| \int_{|\tau'-\xi^5| \le 1/T} \wt{w}(\tau',\xi) \; d\tau'\Big|^2\;d\xi\Big)^{\frac12} \lesssim T^{-\frac12 - b}\norm{w}_{X^{s,-b}},\]
we have
\[\begin{aligned}
\Big(\int_{|\xi|\le 1}\int\bra{\tau}^{2\alpha} \Big| \int_{|\tau'-\xi^5| \le 1/T} \wt{w}(\tau',\xi)\frac{\wh{\psi}_T(\tau - \tau') - \wh{\psi}_T(\tau - \xi^5)}{i(\tau'-\xi^5)} \; d\tau'\Big|^2& \; d\tau d\xi\Big)^{\frac12} \\
&\lesssim T^{1-\alpha-b}\norm{w}_{X^{s,-b}}
\end{aligned}\]
and
\[\begin{aligned}
\Big(\int_{|\xi| > 1}|\xi|^{2s}\int\bra{\tau-\xi^5}^{2b} \Big| \int_{|\tau'-\xi^5| \le 1/T} \wt{w}(\tau',\xi)\frac{\wh{\psi}_T(\tau - \tau') - \wh{\psi}_T(\tau - \xi^5)}{i(\tau'-\xi^5)} \; d\tau'\Big|^2& \; d\tau d\xi\Big)^{\frac12}\\
 &\lesssim T^{1-2b}\norm{w}_{X^{s,-b}}.
\end{aligned}\]

Otherwise ($T|\tau'-\xi^5| > 1$), since
\[\left(\int \bra{\tau}^{2\sigma} |\wh{\psi}_T(\tau)|^2 \; d\tau \right)^{\frac12} \lesssim T^{\frac12-\sigma}\norm{\psi}_{H^{\sigma}} \]
and
\[\Big(\int_{|\xi| > 1}|\xi|^{2s}\Big| \int_{|\tau'-\xi^5| > 1/T} \frac{\wt{w}(\tau',\xi)}{i(\tau'-\xi^5)} \; d\tau'\Big|^2\;d\xi\Big)^{\frac12} \lesssim T^{\frac12 - b}\norm{w}_{X^{s,-b}},\]
it suffices to show
\begin{equation}\label{eq:c.3}
\left(\int_{|\xi|\le 1}\int\bra{\tau}^{2\alpha} \left| \int_{|\tau'\xi^5|>1/T} \frac{\wt{w}(\tau',\xi)}{i(\tau'-\xi^5)}\wh{\psi}_T(\tau - \tau') \; d\tau'\right|^2 \; d\tau d\xi\right)^{\frac12} \lesssim T^{\theta}\norm{w}_{X^{s,-b}}
\end{equation}
and
\begin{equation}\label{eq:c.4}
\Big(\int_{|\xi| > 1}|\xi|^{2s}\int\bra{\tau-\xi^5}^{2b} \Big| \int_{|\tau'\xi^5|>1/T} \frac{\wt{w}(\tau',\xi)}{i(\tau'-\xi^5)}\wh{\psi}_T(\tau - \tau') \; d\tau'\Big|^2 \; d\tau d\xi\Big)^{\frac12}\lesssim T^{\theta}\norm{w}_{X^{s,-b}},
\end{equation}
for some $\theta > 0$. It follows the similar way used in the proof of (b). In fact, the proofs of \eqref{eq:c.3} and \eqref{eq:c.4}  are much simpler and easier than the proof of (b), since $L^2$ integral with respect to $\xi$ is negligible and hence it is enough to consider the relation between $\tau - \xi^5$ and $\tau'-\xi^5$. Thus, we omit the details and we have
\[\norm{\psi_T \mathcal{D}w}_{D^{\alpha}} \lesssim T^{1-\alpha - b} \norm{w}_{X^{s,-b}}\]
and  
\[\norm{\psi_T \mathcal{D}w}_{X^{s,b}} \lesssim T^{1-2b} \norm{w}_{X^{s,-b}}.\]
\end{proof}

\begin{lemma}\label{edbf}
Let $-\frac52 < s < \frac12$\footnote{The restriction of regularity makes the range of $\lambda$ below non-empty.}.
\begin{itemize}
\item[(a)] \emph{(Space traces)}
For $\max(s-\frac{9}{2}, -4) <\lambda< \min(s+\frac{1}{2}, \frac12)$, we have
		$$\|\psi_T(t)\mathcal{L}_{\pm}^{\lambda}f(t,x)\|_{C\big(\mathbb{R}_t;\,H^s(\mathbb{R}_x)\big)}\leq c \|f\|_{H_0^\frac{s+2}{5}(\mathbb{R}^+)};$$
\item[(b)] \emph{((Derivatives) Time traces)} For $-4+j<\lambda<1+j$, $j=0,1,2$, we have 
\[		\|\psi_T(t)\partial_x^j\mathcal{L}_{\pm}^{\lambda}f(t,x)\|_{C\big(\mathbb{R}_x;\,H_0^{\frac{s+2-j}{5}}(\mathbb{R}_t^+)\big)}\leq c \|f\|_{H_0^\frac{s+2}{5}(\mathbb{R}^+)};\]
\item[(c)] \emph{(Bourgain spaces)} 
For $b < \frac12 < \alpha < 1-b$ and  $\max(s-2, -\frac{13}{2}) <\lambda< \min(s+\frac{1}{2}, \frac12)$, we have
		\[\|\psi_T(t)\mathcal{L}_{\pm}^{\lambda}f(t,x)\|_{X^{s,b}\cap D^{\alpha}}\leq c \|f\|_{H_0^\frac{s+2}{5}(\mathbb{R}^+)}.\]
\end{itemize}

\end{lemma}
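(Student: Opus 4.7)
The plan is to start from an explicit spatial Fourier representation of $\mathcal{L}_\pm^\lambda f$. Combining the defining formulas \eqref{forcing2}--\eqref{forcing1} with the Fourier identity \eqref{transformada} for $\frac{x_\mp^{\lambda-1}}{\Gamma(\lambda)}$ and the obvious Fourier transform of $\mathcal{L}^0 f$ coming from \eqref{eq:BFO}, one obtains
\[
\ft_x\bigl[\mathcal{L}_\pm^\lambda f\bigr](t,\xi)
= M\, c_\pm(\lambda,\xi)\int_0^t e^{i(t-t')\xi^5}\,\mathcal{I}_{-\frac{4+\lambda}{5}}f(t')\,dt',
\]
where $c_\pm(\lambda,\xi)=e^{\mp i\pi\lambda/2}(\xi\mp i0)^{-\lambda}$ is a homogeneous symbol of order $-\lambda$ with a prescribed phase jump. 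This realizes $\mathcal{L}_\pm^\lambda f$ as a fractional operator of order $-\lambda$ applied to the Duhamel propagator of the boundary-type forcing $\delta_0(x)\cdot \mathcal{I}_{-(4+\lambda)/5}f(t)$, which is the key object that will be fed into the estimates of Section \ref{sec:energy}.

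For (a) and (c), I would split into a low-frequency piece ($|\xi|\le1$) and a high-frequency piece ($|\xi|>1$). On the high-frequency side $|c_\pm(\lambda,\xi)|\sim|\xi|^{-\lambda}$ gives a $\lambda$-derivative gain; one treats the resulting expression as the Duhamel integral of Section \ref{sec:Duhamel boundary forcing operator} applied to a forcing of regularity coming from $\delta_0\in H^{-1/2-}_x$ and Lemma \ref{lio}, and matches to $X^{s-\lambda,-b}$ via the duality hidden in Lemma \ref{duhamel}. This produces the upper constraint $\lambda<s+\frac12$ in both cases and the lower constraints $\lambda>s-\frac92$ in (a) and the stricter $\lambda>s-2$ in (c). On the low-frequency side, $c_\pm(\lambda,\xi)$ is locally integrable for $\lambda<\frac12$, so one estimates the oscillatory kernel
\(\int_{|\xi|\le1}e^{ix\xi}c_\pm(\lambda,\xi)\,e^{i(t-t')\xi^5}\,d\xi\)
directly using the Mellin identities of Lemma \ref{mellin}, then combines this with the Riemann--Liouville bound of Lemma \ref{lio},
\(\|\mathcal{I}_{-(4+\lambda)/5}f\|_{H_0^{\sigma}}\lesssim\|f\|_{H_0^{\sigma+(4+\lambda)/5}}\),
so that control of $f$ in $H_0^{(s+2)/5}$ transfers to the desired norm of $\mathcal{L}_\pm^\lambda f$. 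Lemma \ref{cut} (or Remark \ref{rem:cut}) handles the $\psi_T$ cutoff.

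For (b), I would first note the algebraic identity, obtained by differentiating \eqref{forcing2}--\eqref{forcing1} in $x$ and rewriting the inner Riemann--Liouville factor,
\[
\partial_x\mathcal{L}_\pm^\lambda g=\mp\,\mathcal{L}_\pm^{\lambda-1}\!\bigl(\mathcal{I}_{-1/5}g\bigr),
\qquad
\partial_x^j\mathcal{L}_\pm^\lambda f=(\mp1)^j\,\mathcal{L}_\pm^{\lambda-j}\!\bigl(\mathcal{I}_{-j/5}f\bigr).
\]
The trace formula of Lemma \ref{trace1} applied to $\mathcal{L}_\pm^{\lambda-j}$ then identifies the value at $x=0$ as a constant (finite in the range $-4+j<\lambda<1+j$) times $\mathcal{I}_{-j/5}f(t)$, whose $H_0^{(s+2-j)/5}$ norm is dominated by $\|f\|_{H_0^{(s+2)/5}}$ through Lemma \ref{lio}. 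For $x\neq0$ the spatial Fourier formula above, together with the continuity and decay estimates of Lemma \ref{continuity} and the continuity range for $\mathcal{L}_\pm^{\lambda-j}$ in Lemma \ref{holmer1}, upgrades the bound to uniform-in-$x$ control. The conditions $-4+j<\lambda<1+j$ are precisely what is needed to keep $\partial_x^j\mathcal{L}_\pm^\lambda f(t,x)$ continuous in $x$ (in particular at $x=0$) and to allow Lemma \ref{trace1}.

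The main technical obstacle will be the $D^\alpha$ component of (c): here one must bound the oscillatory integral above with two incompatible weights simultaneously, namely $\bra{\tau}^{\alpha}$ on $|\xi|\le1$ and $\bra{\tau-\xi^5}^{b}$ on $|\xi|>1$, while maintaining $b<\tfrac12<\alpha<1-b$. The low-frequency $D^\alpha$ weight is the source of the stronger lower bound $\lambda>s-2$ (versus $\lambda>s-\tfrac92$ in (a)), because the $H^\alpha_\tau$ localization at low $\xi$ is strictly stronger than the $L^\infty_t$ statement used in (a), and it must be absorbed by the finite number of derivatives supplied by $c_\pm(\lambda,\xi)$ and by $\mathcal{I}_{-(4+\lambda)/5}$. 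Careful use of the oscillation $e^{i(t-t')\xi^5}$ (stationary/van der Corput on $|\xi|\le 1$) should produce the remaining integrability in $\tau$ and close the estimate.
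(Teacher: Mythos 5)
Your approach is genuinely different from the paper's. The paper's own proof is a four\-/line reduction: since $\mathcal{L}_\pm^\lambda$ is causal in time and $\psi_T=\psi_2\psi_T$, one may push a cutoff onto $f$ and then invoke Lemma 4.3 of the previous paper \cite{CK2018-1} (which proves exactly these three bounds with a fixed cutoff $\psi_2$ in place of $\psi_T$), finishing with Remark \ref{rem:cut} to compare $\|\psi_{(\cdot)T}f\|_{H_0^{(s+2)/5}}$ with $\|f\|_{H_0^{(s+2)/5}}$. What you propose instead — an explicit spatial Fourier symbol $c_\pm(\lambda,\xi)\sim|\xi|^{-\lambda}$, a low/high frequency split, Mellin and Riemann--Liouville bounds, and the identity $\partial_x^j\mathcal{L}_\pm^\lambda f=(\mp1)^j\mathcal{L}_\pm^{\lambda-j}(\mathcal{I}_{-j/5}f)$ together with Lemma \ref{trace1} — is essentially a from\-/scratch reconstruction of what that cited Lemma 4.3 must itself do (compare Holmer's analogous lemma for KdV). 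As a program this is legitimate, but several of the central steps are announced rather than executed and would not close as written.

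Concretely: (i) For (a) and (c) the step ``matches to $X^{s-\lambda,-b}$ via the duality hidden in Lemma \ref{duhamel}'' is not a well\-/defined move. Lemma \ref{duhamel} is a direct estimate for $\mathcal{D}w$ assuming $w\in X^{s,-b}$, not a duality; to use it here you would have to show that the effective forcing, whose space-time Fourier transform is $c_\pm(\lambda,\xi)\,\widehat{\mathcal{I}_{-(4+\lambda)/5}f}(\tau)$, actually belongs to $X^{s-\lambda,-b}$, and that computation is exactly where the constraints $\lambda<s+\tfrac12$, $\lambda>s-\tfrac92$ (in (a)) and $\lambda>s-2$ (in (c)) must come from. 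You assert the ranges but do not derive them. (ii) For (b), your identity plus Lemma \ref{trace1} only pins down $\partial_x^j\mathcal{L}_\pm^\lambda f(t,0)$; the required norm is $C\bigl(\mathbb{R}_x;H_0^{\frac{s+2-j}{5}}(\mathbb{R}_t^+)\bigr)$, uniform in $x$, and the ``upgrade to uniform-in-$x$ control'' is precisely the hard part. Lemma \ref{continuity} and Lemma \ref{holmer1} give pointwise/decay information in $x$ for smooth or compactly supported data, not the $H_t^{(s+2-j)/5}$ bound uniformly in $x$. Your explanation also only accounts for the lower endpoint $\lambda>-4+j$; the upper endpoint $\lambda<1+j$ is left unexplained. (iii) You flag the $D^\alpha$ component of (c) as ``the main technical obstacle'' and then defer it to an unworked stationary\-/phase argument that ``should'' close. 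As written this is the hole: the low-frequency weight $\bra{\tau}^{\alpha}$ with $\alpha>\tfrac12$ is exactly the part that the elementary bounds you list do not produce, and the proposal does not show how the oscillation in $\xi\in[-1,1]$ generates the needed $\tau$-integrability.
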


\begin{proof}
From the fact $\psi_T(t) = \psi_2(t)\psi_T(t)$ for $0 < T \le 1$ and the definition of $\mathcal{L}_{\pm}^{\lambda}$, it suffices to consider  
\[\psi_2(t)\mathcal{L}_{\pm}^{\lambda}(\psi_Tf(t,x))\]
instead of 
\[\psi_T(t)\mathcal{L}_{\pm}^{\lambda}f(t,x).\]
Then, by Lemma 4.3 in \cite{CK2018-1} and Remark \ref{rem:cut} (a variant of Lemma \ref{cut}), we have Lemma \ref{edbf}. We omit the details.
\end{proof}

\section{Proof of Theorems \ref{theorem1} -- \ref{theorem22}}\label{sec:thm proof}
The proofs of Theorems \ref{theorem1} -- \ref{theorem22} are based on the argument in \cite{CK2018-1}, while the scaling argument does not hold here as mentioned in Section \ref{sec:intro}. Hence, we only provide a sketch of the proof of Theorem \ref{theorem1}.

We fix $-\frac52<s<\frac12$. Recall from \cite{CK2018-1}
\[a_j = \frac{1}{5B(0)\Gamma\left(\frac45\right)}\frac{\cos\left(\frac{(1+4\lambda_j)\pi}{10}\right)}{\sin\left(\frac{(1-\lambda_j)\pi}{5}\right)}\quad \text{and}\quad b_j = \frac{1}{5B(0)\Gamma\left(\frac45\right)}\frac{\cos\left(\frac{(4\lambda_j-3)\pi}{10}\right)}{\sin\left(\frac{(2-\lambda_j)\pi}{5}\right)}\]
and define a matrix
\[A(\lambda_1,\lambda_2)=
\left[\begin{array}{cc}
a_1 & a_2 \\
	b_1 & b_2 \end{array} \right].\]\
We note that when $-\frac52<s<\frac12$, the parameters $\lambda_1$ and $\lambda_2$ satisfying
\begin{equation}\label{eq:lambda condition}
\max(s-2,\ -3)<\lambda_j<\min\left(\frac12,s+\frac12\right),\; j=1,2,
\end{equation}
and
\begin{equation}\label{eq:lambda condition1}
\lambda_1 - \lambda_2 \neq 5n, \qquad n \in \Z,
\end{equation}
facilitate that  Lemma \ref{edbf} holds and $A$ is invertible.

We fix $\lambda_j$, $j=1,2$, satisfying \eqref{eq:lambda condition} and \eqref{eq:lambda condition1}. We bring the solution operator on $[0,T]$ from \cite{CK2018-1} as follows:
\begin{equation}\label{eq:solution map}
\Lambda u(t,x)= \psi_T(t)\mathcal{L}_+^{\lambda_1}\gamma_1(t,x)+\psi_T(t)\mathcal{L}_+^{\lambda_2}\gamma_2(t,x)+\psi_T(t)F(t,x),
\end{equation}
where 
\[\left[\begin{array}{c}
\gamma_1(t)\\
\gamma_2(t)   \end{array} \right]=A^{-1}\left[\begin{array}{c}
f(t)-F(t,0) \\
\mathcal{I}_{\frac15}g(t)-  \mathcal{I}_{\frac15} \partial_xF(t,0)  \end{array} \right],\]
and $F(t,x)=e^{it\partial_x^5}\underline{u}_0-\mathcal{D}((1-\partial_x^2)^{\frac12}\partial_x(u^2))(t,x)$. 

For given initial and boundary data $u_0, f$ and $g$, we fix $0<T<1$ such that
\begin{equation}\label{eq:T}
4C^2T^{\frac32-2\alpha-b}(\|u_0\|_{H^s(\R^+)}+\|f\|_{H^{\frac{s+2}{5}}(\R^+)}+\|g\|_{H^{\frac{s+1}{5}(\R^+)}}) < \frac12,
\end{equation}
where $C$ is the maximum constant among other implicit constants appeared in all estimates in Sections \ref{sec:energy} and \ref{sec:nonlinear}. Note that $\frac12 < \frac34 -\frac b2 < 1-b$ holds for $b < \frac12$, and hence it is possible to choose a small $T>0$ satisfying \eqref{eq:T}, since $\frac32-2\alpha-b > 0$ when $b < \frac12 < \alpha < \frac34 -\frac b2$.

Recall the $Z_1^{s,\alpha,b}$-norm defined in \eqref{eq:solution space}. All estimates obtained in Sections \ref{sec:energy} and \ref{sec:nonlinear} yield
\[\|\Lambda u\|_{Z_1^{s,\alpha,b}}\leq CT^{\frac12 -\alpha}(\|u_0\|_{H^s(\R^+)}+\|f\|_{H^{\frac{s+2}{5}}(\R^+)}+\|g\|_{H^{\frac{s+1}{5}(\R^+)}} ) + CT^{1-\alpha-b}\norm{u}_{Z_1^{s,\alpha,b}}^2.\]
Similarly,
\[\|\Lambda u_1 - \Lambda u_2\|_{Z_1^{s,\alpha,b}}\leq CT^{1-\alpha-b}(\norm{u_1}_{Z_1^{s,\alpha,b}}+\norm{u_2}_{Z_1^{s,\alpha,b}})\norm{u_1-u_2}_{Z_1^{s,\alpha,b}},\]
for $u_1(0,x) = u_2(0,x)$. These immediately imply that $\Lambda$ is a contraction map on 
\[\set{u \in Z_1^{s,\alpha,b} : \norm{u}_{Z_1^{s,\alpha,b}} < 2CT^{\frac12 -\alpha}(\|u_0\|_{H^s(\R^+)}+\|f\|_{H^{\frac{s+2}{5}}(\R^+)}+\|g\|_{H^{\frac{s+1}{5}(\R^+)}} ) },\] and it completes the proof.

\end{document}